\theoremstyle{plain}
\newtheorem{thm}{Theorem}[section]
\newtheorem*{thm*}{Theorem}
\newtheorem{cor}[thm]{Corollary}
\newtheorem{lem}[thm]{Lemma}
\theoremstyle{definition}
\newtheorem{remark}[thm]{Remark}
\newtheorem{remarks}[thm]{Remarks}
\newtheorem{convention}[thm]{Convention}
\newtheorem{example}[thm]{Example}
\newtheorem{examples}[thm]{Examples}
\newtheorem{notation}[thm]{Definition}
\newtheorem{notations}[thm]{Definitions}
\newtheorem{defn}[thm]{Definition}
\newtheorem{defns}[thm]{Definitions}
\newcommand{\Notation}{Definition}
\newcommand{\Q}{{\mathbb{Q}}}
\newcommand{\N}{{\mathbb{N}}}
\newcommand{\PP}{{\mathbb{P}}}
\newcommand{\R}{{\mathbb{R}}}
\newcommand{\Z}{{\mathbb{Z}}}
\newcommand{\cB}{{\mathcal{B}}}
\newcommand{\cF}{{\mathcal{F}}}
\newcommand{\cG}{{\mathcal{G}}}
\newcommand{\cI}{{\mathcal{I}}}
\newcommand{\cJ}{{\mathcal{J}}}
\newcommand{\cL}{{\mathcal{L}}}
\newcommand{\cO}{{\mathcal{O}}}
\newcommand{\cP}{{\mathcal{P}}}
\newcommand{\cQ}{{\mathcal{Q}}}
\newcommand{\cR}{{\mathcal{R}}}
\newcommand{\KS}{{\mathcal{KS}}}
\newcommand{\fF}{{\mathfrak{F}}}
\newcommand{\hx}{{\hat{x}}}
\newcommand{\hB}{{\widehat{B}}}
\newcommand{\hT}{{\widehat{T}}}
\newcommand{\hf}{{\widehat{f}}}
\newcommand{\hF}{{\widehat{F}}}
\newcommand{\hH}{{\widehat{H}}}
\newcommand{\hI}{{\widehat{I}}}
\newcommand{\hS}{{\widehat{S}}}
\newcommand{\hX}{{\widehat{X}}}
\newcommand{\hq}{{\widehat{q}}}
\newcommand{\hw}{{\widehat{w}}}
\newcommand{\hy}{{\widehat{y}}}
\newcommand{\hz}{{\widehat{z}}}
\newcommand{\hR}{{\widehat{R}}}
\newcommand{\hpi}{{\widehat{\pi}}}
\newcommand{\hmu}{{\widehat{\mu}}}
\newcommand{\bt}{{\mathbf{t}}}
\newcommand{\bv}{{\mathbf{v}}}
\newcommand{\bx}{{\mathbf{x}}}
\newcommand{\by}{{\mathbf{y}}}
\newcommand{\bz}{{\mathbf{z}}}
\newcommand{\bp}{{\mathbf{p}}}
\newcommand{\bq}{{\mathbf{q}}}
\newcommand{\bJ}{{\mathbf{J}}}
\newcommand{\bP}{{\mathbf{P}}}
\newcommand{\bQ}{{\mathbf{Q}}}
\newcommand{\bT}{{\mathbf{T}}}
\newcommand{\bxi}{{\boldsymbol{\xi}}}
\newcommand{\barf}{\overline{f}}
\newcommand{\barg}{\overline{g}}
\newcommand{\bD}{\overline{D}}
\newcommand{\tB}{\widetilde{B}}
\newcommand{\tD}{\widetilde{D}}
\newcommand{\tmu}{{\tilde{\mu}}}
\newcommand{\diam}{\operatorname{diam}}
\newcommand{\Rem}{\operatorname{Rem}}
\newcommand{\Int}{\operatorname{Int}}
\newcommand{\lhe}{\operatorname{lhe}}
\newcommand{\rhe}{\operatorname{rhe}}
\newcommand{\NBT}{\operatorname{NBT}}
\newcommand{\id}{\operatorname{id}}
\newcommand{\ingam}{{{\accentset{\circ}{\gamma}}}}
\newcommand{\inR}{{{\accentset{\circ}{R}}}}
\newcommand{\invlim}{\varprojlim}
\newcommand{\thr}[1]{\left< #1 \right>}
\newcommand{\Infs}[1]{#1^\infty}
\newcommand{\Inf}[1]{\Infs{\left(#1\right)}}
\newcommand{\floor}[1]{\left\lfloor #1 \right\rfloor}
\newcommand{\yst}{\left((\by, s), t\right)}
\newcommand{\sj}{^{(j)}}
\newcommand{\ystj}{\left((\by\sj, s_j), t_j\right)}
\newcommand{\tPhi}{\Phi_*}
\newcommand{\htPhi}{\widehat{\Phi}_*}
\newcommand{\sD}{{D^{{\scriptstyle\dagger}}}} 
\newcommand{\sDt}{{D^{{\scriptstyle\dagger}}_{{t}}}}
\newcommand{\sDast}{{D^{{\scriptstyle\dagger}}_\ast}}
\newcommand{\syma}{\mu}
\newcommand{\symb}{\nu}
\newcommand{\re}{\alpha}
\begin{document}

\title[Natural extensions of unimodal maps]{Natural extensions of unimodal maps: virtual 
sphere homeomorphisms and prime ends of basin boundaries.}
\date{January 2020}
\author{Philip Boyland}
\address{Department of Mathematics\\University of Florida\\372 Little
    Hall\\Gainesville\\ FL 32611-8105, USA}
\email{boyland@ufl.edu}
\author{Andr\'e de Carvalho}
\address{Departamento de Matem\'atica Aplicada\\ IME-USP\\ Rua Do Mat\~ao
    1010\\ Cidade Universit\'aria\\ 05508-090 S\~ao Paulo SP\\ Brazil}
\email{andre@ime.usp.br}
\author{Toby Hall}
\address{Department of Mathematical Sciences\\ University of Liverpool\\
    Liverpool L69 7ZL, UK}
\email{tobyhall@liverpool.ac.uk}

\begin{abstract} 
  Let~$\{f_t\colon I\to I\}$ be a family of unimodal maps with topological
  entropies $h(f_t)>\frac12\log 2$, and $\hf_t\colon\hI_t\to\hI_t$ be their
  natural extensions, where $\hI_t=\invlim(I,f_t)$. Subject to some regularity
  conditions, which are satisfied by tent maps and quadratic maps, we give a
  complete description of the prime ends of the Barge-Martin embeddings of
  $\hI_t$ into the sphere. We also construct a family $\{\chi_t\colon S^2\to
  S^2\}$ of sphere homeomorphisms with the property that each $\chi_t$ is a
  factor of $\hf_t$, by a semi-conjugacy for which all fibers except one
  contain at most three points, and for which the exceptional fiber carries no
  topological entropy: that is, unimodal natural extensions are virtually
  sphere homeomorphisms. In the case where~$\{f_t\}$ is the tent family, we
  show that~$\chi_t$ is a \emph{generalized pseudo-Anosov map} for the dense
  set of parameters for which $f_t$ is post-critically finite, so that
  $\{\chi_{t}\}$ is the completion of the unimodal generalized pseudo-Anosov
  family introduced in~\cite{gpa}.

\end{abstract}

\maketitle


\section{Introduction}
\subsection{Overview}
The study of continua and their rich topological structures goes back to the
first half of the $20^\text{th}$ century, and played a central r\^ole in the
early development of topology. Embeddings of continua in surfaces have also
been an important ingredient in dynamical systems theory: early examples
include Birkhoff's {\em remarkable curves}~\cite{birkhoff,LeC} and the
Cartwright-Littlewood Theorem~\cite{CL}. Williams~\cite{williams,williams2} was
the first to notice, in the late 1960s, that continua defined by inverse limits
are a useful tool in the study of dynamical systems: specially relevant here is
his discovery that a particular class of planar continua, the inverse limits of
expanding maps on graphs, describe planar, one-dimensional hyperbolic
attractors. In the early 1990s --- inspired in part by the importance of the
H\'enon family as a paradigm for the larger family of \emph{non-hyperbolic}
attractors --- Barge and Martin~\cite{bargemartin,param-fam} gave a method to
embed a wide class of inverse limits as attractors of planar homeomorphisms.
The inverse limits of unimodal maps of the interval such as those from the
quadratic and tent families are of particular importance for the H\'enon
family. These inverse limits are the chief objects of study here. A simple
example is shown in Figure~\ref{fig:1001110} for expository purposes: it will
be used as a point of reference throughout the introduction.

\begin{figure}[htbp]
  \begin{center}
   \includegraphics[width=0.8\textwidth]{}
  \end{center}
  \caption{An approximation of the inverse limit of a tent map with
  kneading sequence $(1001110)^\infty$, which is of rational interior
  type with height~$1/3$ (see below).}
  \label{fig:1001110}
\end{figure}

The prime ends of the complementary domains form an essential part of the
analysis of planar continua: in dynamical systems, they have been used in the
description of basin boundaries~\cite{AY,nusseyorke} and, in the wider context
of holomorphic dynamics, prime ends of the complements of Julia
sets~\cite{BO,CMTT,kiwi,rogers} have also been studied. In this paper we give a
complete description of the prime ends of the complementary domains of the
Barge-Martin embeddings of the inverse limits of families of unimodal maps.  We
believe that this constitutes the first complete analysis in the literature of
the nature of the embeddings of a continuously varying family of planar
attractors. (In subsequent work using more symbolic techniques, Anu\v{s}i\'{c}
and \v{C}in\v{c}~\cite{AC} reproduce most of the results here about the prime
ends of Barge-Martin embeddings in the specific case of tent map inverse
limits, and enhance our results in this case with additional topological
information concerning folding points and endpoints.)

The topology of unimodal inverse limits is exquisitely complicated. For the
tent family~$\{f_t\}$, results of Bruin and of Raines~\cite{bruin,raines} imply
that, when the parameter~$t$ is such that the critical orbit of $f_t$ is dense
(a full measure, dense $G_\delta$ set of parameters),  the inverse
limit $\hI_t$ is nowhere locally the product of a Cantor set and an interval,
and is therefore much more complicated than the example of
Figure~\ref{fig:1001110}. A striking statement of self-similarity is given by
Barge, Brucks and Diamond~\cite{BaBrDi}, who show that there is a
dense~$G_\delta$ set of parameters~$t$ for which every open subset of $\hI_t$
contains a homeomorphic copy of $\hI_s$ for \emph{every} $s\in[\sqrt2, 2]$.
Moreover, the Ingram conjecture posits that the inverse limits $\hI_t$ are
pairwise non-homeomorphic. This has been proved for non-core tent maps by
Barge, Bruin, and \v Stimac~\cite{Ingram}, while for core tent maps there are
known to be uncountably many homeomorphism classes (see for
example~\cite{ABC,GR}).

In the second part of the paper, we show that all of these inverse limit spaces
are virtually spheres: there are quotients $p_t\colon \hI_t\to S^2$ which
respect the natural extensions $\hf_t\colon\hI_t\to \hI_t$, and have the
property that, with the exception of at most one $x\in S^2$, the fiber
$p_t^{-1}(x)$ contains at most three points: moreover, the exceptional fiber
carries no topological entropy. There is therefore a family $\{\chi_t\}$ of
sphere homeomorphisms --- which is shown to vary continuously --- such that
\[
  \begin{CD} 
    S^2    @>\chi_t>>      S^2\\
    @Ap_t AA   @AAp_t A\\ 
    \hI_t  @>\hf_t>> \hI_t\\
    @V \pi_0 VV   @VV \pi_0 V\\ 
    I   @>f_t>>      I\\
  \end{CD}
\]
commutes (here $\pi_0\colon\hI_t\to I$ is projection onto the first
coordinate). In view of the mildness of the semi-conjugacies~$p_t$, this
suggests that the sphere is a natural space on which to study invertible
analogs of unimodal maps.

The sphere homeomorphisms~$\chi_t$ are best seen as generalizations of
Thurston's pseudo-Anosov maps~\cite{Thurston}. A pseudo-Anosov
homeomorphism~$\phi$ of a surface has a transverse pair of invariant singular
foliations, one stable and one unstable, which fill the surface. Collapsing the
stable foliation yields a graph, the \emph{train track}, which carries an
expanding map. Following Williams, the inverse limit of the train track map
yields a homeomorphism~$\Phi$ with a one-dimensional hyperbolic attractor. This
homeomorphism can alternatively be obtained by ``DA-ing'' the prongs of the
pseudo-Anosov foliations, i.e., splitting open the leaves ending at the pronged
singularities. The original pseudo-Anosov map~$\phi$ can be reconstructed
from~$\Phi$ by ``collapsing'' stable sets in the complement of the attractor.

The process used to construct the sphere homeomorphisms~$\chi_t$ formalizes and
generalizes this last collapse: we start with the Barge-Martin embedding of the
inverse limit of a unimodal map~$f_t$ as an attactor, and collapse
\emph{strongly stable sets} (see Definition~\ref{def:strong-stable}) to obtain
$\chi_t$. In Figure~\ref{fig:1001110} the semi-circular arcs represent
identifications, and are not part of the inverse limit, which consists only of
the horizontal arcs. With this in mind, the quotient can be seen --- although
not quite accurately --- as being obtained by collapsing, in each vertical
line, the closure of the segments in the complement of the attractor, and then
sewing up the outside in a dynamically coherent way. Because the inverse limit
of a general unimodal map can be much more complicated than that of a train
track map, the dynamics and invariant geometric structures of~$\chi_t$ are
correspondingly more complicated. In particular, the invariant stable and
unstable ``foliations'' are only defined in a measurable sense.

In the case where~$\{f_t\}$ is the tent family, the corresponding
family~$\{\chi_t\}$ is a completion of the family of \emph{generalized
pseudo-Anosov maps} which was constructed in~\cite{gpa} for post-critically
finite tent maps. (A generalized pseudo-Anosov is defined similarly to a
pseudo-Anosov, except that its invariant foliations can have infinitely many
singularities, provided they accumulate on only finitely many points: see
Definition~\ref{def:gpA}.) The earlier construction was explicit, and made
essential use of the existence of finite Markov partitions. The constructions
of this paper show not only  how generalized pseudo-Anosovs arise directly from
inverse limits and natural extensions --- with the leaves of the unstable
foliation of $\chi_t$ coming from the path components of the inverse limit
of~$f_t$ --- but also how they live within the richer class of homeomorphisms
which arise in the post-critically infinite case. These \emph{measurable
pseudo-Anosov homeomorphisms}, whose invariant foliations are only defined on a
full measure subset of the sphere, are the subject of articles in preparation.

For the countable set of \emph{NBT }parameters introduced in~\cite{HS}, the map
$\chi_t$ is an actual pseudo-Anosov homeomorphism. Thus the analysis of the
family $\chi_t$ also contributes to the question of the completion in the
$C^0$-topology of the set of all pseudo-Anosov homeomorphisms on a given
surface.

\subsection{Background}
We now proceed to a brief overview of some background theory in dynamics and
topology, which will enable us to give more precise statements of our main
results in Sections~\ref{sec:prime-end-summary} and~\ref{sec:semi-conj-summary}
below.

\subsubsection{Unimodal maps}
\label{sec:uni-background}
The study of unimodal maps of the interval, one of the simplest classes of
dynamical systems which exhibit complicated behavior, drove the development of
the theory of topological dynamical systems in the 1970s and beyond. A
continuous map $f\colon[a,b]\to[a,b]$ is said to be \emph{(non-core) unimodal}
if
\begin{enumerate}[(a)]
  \item $f(a)=f(b)=a$, and
  \item there is a turning point $c\in(a,b)$ such that $f$ is strictly
  increasing on~$[a,c]$ and strictly decreasing on~$[c,b]$. Moreover $f(x)>x$
  for $x\in(a,c]$.
\end{enumerate}
The qualification \emph{non-core} is important here: we will shortly replace
condition~(a) with an alternative version which corresponds to restricting the
domain to an invariant sub-interval (the \emph{core}) in which all of the
non-trivial dynamics is contained.

Prototypical examples of families of unimodal maps are the \emph{quadratic}
(also known as \emph{logistic}) and \emph{tent} families
$f_t\colon[0,1]\to[0,1]$ defined respectively by
\[ 
    f_{t}(x)=tx(1-x) \quad (0<t\le 4) \qquad\text{ and }\qquad 
    f_{t}(x) = t \min\{x,1-x\} \quad (0<t\le 2).
\]
The tent family is of particular theoretical importance, because any unimodal
map~$f\colon[a,b]\to[a,b]$ with positive \emph{topological entropy} $h(f)$ (a
numerical measure of the asymptotic rate at which the orbits of nearby points
diverge from each other~\cite{AKM}) is semi-conjugate to the tent map with
slope $t=\exp(h(f))$~\cite{MT,Parry}. That is, there is an increasing surjection $p\colon[a,b]\to [0,1]$ such that
\[
  \begin{CD} 
    [a,b]  @>f>> [a,b]\\
    @V p VV   @VV p V\\ 
    [0,1]   @>f_t>>     [0,1]\\
  \end{CD}
\]
commutes, so that the dynamics of~$f$ and of the tent map~$f_t$ agree once
certain intervals in the domain of~$f$ --- the nontrivial point preimages
$p^{-1}(x)$ --- have been collapsed. The semi-conjugacy~$p$ can be described
explicitly, by means either of a formula, or of a dynamical description of
exactly which intervals in the domain of~$f$ are collapsed.

\medskip

\emph{Kneading theory} is a key tool in the analysis of the dynamics of
unimodal maps. Points $x\in[a,b]$ are described by their \emph{itineraries}
$\iota(x)\in\{0,1\}^\N$, sequences of $0$s and $1$s which encode, for each
successive point on the orbit of~$x$, whether it is on the left (`$0$') or the
right (`$1$') of the turning point~$c$. (The details of how to encode~$c$
itself are largely unimportant in this paper, and are left for
Section~\ref{sec:unimodal}.) The itinerary of $f(c)$, the largest point in the
range of~$f$, is of particular importance, and is called the \emph{kneading
sequence}~$\kappa(f)$ of~$f$. The dynamics of~$f$ is largely determined by its
kneading sequence --- in particular, $\kappa(f)$ determines the topological
entropy~$h(f)$, and hence the particular tent map to which~$f$ is
semi-conjugate.

The \emph{unimodal order} (or \emph{parity-lexicographic order}) $\preceq$
on~$\{0,1\}^\N$ is defined (see Definition~\ref{def:unimodal-order}) to reflect
the ordering of the interval: if $x<y$, then $\iota(x)\preceq\iota(y)$.
However, it takes on more meaning when interpreted as an order on the space of
unimodal maps: if $\kappa(f)\preceq \kappa(g)$, then the dynamics of~$g$ is at
least as complicated as the dynamics of~$f$. In particular, $\kappa(f_t)$ is an
increasing function of~$t$ if~$f_t$ is either the quadratic or the tent family.

\medskip

The \emph{core} of a unimodal map $f\colon [a,b]\to[a,b]$ is the interval $J =
[f^2(c), f(c)]$. It satisfies $f(J) = J$: moreover, the orbit of
every~$x\in(a,b)$ falls into~$J$, so that all of the non-trivial recurrent
dynamics of~$f$ is contained in the core. For this reason, it is sensible ---
particularly when considering inverse limits --- to restrict the domain of a
unimodal map to its core. This corresponds to replacing the condition that
$f(a)=f(b)=a$ with the condition that $f(c) = b$ and $f(b)=a$.

In this paper we will be exclusively concerned with core unimodal maps, and
Definition~\ref{def:unimodal} reflects this. We impose some additional
conditions on our unimodal maps~$f$, which are stated in
Convention~\ref{conv:unimodal} below. These conditions are of two types:
\begin{enumerate}[(a)]
  \item Regularity conditions, expressed in a way which allows them to
  encompass both the quadratic family and the tent family. These conditions
  appear technical, but are standard in the theory of unimodal maps.
  \item The additional condition that~$f$ has topological entropy
  $h(f)>\frac{1}{2}\log 2$. This is an indecomposability condition: it is
  equivalent to the non-existence of a pair of subintervals $J_1$, $J_2$,
  disjoint except perhaps at their endpoints, with $f(J_1)\subset J_2$ and
  $f(J_2)\subset J_1$.
\end{enumerate}

Readers without a background in one-dimensional dynamics can substitute ``a
unimodal map satisfying the conditions of Convention~\ref{conv:unimodal}'' with
``a map from the quadratic or tent family with sufficiently large parameter",
without substantial loss.

\subsubsection{Inverse limits}
Let~$X$ be a compact metric space with metric~$d$, and let $f\colon X\to X$ be
continuous and surjective. The {\em inverse limit} of $f\colon X\to X$ is the
space of ``backwards orbits" of~$f$:
\[
\hX := \invlim(X,f) = \{\bx\in X^\N\,:\,f(x_{i+1})=x_i \text{ for all
}i\in\N\}.
\] 
We endow $\hX$ with a standard metric, also denoted~$d$, which induces its
 natural topology as a subspace of the product $X^\N$:
\[ 
  d(\bx, \by) = \sum_{i=0}^\infty \frac{d(x_i,y_i)}{2^i}.
\] 
Elements of $\hX$ are denoted with angle brackets, $\bx =
\thr{x_0,x_1,x_2,\ldots}$ and referred to as {\em threads}.

The {\em natural extension} of $f\colon X\to X$ is the homeomorphism $\hf\colon
\hX\to\hX$ defined by
\[
\hf(\thr{x_0,x_1,x_2,\ldots}) = \thr{f(x_0), x_0, x_1, x_2,\ldots}.
\]

The projection $\pi_0\colon \hX\to X$ is defined by $\pi_0(\bx)=x_0$.
Clearly $\pi_0\circ\hf = f\circ\pi_0$, so that $\pi_0$ semi-conjugates $\hf$
to~$f$.  It is straightforward to show that if $g\colon Y\to Y$ is an
invertible dynamical system and $p\colon Y\to X$ semi-conjugates~$g$ to~$f$,
then $p$ factors through $\pi_0$: therefore the natural extension is the
simplest invertible system which has $f$ as a factor.

\subsubsection{The Barge-Martin construction}
\label{sec:BM-background}
The Barge-Martin construction~\cite{bargemartin} provides a mechanism for
embedding the inverse limit $\hX$ of a dynamical system $f\colon X\to X$ as a
global attractor of a self-homeomorphism of a manifold, on which the
homeomorphism restricts to the natural extension of~$f$. We now give a brief
outline of the construction in the case of interest here, where $f\colon I\to
I$ is a unimodal map whose inverse limit is embedded as an attractor of a
sphere homeomorphism. Further details can be found in
Section~\ref{sec:barge-martin}.

Let~$T$ be a topological sphere, $D\subset T$ be a  closed disk containing a
copy of~$I$ in its interior, and~$\partial$ be a point of $T\setminus D$.
Construct a \emph{smash}~$\Upsilon\colon T\to T$, a near-homeomorphism (i.e.\ a
uniform limit of homeomorphisms) which
\begin{itemize}
  \item collapses~$D$ onto~$I$, in such a way that the preimage of each point of~$I$ is an arc in~$D$;
  \item fixes~$\partial$; and
  \item pushes points of $T\setminus(D\cup\{\partial\})$ ``inwards''
  towards~$I$.
\end{itemize}
Let $\barf\colon T\to T$ be an \emph{unwrapping} of~$f$: a near-homeomorphism
which 
\begin{itemize}
  \item sends~$I$ into~$D$ in such a way that $\Upsilon\circ\barf|_I = f$; and
  \item doesn't push any points of $T\setminus\{\partial\}$ too far
  ``outwards''.
\end{itemize}

Now consider the near-homeomorphism $H=\Upsilon\circ\barf\colon T\to T$. By
construction we have $H|_I = f$, so that the inverse limit $\hT = \invlim(T,
H)$ contains an embedded copy of~$\hI$, namely $\{\bx\in\hT\,:\,x_i\in I \text{
for all }i\}$, on which the action of the natural extension $\hH$ restricts to
$\hf$. Because the smash pushes points of~$T$ other than~$\partial$ towards~$I$
more strongly than the unwrapping pushes them away, every point of $\hT$ other
than $\thr{\partial, \partial, \partial, \dots}$ is attracted to the copy of
$\hI$ under iteration of $\hH$.

The key observation is that the inverse limit $\hT$ is itself a topological
sphere, as a consequence of the following theorem due to Morton Brown:
\begin{thm*}[Brown~\cite{brown}]
  Let~$X$ be a compact metric space, and $f\colon X\to X$ be a
  near-homeomorphism. Then $\invlim(X,f)$ is homeomorphic to~$X$.
\end{thm*}

The constructions in this paper depend crucially on the details of the
smash~$\Upsilon$ (see Section~\ref{sec:barge-martin}), and on the careful
definition of a particular choice of unwrapping~$\barf$, which is described in
Section~\ref{sec:unwrapping-main}.

\subsubsection{Prime ends}
\label{sec:prime-ends}
We will describe the Barge-Martin embedding of the inverse limit~$\hI$ of a
unimodal map~$f$ in the topological sphere~$\hT$ by means of Carath\'eodory's
theory of prime ends. Here we review some basic definitions in order to allow
for precise statements of our main results. We note that while the theory
of prime ends can be profitably approached from the viewpoint of conformal
mappings, the spaces which we will be dealing with have no natural complex
structure, and so we take a purely topological approach. The reader seeking a
more comprehensive introduction from this point of view could consult, for
example, Mather's paper~\cite{Mather}.

Let~$T$ be a topological $2$-sphere, and~$X$ be a non-empty, compact,
connected, non-separating proper subset of~$T$, so that the complement $U :=
T\setminus X$ is a topological open disk. (For most of our applications, $T$
will be the sphere $\hT$ of the Barge-Martin embedding, and $X$ will be the
embedded copy of~$\hI$.) Fix a point $\partial\in U$.

A {\em crosscut (in $(T, X)$)} is an arc $\xi$ in $T$ which is disjoint
from~$\partial$ and intersects $X$ exactly at the endpoints of~$\xi$. Such a
crosscut separates the open disk $U$ into two components, and we write
$U(\xi)$ for the component which doesn't contain~$\partial$. If $\xi_1$ and
$\xi_2$ are crosscuts, then we write $\xi_2<\xi_1$ to mean that
$U(\xi_2)\subset U(\xi_1)$. A {\em chain} is a sequence $(\xi_k)$ of disjoint
crosscuts with $\xi_{k+1}<\xi_k$ for each~$k$ and $\diam(\xi_k)\to0$ as
$k\to\infty$. Two chains $(\xi_k)$ and $(\xi_k')$ are {\em equivalent} if for
each~$k$ there is some~$K$ with $\xi_K <
\xi_k'$ and $\xi_K' < \xi_k$.

A {\em prime end (of $(T,X)$)} is an equivalence class of chains of
crosscuts in $(T,X)$.

Let~$\cP$ be a prime end of $(T, X)$. The {\em principal set} $\Pi(\cP)$
of~$\cP$ is the set of points $x\in X$ for which there is some
chain~$(\xi_k)$ representing~$\cP$ with $d(\xi_k, x)\to 0$ as $k\to\infty$.
The {\em impression} $\cI(\cP)$ of~$\cP$ is defined by
\[
  \cI(\cP) = \bigcap_{k\ge 0} \overline{U(\xi_k)},
\] where $(\xi_k)$ is a chain representing~$\cP$ (the definition is clearly
independent of the choice of chain). We therefore have
\[
  \emptyset \not=\Pi(\cP) \subseteq \cI(\cP) \subseteq X.
\] 

According to Carath\'eodory's classification, a prime end~$\cP$ is of the
\begin{description}
\item[First kind] if $\Pi(\cP)=\cI(\cP)$ is a point;
\item[Second kind] if $\Pi(\cP)$ is a point and is strictly contained in
 $\cI(\cP)$;
\item[Third kind] if $\Pi(\cP)=\cI(\cP)$ is not a point; and
\item[Fourth kind] if $\Pi(\cP)$ is not a point and is strictly contained in
 $\cI(\cP)$.
\end{description}

The language of rays is helpful in developing an intuitive understanding of
principal sets and impressions. A {\em ray} in $(T, X)$ is a continuous
injection $\sigma\colon[0,\infty)\to U$ with $d(\sigma(s),X)\to 0$
as $s\to\infty$. The {\em remainder} $\Rem(\sigma)$ of $\sigma$ is the set
$\overline{\sigma([0,\infty))} \cap  X$. We say that $\sigma$ {\em lands} (and
that its {\em landing point} is $x\in X$) if $\Rem(\sigma)=\{x\}$. A point
$x\in X$ is {\em accessible} if it is the landing point of some ray.

Let~$\cP$ be a prime end defined by a chain~$(\xi_k)$. A ray~$\sigma$ {\em
 converges to~$\cP$} if for every~$k$ there is some $t$ such that
 $\sigma([t,\infty))\subset U(\xi_k)$: in particular, this means that the image
 of~$\sigma$ intersects $\xi_k$ for all sufficiently large~$k$.

It can be shown that if $\sigma$ converges to~$\cP$, then $\Pi(\cP)\subseteq
\Rem(\sigma)\subseteq \cI(\cP)$: in particular, if $\sigma$ lands at an
accessible point $x\in X$, then $\Pi(\cP)=\{x\}$. Moreover, there are rays
$\sigma, \sigma'$ converging to~$\cP$ with $\Rem(\sigma)=\Pi(\cP)$ and
$\Rem(\sigma')=\cI(\cP)$. Thus the principal set and impression of~$\cP$ can be
seen, respectively, as the remainders of the ``tightest'' and ``loosest'' rays
converging to~$\cP$.

\medskip

Let $\PP$ denote the set of prime ends of $(T, X)$. There is a natural topology
on~$\PP$, with respect to which it is a topological circle: a basis for this
topology is given by the subsets $\cB(\xi)$ of $\PP$, defined for each
crosscut~$\xi$ to consist of all of the prime ends defined by chains $(\xi_k)$
with $\xi_k<\xi$ for some~$k$. (In fact, this is the subspace topology of a
natural topology on $\PP\cup U$, with respect to which this space is a compact
disk; and the definition above of a ray converging to a prime end is the normal
notion of convergence with respect to this topology.)

A homeomorphism $H\colon (T, X)\to (T, X)$ (such as the natural extension
$\hH\colon(\hT, \hI)\to(\hT, \hI)$ of the Barge-Martin construction) induces a
self-homeomorphism of the circle~$\PP$ which sends the prime end represented by
a chain $(\xi_k)$ to the prime end represented by $(H(\xi_k))$. The {\em prime
end rotation number} of $H\colon(T,X)\to(T,X)$ is the Poincar\'e rotation
number of this circle homeomorphism.

\subsubsection{Height}
Let~$\{f_t\}$ be a family of core unimodal maps of an interval~$I$ satisfying
the assumptions of Convention~\ref{conv:unimodal}, such as the quadratic or
tent family with topological entropy greater than $\frac{1}{2}\log 2$. The
Barge-Martin construction yields (abstract) sphere homeomorphisms
$\hH_t\colon\hT_t\to\hT_t$, having attractors~$\Lambda_t$ which are
homeomorphic to the inverse limits $\hI_t := \invlim(I, f_t)$, and restricted
to which the homeomorphisms~$\hH_t$ are conjugate to the natural extensions
of~$f_t$.

In the first part of the paper we study the prime ends of $(\hT_t, \Lambda_t)$.
In the second part we construct sphere homeomorphisms~$\chi_t$ by collapsing a
system of subsets of~$\hT_t$, which are permuted by~$\hH_t$, such that
\begin{itemize}
  \item each subset intersects~$\Lambda_t$ in at least one point; and
  \item each subset except at most one intersects~$\Lambda_t$ in only finitely
  many points.
\end{itemize}
The former of these properties ensures that there is a semiconjugacy~$p_t$ from
$\hf_t$ to $\chi_t$, and the latter that all but at most one of the fibers
of~$p_t$ is finite.

Both the structure of the prime ends and the construction of the semiconjugacy
(including the nature of its exceptional fiber) are heavily dependent on the
parameter~$t$, or, to be more precise, on the \emph{height} $q(f_t)$
of~$f_t$~\cite{HS} (Section~\ref{sec:height}). Dynamically, the height is the
prime end rotation number of $\hH_t\colon (\hT_t, \Lambda_t)\to(\hT_t,
\Lambda_t)$. It is an element of~$[0,1/2]$, dependent only on the kneading
sequence $\kappa(f_t)$ of~$f_t$, which decreases as $\kappa(f_t)$ increases in
the unimodal order, with each irrational height being realized by a single
kneading sequence, and each rational height being realized on a closed
\emph{height interval} of kneading sequences. See Figure~\ref{fig:height},
which shows how height varies in the quadratic family. The assumption that
$h(f_t)>\frac{1}{2}\log 2$ is, in fact, equivalent to $q(f_t)< 1/2$
(Lemma~\ref{lem:height-intervals}).

It follows that every unimodal map~$f$ is of one of three types:
\begin{description}
  \item[Irrational] when~$q(f)$ is irrational;

  \item[Rational interior] when $\kappa(f)$ is in the interior of the interval
  of kneading sequences of some rational height~$m/n$; or

  \item[Rational endpoint] when $\kappa(f)$ is an endpoint of the interval of
  kneading sequences of some rational height~$m/n$.
\end{description}

\begin{figure}[htbp]
  \begin{center}
   \includegraphics[width=0.5\textwidth]{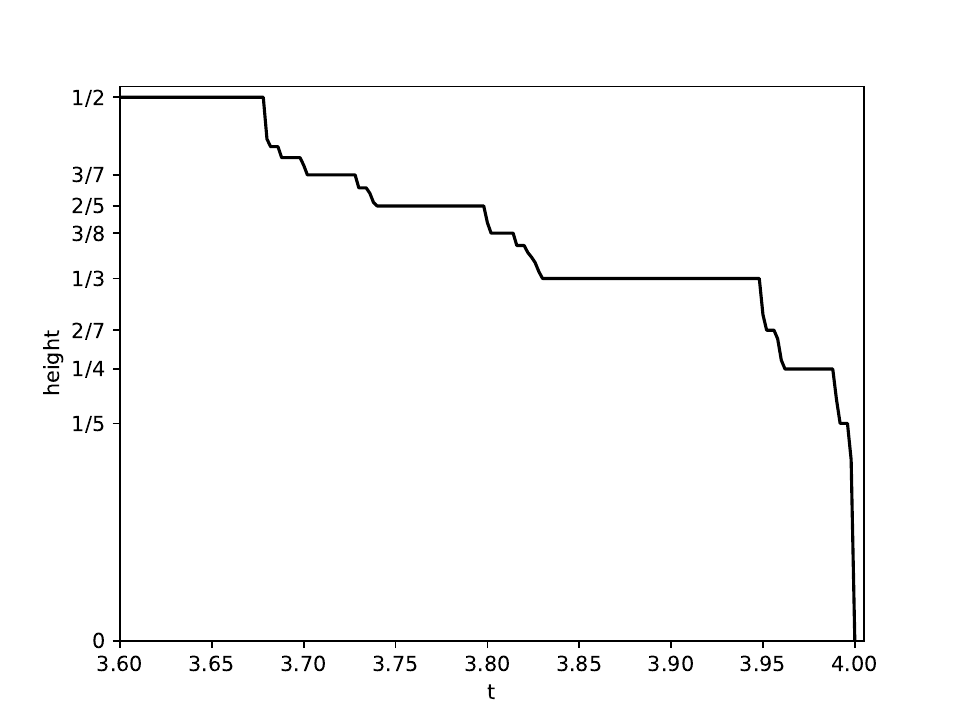}
  \end{center}
  \caption{Height $q(f_t)$ for the quadratic family $f_t(x)=tx(1-x)$.}
  \label{fig:height}
\end{figure}

\subsection{Prime ends of Barge-Martin attractors}
\label{sec:prime-end-summary}
The results of
Theorems~\ref{thm:summary-irrational},~\ref{thm:summary-rational-interior},
and~\ref{thm:summary-rational-endpoint}, together with
Remarks~\ref{rmk:irrat-accessible},~\ref{rmk:accessible-rat-interior},
and~\ref{rmk:accessible-rat-endpoint} are summarized in the following
statement, where we refer to a prime end of the first kind (whose impression is
a point) as \emph{trivial}. As discussed in Section~\ref{sec:uni-background},
the hypothesis of this theorem is satisfied by the tent and quadratic families
with topological entropy greater than $\frac{1}{2}\log 2$.

\begin{thm*}
  Let $\{f_t\}$ be a family of unimodal maps satisfying the assumptions of
  Convention~\ref{conv:unimodal}. Then the prime ends of the Barge-Martin
  attractor of~$f_t$ in the sphere satisfy the following.
  \begin{enumerate}[(a)]
     \item If $f_t$ is of \emph{irrational type}, then the set of non-trivial
     prime ends is a Cantor set. These non-trivial prime ends are of the second
     kind, with impression the whole attractor.

     \item If $f_t$ is of \emph{rational endpoint type} with height~$m/n$,
     then there are exactly~$n$ non-trivial prime ends, which are of the second
     kind, with impression the whole attractor.

     \item If $f_t$ is of \emph{rational interior type} with height~$m/n$, then
     there are exactly~$n$ non-trivial prime ends, whose impressions are the
     whole attractor. These are of the third kind (the principal set is also
     the whole attractor), unless $f_t$ belongs to a particular renormalization
     window at the start of the $m/n$ height interval, in which case they are
     of the fourth kind.

     \item If $f_t$ is of \emph{rational type} with height~$m/n$ then the
     attractor has~$n$ components of accessible points; while if~$f_t$ is of
     \emph{irrational type} then the attractor has infinitely many components
     of accessible points (countably many intervals and uncountably many
     points).
  \end{enumerate}
\end{thm*}

For the example of Figure~\ref{fig:1001110}, which depicts the inverse limit of
a tent map~$f_t$ of rational interior type with height~$1/3$, there are three
infinite ``tunnels'', corresponding to the three non-trivial prime ends, which
become stepwise narrower and narrower as they probe deeper and deeper into the
inverse limit. The natural extension stretches by a factor~$t$ in the
horizontal direction, contracts by a factor~$1/t$ in the vertical direction,
and bends the image around as dictated by~$f_t$ (see for example
Figure~\ref{fig:outside}): thus the three tunnels are permuted by the action,
and so are the three non-trivial prime ends, with rotation number~$1/3$ equal
to the height. By comparison, Figure~\ref{fig:height2_7} depicts an example of
rational interior type with height~$2/7$, where there are seven infinite
tunnels which are permuted with rotation number~$2/7$; and
Figure~\ref{fig:rhe1_3} depicts an example of rational endpoint type with
height~$1/3$: here there are infinitely many tunnels into the inverse limit,
but all of them are finite.

\begin{figure}[htbp]
  \begin{center}
   \includegraphics[width=0.7\textwidth]{height2_7}
  \end{center}
  \caption{An example of rational interior type with height~$2/7$.}
  \label{fig:height2_7}
\end{figure}

\begin{figure}[htbp]
  \begin{center}
   \includegraphics[width=0.7\textwidth]{rhe1_3}
  \end{center}
  \caption{An example of rational endpoint type with height~$1/3$.}
  \label{fig:rhe1_3}
\end{figure}

We now give an informal overview of the main steps in the proof of this
theorem, dropping the dependence on~$t$ for the sake of clarity. In
Section~\ref{sec:unwrapping} we construct an explicit unwrapping of~$f$ to be
used as the starting point of the Barge-Martin construction, based on the
\emph{outside map~$B\colon S\to S$} of~$f$ (Section~\ref{sec:outside-map}), a
monotone circle map which describes the ``thickened'' action of~$f$ as seen
from a circle around~$I$, whose rotation number is equal to the height $q(f)$
of~$f$. The explicit nature of the unwrapping provides a description of the
elements of $\hT\setminus\Lambda$ which makes it possible to construct
explicit chains of cross cuts to determine the prime ends (see for example
Figure~\ref{fig:gammas}). The key part of this process is the definition of a
homeomorphism $\Psi\colon
\hS\times[0,\infty)/(\hS\times\{0\})
\to
\hT\setminus\Lambda$ (Section~\ref{sec:Psi}), where $\hS$ is the inverse limit
of the outside map, which provides a coordinate system on $\hT\setminus\Lambda$
in which these crosscuts can be described in a straightforward way. Because the
space $\hS$ depends on the dynamics of the outside map, which is strongly
dependent on its rotation number (Theorem~\ref{thm:outside-dynamics}), the
structure of the prime ends themselves is strongly dependent on the height.

\subsection{Semi-conjugacy to a family of sphere homeomorphisms}
\label{sec:semi-conj-summary}

The results of Theorems~\ref{thm:cont-var} and~\ref{thm:gpA} are summarized in
the following statement.

\begin{thm*}
Let~$\{f_t\}$ be a family of unimodal maps satisfying the assumptions of
Convention~\ref{conv:unimodal}. Then there is a continuously varying
family~$\{\chi_t\colon S^2\to S^2\}$ of sphere homeomorphisms such that each
natural extension $\hf_t\colon\hI_t\to\hI_t$ is semi-conjugate to $\chi_t$, by
a semi-conjugacy all but one of whose fibers contains three or fewer points,
and only countably many of whose fibers contain three points.

If~$\{f_t\}$ is the tent family, then for each parameter~$t$ for which $f_t$ is
post-critically finite, the sphere homeomorphism~$\chi_t$ is a generalized
pseudo-Anosov map.
\end{thm*}

The exceptional fiber depends on the type of the unimodal map~$f_t$. In the
tent family case, where different parameters give rise to different kneading
sequences, there is a particularly clean description of these fibers:
\begin{itemize}
  \item if~$f_t$ is of irrational type, then the exceptional fiber is a Cantor set;
  \item if~$f_t$ is of rational interior type with height~$m/n$, then the exceptional fiber is finite with cardinality~$n$; and
  \item if~$f_t$ is of rational endpoint type with height~$m/n$, then the exceptional fiber is countable with~$n$ accumulation points.
\end{itemize}
In particular, the set of parameters for which the exceptional fiber is
infinite is a Cantor set. In the general case the description is more
complicated in an initial subinterval of each height interval, and the
exceptional fibre may contain arcs for parameters in these subintervals (see
Remark~\ref{rmk:semiconj-fibres}). In all cases no fiber of the semi-conjugacy
carries entropy, so no entropy is lost in the quotient.

For each parameter~$t$, the sphere homeomorphism~$\chi_t$ is constructed from
the Barge-Martin homeomorphism $\hH_t\colon\hT_t\to\hT_t$ by collapsing the
elements of an $\hH_t$-invariant decomposition~$\cG_t$ of~$\hT_t$. The
decompositions are dynamically defined, their elements being determined by the
\emph{strongly stable components} of~$\hH_t$. The homeomorphism $\Psi_t\colon \hS_t\times[0,\infty)/(\hS_t\times\{0\}) \to
\hT_t\setminus\Lambda_t$ enables these components to be described explicitly,
with their configuration determined by the type of the unimodal map~$f_t$ (see
Figures~\ref{fig:irrat-strong-stab},~\ref{fig:rat-gen-strong-stab},~\ref{fig:rat-NBT-strong-stab},
and~\ref{fig:rat-end-strong-stab}). From these descriptions, it can be shown
that each~$\cG_t$ is a non-separating, monotone, upper semicontinuous
decomposition, whose elements all intersect~$\Lambda_t$, with at most one
element intersecting~$\Lambda_t$ in more than three points. It follows from
Moore's theorem~\cite{moore} that the quotient space $\hT_t/\cG_t$ is itself a
sphere, and the quotient homeomorphism $\hH_t/\cG_t$ has the required
properties.

Since these quotient homeomorphisms are all defined on different abstract
spheres, some further work is needed to show that they can be conjugated to a
continuous family of homeomorphisms of a standard sphere. The key result here
is a theorem of Dyer and Hamstrom~\cite{DH}, Theorem~\ref{thm:DH}, which
requires, roughly speaking, that if we take the three-dimensional space
obtained by piecing together the spheres~$\hT_t$, then the decomposition of
this space obtained from the~$\cG_t$ is itself upper semicontinuous. That this
is the case follows once more from the explicit descriptions of the~$\cG_t$
(see Section~\ref{sec:cont-var}).

That the dynamics of the sphere homeomorphisms~$\chi_t$ closely mimic those of
the unimodal maps~$f_t$ is expressed by the following straightforward result
(see Theorem~\ref{thm:sphere-dynamics}).

\begin{thm*}
 Let~$f$ be a unimodal map satisfying the conditions
of Convention~\ref{conv:unimodal}, and $\chi\colon S^2\to S^2$ be the
corresponding semi-conjugate sphere homeomorphism. Then
\begin{enumerate}[(a)]
\item if~$f$ is topologically transitive then so is~$\chi$;
\item if~$f$ has dense periodic points, then so does~$\chi$;
\item $f$ and~$\chi$ have the same number of periodic orbits of each period, with
the exception that, provided $\kappa(f)\not=1\Infs{0}$,
\begin{itemize}
  \item $\chi$ has one more fixed point than~$f$, and
  \item if~$f$ is of rational type with
$q(\kappa(f))=m/n\in(0,1/2)$, then~$\chi$ has either one or two fewer period~$n$
orbits than~$f$.
\end{itemize}
\item $f$ and $\chi$ have the same topological entropy; and
\item if~$f$ preserves an ergodic Oxtoby-Ulam-measure, then~$\chi$
 preserves an ergodic
  Oxtoby-Ulam-measure with the same metric entropy.
\end{enumerate} In particular, if $f$ is a tent map of slope $t$, then~$\chi$
is topologically transitive, has dense periodic points, has topological
entropy~$\log(t)$, and has an invariant ergodic Oxtoby-Ulam-measure with metric
entropy~$\log(t)$.
\end{thm*}

\subsection{Acknowledgments}
  The authors are grateful for the support of FAPESP grants
  \mbox{2011/16265-8} and \mbox{2016/04687-9}, CAPES grant
  88881.119100/2016-01, and CAPES PVE grant \mbox{88881.068037/2014-01}. This
  research has also been supported in part by EU Marie-Curie IRSES
  Brazilian-European partnership in Dynamical Systems (FP7-PEOPLE-2012-IRSES
  318999 BREUDS). This work was supported by the Engineering and Physical
  Sciences Research Council (grant number EP/R024340/1).

\section{Preliminaries}
\label{sec:preliminaries}

\subsection{Unimodal maps}
\label{sec:unimodal} 
In this section we expand on the introductory material presented in
Section~\ref{sec:uni-background}, primarily to fix notation and conventions.
Our definition of unimodal maps reflects the fact that we will always consider
them to be defined on their cores.

\begin{defn}[Unimodal map, turning point]
\label{def:unimodal} 
  A \emph{unimodal map} is a continuous self-map
  \mbox{$f\colon[a,b]\to[a,b]$} of a compact interval~$[a,b]$, satisfying the
  following conditions:
  \begin{enumerate}[(a)]
    \item There is some $c\in(a,b)$, which is called the \emph{turning point}
      of~$f$, such that $f$ is strictly increasing on $[a,c]$ and strictly
      decreasing on $[c,b]$.
    \item $f(c)=b$ and $f(b)=a$.
  \end{enumerate}
\end{defn}

\begin{defn}[Itinerary]
\label{def:itinerary} 
  Let $f\colon[a,b]\to[a,b]$ be a unimodal map with turning point~$c$, and let
  $x\in [a,b]$. We say that an element $\syma$ of $\{0,1\}^\N$ is an {\em
  itinerary} of~$x$ if, for all $r\ge 0$,
  \begin{eqnarray*}
    \syma_r = 0 &\implies& f^r(x)\in[a,c], \text{ and}\\
    \syma_r = 1 &\implies& f^r(x)\in[c,b].
  \end{eqnarray*}
\end{defn} 
If the orbit~$\{f^r(x)\,:\,r\ge 0\}$ of~$x$ contains~$c$, then there is more
than one itinerary of~$x$. We will nevertheless abuse notation by writing
$\iota(x)=\syma$ to mean that~$\syma$ is an itinerary of~$x$.

\begin{defn}[Unimodal order]
\label{def:unimodal-order} 
  The {\em unimodal order} is a total order $\preceq$ defined on $\{0,1\}^\N$
  as follows. Let $\syma$ and $\symb$ be distinct elements of $\{0,1\}^\N$, and
  let $r\ge 0$ be least such that $\syma_r\not=\symb_r$. Then
  \[
    \syma\prec \symb \iff \sum_{i=0}^r \syma_i\,\,\text{ is even}.
  \]
\end{defn} 
The unimodal order reflects the ordering of points on the interval
$[a,b]$: if $x,y\in[a,b]$ have itineraries $\syma$ and~$\symb$ respectively,
and $x<y$, then $\syma \preceq \symb$.

\begin{defn}[Kneading sequence]
\label{def:kneading-sequence} 
  Let $f\colon[a,b]\to[a,b]$ be a unimodal map. The {\em kneading sequence}
  $\kappa(f)\in\{0,1\}^\N$ of~$f$ is the itinerary of~$b$ which is smallest
  with respect to the unimodal order.
\end{defn} 
Therefore $\kappa(f)$ is the unique itinerary of~$b$ unless the
turning point~$c$ is a periodic point of~$f$.  The choice of $\kappa(f)$ in the
periodic case has no particular significance: it is a convention which ensures
that the kneading sequence is well defined. It means that $\kappa(f) =
\Infs{W}$ for some word~$W$ whose length is the period of~$c$ and which
contains an even number of $1$s. (If $V$ and $W$ are words in the symbols $0$ and~$1$, we write $\Infs{W}$ and $V\Infs{W}$ for the
elements $WWW\ldots$ and $VWWW\ldots$ of $\{0,1\}^\N$.)

We recall the following definition and result (see for example~\cite{Devaney}),
which characterize the elements of $\{0,1\}^\N$ which are kneading sequences of
unimodal maps.

\begin{defn}[Maximal sequence]
\label{def:maximal} 
  An element $\syma$ of $\{0,1\}^\N$ is {\em maximal} if
  $\sigma^r(\syma)\preceq \syma$ for all $r\ge 0$,
  where~$\sigma\colon\{0,1\}^\N\to\{0,1\}^\N$ is the shift map.
\end{defn}

\begin{lem}
\label{lem:char-ks} 
  An element~$\syma$ of $\{0,1\}^\N$ is the kneading sequence of some unimodal
  map~$f$ if and only if $\syma$ is maximal and $\syma_0\syma_1=10$. \qed
\end{lem}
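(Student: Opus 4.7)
The plan is to prove the two implications separately. The forward direction is a direct manipulation of the definitions, while the converse requires the construction of a unimodal map realizing the prescribed sequence.

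For the forward direction, suppose $\syma=\kappa(f)$ for a unimodal map $f\colon[a,b]\to[a,b]$ with turning point $c$. Since $b\in[c,b]$ and $b\notin[a,c]$, the condition defining itineraries forces $\syma_0=1$; and since $f(b)=a\in[a,c]\setminus\{c\}$ we similarly get $\syma_1=0$. To establish maximality, I would fix $r\ge 0$ and observe that $\sigma^r(\syma)$ is itself an itinerary of $f^r(b)$, since the defining implications pass verbatim to the shift. Because $b$ is the right endpoint we have $f^r(b)\le b$, and the order-preserving property of itineraries recorded just after Definition~\ref{def:unimodal-order} yields $\sigma^r(\syma)\preceq\syma$ whenever $f^r(b)<b$. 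When $f^r(b)=b$ the turning point $c$ necessarily lies on the orbit of $b$ (as $f(c)=b$), so $c$ is periodic, $\syma=\Infs{W}$ for some word $W$ whose length equals the period, and $\sigma^r(\syma)=\syma$ outright for every such $r$.

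For the converse, given a maximal $\syma$ with $\syma_0\syma_1=10$, I would realize $\syma$ as the kneading sequence of a tent map $T_s(x)=s\min(x,1-x)$, restricted to its dynamical interval, for an appropriately chosen slope $s\in(1,2]$. The standard approach rests on two facts: that $s\mapsto\kappa(T_s)$ is monotone non-decreasing in the unimodal order, and that its image is precisely the set of maximal sequences beginning with $10$. A candidate parameter $s=s(\syma)$ can be singled out either via the kneading determinant or by a nested-interval argument keyed to the finite admissible prefixes of $\syma$. The main technical obstacle is this surjectivity: maximality of $\syma$ has to be used in a precise combinatorial way to rule out the possibility that $\kappa(T_s)$ turns out to be strictly smaller than $\syma$, and the boundary cases in which $c$ is periodic must be reconciled with the minimal-itinerary convention used in the definition of $\kappa$. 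Since this characterization is classical, I would ultimately refer to a standard source such as~\cite{Devaney} for the details of the converse after outlining the argument above.
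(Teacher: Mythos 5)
Your forward direction is fine and is the standard argument: $\syma_0\syma_1=10$ comes from $f(c)=b$ and $f(b)=a$, and maximality comes from $f^r(b)\le b$ together with the order-preservation of itineraries, the case $f^r(b)=b$ being handled by the observation that $c$ is the unique $f$-preimage of $b$, so that $c$ is then periodic and $\sigma^r(\syma)=\syma$. The paper offers no proof of this lemma at all (it simply cites~\cite{Devaney}), so there is no argument of the paper to compare against.

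The converse as you outline it, however, rests on a false premise: the tent family does \emph{not} realize every maximal sequence beginning with $10$. The map $s\mapsto\kappa(T_s)$ is monotone, but its image omits every renormalization window. A concrete obstruction: if $\kappa(T_s)=\Infs{W}$ is periodic, then either $c$ is periodic, in which case the minimality convention of Definition~\ref{def:kneading-sequence} forces $W$ to contain an even number of $1$s, or $c$ is not periodic, in which case $b$ has a unique itinerary which $f^{|W|}(b)$ shares, and (since distinct points of a tent map have distinct itineraries) $b$, and hence its unique preimage $c$, is periodic after all --- a contradiction. Thus the maximal sequence $\Infs{100}=\Inf{w_{1/3}0}$, which begins with $10$, is not the kneading sequence of any tent map, although it is the kneading sequence of a quadratic map of late endpoint type. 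The paper itself records this phenomenon (Remark~\ref{rmk:terminology}, and the remark preceding Lemma~\ref{lem:images-of-tight}: kneading sequences $\kappa$ with $\Inf{w_q1}\prec\kappa\preceq w_q0\Inf{w_q1}$ do not occur in the tent family). To realize an arbitrary element of $\KS$ you need a full family such as the quadratic family, via the intermediate value theorem for kneading sequences, or a direct construction of a unimodal map from the given sequence; the monotonicity-plus-surjectivity argument you propose for tent maps cannot be repaired, so the deferral to a standard source would have to import a genuinely different construction rather than the details of yours.
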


\begin{notation}[$\KS$]
\label{notation:KS} 
  We write $\KS\subset\{0,1\}^\N$ for the set of kneading sequences of unimodal
  maps: maximal sequences which start with the symbols~$10$.
\end{notation}

\begin{convention}[Standing assumptions for unimodal maps]
\label{conv:unimodal} 
  All unimodal maps~$f:[a,b]\to[a,b]$ in this paper will be assumed to satisfy
  the following conditions:
  \begin{enumerate}[(a)]
    \item $10\Infs{1} \prec \kappa(f)$.
    \item If~$\kappa(f)$ is not a periodic sequence, then distinct points
    of~$[a,b]$ cannot share a common itinerary.
    \item For each~$n>0$ and each $\mu\in\{0,1\}^\N$, there are at most two fixed points of $f^n$ with itinerary~$\mu$. If there are two such points, then $\kappa(f) = \sigma^r(\mu)$ for some~$r$.
  \end{enumerate}
\end{convention} 
Condition~(a) says that~$f$ cannot be subjected to a two-interval
renormalization: it is equivalent to requiring that the topological entropy
$h(f)$ of $f$ be greater than $\frac{1}{2}\log2$. Conditions~(b) and~(c) are
trivially satisfied by tent maps of slope greater than~1, for which no distinct
points share a common itinerary. It follows from standard results in the theory
of unimodal maps that they are also satisfied by quadratic maps, and indeed by
any $C^3$ unimodal map with non-flat turning point, no points of inflection,
and negative Schwarzian derivative.

Note that when we consider standard {\em families} of unimodal maps such as the
quadratic family and the tent family, we can apply a parameter-dependent affine
change of coordinates so that the core is constant throughout the family.

\medskip

The following notation will be useful:
\begin{defn}[$\re$, the point $\hx$ symmetric to $x$]
  Let~$f\colon[a,b]\to[a,b]$ be a unimodal map with turning point~$c$. We
  denote by~$\re$ the unique point of $(c,b]$ with $f(\re)=f(a)$. If
  $x\in[a,\re]$, we denote by $\hx$ the unique point of $[a,\re]$ which
  satisfies $f(\hx)=f(x)$, and $\hx\not=x$ unless $x=c$.
\end{defn}

Some necessary technical lemmas about the dynamics of unimodal maps, whose
proofs are routine, are presented in Appendix~\ref{app:technical}.

\subsection{Inverse limit attractors for unimodal maps: the Barge-Martin
  construction}
\label{sec:barge-martin} 
We now provide more details of the construction outlined in
Section~\ref{sec:BM-background}. The results stated are from~\cite{bargemartin}
and~\cite{param-fam}, restricted to the situation which is of interest in this
paper. Throughout this section $f\colon I\to I$ is a unimodal map defined on
the interval $I=[a,b]$. Recall that the aim of the construction is to embed the
inverse limit $\invlim(I,f)$ in the sphere, in such a way that it is a global
attractor of a homeomorphism which restricts to the natural extension $\hf$ on
$\invlim(I,f)$.

\begin{defns}[$S$, $x_u$, $x_\ell$, $T$, $\partial$, $\eta_y$]
\label{defn:circle-sphere}
  Let $S$ be the circle obtained by gluing together two copies of~$I$ at their
  endpoints. We denote the points of~$S$ by $x_u$ and $x_\ell$ for $x\in I$,
  depending on whether they come from the `upper' or `lower' copy of~$I$. We
  therefore have $a_u=a_\ell$ and $b_u=b_\ell$, and we will also denote these
  points of~$S$ with the symbols $a$ and $b$ respectively.

  Let~$T = S\times[0,1]/\!\!\sim$, where $\sim$ is the equivalence relation
  which identifies
  \begin{itemize}
    \item $(x_u,1)$ with $(x_\ell,1)$ for each~$x\in(a,b)$, and
    \item $(y, 0)$ with $(y', 0)$ for all $y, y'\in S$,
  \end{itemize}
  with the quotient topology. Then~$T$ is a two-sphere, which we endow with any
  metric~$d$ which induces its topology. Suppressing the equivalence relation,
  we will describe points of~$T$ by their ``coordinates'' $(y,s)\in
  S\times[0,1]$. We identify the subset $\{(x_u,1)\,:\,x\in I\} =
  \{(x_\ell,1)\,:\, x\in I\}$ with~$I$, so that $(x_u,1)=(x_\ell,1)=x$ for all $x\in I$, and denote by~$\partial$ the point
  of~$T$ corresponding to $S\times\{0\}$.

  $T$ decomposes into a continuously varying family of arcs $\{\eta_y\}_{y\in
  S}$ defined by $\eta_y(s) = (y,s)$, with initial points~$\eta_y(0)=\partial$
  and final points~$\eta_y(1)\in I$, whose images are mutually disjoint except
  at their initial points and perhaps at their final points. (See
  Figure~\ref{fig:MCN}. Here, for clarity, we have depicted~$T$ with the
  point~$\partial$ opened out into the circle~$S$.)
\end{defns}

\begin{figure}[htbp]
\begin{center}
\includegraphics[width=0.4\textwidth]{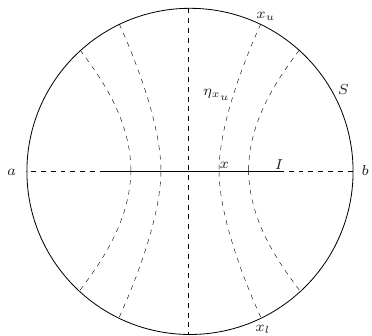}
\end{center}
\caption{The sphere~$T$ and the arc decomposition $\{\eta_y\}_{y\in S}$.}
\label{fig:MCN}
\end{figure}

\begin{defns}[The projection~$\tau$ and the smash~$\Upsilon$]
\label{def:retraction-smash} 
  The projection $\tau\colon S\to I\subset T$ is defined by $\tau(y) = (y,1)$.
  The {\em smash} $\Upsilon\colon T\to T$ is the near-homeomorphism defined by
  \[
    \Upsilon(y,s) =
    \begin{cases} 
      (y,2s) & \text{ if } s\in[0,1/2],\\ 
      (y,1) & \text{ if }s\in[1/2,1].
    \end{cases}
  \]
\end{defns}

\begin{defn}[Unwrapping]
\label{def:unwrapping} 
  An {\em unwrapping} of the unimodal map~$f$ is an
  orientation-preserving near-homeomorphism $\barf\colon T\to T$ with the
  properties that
  \begin{enumerate}[(a)]
  \item $\barf$ is injective on~$I$, and $\barf(I) \subseteq
    \{(y,s)\,:\, s\ge 1/2\}$,
  \item $\Upsilon\circ \barf|_I = f$, and
  \item $\barf(\partial)=\partial$, and for all $y\in S$ and all $s\in(0,1/2]$,
    the second component of $\barf(y,s)$ is~$s$.
  \end{enumerate}
\end{defn}

Given such an unwrapping, let $H=\Upsilon \circ \barf\colon T\to T$. Since~$H$
is a near-homeomorphism, the inverse limit $\hT=\invlim(T, H)$ is a topological
sphere by Brown's theorem. It has as a subset  
\[
  \hI = \invlim(I, H) = \invlim(I,f),
\]
 the two inverse limits being equal since $H|_I=f$ by
Definition~\ref{def:unwrapping}~(b). We reuse the notation~$\partial$
for the point $\partial = \thr{\partial, \partial, \ldots}$ of $\hT$.

Let $\hH\colon\hT\to\hT$ be the natural extension, which we refer to as the
{\em Barge-Martin homeomorphism} associated to the unwrapping~$\barf$. The
following theorem, from~\cite{bargemartin}, is a straightforward consequence of
the facts that $H|_I = f$, and that for all $(y,s)\not=\partial$, there is some
$r\ge 0$ with $H^r(y,s)\in I$.
\begin{thm}\mbox{}
\label{thm:unwrap-single}
  \begin{enumerate}[(a)]
  \item $\hH|_{\hI}\colon \hI\to\hI$ is topologically conjugate to the natural
  extension $\hf\colon \invlim(I, f)\to\invlim(I,f)$.
  \item For all $\bx\in\hT\setminus\{\partial\}$, the $\omega$-limit set
  $\omega(\bx, \hH)$ is contained in~$\hI$. \qed
  \end{enumerate}
\end{thm}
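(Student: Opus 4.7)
The plan is to derive both parts directly from the two facts stated just above the theorem, namely (i)~$H|_I=f$, and (ii)~for every $(y,s)\ne\partial$ there is some $r\geq 0$ with $H^r(y,s)\in I$. I will take both as given; (i) is in any case immediate from Definition~\ref{def:unwrapping}(a)--(b) together with the observation that $\Upsilon(y,s)=(y,1)$ for $s\geq 1/2$.

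For part~(a), fact~(i) gives $H(I)\subseteq I$, so the inclusion $I\hookrightarrow T$ induces $\invlim(I,H)\subseteq\invlim(T,H)=\hT$; since $H|_I=f$, this subset is literally $\invlim(I,f)=\hI$. On a thread $\bx=\thr{x_0,x_1,x_2,\ldots}\in\hI$ the two natural extensions coincide, because
\[
\hH(\bx)=\thr{H(x_0),x_0,x_1,\ldots}=\thr{f(x_0),x_0,x_1,\ldots}=\hf(\bx),
\]
so the identity map is the required topological conjugacy.

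For part~(b), I would first draw out a consequence of~(ii): for any $z\in T\setminus\{\partial\}$ and any $n\geq 0$, $H^n(z)\ne\partial$. Indeed, if $H^n(z)=\partial$ then $H^{n+k}(z)=\partial$ for all $k\geq 0$ since $\partial$ is fixed by~$H$, contradicting the existence of some $r$ with $H^r(z)\in I$, as $\partial\notin I$. Now let $\bx=\thr{x_0,x_1,x_2,\ldots}\in\hT\setminus\{\partial\}$; some coordinate $x_i\ne\partial$, and then $x_0=H^i(x_i)\ne\partial$ by the observation. Applying (ii) to~$x_0$ yields $r\geq 0$ with $H^r(x_0)\in I$, and hence $H^n(x_0)\in I$ for all $n\geq r$ by~(i). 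Writing $\pi_j\colon\hT\to T$ for projection onto the $j$-th coordinate, this gives
\[
\pi_j\bigl(\hH^n(\bx)\bigr)=H^{n-j}(x_0)\in I \quad\text{whenever } n\geq r+j.
\]
Given $\by\in\omega(\bx,\hH)$, choose $n_k\to\infty$ with $\hH^{n_k}(\bx)\to\by$; continuity of each $\pi_j$ and closedness of $I$ in~$T$ then give $\pi_j(\by)\in I$ for every~$j$, so $\by\in\hI$.

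Once (i) and~(ii) are in hand, the argument above is essentially formal; the main obstacle is the verification of~(ii) for the explicit unwrapping to be constructed in Section~\ref{sec:unwrapping-main}. On $\{s\in(0,1/2]\}$, condition~(c) of Definition~\ref{def:unwrapping} forces $H$ to double the second coordinate, so points escape this region in finitely many steps; but on $\{s\in(1/2,1)\}$ the behaviour is not constrained by the axioms for an unwrapping and must be controlled by the specific geometry of~$\barf$. That analysis, rather than the deduction above, is where the real content lies.
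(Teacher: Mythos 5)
Your proposal is correct and matches the paper's approach: the paper gives no proof beyond asserting that the theorem "is a straightforward consequence of the facts that $H|_I=f$, and that for all $(y,s)\not=\partial$, there is some $r\ge 0$ with $H^r(y,s)\in I$" (citing Barge--Martin), and your argument is precisely the intended deduction from those two facts. Your closing caveat about verifying fact~(ii) is reasonable, though it in fact follows for any unwrapping (not just the explicit one), since conditions (a) and (c) of Definition~\ref{def:unwrapping} together with $\barf$ being a monotone surjection force $\barf(\{(y,s):s>1/2\})\subseteq\{(y,s):s\ge 1/2\}$, so $H$ sends that region into $I$ in one step.
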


If we consider a parameterized family of unimodal maps, then the constructions
above can be done in a continuous way. Let $\{f_t\}_{t\in[0,1]}$ be a
continuously varying family of unimodal maps $I\to I$ (for each of which~$I$ is
the dynamical interval); and suppose that unwrappings $\barf_t$ of each $f_t$
are chosen in such a way that $\{\barf_t\}$ is a continuously varying family of
near-homeomorphisms of~$T$. Let $\hH_t\colon \hT_t\to \hT_t$ be the natural
extension of $H_t = \Upsilon\circ\barf_t\colon T\to T$; and let $\hI_t =
\invlim(I, H_t)$. A proof of the following result can be found
in~\cite{param-fam}, see also~\cite{barge}.

\begin{thm}
\label{thm:unwrap-family} 
  There are homeomorphisms $h_t\colon \hT_t\to S^2$ for each~$t$ (where~$S^2$
  is a standard model of the sphere) such that
  \begin{enumerate}[(a)]
    \item $h_t\circ \hH_t\circ h_t^{-1}\colon S^2\to S^2$ is a continuously
      varying family of homeomorphisms, and
    \item The attractors $h_t(\hI_t)$ vary Hausdorff continuously with~$t$.
    \qed
  \end{enumerate}
\end{thm}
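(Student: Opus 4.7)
The plan is to deduce both parts of the theorem from a parameterized version of Brown's theorem (Theorem~\ref{thm:Brown}), applied to the family $\{H_t\}$ of near-homeomorphisms of the compact sphere $T$.

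For each $t$, Brown's inductive construction produces a homeomorphism $\widehat{h}_t\colon \hT_t \to T$ as a limit associated with a sequence of homeomorphisms $g_{t,n}\colon T\to T$ converging uniformly to $H_t$. The technical heart of the argument is to choose these approximants so that $(t,y)\mapsto g_{t,n}(y)$ is jointly continuous on $[0,1]\times T$ for each~$n$, and so that $\sup_{y\in T}d(g_{t,n}(y),H_t(y))\to 0$ uniformly in $t$ as $n\to\infty$. Because $\{H_t\}$ is a jointly continuous family of near-homeomorphisms of the compact sphere $T$, this can be arranged by an interpolation argument: cover $[0,1]$ by short intervals on which $H_t$ varies in $C^0$-norm by at most $1/n$, select a single homeomorphism approximating $H_t$ at the center of each interval, and patch these together using a partition of unity and small ambient isotopies of~$T$. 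Running Brown's construction with these jointly continuous approximants yields a family $\{\widehat{h}_t\}$ such that $(t,\bx)\mapsto\widehat{h}_t(\bx)$ is jointly continuous. Composing with any fixed homeomorphism $T\to S^2$ gives the required $h_t$.

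Part~(a) then follows from joint continuity of $h_t$ and $h_t^{-1}$: the natural extension acts by the shift $\hH_t(\thr{x_0,x_1,\ldots})=\thr{H_t(x_0),x_0,\ldots}$, whose parameter dependence is inherited directly from that of $H_t$, and conjugating by the jointly continuous $h_t$ preserves joint continuity. By compactness of $S^2$, joint continuity of $(t,y)\mapsto h_t\circ\hH_t\circ h_t^{-1}(y)$ is equivalent to $C^0$-continuous dependence of the family on~$t$.

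For~(b), the joint continuity of $h_t$ reduces the question to showing that $\hI_t\subset T^\N$ varies Hausdorff continuously with~$t$, and this is the main obstacle. Upper semi-continuity is straightforward: if $\bx^{(k)}\in\hI_{t_k}$ with $t_k\to t$ and $\bx^{(k)}\to\bx$ coordinatewise, then $f_t(x_{i+1})=x_i$ follows by passing to the limit in $f_{t_k}(x_{i+1}^{(k)})=x_i^{(k)}$, so $\bx\in\hI_t$. Lower semi-continuity is more delicate: given $\bx=\thr{x_0,x_1,\ldots}\in\hI_t$ and $\epsilon>0$, one must exhibit, for every $t'$ close enough to~$t$, a thread $\bx'\in\hI_{t'}$ within $\epsilon$ of~$\bx$. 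The weighted metric $d(\bx,\by)=\sum_i 2^{-i}d(x_i,y_i)$ reduces this to matching the first $N$ coordinates to within some $\delta$ depending on~$\epsilon$, after which the tail contributes at most $2^{-N+1}\diam(I)<\epsilon/2$. The subtlety is that $f_{t'}$-preimages are not continuous in $t'$ near the turning point, so each $x_i'$ must be chosen as the preimage of $x_{i-1}'$ on the same side of the turning point of~$f_{t'}$ as $x_i$ is of the turning point of $f_t$ --- a choice which depends on branch data that varies continuously in~$t'$ under our standing assumptions. Surjectivity of $f_{t'}$ then lets us extend the approximate initial segment to a full thread in $\hI_{t'}$, completing the argument.
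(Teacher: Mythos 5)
Your strategy --- a parameterized version of Brown's theorem applied to the family $\{H_t\}$, followed by a direct verification that the cores $\hI_t$ vary Hausdorff continuously --- is essentially the route the paper takes: the proof it cites (in \cite{param-fam}) applies Brown's construction to the single fiber-preserving near-homeomorphism $(y,t)\mapsto(H_t(y),t)$ of $T\times J$ to obtain one slice-preserving homeomorphism $\beta\colon\hT_\ast\to T\times J$ (the paper's Theorem~\ref{thm:fat-homeo}), and then sets $h_t=p_1\circ\beta\circ\iota_t$. Your ``partition of unity'' patching is the one step I would not accept as written: homeomorphisms of $T$ cannot be averaged, so interpolating between the approximants chosen on overlapping parameter intervals requires joining two nearby homeomorphisms by a small isotopy, which is exactly the Edwards--Kirby local contractibility of $\operatorname{Homeo}(T)$ (invoked elsewhere in the paper, Remark~\ref{rmk:EK}). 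The fat-map formulation sidesteps this entirely, since Brown's approximants are then chosen once for a single near-homeomorphism of $T\times J$ (one only has to keep them fiber-preserving); this is the main thing the paper's packaging buys you.

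There is also a genuine, though repairable, flaw in your lower semi-continuity argument for~(b). Building $x_0',\dots,x_N'$ by taking $f_{t'}$-preimages ``on the same branch'' can fail outright: the left branch of $f_{t'}$ has range $[f_{t'}(a),b]$, so if $x_i$ is near $a$ (hence $x_{i-1}$ near $f_t(a)$) and $f_{t'}(a)>x_{i-1}'$, the required left-branch preimage does not exist, and the forced right-branch preimage lands near $\re$, far from $x_i$. The clean fix is to run the matching forwards rather than backwards: set $x_i'=f_{t'}^{\,N-i}(x_N)$ for $0\le i\le N$, which converges to $x_i$ uniformly in $\bx$ as $t'\to t$ since $f_{t'}^{\,k}\to f_t^{\,k}$ uniformly for each fixed $k$, and then extend to a full thread using surjectivity of $f_{t'}$ exactly as you propose. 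With that substitution, and with Edwards--Kirby (or the fat-map trick) supplying the jointly continuous approximants, your argument goes through.
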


\subsection{Independence of the unwrapping} In this paper we will carry out a
careful construction of a specific unwrapping $\barf\colon T\to T$ of each
unimodal map $f\colon I\to I$, which will enable us to describe precisely the
embedding of $\hI$ in~$\hT$, and hence the prime ends of $(\hT, \hI)$. A
natural and important question is therefore the extent to which the results
depend on the choice of unwrapping. We now state a theorem whose consequence is
that the results are, in fact, independent of the unwrapping.

If $\barf_0$ and $\barf_1$ are unwrappings of the same unimodal map~$f$, with
associated Barge-Martin homeomorphisms $\hH_0\colon
\hT_0\to\hT_0$ and $\hH_1\colon \hT_1\to \hT_1$ then we can identify $\hI =
\invlim(I,f) = \invlim(I,H_0)=\invlim(I,H_1)$ as a subset of both $\hT_0$ and
$\hT_1$. We say that $\barf_0$ and $\barf_1$ are {\em equivalent} if there is a
homeomorphism $\lambda\colon \hT_0\to\hT_1$ which restricts to the identity on
$\hI$. This means that $\hI$ is equivalently embedded in $\hT_0$ and~$\hT_1$;
and, since $\hH_0|_{\hI} = \hH_1|_{\hI} = \hf$, that $\lambda$ conjugates the
actions of $\hH_0$ and $\hH_1$ on $\hI$.

\begin{thm}
\label{thm:independence-unwrapping} 
  Any two unwrappings of a unimodal map~$f$ are equivalent.
\end{thm}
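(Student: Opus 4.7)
The plan is to reduce the theorem to Theorem~\ref{thm:unwrap-family} by connecting $\barf_0$ and $\barf_1$ through a continuous path of unwrappings of the same map~$f$, and then extracting the desired equivalence from the resulting continuous family of Barge-Martin embeddings.

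The first step is to show that the space of unwrappings of a fixed unimodal map~$f$ is path-connected. Using the decomposition suggested by Definition~\ref{def:unwrapping}, condition~(c) determines an unwrapping on the lower disc $\{s \le 1/2\}$ up to the choice of a continuous family $\{\phi_s\}_{s \in (0,1/2]}$ of orientation-preserving circle homeomorphisms; conditions~(a)--(b) reduce its behaviour on~$I$ to the choice of a continuous function $\sigma\colon I \to [1/2,1]$ which separates the two preimages under~$f$ of each interior point of $f(I)$; and in the remaining annulus $\{1/2 < s < 1\}$ the unwrapping is an orientation-preserving near-homeomorphism with prescribed boundary behaviour. The first two ingredients can be interpolated directly using path-connectedness of $\mathrm{Homeo}^+(S)$ and a convex-combination argument for~$\sigma$ (noting that convex combinations of separating $\sigma$'s remain separating); the annular region can then be filled in by an Alexander-trick isotopy that preserves the near-homeomorphism property. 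Combining these produces a continuous path $\{\barg_t\}_{t \in [0,1]}$ of unwrappings with $\barg_0 = \barf_0$ and $\barg_1 = \barf_1$.

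Applying Theorem~\ref{thm:unwrap-family} with the constant family $f_t \equiv f$ and the unwrappings $\barg_t$ yields homeomorphisms $h_t\colon \hT_t \to S^2$ varying continuously in~$t$. Since $H_t|_I = f$ for every~$t$, the canonical inclusion $\hI = \invlim(I,f) \hookrightarrow \hT_t$ is independent of~$t$, and restriction yields a continuous one-parameter family of embeddings $\iota_t := h_t|_{\hI}\colon \hI \to S^2$. The naive candidate $h_1^{-1} \circ h_0$ is a homeomorphism $\hT_0 \to \hT_1$, but in general restricts to $\iota_1^{-1} \circ \iota_0 \ne \id$ on~$\hI$. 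To repair this, I would post-compose $h_t$ by a continuous family of self-homeomorphisms $\Psi_t$ of~$S^2$, with $\Psi_0 = \id$ and $\Psi_t \circ \iota_0 = \iota_t$; the modified family $\tilde{h}_t := \Psi_t^{-1} \circ h_t$ would then satisfy $\tilde{h}_t|_{\hI} = \iota_0$ for all~$t$, and $\lambda := \tilde{h}_1^{-1} \circ \tilde{h}_0 \colon \hT_0 \to \hT_1$ would restrict to the identity on~$\hI$ as required.

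The delicate step is the construction of the ambient isotopy $\{\Psi_t\}$: since $\hI$ is typically an indecomposable continuum that may be wildly embedded in~$S^2$, the isotopy of embeddings $\{\iota_t\}$ need not extend to an ambient isotopy by any standard isotopy extension theorem. I would circumvent this by working in the inverse-limit models directly rather than in~$S^2$, constructing the rectifying homeomorphisms $\hT_t \to \hT_0$ level-by-level via a Mioduszewski-style diagonal argument on cofinal subsystems. The input at each step is a near-homeomorphism of~$T$ supported away from $I \cup \{\partial\}$, obtained by comparing two nearby unwrappings $\barg_t$ along the path; the key point is that all such near-homeomorphisms restrict to the identity on~$I$ and fix~$\partial$, so the telescoped homeomorphism automatically restricts to the identity on~$\hI$.
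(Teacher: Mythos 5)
Your overall strategy for unwrappings of the same ``type'' is essentially the paper's (Lemma~\ref{lem:u-n-i-equiv}): connect the two unwrappings by a homotopy, apply Theorem~\ref{thm:unwrap-family} to the constant family $f_t\equiv f$, and rectify the resulting isotopy of embeddings of $\hI$ by an ambient isotopy. But there are two genuine gaps.

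First, the space of unwrappings of~$f$ is \emph{not} path-connected, and your convex-combination argument cannot show that it is. The behaviour of $\barf$ on~$I$ is not captured by a height function $\sigma\colon I\to[1/2,1]$ alone: for $x\in[a,c)$ the two preimages $x,\hx$ of $f(x)$ must be sent to two distinct points of the fiber arc $\bigl(\{f(x)_u\}\times[1/2,1]\bigr)\cup\bigl(\{f(x)_\ell\}\times[1/2,1]\bigr)$, and which of the two lies ``further along'' that arc is a $\Z/2$-valued invariant, constant in~$x$ and under homotopy through maps injective on~$I$. This is exactly the ``up''/``down'' dichotomy of Lemma~\ref{lem:up-down}; a path of unwrappings from an ``up'' unwrapping to a ``down'' one would have to pass through a map with $\barf(x)=\barf(\hx)$. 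The paper closes this case not by a path but by the explicit involution $\Gamma(x_u,s)=(x_\ell,s)$, which conjugates an unwrapping of one type to one of the other and commutes with the smash, yielding an explicit level-wise homeomorphism $\hT_0\to\hT_1$ fixing~$\hI$. Your argument as written proves nothing for unwrappings of opposite type.

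Second, you correctly identify the crux --- extending the isotopy of embeddings $\iota_t\colon\hI\to S^2$ to an ambient isotopy, which fails for standard isotopy-extension reasons since $\hI$ is an indecomposable continuum --- but your proposed workaround is not a proof. The paper resolves this with the Oversteegen--Tymchatyn theorem (Theorem~\ref{thm:OT}), a deep recent result stating that any isotopy of embeddings of a continuum into $S^2$ is ambient. Your alternative, a ``Mioduszewski-style diagonal argument on cofinal subsystems'' whose bonding comparisons are ``near-homeomorphisms of $T$ supported away from $I\cup\{\partial\}$'', is unsubstantiated at the key point: $H_0$ and $H_1$ are two near-homeomorphisms that merely agree on~$I$, and there is no reason they differ by a map supported off~$I$, nor is it shown that the required almost-commuting diagrams exist, nor that the telescoped homeomorphism of inverse limits restricts to the identity on $\hI$ rather than to some nontrivial self-homeomorphism of it. Whether the two embeddings of $\hI$ are ambiently equivalent is precisely the content of the theorem, and this step cannot be waved through.
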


The proof can be found in Appendix~\ref{app:independence}.

\subsection{The height of a kneading sequence}
\label{sec:height}

{\em Height} is a function $q\colon\KS\to[0,1/2]$, introduced in~\cite{HS},
which will play a central role in this paper. (Recall from
Definition~\ref{notation:KS} that $\KS$ denotes the set of kneading sequences
of unimodal maps.) We will see that, for each unimodal map~$f$, the prime end
rotation number of the associated homeomorphism $\hH\colon
(\hT,\hI)\to(\hT,\hI)$ is equal to~$q(\kappa(f))$, and that the structure of
the prime ends depends strongly on whether $q(\kappa(f))$ is rational or
irrational, as does the exceptional fiber of the semi-conjugacy between $\hf$
and a sphere homeomorphism.

Height is defined using certain words $c_q$ associated to each rational
$q\in(0,1/2]$, which we now describe. These words, which are closely related to
Sturmian sequences, were introduced by Holmes and Williams~\cite{HoWi} in their
work on knot types in suspensions of Smale's horseshoe map, and developed by
the third author in~\cite{HS}: they also appear in a paper of Barge and
Diamond~\cite{BaDi3} on periodic orbits which are accessible from the
complement of the attracting set of H\'enon maps in cases where that attracting
set is homeomorphic to the inverse limit of a unimodal map.

\begin{defns}[The integers $\kappa_i(q)$ and the words $c_q$]
\label{def:cq} 
  Let $q\in(0,1/2]$, and let $L_q$ be the straight line in the plane which
  passes through the origin and has slope~$q$. For each~$i\ge 1$, define
  $\kappa_i(q)$ to be two less than the number of vertical lines
  $x=\mbox{integer}$ which~$L_q$ intersects for $y\in[i-1,i]$.

  If $q=m/n$ is rational (throughout the paper, when we write $m/n$ for a
  rational number, we always assume that $m$ and $n$ are coprime), define the
  word $c_q\in\{0,1\}^{n+1}$ by
  \[ 
    c_q = 10^{\kappa_1(q)} 11 0^{\kappa_2(q)} 11 \ldots 11 0^{\kappa_m(q)}1.
  \]
\end{defns}

It is straightforward (see~\cite{HS}) to obtain the following formula for
$\kappa_i(q)$: if $q=m/n$ is rational, then
\begin{equation}
\label{eq:kappa-i}
  \kappa_i(q) =
  \begin{cases}
    \floor{1/q}-1 & \text{ if }i=1, \text{ \ and}\\
    \floor{i/q}-\floor{(i-1)/q}-2  & \text{ if }2\le i\le m,
  \end{cases}
\end{equation} 
where $\floor{x}$ denotes the greatest integer which does not exceed~$x$. On
the other hand, if $q$ is irrational, then $\kappa_i(q)$ is given
by~(\ref{eq:kappa-i}) for all $i\ge 1$.

\begin{remark} 
  The fact that the formulae~(\ref{eq:kappa-i}) do not give $\kappa_i(q)$ in
  the rational case $q=m/n$ when $i>m$ is irrelevant, since we only make use of
  $\kappa_i(q)$ for $i\le m$.
\end{remark}

\begin{examples}[The words~$c_q$] 
  Figure~\ref{fig:cq} shows the line $L_{5/17}$ for $x\in[0,17]$. The numbers
  of intersections with vertical coordinate lines for $y\in[i-1,i]$ are $4$,
  $3$, $4$, $3$, and $4$ for $i=1$, $i=2$, $i=3$, $i=4$, and $i=5$. Hence
  $\kappa_1(5/17)=\kappa_3(5/17)=\kappa_5(5/17)=2$, while
  $\kappa_2(5/17)=\kappa_4(5/17)=1$. Therefore $c_{5/17}=100110110011011001$, a
  word of length~18.

  More generally, if $q=m/n$ then the word~$c_q$ is clearly palindromic, and
  contains $n-2m+1$ zeroes divided `as even-handedly as possible' into $m$
  (possibly empty) subwords, separated by~$11$. For example, for each~$n\ge 2$
  we have $c_{1/n} = 10^{n-1}1$; $c_{2/(2n+1)} = 10^{n-1}110^{n-1}1$;
  $c_{3/(3n+1)}=10^{n-1}110^{n-2}110^{n-1}1$; and
  $c_{3/(3n+2)}=10^{n-1}110^{n-1}110^{n-1}1$.
\end{examples}

\begin{figure}[htbp]
\begin{center}
\includegraphics[width=0.6\textwidth]{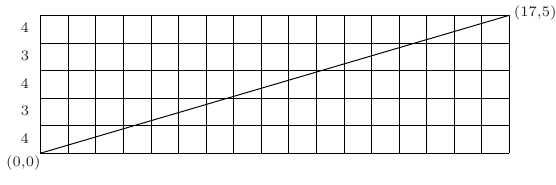}
\end{center}
\caption{$c_{5/17}=100110110011011001$.}
\label{fig:cq}
\end{figure}

The next lemma, from~\cite{HS}, is essential for the definition of height.

\begin{lem}
\label{lem:cq1-maximal} 
  $\Inf{c_q0}\in\KS$ for each rational $q\in(0,1/2]$. Moreover, the function
  $(0,1/2]\cap\Q\to\KS$ defined by $q\mapsto \Inf{c_q0}$ is strictly decreasing
  with respect to the unimodal order on~$\KS$. \qed
\end{lem}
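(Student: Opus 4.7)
The plan is to verify both claims by a direct combinatorial analysis of the explicit structure
\[
  c_q = 10^{\kappa_1(q)}\,11\,0^{\kappa_2(q)}\,11\,\cdots\,11\,0^{\kappa_m(q)}\,1.
\]

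For the first claim, by Lemma~\ref{lem:char-ks} I only need to check that $\Inf{c_q 0}$ begins with~$10$ and is maximal under the shift. The initial $10$ is immediate because $\kappa_1(q)=\floor{1/q}-1\ge 1$ for every $q\in(0,1/2]$. For maximality, by periodicity it suffices to verify $\sigma^r(\Inf{c_q 0})\preceq \Inf{c_q 0}$ for the finitely many $r\in\{1,\dots,n+1\}$. Shifts with $(c_q 0)_r=0$ are immediately strictly smaller in the unimodal order, so the work is concentrated on the $2m$ positions~$r$ at which $c_q 0$ has a~$1$. For each such~$r$ I would locate the first index at which $\sigma^r(\Inf{c_q 0})$ disagrees with $\Inf{c_q 0}$, using the rigid block structure of $c_q$ together with formula~(\ref{eq:kappa-i}). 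At that first disagreement the shifted sequence always has a~$0$ where $\Inf{c_q 0}$ has a~$1$, and a parity count of the $1$s in the common prefix then yields $\preceq$ via the definition of the unimodal order.

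For the monotonicity, take coprime rationals $q_1<q_2$ in $(0,1/2]\cap\Q$ and consider the longest common prefix of $\Inf{c_{q_1} 0}$ and $\Inf{c_{q_2} 0}$. The sequences $(\kappa_i(q_1))$ and $(\kappa_i(q_2))$ are controlled by~(\ref{eq:kappa-i}), and since $L_{q_1}$ has smaller slope than $L_{q_2}$, the shallower line needs a strictly greater horizontal excursion to traverse each vertical unit. Translating this crossing-count inequality into symbols forces the first disagreement between the two words to occur at a position where one has~$0$ and the other~$1$, in the direction dictated by which of the two lines reaches the next vertical integer line first. The parity of the~$1$s in the common prefix is then computed directly from the block structure --- in which the $1$s appear singly at each end and in pairs~$11$ between consecutive $0$-blocks --- and the definition of~$\prec$ gives $\Inf{c_{q_1} 0}\succ \Inf{c_{q_2} 0}$.

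The main technical obstacle is the parity bookkeeping for the unimodal order. Because $\preceq$ reverses direction at each~$1$ read, a symbol-by-symbol comparison gives the wrong sign roughly half the time; one must carefully count how many~$1$s have appeared before the first disagreement. Fortunately the regularity of~$c_q$ --- isolated $1$s at the two ends, $m-1$ interior pairs~$11$ separating the $m$ blocks of~$0$s, and block lengths predicted by~(\ref{eq:kappa-i}) --- makes the parity at every possible disagreement site a predictable function of how many $11$-pairs have been passed. This reduces the whole argument to a finite number of explicit, tractable checks.
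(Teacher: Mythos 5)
First, note that the paper itself gives no proof of this lemma: it is quoted from~\cite{HS} and stamped with an immediate \qed, so there is no internal argument to measure yours against. Your overall strategy --- deduce the initial $10$ from $\kappa_1(q)=\floor{1/q}-1\ge 1$, check maximality only at the finitely many shifts landing on a $1$ of the periodic block, and compare $\Inf{c_{q_1}0}$ with $\Inf{c_{q_2}0}$ via the crossing counts of the lines $L_{q_1}$ and $L_{q_2}$ --- is the standard and viable route, essentially the one carried out in~\cite{HS}.

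As written, however, the proposal has a genuine gap: the one concrete structural claim it makes is false, and the combinatorial work is deferred exactly where it is needed. You assert that for every shift $r$ with $(c_q0)_r=1$, the first disagreement between $\sigma^r(\Inf{c_q0})$ and $\Inf{c_q0}$ has the shifted sequence showing a $0$ where the original shows a $1$. Take $q=1/3$: then $c_q0=10010$, and $\sigma^3\bigl((10010)^\infty\bigr)=10100\ldots$ first disagrees with $(10010)^\infty=10010\ldots$ at index $2$, where the \emph{shifted} sequence has the $1$ and the original has the $0$; the inequality $\sigma^3(\Inf{c_q0})\prec\Inf{c_q0}$ still holds, but only because $1+0+1=2$ is even, i.e.\ via the opposite parity configuration from the one you describe. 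The same happens for $q=2/5$ at $r=2$. So the first disagreement can fall in either direction, there is no single uniform parity count of the form you propose, and one must split into cases according to which sequence carries the $1$ at the disagreement and verify the appropriate parity in each --- which is precisely the substance of the lemma. The monotonicity argument has the same character: the claim that the disagreement goes "in the direction dictated by which of the two lines reaches the next vertical integer line first" is asserted, not derived, and the parity bookkeeping is again left undone. Until those case analyses are actually carried out, the argument is a programme rather than a proof.
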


\begin{defn}[Height]
\label{def-height} 
  Let $\syma\in\KS$. Then the {\em height} $q(\syma)\in[0,1/2]$ of $\syma$ is
  given by
  \[ 
    q(\syma) = \inf\left(\{q\in(0,1/2]\cap\Q\,:\, \Inf{c_q0} \prec
    \syma\}\cup\{1/2\}\right).
  \]
\end{defn}

By Lemma~\ref{lem:cq1-maximal}, the height function $q\colon\KS\to[0,1/2]$ is
decreasing with respect to the unimodal order on~$\KS$ and the usual order
on~$[0,1/2]$. The next result, also from~\cite{HS}, describes the interval of
kneading sequences with given rational height.

\begin{notation}[The words $w_q$]
\label{notn:wq} 
  For each $q=m/n\in(0,1/2)$, define $w_q\in\{0,1\}^{n-1}$ to be the word
  obtained by deleting the last two symbols of $c_q$; and
  $\hw_q\in\{0,1\}^{n-1}$ to be the reverse of $w_q$.
\end{notation}

Statements~(a), (b), and (c) of the following lemma can be found in~\cite{HS},
while~(d) is contained in results of~\cite{gpa} (see also Lemma~11.5
of~\cite{AC} for a self-contained proof).

\begin{lem}[Characterization of kneading sequences of given height]\mbox{}
\label{lem:height-intervals}
  \begin{enumerate}[(a)]
    \item For each irrational $q\in[0,1/2]$, there is a unique $\syma\in\KS$
      with $q(\syma)=q$, namely
      \[\syma=10^{\kappa_1(q)}110^{\kappa_2(q)}110^{\kappa_3(q)}11\ldots.\]
    \item Let $\syma\in\KS$. Then $q(\syma)=0$ if and only if
      $\syma=1\Infs{0}$; and $q(\syma)=1/2$ if and only if $\syma\preceq
      10\Infs{1}$.
    \item Let $\syma\in\KS$ and $q=m/n\in(0,1/2)\cap\Q$. Then $q(\syma)=q$ if
    and only if
    \[
    \Inf{w_q1} \preceq \syma \preceq 10\Inf{\hw_q1}.
    \] 
    Moreover, if $q(\mu)=q$ and $\syma\not=\Inf{w_q1}$ then $c_q$ is an initial
    subword of~$\syma$; and if~$\syma$ is periodic, then either
    $\syma=\Inf{w_q1}$ or $\syma=\Inf{w_q0}$, or its period is at least~$n+2$.
    \item Let $q=m/n\in(0,1/2)\cap\Q$. then $10\Inf{\hw_q1}$ is pre-periodic to
    $\Inf{w_q1}$: that is, there is some $r$ with $\sigma^r(10\Inf{\hw_q1}) =
    \Inf{w_q1}$. \qed
  \end{enumerate} 
\end{lem}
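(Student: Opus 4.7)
The four parts of the lemma all rest on the definition of height together with the strict monotonicity of $q\mapsto\Inf{c_q 0}$ established in Lemma~\ref{lem:cq1-maximal}; the engine for each part is a careful analysis, via formula~(\ref{eq:kappa-i}), of how the initial segments of $c_q$ depend on~$q$. The key observation is that for any fixed $N$, the values $\kappa_1(q),\ldots,\kappa_N(q)$ are locally constant as functions of~$q$ at each irrational point, and almost locally constant at each rational point (with prescribed jumps on each side). Consequently the initial portions of the words $c_{q'}$ stabilize, as $q'\to q$ through rationals, to those of the (finite or infinite) sequence $10^{\kappa_1(q)}110^{\kappa_2(q)}11\ldots$.

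For (a) I would call this infinite sequence $\syma$, and argue as follows. By the stabilization above, any rational $q'>q$ gives $\Inf{c_{q'}0}$ sharing an arbitrarily long common prefix with $\syma$; monotonicity then forces $\Inf{c_{q'}0}\prec\syma$, and a parity check at the first symbol of disagreement (which lies in the block just beyond the common prefix) confirms this. Similarly $\Inf{c_{q'}0}\succ\syma$ for rational $q'<q$, which together give $q(\syma)=q$. Maximality of $\syma$ is inherited from the maximal approximants $\Inf{c_{q'}0}$ by a prefix argument: any hypothetical $\sigma^r(\syma)\succ\syma$ would already be visible on a long enough prefix, contradicting maximality of $\Inf{c_{q'}0}$ for $q'$ close enough to~$q$. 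Uniqueness follows from the same sandwich: any $\syma'\in\KS$ with $q(\syma')=q$ must be simultaneously $\preceq\syma$ and $\succeq\syma$.

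Part (b) is a direct parity computation: for any rational $q\in(0,1/2]$, $\Inf{c_q 0}$ and $1\Inf{0}$ agree on $10^{\kappa_1(q)}$ and then differ (as $\kappa_1(q)\ge 1$ for $q\le 1/2$), with parity giving $\Inf{c_q 0}\prec 1\Inf{0}$, so $q(1\Inf{0})=0$. Conversely, if $\syma\in\KS$ has its next $1$ at position $k+1>1$, taking $q=1/(k+1)$ yields $\Inf{c_q 0}\succeq\syma$, so $q(\syma)>0$. The statement at $1/2$ is the symmetric calculation, using $\Inf{c_{1/2}0}=\Inf{1010}\prec 10\Inf{1}$ together with $\Inf{c_q 0}\succ 10\Inf{1}$ for all rational $q\in(0,1/2)$. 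Part (c) is the analogous limiting analysis on each side of a rational $q=m/n$: by writing $c_{q'}$ for $q'$ close to~$q$ as a concatenation built from $c_q$ and its Stern--Brocot neighbour, the left endpoint $\lim_{q'\searrow q}\Inf{c_{q'}0}$ is identified as $\Inf{w_q 1}$ and the right endpoint $\lim_{q'\nearrow q}\Inf{c_{q'}0}$ as $10\Inf{\hw_q 1}$; the supplementary claims about periodic sequences of height $q$ having period at least $n+2$ then come from the observation that $c_q$ must be an initial subword of any non-extremal $\syma$ of height~$q$.

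Finally, (d) is a combinatorial identity about the palindromic word $c_q$: since $w_q$ is $c_q$ with its last two symbols removed and $\hw_q$ is its reverse, the palindromicity of $c_q$ (which can be verified by induction on the Stern--Brocot level of~$q$, or equivalently on the continued fraction expansion of~$q$) forces the tail of $\hw_q 1$ to coincide, after a specific shift~$r$, with the head of $w_q 1$, yielding $\sigma^r(10\Inf{\hw_q 1})=\Inf{w_q 1}$. The main obstacle throughout is the bookkeeping of parity when comparing long prefixes whose block structures $10^{\kappa_i}11$ only diverge far into the word, and in particular locating the precise value of~$r$ in~(d) from the palindromic structure; these are exactly the technicalities worked out in~\cite{HS}, \cite{gpa} and Lemma~11.5 of~\cite{AC}, and my proof would follow the structure there.
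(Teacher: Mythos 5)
The paper does not prove this lemma at all: it is stated with a \qed{} and attributed outright to \cite{HS} for parts (a)--(c) and to \cite{gpa} (see also Lemma~11.5 of \cite{AC}) for part~(d). Your proposal is therefore not competing with an argument in the paper but with a citation, and in the end it does the same thing: at every point where genuine combinatorial work is required you write that the details ``are exactly the technicalities worked out in \cite{HS}, \cite{gpa} and Lemma~11.5 of \cite{AC}.'' As a roadmap of how those references proceed (monotonicity of $q\mapsto \Inf{c_q0}$ from Lemma~\ref{lem:cq1-maximal}, stabilization of the blocks $\kappa_1(q'),\dots,\kappa_N(q')$ as $q'\to q$, a sandwich argument for existence and uniqueness in (a), one-sided limits at a rational $q$ for (c), palindromicity of $c_q$ for (d)), the outline is sound, but it is not a proof.

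Two concrete points where what you do spell out needs repair. First, in part (b) your converse direction fails as written: if $\syma=10^k1\ldots$ (second $1$ at position $k+1$), then $c_{1/(k+1)}=10^k1$ and the first disagreement between $\Inf{c_{1/(k+1)}0}=10^k10\ldots$ and $\syma$ occurs at position $k+2$ with an even digit sum, so $\Inf{c_{1/(k+1)}0}\prec\syma$ --- the wrong inequality. You need $q=1/(k+2)$, for which $c_q=10^{k+1}1$ first disagrees with $\syma$ at position $k+1$ with odd digit sum, giving $\Inf{c_q0}\succ\syma$ and hence $q(\syma)\ge 1/(k+2)>0$. Second, in part (c) the identification of $\lim_{q'\searrow q}\Inf{c_{q'}0}$ with $\Inf{w_q1}$ and of $\lim_{q'\nearrow q}\Inf{c_{q'}0}$ with $10\Inf{\hw_q1}$ is asserted via an unproved ``Stern--Brocot concatenation'' of $c_{q'}$; that identification, together with the verification that the extreme sequences themselves have height exactly $q$ (the infimum in Definition~\ref{def-height} need not be attained, so membership of the endpoints in $\KS(q)$ requires a separate check), is precisely the content of the lemma and cannot be taken as the starting point. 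If you intend a self-contained proof rather than a citation, these are the steps that must be carried out; otherwise it would be cleaner simply to cite \cite{HS} and \cite{AC} as the paper does.
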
 By Lemma~\ref{lem:height-intervals}~(b), Condition~(a) of
Convention~\ref{conv:unimodal} says exactly that $q(\kappa(f))\in[0,1/2)$.

The endpoints of the intervals of kneading sequences of given rational height
will play an important role, as will the kneading sequences $\Inf{c_q0}$ used
in the definition of height. The acronym NBT in the following notation stands
for `no bogus transitions' and reflects the original motivation of height.

\begin{notations}[$\lhe(q)$, $\rhe(q)$, $\NBT(q)$, $\KS(q)$]
\label{def:lhe,rhe,etc} 
  Let $q\in(0,1/2)\cap\Q$. We write $\lhe(q)=\Inf{w_q1}$,
  $\rhe(q)=10\Inf{\hw_q1}$, and $\NBT(q)=\Inf{c_q0}$. We write $\KS(q)$ for the
  set of kneading sequences $\syma$ with height~$q$ (i.e.\ with $\lhe(q)\preceq
  \syma\preceq \rhe(q)$). In the special case $q=0$, we write
  $\lhe(q)=1\Infs{0}$ and $\rhe(q)$ is undefined.
\end{notations}

\begin{example} 
  Let $q=2/7$, with $c_q=10011001$, $w_q=100110$, and $\hw_q=011001$. Then we
  have \mbox{$\lhe(2/7)=\Inf{1001101}$}, $\rhe(2/7)= 10\Inf{0110011}$, and
  \mbox{$\NBT(2/7) =\Inf{100110010}$}. A kneading sequence $\syma$ lies
  in~$\KS(2/7)$ if and only if $\Inf{1001101} \preceq \syma\preceq
  10\Inf{0110011}$.
\end{example}

In addition to the characterization of Lemma~\ref{lem:height-intervals}, there
is a straightforward algorithm which calculates $q(\syma)$ for any kneading
sequence which has rational height: see Section~3.2 of~\cite{HS}\footnote{A
script to carry out this calculation can be found at
\url{http://www.maths.liv.ac.uk/cgi-bin/tobyhall/horseshoe}}

As stated at the beginning of this section, the structure of the prime ends of
$(\hT, \hI)$ depends on whether $q(\kappa(f))$ is rational or irrational. The
rational case $q(\kappa(f))=m/n$ also splits into two subcases: one in which
$\kappa(f)$ is either an endpoint of $\KS(m/n)$ or is equal to $\Inf{w_q0}$ (a
consecutive kneading sequence to $\lhe(m/n)$), and one in which neither of
these happens. The endpoint case further splits into subcases which, while they
yield the same results, are analyzed in quite different ways. These
observations motivate the following definitions (see Figure~\ref{fig:types}).

\begin{defns}[Irrational and rational types; interior and endpoint types;
    early, strict, and late types; tent-like and quadratic-like types; normal
    type; general and NBT types]
\label{def:type} 
  We say that a unimodal map~$f$ is of {\em irrational type} or of {\em
  rational type} according as $q(\kappa(f))$ is irrational or rational.

  In the rational case, with $q(\kappa(f))=m/n\in(0,1/2)$, we say that $f$ is
  of {\em (rational) endpoint type} if $\kappa(f)\in\{\lhe(m/n),
  \Inf{w_q0}, \rhe(m/n)\}$; and is of {\em (rational) interior type} otherwise.

  In the rational endpoint case we say that $f$ is of {\em left endpoint type}
  if $\kappa(f)\in\{\lhe(m/n), \Inf{w_q0}\}$, and of {\em right
  endpoint type} if $\kappa(f)=\rhe(m/n)$. 

  In the rational left endpoint case, we say that~$f$ is of {\em early endpoint
  type} if $\kappa(f)=\lhe(m/n)$ but $f^n(a)\not=a$; of {\em strict endpoint
  type} if $\kappa(f)=\lhe(m/n)$ and $f^n(a)=a$; and of {\em late endpoint
  type} if $\kappa(f)=\Inf{w_q0}$.

  In the left strict endpoint case, we say that~$f$ is of {\em tent-like type}
  if $b$ is the only period~$n$ point of~$f$ with itinerary $\lhe(m/n)$; and
  that it is of {\em quadratic-like type} if it has a second such period~$n$
  point (there cannot be more than two period~$n$ points with this itinerary by
  Convention~\ref{conv:unimodal}~(c)).

  We say that~$f$ is of {\em normal (endpoint) type} if it is either of right
  endpoint type, or of tent-like left strict endpoint type. (These are the only
  endpoint types which occur for tent maps.)

  In the rational interior case we say that $f$ is of {\em (rational) NBT type}
    if $f^{n+2}(c)=c$ --- in which case $\kappa(f)=\NBT(m/n)$
  --- and of {\em (rational) general type} otherwise.

  In the special case $m/n=0$ (i.e.\ $\kappa(f)=1\Infs{0}$), we declare~$f$ to
  be of tent-like strict left endpoint type.
\end{defns}

\begin{remark}\mbox{}
\label{rmk:terminology} To explain the terminology in the rational left
endpoint case, consider a full monotonic family $\{f_t\}$ of unimodal maps such
as the quadratic family, and let $t_1 = \inf\{t\,:\, \kappa(f_t)=\lhe(m/n)\}$.
Then a saddle-node bifurcation occurs at $t=t_1$ creating a semi-stable
period~$n$ orbit, which contains a point of itinerary $\lhe(m/n)$ and attracts
the orbit of the turning point. As~$t$ increases, this periodic orbit splits
into a stable-unstable pair of periodic orbits, both containing points of
itinerary~$\lhe(m/n)$. We follow this pair of periodic orbits until at $t=t_2$
the stable orbit contains the turning point. We still have
$\kappa(f_{t_2})=\lhe(m/n)$, but now $f_{t_2}^n(a)=a$. When we increase the
parameter further, the stable periodic orbit passes through the turning point
and the kneading sequence becomes $\Inf{w_q0}$. Therefore $f_t$ is of early
endpoint type for $t\in[t_1,t_2)$, of strict quadratic-like endpoint type for
$t=t_2$, and of late endpoint type for $t>t_2$ sufficiently close to~$t_2$.
There is no corresponding distinction at the right hand endpoint of the height
interval since, by Convention~\ref{conv:unimodal}~(b), if
$\kappa(f)=\rhe(m/n)$, which is not periodic, then $f(a)$ is necessarily
periodic of period~$n$ since it has the same itinerary $\Inf{\hw_q 1}$ as
$f^{n+1}(a)$.

 In the tent family, by contrast, $\kappa(f)=\lhe(m/n)$ only if $f^n(a)=a$,
 $\kappa(f)$ is never equal to $\Inf{c_q0}$, and there is only ever one point
 of any given itinerary. Therefore only the strict tent-like left hand endpoint
 case occurs. That is, only the first three rows of Figure~\ref{fig:types} are
 relevant for tent maps.

The reason for the distinction between rational general and rational NBT types
will become apparent in Section~\ref{sec:semi-conj}.
\end{remark}

\begin{figure}[htbp]
\begin{center}
\includegraphics[width=0.4\textwidth]{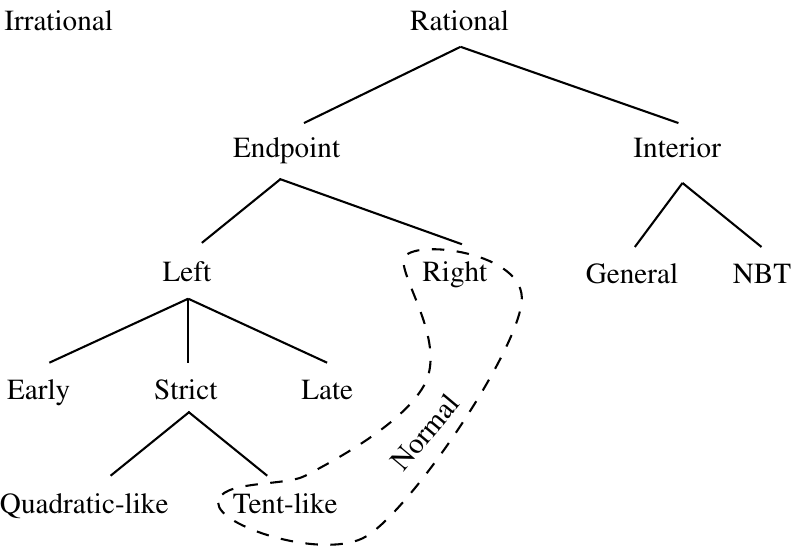}
\end{center}
\caption{Types of unimodal maps.}
\label{fig:types}
\end{figure}

\section{The unwrapping}
\label{sec:unwrapping-main}

In this section we construct an explicit unwrapping $\barf\colon T\to T$ of an
arbitrary unimodal map $f\colon[a,b]\to [a,b]$. This construction provides
explicit descriptions of the sphere $\hT$, the embedded inverse limit $\hI$,
and the homeomorphism $\hH\colon\hT\to\hT$ which restricts to the natural
extension $\hf$ on $\hI$.

The construction proceeds in two steps. In Section~\ref{sec:outside-map} we
recall from~\cite{gpa} the {\em outside map} $B\colon S\to S$ corresponding
to~$f$, which is obtained by ``fattening up'' the interval to give it some
two-dimensional structure (a closely related construction also appears
in~\cite{bruin2}). The unwrapping~$\barf$ itself is then constructed in
Section~\ref{sec:unwrapping}. It is the product of the outside map and the
identity on $\{(y,s)\,:\, s\le 1/2\}\subset T$, and is gradually changed in
$\{(y,s)\,:\,s\ge 1/2\}$ so that it satisfies the conditions of an unwrapping
(Lemma~\ref{lem:barf-is-unwrapping}). We finish with a description of the
elements of $\hT\setminus\hI$ (\Notation~\ref{notn:threads-U} and
Lemma~\ref{lem:threads-U}).

\subsection{The outside map}
\label{sec:outside-map} Let~$f\colon [a,b]\to [a,b]$ be a unimodal map with
turning point~$c\in(a,b)$. Recall that we denote by $\re$ the unique point
of~$(c,b]$ with $f(\re)=f(a)$.

Recall from Section~\ref{sec:barge-martin} that~$S$ denotes the circle obtained
by gluing together two copies of~$I$ at their endpoints; that points of~$S$ are
denoted $x_u$ or $x_\ell$ for $x\in I$; and that we write $a=a_u=a_\ell$ and
$b=b_u=b_\ell\in S$. We will use standard interval notation $(x,y)$, $[x,y]$,
etc.\ for subintervals of~$S$, the interval consisting of the arc which goes
counterclockwise, in the model of Figure~\ref{fig:MCN}, from the first point
listed to the second. Thus, for example, the interval~$[a,b]$ contains $x_\ell$
for all $x\in I$, while the interval~$[b,a]$ contains $x_u$ for all $x\in I$.

 The intuitive motivation for the definition of the outside map $B\colon S\to
 S$ is illustrated in Figure~\ref{fig:outside}. We add some two-dimensional
 structure to the unimodal map~$f$ as depicted on the left of the figure,
 regarding the image of~$[a,c)$ as lying underneath the image of~$(c,b]$. Then
 points which are {\em above} the interval, lying in~$(a,\re)$, get folded into
 the interior -- that is, they no longer remain on the outside. These points
 correspond to the interior of the interval~$\gamma=[\re_u,a]$ in~$S$ depicted
 on the right hand side of the figure, which is collapsed to a point by the
 outside map.  Other points above the interval, and all points below the
 interval, remain on the outside after one iteration, with points below $[a,c)$
 and above $[\re,b]$ being sent below the interval, and points below $(c,b]$
 being sent above the interval.

\begin{figure}[htbp]
\begin{center}
\includegraphics[width=0.8\textwidth]{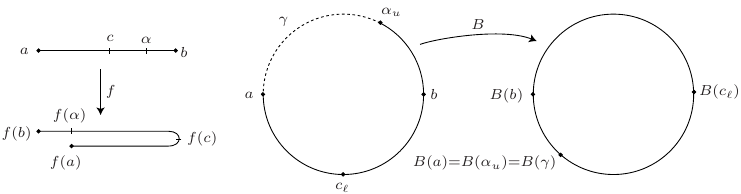}
\end{center}
\caption{The outside map~$B$ corresponding to a unimodal map~$f$.}
\label{fig:outside}
\end{figure}

This intuition leads to the following definition:

\begin{defn}[The outside map]
\label{def:outside-map} 
  Let $f\colon[a,b]\to[a,b]$ be a unimodal map. The {\em outside map
    \mbox{$B\colon S\to S$} corresponding to~$f$} is defined by
  \begin{equation}
  \label{eq:outside-map}
    \begin{array}{rcll} 
      B(x_\ell) &=& f(x)_\ell & \text{ if }x\in[a,c]\\
      B(x_\ell) &=& f(x)_u & \text{ if }x\in[c,b]\\ 
      B(x_u) &=& f(x)_\ell & \text{ if }x\in[\re,b], \text{ and}\\ 
      B(x_u) &=& f(a)_\ell & \text{ for all }x\in[a,\re].
    \end{array}
  \end{equation}
\end{defn}

The dynamics of the outside map plays a key role in the paper, and is discussed
in detail in Section~\ref{sec:outside-dynamics} below. For now we note that, by
the first three equations of~(\ref{eq:outside-map}),
\begin{equation}
\label{eq:commute}
  \tau\circ B(y) = f\circ\tau(y) \quad\text{ for all }y\in S\setminus\ingam,
\end{equation} 
where $\ingam=(\re_u,a)$ and $\tau\colon S\to I$ is the projection of
Definition~\ref{def:retraction-smash}, satisfying $\tau(x_\ell)=\tau(x_u)=x$.

\subsection{Definition of the unwrapping}
\label{sec:unwrapping} We now use the outside map to define an unwrapping of
the unimodal map~$f$.  Figure~\ref{fig:construct-unwrapping} shows the
sphere~$T$ (with~$\partial$ opened out into the circle~$S$), the
interval~$I\subset T$, the circle $S\times\{1/2\}$ (dashed line), and segments
of some of the arcs~$\eta_y$ (dotted lines). It also depicts an interval~$J$
with endpoints $(f(a)_\ell,\, 1/2)$ and $(b,1)$. The unwrapping will be
constructed so that as $x$ runs from~$a$ to $c$, $\barf(x_\ell,1)$ runs
along~$J$ with $\Upsilon(\barf(x_\ell,1)) = (f(x),1)$; while as $x$ runs from
$c$ to~$b$, $\barf(x_\ell,1)=(f(x),1)\in I$.  The interval~$J$ is defined by
$J=\{(\phi(s)_\ell, s)\,:\,s\in[1/2,1]\}$, where $\phi$ is the affine map of
Definition~\ref{def:unwrapping-unimodal} below.

\begin{figure}[htbp]
\begin{center}
\includegraphics[width=0.4\textwidth]{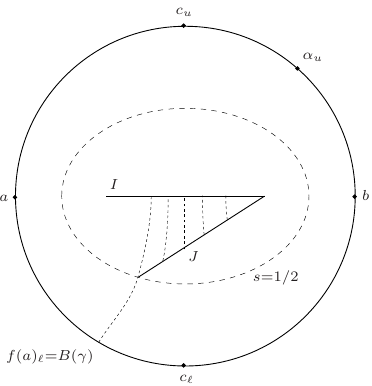}
\end{center}
\caption{Construction of the unwrapping $\barf$.}
\label{fig:construct-unwrapping}
\end{figure}

\begin{defn}[The map $\phi\colon\lbrack1/2,1\rbrack \to \lbrack f(a), b
\rbrack$ and the unwrapping~$\barf$ of a unimodal map~$f$]
\label{def:unwrapping-unimodal} \mbox{}\\
  Let $\phi\colon[1/2,1]\to[f(a),b]$ be the affine map
  \[
  \phi(s) = f(a) + (2s-1)(b-f(a))
  \] 
  with $\phi(1/2)=f(a)$ and $\phi(1)=b$. We define $\barf\colon T\to T$ as
  follows:
  \begin{enumerate}[(U1)]
    \item $\barf(y,s) = (B(y),s)$ for all $(y,s)\in S\times[0,1/2]$.
    \item If $y\in[c_\ell, \re_u]$ then $\barf(y,s) = (B(y),s)$ for all
      $s\in[0,1]$.
    \item If $y\in [\re_u, c_u]$ then
    \[
    \barf(y,s) =
    \begin{cases} (\phi(s)_\ell, s) & \text{ if }s\in[1/2,\,\,
    \phi^{-1}(f(\tau(y)))], \\ (f(\tau(y))_\ell, s) & \text{ if
    }s\in[\phi^{-1}(f(\tau(y))),\,\, 1].
    \end{cases}
    \]
    \item If $y\in [c_u,a]$ then
    \[
    \barf(y,s) =
    \begin{cases} (\phi(s)_\ell, s) & \text{ if } s\in[1/2,\,\,
    \phi^{-1}(f(\tau(y)))],\\ (f(\tau(y))_\ell, \phi^{-1}(f(\tau(y))) & \text{ if
    }s\in[\phi^{-1}(f(\tau(y))),\,\, 1].
    \end{cases}
    \]
    \item If $y\in [a,c_\ell]$ then
    \[
    \barf(y,s) =
    \begin{cases} (B(y),s) & \text{ if }s\in[1/2,\,\, \phi^{-1}(\tau(B(y)))],\\
    (B(y), \phi^{-1}(\tau(B(y)))) & \text{ if }s\in[\phi^{-1}(\tau(B(y))),\,\,
    1].
    \end{cases}
    \]
  \end{enumerate}
\end{defn}

\begin{remarks}\mbox{}
\label{rmk:barf}
  \begin{enumerate}[(a)]
    \item  If $y\not\in\ingam$ then the first component of $\barf(y,s)$ is
      equal to $B(y)$ for all~$s\in[0,1]$ by (U1), (U2), and~(U5).
    \item When parsing this definition, it is helpful to recall that, in order
      for $\barf$ to be an unwrapping, we must have $\Upsilon(\barf(y,1)) =
      f(\tau(y))$ for each~$y\in S$ (Definition~\ref{def:unwrapping}~(b)). The
      value $s=\phi^{-1}(f(\tau(y)))$ which appears in~(U3), (U4), and (U5) ---
      noting that in~(U5) we have $y\not\in\ingam$, so that
      $\tau(B(y))=f(\tau(y))$ by~\eqref{eq:commute} --- is the parameter of the
      point~$j_y$ of~$J$ which retracts to $f(\tau(y))$: therefore $\barf(y,1)$
      must lie on the decomposition arc~$\eta$ which passes through this point.
      According to the definition,
    \begin{enumerate}[(U1)]
    \addtocounter{enumii}{2}
      \item When $y\in[\re_u,c_u]$, the path $\{\barf(y,s)\,:\,s\in[1/2,1]\}$
        moves along~$J$ from $(f(a)_\ell, 1/2)$ until it reaches $j_y$, and
        then moves along~$\eta$ until it reaches~$I$;
      \item When $y\in[c_u,a]$, the path $\{\barf(y,s)\,:\, s\in[1/2,1]\}$ moves
        along~$J$ from $(f(a)_\ell, 1/2)$ until it reaches $j_y$, and then
        remains at this point;
      \item When $y\in[a,c_\ell]$, the path $\{\barf(y,s)\,:\, s\in[1/2,1]\}$ moves
        along~$\eta$ from $(B(y),1/2)$ until it reaches $j_y$, and then remains
        at this point.
    \end{enumerate}
  \end{enumerate}
\end{remarks}

\begin{lem}
\label{lem:barf-is-unwrapping} 
  $\barf$ is an unwrapping of $f$.
\end{lem}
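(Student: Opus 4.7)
The plan is to check each requirement of Definition~\ref{def:unwrapping}: continuity of $\barf$, conditions~(a)--(c), and that $\barf$ is an orientation-preserving near-homeomorphism. The algebraic conditions and continuity are direct verifications from the piecewise formulas; the main obstacle is establishing the near-homeomorphism property.

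For continuity I would verify agreement of the five formulas at the interfaces between their domains: the circle $S\times\{1/2\}$ and the four arcs $\{y\}\times[1/2,1]$ with $y\in\{c_\ell,\re_u,c_u,a\}$. In each case the adjacent formulas reduce to the same expression because $\phi^{-1}(f(\tau(y)))$ lies at an endpoint of $[1/2,1]$: for instance, at $y=\re_u$ we have $f(\tau(y))=f(\re)=f(a)$, so $\phi^{-1}(f(a))=1/2$, the first case of (U3) degenerates, and all three formulas (U1), (U2), (U3) give $(f(a)_\ell,s)$. Condition~(c) is immediate from (U1), since $\barf(y,s)=(B(y),s)$ on $S\times[0,1/2]$. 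For~(a), evaluating $\barf(x,1)$ via (U5) and (U2) gives $(f(x)_\ell,\phi^{-1}(f(x)))$ for $x\in[a,c]$ and $f(x)\in I$ for $x\in[c,b]$, with consistent values computed from (U4) and (U3) on the upper representatives; strict monotonicity of $f$ on each half of $I$ yields injectivity, the two images meet only at $x=c$ (both giving $b$), and $\phi^{-1}(f(x))\ge\phi^{-1}(f(a))=1/2$ throughout. For~(b), applying $\Upsilon$ to each expression collapses the second coordinate to $1$, giving $f(x)\in I\subset T$.

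To show that $\barf$ is an orientation-preserving near-homeomorphism I would construct an explicit family of orientation-preserving homeomorphisms $\barf_n\colon T\to T$ converging uniformly to $\barf$. The collapsing locus of $\barf$ is a finite union of cellular subsets of $T$: the region $[\re_u,a]\times[0,1/2]$, sent by (U1) to the arc $\{f(a)_\ell\}\times[0,1/2]$ since $B$ collapses $\gamma$ to $f(a)_\ell$; the horizontal sub-arcs of $[\re_u,c_u]$ and $[c_u,a]$ at each height $s\in[1/2,1]$ whose images under (U3) and (U4) lie on the interval $J$; and terminal sub-arcs of the decomposition arcs $\eta_{B(y)}$ collapsed to points by (U4) and (U5). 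I would define $\barf_n$ to agree with $\barf$ outside a $1/n$-neighborhood of these loci, and inside the neighborhood to spread the collapsed sets injectively into thin strips interpolating to match $\barf$ on the boundary of the neighborhood; uniform convergence $\barf_n\to\barf$ follows from the shrinking support. Orientation-preservation carries over from the large region $[c_\ell,\re_u]\times[0,1]$ on which $\barf$ coincides with the orientation-preserving homeomorphism $B\times\id$, and the perturbations defining $\barf_n$ can be chosen consistently with this orientation.
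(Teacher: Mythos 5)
Your verification of continuity and of conditions (a)--(c) of Definition~\ref{def:unwrapping} is essentially the same as the paper's, but you take a genuinely different route to the key point, the near-homeomorphism property. You propose to build explicit approximating homeomorphisms $\barf_n$ by spreading the collapsed sets into thin strips; the paper instead invokes a theorem of Youngs, which states that any continuous monotone surjection $T\to T$ is a near-homeomorphism, and then only has to check that every point-preimage of $\barf$ is a point or an arc. The non-trivial preimages are exactly the sets you identify: the arcs $[\re_u,a]\times\{s\}$ over $(f(a)_\ell,s)$ for $s\in(0,1/2)$, and, for each point $(\phi(s)_\ell,s)$ of $J$, the union of the horizontal segment $[z(s)_u,w(s)_u]\times\{s\}$ with the two vertical segments $\{w(s)_u\}\times[s,1]$ and $\{w(s)_\ell\}\times[s,1]$ (where $f(w(s))=f(z(s))=\phi(s)$); you list the horizontal and vertical pieces separately, but it is worth observing that they assemble into a single connected arc for each point of $J$, since the two vertical segments meet at $w(s)\in I$. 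Your by-hand construction can be completed -- the decomposition into point-preimages is upper semi-continuous with cellular (indeed arc) elements, which is what makes the interpolation work -- but doing so rigorously amounts to reproving the special case of Youngs' theorem you need, so the citation is both shorter and less error-prone. If you keep your route, the one step to make explicit is that the collapsed arcs sweep out two-dimensional regions (e.g.\ the region between the graphs of $w$ and $z$), so the approximating homeomorphisms must be built fiberwise over this decomposition, shrinking each arc to a short arc coherently in $s$, rather than by a perturbation supported near a one-dimensional locus.
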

\begin{proof} 
  A theorem of Youngs~\cite{youngs} states that any continuous monotone
  surjection $T\to T$ is a near-homeomorphism. Therefore $\barf$ is a
  near-homeomorphism (which is clearly orientation-preserving), since the
  preimage of each point of~$T$ under~$\barf$ is either a point or an arc. In
  fact, the only points of~$T$ whose preimages are not points are
  \begin{itemize}
    \item For each $s\in(0,1/2)$, the point $(f(a)_\ell,s)$, whose preimage is
      the arc $[\re_u,a]\times \{s\}$, and
    \item For each $s\in[1/2,1)$, the point $(\phi(s)_\ell, s)$ of~$J$, whose
      preimage is the arc
    \[
      [z(s)_u, w(s)_u] \times\{s\} \,\, \cup \,\,
      \{w(s)_u\}\times[s, 1] \,\, \cup \,\,
      \{w(s)_\ell\}\times [s,1],
    \]
    where $w(s)$ and $z(s)$ denote the points of $[a, c]$ and of $[c,b]$
    respectively with $f(w(s))=f(z(s)) = \phi(s)$. The first set in this union
    comes from~(U3) and~(U4), the second from~(U4), and the third from~(U5).
  \end{itemize}

   Moreover, $\barf$ satisfies condition~a) of Definition~\ref{def:unwrapping},
   since $\barf|_I$ is injective, with $\barf(y,1)\in I\cup J$ for all $y\in S$
   by (U2)\,--\,(U5); it satisfies condition~b) since $\Upsilon\circ\barf(y,1)
   = f\circ\tau(y)$ for all $y\in S$ by (U2)\,--\,(U5) and~\eqref{eq:commute};
   and it satisfies condition~c) by (U1). It is therefore an unwrapping of~$f$
   as required.
\end{proof}

\begin{defns}[$H$, $\hT$, $U$, $\hH\colon\hT\to\hT$, $\hB\colon\hS\to\hS$] 
  As in Section~\ref{sec:barge-martin}, set $H =\Upsilon\circ\barf\colon T\to
  T$, and observe that $H|_I=f$. Write
  \begin{eqnarray*}
    \hT &=& \invlim(T,H) \quad\text{ (a topological sphere),}\\
    \hI &=& \invlim(I,H) = \invlim(I,f), \quad\text{ and}\\ 
    U   &=& \hT\setminus \hI.
  \end{eqnarray*} 
  Let $\hH\colon\hT\to\hT$ be the natural extension of $H\colon T\to T$, so
  that $\hH|_{\hI} = \hf$, the natural extension of $f\colon I\to I$.

  Let $\hB\colon\hS\to\hS$ denote the natural extension of the outside map
  $B\colon S\to S$. This is a circle homeomorphism, since $\hS$ is a
  topological circle by Brown's theorem.
\end{defns}

We next introduce some notation for the elements of~$U$. The key fact here is
that if $y\in S$ and $s\in[0,1/2)$, then $H(y,s) =
  \Upsilon\circ\barf(y,s) = \Upsilon(B(y),s) = (B(y), 2s)$. Recall that we
  denote by~$\partial$ the element $\thr{\partial, \partial, \ldots}$ of~$\hT$.

\begin{notation}[Threads $\bT(\by,s)$ and $\bT(\by,s,k)$ in $U$]\mbox{}
\label{notn:threads-U}
  \begin{enumerate}[(a)]
  \item For each $\by\in\hS$ and $s\in(0,1)$, define
  \begin{equation}
  \label{eq:thread1}
  \bT(\by, s) = \thr{(y_0,s), (y_1,s/2), (y_2,s/4), \ldots} \in U.
  \end{equation}
  \item For each $\by\in\hS$, $s\in[1/2,1)$, and $k\ge 0$, define
  \begin{equation}
  \label{eq:thread2}
  \bT(\by, s, k) = \thr{f^k(H(y_0,s)), \ldots, f(H(y_0,s)), H(y_0,s), (y_0,s),
    (y_1,s/2), (y_2, s/4), \ldots} \in U.
  \end{equation}
  \end{enumerate}
\end{notation}
\begin{lem}
\label{lem:threads-U} 
  Every element of~$U\setminus\{\partial\}$ is equal to exactly one of the
  threads of \Notation~\ref{notn:threads-U}.
\end{lem}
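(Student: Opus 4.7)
My plan is to analyze the coordinate structure of a thread $\bx = \thr{x_0, x_1, \ldots} \in U\setminus\{\partial\}$ by tracking the second coordinates of the points $x_i\in T$. The key preparatory step is to establish that for $(y,s)\in T\setminus\{\partial\}$, one has $H(y,s)\in I$ if and only if $s\ge 1/2$. This follows by inspecting $\barf$: condition~(U1) gives second coordinate $s$ when $s\le 1/2$, while (U2)--(U5) give second coordinate in $[1/2,1]$ when $s\in[1/2,1]$; then $\Upsilon$ sends second coordinates $\le 1/2$ to $2s$ and $\ge 1/2$ to~$1$. A similar inspection yields $H^{-1}(\partial)=\{\partial\}$, so $\bx\ne\partial$ implies no $x_i=\partial$.

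Given $\bx\in U\setminus\{\partial\}$, let $N=\min\{i\ge 0\,:\, x_i\notin I\}$, which is finite because $\bx\notin\hI$. Since $H(I)\subseteq I$, the condition $x_j\notin I$ propagates: $x_j\notin I$ for all $j\ge N$. If $N\ge 1$, then $H(x_N)=x_{N-1}\in I$ together with the preparatory step forces the second coordinate of $x_N$ to lie in $[1/2,1)$; write $x_N=(y_0,s)$ with $s\in[1/2,1)$. For $j>N$, since $x_{j-1}\notin I$, the preparatory step forces the second coordinate $s_j'$ of $x_j=(y_j',s_j')$ to be strictly less than~$1/2$, and then~(U1) applies to give $H(x_j)=(B(y_j'),2s_j')$. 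Matching with $x_{j-1}$ yields inductively $s_j'=s/2^{j-N}$ and $B(y_j')=y_{j-1}'$. Reindexing $y_i=y_{N+i}'$ shows that $\by=\thr{y_0,y_1,\ldots}\in\hS$, and with $k=N-1$ the indices $j\ge N$ of $\bx$ match those of $\bT(\by,s,k)$. For $j<N$, the $I$-portion satisfies $x_j=f^{k-j}(H(y_0,s))$ by $H|_I=f$, so $\bx=\bT(\by,s,k)$. The case $N=0$ is handled analogously, producing $\bx=\bT(\by,s_0)$ with $s_0\in(0,1)$.

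For uniqueness, the two families are disjoint because type~(a) threads satisfy $x_0=(y_0,s)\notin I$ (as $s<1$), whereas type~(b) threads have $x_0=f^k(H(y_0,s))\in I$. Within type~(a), the pair $(y_0,s)$ is read off directly from $x_0$ and subsequent $y_i$ are the first coordinates of $x_i$; within type~(b), $k$ is determined as the largest index with $x_k\in I$, then $(y_0,s)=x_{k+1}$ and the remaining $y_i$ are first coordinates of $x_{k+1+i}$, so the parameters $(\by,s,k)$ are uniquely recoverable from $\bx$. The main care point in the existence argument is the boundary case $s_N=1/2$ at the transition between the $I$-portion and the non-$I$-portion, but the well-definedness of $\barf$ proved in Lemma~\ref{lem:barf-is-unwrapping} already guarantees compatibility between~(U1) and (U2)--(U5) there, so no extra work is required.
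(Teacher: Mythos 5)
Your proof is correct and follows essentially the same route as the paper's: locate the least index $N$ with $x_N\notin I$, observe that the tail entries must have second coordinates halving under $H$ (so their first coordinates form a thread of $\hS$), and match the result against $\bT(\by,s)$ or $\bT(\by,s,N-1)$ according as $N=0$ or $N\ge 1$. The paper's own proof is much terser and leaves implicit the points you spell out (the characterization $H(y,s)\in I\iff s\ge 1/2$, the propagation of $x_j\notin I$, and the uniqueness), so your additional detail is welcome but not a different argument.
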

\begin{proof} 
  Let $\bx\in U\setminus\{\partial\}$. Since $\bx\not\in\hI$, there is some
  least~$k\ge 0$ with $x_k\not\in I$: therefore $x_k = (y,s)$ for some $y\in S$
  and $s\in(0,1)$.

  If $k=0$ then $\bx = \bT(\by,s)$, where $y_i$ is the first component of~$x_i$
  for each~$i$. On the other hand, if $k\ge 1$ then, since $H(x_k)\in I$, we
  have $s\in[1/2,1)$; and $\bx = \bT(\by,s,k-1)$ where $y_i$ is the first
  component of~$x_{k+i}$ for each~$i$.
\end{proof}

The interesting entry of the threads $\bT(\by,s,k)$ is $H(y_0,s)\in I$, which
 is where the transition takes place from the dynamics of the outside map to
 the dynamics of the unimodal map.

\begin{remark} 
  The unwrapping~$\barf$ varies continuously with the unimodal map~$f$. It
  follows from Theorem~\ref{thm:unwrap-family} that if $\{f_t\}$ is a
  continuously varying family of unimodal maps, then the spheres constructed
  above can be identified with a standard model in such a way that the
  homeomorphisms $\hH_t$ and the attractors $\hI_t$ vary continuously.
\end{remark}

\section{Calculation of prime ends}
\label{sec:prime-end-calc} 

In this section we determine the prime ends of~$(\hT, \hI)$ for any unimodal
map~$f$ satisfying the conditions of Convention~\ref{conv:unimodal}. The main
tool that we use is an explicit homeomorphism~$\Psi$ from the open disk
$D=\hS\times[0,\infty)/(\hS\times\{0\})$ to~$U = \hT\setminus\hI$, which is
defined in Section~\ref{sec:Psi}. We will see that~$\Psi$ conjugates $\hH|_{U}$
to the product of $\hB$ and a simple push on~$D$
(Corollary~\ref{cor:Psi-conjugates}).

In Section~\ref{sec:loc-unif-land} we define the {\em locally uniformly landing
  set}~$\cR$, a subset of~$\hS$ with the property that~$\Psi$ extends
  continuously over $\cR\times\{\infty\}$.  In Section~\ref{sec:good-chain} we
  impose some additional conditions (which we show later are always satisfied),
  and use these to construct a homeomorphism between $\hS$ and the circle~$\PP$
  of prime ends.

The structure of the locally uniformly landing set for a specific unimodal
map~$f$ depends on the dynamics of the outside map $B\colon S\to S$, which is
discussed in Section~\ref{sec:outside-dynamics}. Armed with the results of that
section, we will be able to complete the calculation of prime ends. The details
of this calculation depend on whether $f$ is of irrational type, of rational
interior type, or of rational endpoint type, and these cases are presented in
Sections~\ref{sec:irrational},~\ref{sec:rational}, and~\ref{sec:rat-endpoint}
respectively.

\subsection{The homeomorphism \texorpdfstring{$\Psi\colon D\to U$}{Psi}}
\label{sec:Psi}
\begin{notations}[$D$, $\bD$, $\partial'$, $\hS_\infty$, $X_\infty$, the
    push~$\lambda$, the homeomorphisms $G\colon\bD\to\bD$ and $G\colon D\to D$]
  \label{notn:G-et-al}  \mbox{}\\   
    Write $D=\hS\times[0,\infty)/(\hS\times\{0\})$ and $\bD =
    \hS\times[0,\infty]/(\hS\times\{0\})$. We regard $D$ as a subset of
    $\bD$, and use coordinates $(\by,s)\in \hS\times[0,\infty]$ on $D$ and
    $\bD$: these coordinates are singular at $\partial'$, the point
    corresponding to $\hS\times\{0\}$.

     Write $\hS_\infty = \hS\times\{\infty\}
    \subset \bD$, the circle at infinity. Similarly, given any subset $X$ of
    $\hS$, we write $X_\infty = X\times\{\infty\}\subset \hS_\infty$.

Let $\lambda\colon[0,\infty]\to[0,\infty]$ be defined by
\[
  \lambda(s) =
  \begin{cases} 
    2s & \text{ if }s\in [0,1],\\ s+1 & \text{ if }s\in [1,\infty),\\
    \infty & \text{ if }s=\infty,
  \end{cases}
\] 
and $G\colon \bD\to \bD$ be the homeomorphism defined by $G(\by,s) = (\hB(\by),
\lambda(s))$. We denote the restriction to~$D$ with the same symbol, $G\colon
D\to D$.
\end{notations}

In this section we define an explicit homeomorphism $\Psi\colon D\to U$, which
is constructed in such a way that it conjugates $G\colon D\to D$ to $\hH\colon
U\to U$, thereby providing a coordinate system on~$U$ in which the action
of~$\hH$ is very easy to understand. We will see in subsequent sections
that~$\Psi$ extends over an open dense subset of the circle $\hS_\infty$ as a
homeomorphism into $\hT$. The non-trivial prime ends of $(\hT, \hI)$ can be
understood in terms of the action of $\Psi$ on rays in~$D$ which converge to
points of $\hS_\infty$ at which~$\Psi$ is discontinuous or not defined.

The surjectivity of~$\Psi$ will be an immediate consequence of its definition
(Lemma~\ref{lem:Psi-surjective}). To show that it is continuous and injective,
we first establish that it semi-conjugates $G$ and $\hH$
(Lemma~\ref{lem:conjugation}), and then use this semi-conjugacy to extend the
obvious continuity and injectivity on $\hS\times(0,1)$ over the rest of~$D$
(Corollary~\ref{cor:Psi-homeo}).

In order to define $\Psi$, it will be convenient to introduce the following
notation (in which it should be noted carefully that~$v$ is {\em not} the
fractional part of~$s$).
\begin{notation}[Splitting $s$ into parts] 
  We define $P\colon [0,\infty)\to \N \times [1/2,1)$ by $P(s) = (t,v)$, where
  $t=\floor{s}$ is the integer part of~$s$ and $v=(u+1)/2$, where~$u=s-t$ is
  the fractional part of~$s$.
\end{notation}

\begin{defn}[$\Psi\colon D\to U$] 
  Define $\Psi\colon D\to U$ by $\Psi(\partial') = \partial$ and
  \begin{equation}
  \label{eq:Psi}
    \Psi(\by,s) =
    \begin{cases}
      \bT(\by,s) & \text{ if }s\in(0,1),\\
      \bT(\hB^{-t}(\by), v, t-1), & \text{ where $(t,v) = P(s)$, if $s\ge1$.}
    \end{cases}
\end{equation}
\end{defn} 
Substituting \eqref{eq:thread2} into the formula for $\Psi(\by,s)$
in the case $s\ge 1$ yields the useful alternative expression
\begin{equation}
\label{eq:convenient}
  \Psi(\by,s) = \thr{f^{t-1}(H(y_t,v)),
    \ldots, f(H(y_t,v)), H(y_t,v), (y_t,v), (y_{t+1}, v/2), \ldots}
  \quad\text{ when }s\ge 1.
\end{equation}

Therefore the number of entries of $\Psi(\by,s)$ which are in~$I$ is equal to
the integer part~$t$ of~$s$. We will frequently use the following immediate
consequence of~(\ref{eq:convenient}):

\begin{equation}
\label{eq:convenient-2}
  \Psi(\by,s)_r = f^{t-1-r}(H(y_t,v))\qquad\text{for $0\le r\le t-1$, where
  $P(s)=(t,v)$}.
\end{equation}

When $y\in\gamma$, the definition of $\barf(y,s)$, and hence of $H(y,s)$,
depends on whether~$s$ is smaller or greater than $\phi^{-1}(f(\tau(y)))$ (see
(U3) and (U4) of Definition~\ref{def:unwrapping-unimodal}).
By~(\ref{eq:convenient}), the behavior of $\Psi(\by,s)$ therefore depends on
whether $v=(u+1)/2$ is smaller or greater than this value; that is, on whether
the fractional part~$u$ of $s$ is smaller or greater than
$2\phi^{-1}(f(\tau(y_t)))-1$. This value will frequently be significant in the
remainder of the paper, and the following notation will be useful.

\begin{notation}[The function $u\colon S\to\lbrack0,1\rbrack$]
\label{notn:u-y} 
  Define $u\colon S\to[0,1]$ by
  \[ 
    u(y) =
    \begin{cases} 
      2\phi^{-1}(f(\tau(y)))-1 & \text{ if }y\in\gamma,\\ 
      0 & \text{ otherwise.}
    \end{cases}
  \]
\end{notation} 
In particular $u(c_u)=1$ and $u(a)=u(\alpha_u)=0$. The following
lemma gives the key property of the function~$u$.

\begin{lem}
\label{lem:u-key} 
  Let $y\in S$ and $v\in[1/2,1)$, and write $u=2v-1\in[0,1)$.
  Then
  \[ H(y,v) =
  \begin{cases} 
    f(\tau(y)) & \text{ if }u\ge u(y),\\
    \phi(v) & \text{ if }u<u(y).
  \end{cases}
  \]
\end{lem}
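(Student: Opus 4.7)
The plan is to compute $H(y,v) = \Upsilon(\barf(y,v))$ directly from the five-clause piecewise definition of $\barf$ in Definition~\ref{def:unwrapping-unimodal}, and to match the outcome against the split $u \ge u(y)$ versus $u < u(y)$.

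A useful initial reduction is that whenever $(z,s') = \barf(y,v)$ with $v \in [1/2, 1)$, the second coordinate $s'$ is at least $1/2$ (as is visible in each of (U1)--(U5)), so $\Upsilon(z,s') = (z,1) = \tau(z) \in I$. Thus $H(y,v)$ will just be $\tau$ applied to the first coordinate of $\barf(y,v)$, and the problem reduces to identifying this first coordinate in each case.

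First I would dispose of $y \notin \gamma$, where $u(y) = 0$ so the condition $u \ge u(y)$ is vacuous and the desired equality is $H(y,v) = f(\tau(y))$. For $y \in [c_\ell, \re_u]$ this will be immediate from clause (U2) combined with \eqref{eq:commute}, and for $y \in [a, c_\ell]$ clause (U5) presents two subcases for the second coordinate of $\barf(y,v)$, but in both the first coordinate is $B(y)$, giving $\tau(B(y)) = f(\tau(y))$ once more by \eqref{eq:commute}.

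The substantive case is $y \in \gamma$. Here $u(y) = 2\phi^{-1}(f(\tau(y)))-1$, and the condition $u \ge u(y)$ rewrites, via $u = 2v-1$, as $v \ge \phi^{-1}(f(\tau(y)))$. I would then split according to whether $y \in [\re_u, c_u]$ (clause (U3)) or $y \in [c_u, a]$ (clause (U4)), and within each on whether $v$ lies above or below the threshold $\phi^{-1}(f(\tau(y)))$. In each of the four resulting subcases, (U3) or (U4) reads off the first coordinate of $\barf(y,v)$ as either $\phi(v)_\ell$ (when $v$ is below the threshold) or $f(\tau(y))_\ell$ (when $v$ is above), and applying $\tau$ yields $\phi(v)$ or $f(\tau(y))$ respectively, matching the two cases of the lemma exactly. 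The only point needing any care is consistency at the boundary $y = c_u$, which sits in both (U3) and (U4): here $u(c_u) = 2\phi^{-1}(b) - 1 = 1$, so every $u \in [0,1)$ satisfies $u < u(c_u)$, and both clauses consistently produce $\barf(c_u, v) = (\phi(v)_\ell, v)$, hence $H(c_u, v) = \phi(v)$. There is no real obstacle here; the verification is essentially bookkeeping against Definition~\ref{def:unwrapping-unimodal}, and the lemma's definition of $u(y)$ was clearly engineered so the case boundaries line up.
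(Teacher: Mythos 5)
Your proposal is correct and follows essentially the same route as the paper's proof: reduce to identifying the first coordinate of $\barf(y,v)$, handle $y\notin\gamma$ via (U2)/(U5) and \eqref{eq:commute} (the paper cites Remark~\ref{rmk:barf}~(a) for this), and for $y\in\gamma$ translate $u\ge u(y)$ into $v\ge\phi^{-1}(f(\tau(y)))$ and read off (U3)/(U4). The extra check at $y=c_u$ is harmless bookkeeping the paper omits.
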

\begin{proof} 
  If $y\not\in\gamma$ then necessarily $u\ge u(y)$, and
  $H(y,v)=\tau(B(y)) = f(\tau(y))$ by Remark~\ref{rmk:barf}~(a)
  and~(\ref{eq:commute}).

  If $y\in\gamma$ and $u\ge u(y)$, then $v\ge\phi^{-1}(f(\tau(y)))$, and hence
  the first component of $\barf(y,v)$ is $f(\tau(y))_\ell$ by~(U3) and~(U4) of
  Definition~\ref{def:unwrapping-unimodal}. Therefore $H(y,v)=\Upsilon\circ
  \barf(y,v) = f(\tau(y))$ as required.

  If $y\in\gamma$ and $u<u(y)$, then $v<\phi^{-1}(f(\tau(y)))$, and hence the
  first component of $\barf(y,v)$ is $\phi(v)_\ell$ by~(U3) and~(U4) of
  Definition~\ref{def:unwrapping-unimodal}. Therefore $H(y,v) = \phi(v)$ as
  required.
\end{proof}

\begin{remark} 
  If $y\in\gamma$ and $u=u(y)$ then $H(y,v)=f(\tau(y))=\phi(v)$. On the other
  hand, if $y\not\in\gamma$ and $u=u(y)=0$, then it need not be the case that
  $H(y,v)=\phi(v)$: we have $H(y,v)=f(\tau(y))$, while $\phi(v)=\phi(1/2) =
  f(a)$.
\end{remark}

\begin{lem}
\label{lem:Psi-surjective} 
  $\Psi\colon D\to U$ is  surjective.
\end{lem}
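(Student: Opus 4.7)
My plan is to prove surjectivity by invoking Lemma~\ref{lem:threads-U}, which tells us that every element of $U\setminus\{\partial\}$ is of one of exactly two forms, namely $\bT(\by,s)$ with $\by\in\hS$ and $s\in(0,1)$, or $\bT(\by,s,k)$ with $\by\in\hS$, $s\in[1/2,1)$, and $k\ge0$. Together with the point~$\partial$, this gives three cases to handle, and I will exhibit a preimage under~$\Psi$ in each case.

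For $\partial$, the definition gives $\Psi(\partial')=\partial$ immediately. For a thread of the form $\bT(\by,s)$ with $s\in(0,1)$, the first branch of the definition of~$\Psi$ yields $\Psi(\by,s)=\bT(\by,s)$, so $(\by,s)\in D$ is the desired preimage.

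The remaining case, which is the only one requiring any calculation, is $\bT(\by,s,k)$ with $s\in[1/2,1)$ and $k\ge0$. Here I will set $r=k+2s$ and $\bz=\hB^{k+1}(\by)\in\hS$. Since $2s\in[1,2)$, we have $r\in[k+1,k+2)$, so $t:=\floor{r}=k+1$ and the fractional part of~$r$ is $2s-1$, giving $v=((2s-1)+1)/2=s$. Thus $P(r)=(k+1,s)$, and the second branch of the definition gives
\[
\Psi(\bz,r)=\bT\bigl(\hB^{-(k+1)}(\bz),\,s,\,k\bigr)=\bT(\by,s,k),
\]
using that $\hB^{-(k+1)}(\hB^{k+1}(\by))=\by$.

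There is no real obstacle here: the proof is just a matter of unpacking the definition of~$\Psi$ together with that of $P$, and matching the number of ``interval entries'' (which is~$t$ for a thread in the image of the second branch, and $k+1$ for a thread $\bT(\by,s,k)$) by choosing $r$ so that $\floor{r}=k+1$. The only thing to watch is the conversion between the fractional part~$u$ of~$r$ and the variable~$v=(u+1)/2$ used in the definition.
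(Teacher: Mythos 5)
Your proof is correct and follows essentially the same route as the paper: both reduce to Lemma~\ref{lem:threads-U} and, for the threads $\bT(\by,s,k)$, exhibit the preimage $(\hB^{k+1}(\by),\,k+1+(2s-1))$, verifying that $P(k+2s)=(k+1,s)$. Your write-up merely spells out the integer/fractional-part bookkeeping in slightly more detail.
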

\begin{proof} 
  Recall (Lemma~\ref{lem:threads-U}) that every element
  of~$U\setminus\{\partial\}$ is either of the form $\bT(\by,s)$ for some
  $\by\in\hS$ and $s\in(0,1)$; or of the form $\bT(\by,s,k)$ for some
  $\by\in\hS$, $s\in[1/2,1)$ and $k\ge0$. In the former case we have
  $\bT(\by,s) = \Psi(\by,s)$; while in the latter case $\bT(\by,s,k) =
  \Psi(\hB^{k+1}(\by), k+1+(2s-1))$ by direct substitution into~(\ref{eq:Psi}),
  since $P(k+1+(2s-1))=(k+1, s)$. 

  Since $\partial=\Psi(\partial')$, this establishes the surjectivity of
  $\Psi$.
\end{proof}

\begin{lem}
\label{lem:conjugation} 
  $\hH\circ\Psi = \Psi\circ G\colon D\to U$.
\end{lem}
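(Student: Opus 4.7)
The identity is a direct verification by case analysis on the piecewise definition of $\Psi$, using the simple fact that the natural extension $\hH$ prepends $H$ of the zeroth coordinate. I first dispose of the basepoint: $\Psi(\partial')=\partial$, $G(\partial')=\partial'$, and $\hH(\partial)=\partial$, so both sides agree there. For $(\by,s)\in D\setminus\{\partial'\}$ I split according to which branch of~\eqref{eq:Psi} defines $\Psi(\by,s)$.

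When $s\in(0,1)$ we have $\Psi(\by,s)=\bT(\by,s)=\thr{(y_0,s),(y_1,s/2),\ldots}$, so $\hH\circ\Psi(\by,s)$ prepends $H(y_0,s)$. Here I sub-divide according to whether $s<1/2$ or $s\in[1/2,1)$. In the first sub-case, $G(\by,s)=(\hB(\by),2s)$ with $2s\in(0,1)$, so $\Psi(G(\by,s))=\bT(\hB(\by),2s)$; condition~(U1) of Definition~\ref{def:unwrapping-unimodal} together with the smash formula yields $H(y_0,s)=\Upsilon(B(y_0),s)=(B(y_0),2s)$, and a term-by-term comparison (noting $\hB(\by)_0=B(y_0)$ and $\hB(\by)_i=y_{i-1}$ for $i\ge 1$) gives the equality. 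In the second sub-case, $2s\in[1,2)$ with $P(2s)=(1,s)$, so $G$ carries the pair into the second branch of~\eqref{eq:Psi} and $\Psi(G(\by,s))=\bT(\hB^{-1}\hB(\by),s,0)=\bT(\by,s,0)$, which by~\eqref{eq:thread2} matches $\hH(\bT(\by,s))$ outright.

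For $s\ge 1$, write $P(s)=(t,v)$, so that $\Psi(\by,s)=\bT(\hB^{-t}(\by),v,t-1)$; by~\eqref{eq:convenient-2} its zeroth entry is $f^{t-1}(H(y_t,v))\in I$. Since $H|_I=f$, applying $\hH$ prepends $f^t(H(y_t,v))$. On the other side, $G(\by,s)=(\hB(\by),s+1)$ and $P(s+1)=(t+1,v)$, so
\[
\Psi(G(\by,s))=\bT\bigl(\hB^{-(t+1)}(\hB(\by)),v,t\bigr)=\bT(\hB^{-t}(\by),v,t),
\]
and~\eqref{eq:thread2} shows this is obtained from $\Psi(\by,s)$ by prepending precisely $f^t(H(y_t,v))$, completing the equality.

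The only real obstacle is bookkeeping: keeping the $\hB^{-t}$ shifts and thread indices consistent across the two definitional branches of~\eqref{eq:Psi}, and checking coherence at the transition $s=1/2$ between the two sub-cases (where the $H$-prepended coordinate crosses from the ``outside'' region into~$I$) and at $s=1$ between the main cases (where the second formula of~\eqref{eq:Psi} takes over). No deeper idea is needed beyond careful substitution into the definitions of $\bT$, $\Psi$, and $H$.
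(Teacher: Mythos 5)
Your proof is correct and follows essentially the same route as the paper's: the same case split ($\partial'$, then $s\in(0,1/2)$, $s\in[1/2,1)$, and $s\ge 1$), with the same use of (U1) and the smash in the first sub-case, the identity $P(2s)=(1,s)$ in the second, and $H|_I=f$ to see that $\hH$ prepends $f^t(H(y_t,v))$ in the last. No issues.
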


\begin{proof} 
  The proof is a straightforward calculation by cases. Let $(\by,s)\in D$. If
  $s=0$ then $(\by, s)=\partial'$, and $\hH(\Psi(\partial'))=\Psi(G(\partial'))
  = \partial$. We therefore assume that~$s>0$.
  \begin{enumerate}[(a)]
    \item If $s\in(0,1/2)$ then $H(y_0,s)=\Upsilon\circ\barf(y_0,s) = (B(y_0),
      2s)$ by~(U1) of Definition~\ref{def:unwrapping-unimodal} and
      Definition~\ref{def:retraction-smash}. Therefore
      \[
        \hH(\Psi(\by,s)) = \hH(\bT(\by,s)) =
        \thr{(B(y_0),2s),(y_0,s),(y_1,s/2),\ldots} = \bT(\hB(\by),2s) = \Psi(G(\by,s)).
      \]
    \item If $s\in[1/2,1)$ then $P(\lambda(s)) = P(2s) = P(1+(2s-1))=(1,s)$, so
      that
      \[
        \Psi(G(\by,s))=\Psi(\hB(\by),\lambda(s)) = \bT(\hB^{-1}(\hB(\by)),s,0)
        =
        \bT(\by,s,0) = \thr{H(y_0,s),(y_0,s),\ldots} = \hH(\Psi(\by,s)).
      \]
      \item If $s\in[1,\infty)$ then $\lambda(s)=s+1$, so that if $P(s)=(t,v)$ then
        $P(\lambda(s))=(t+1,v)$. Therefore
      \[
        \Psi(G(\by,s))=\Psi(\hB(\by), \lambda(s)) = \bT(\hB^{-(t+1)}(\hB(\by)), v, t) =
        \bT(\hB^{-t}(\by), v, t) = \hH(\Psi(\by,s)),
      \] 
      since $H(f^{t-1}(H(y_t,v))) = f^t(H(y_t,v))$.
  \end{enumerate}
\end{proof}

\begin{cor}
\label{cor:Psi-homeo} 
  $\Psi\colon D\to U$ is a homeomorphism.
\end{cor}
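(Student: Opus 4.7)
The strategy is to exploit Lemmas~\ref{lem:Psi-surjective} and~\ref{lem:conjugation} to reduce everything to direct verification on the \emph{base region} $B_0 := (\hS\times[0,1))/(\hS\times\{0\})\subset D$, and then finish with invariance of domain. Since surjectivity is already in hand, we need only continuity, injectivity, and openness.

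\textbf{Step 1 (direct check on the base region).} On $B_0\setminus\{\partial'\}$ the map is given by the simple formula $\Psi(\by,s) = \thr{(y_0,s),(y_1,s/2),(y_2,s/4),\ldots}$, which is manifestly continuous in $(\by,s)$ using the inverse-limit metric on $\hT$, and which is injective because its first coordinate already recovers $s\in(0,1)$ and $y_0\in S$, after which the remaining coordinates determine all of $\by\in\hS$. Continuity at $\partial'$ is immediate, since as $s\to 0$ each coordinate $(y_i,s/2^i)$ tends to $\partial$ in $T$.

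\textbf{Step 2 (transfer by the semi-conjugacy).} Iterating Lemma~\ref{lem:conjugation} gives $\Psi = \hH^n\circ\Psi\circ G^{-n}$ on all of $D$ for every $n\ge 0$. For an arbitrary point $(\by_0,s_0)\in D$, choose $n$ so large that $\lambda^{-n}(s_0)\in[0,1)$; this is possible because $\lambda^{-1}$ maps $[k,k+1)$ into $[k-1,k)$ for $k\ge 1$ and halves $[0,1)$, so $\lambda^{-k}(s)\to 0$. Then $G^{-n}$ is a homeomorphism of $D$ carrying $(\by_0,s_0)$ into $B_0$, where $\Psi$ is continuous by Step~1; composing with the homeomorphism $\hH^n$ of $\hT$ yields continuity of $\Psi$ at $(\by_0,s_0)$. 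For injectivity, suppose $\Psi(\bp_1)=\Psi(\bp_2)$; choose $n$ so that both $G^{-n}(\bp_i)$ lie in $B_0$, apply $\hH^{-n}$ to obtain $\Psi(G^{-n}(\bp_1))=\Psi(G^{-n}(\bp_2))$, use injectivity on $B_0$ from Step~1, and invert $G^n$.

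\textbf{Step 3 (openness via invariance of domain).} The space $D$ is an open topological disk (an open cone on the circle $\hS$), and $\hT$ is a topological $2$-sphere by Theorem~\ref{thm:Brown}. Thus $\Psi\colon D\to\hT$ is a continuous injection between $2$-manifolds, so by invariance of domain it is an open map onto its image. Combined with Lemma~\ref{lem:Psi-surjective}, this gives that $\Psi\colon D\to U$ is a homeomorphism.

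The only real subtlety is continuity across the circles $s=1,2,3,\ldots$, where the formula~(\ref{eq:Psi}) for $\Psi$ switches between the two cases and the straightforward inspection of Step~1 does not apply. The point of the plan is that the semi-conjugacy allows one to bypass any direct matching of the two pieces of the definition, replacing that computation with the trivial base-region check.
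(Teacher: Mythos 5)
Your proof is correct and follows essentially the same route as the paper: a direct continuity/injectivity check on $\hS\times[0,1)$, propagation to all of $D$ via the identity $\Psi=\hH^N\circ\Psi\circ G^{-N}$ from Lemma~\ref{lem:conjugation}, and invariance of domain for continuity of the inverse. The only (harmless) difference is that you apply invariance of domain uniformly on the open disk $D$, whereas the paper covers $U$ by the two overlapping sets $\Psi(\hS\times[0,1))$ and $\Psi(\hS\times(0,\infty))$.
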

\begin{proof} 
  For each~$N\ge 1$, the restriction of $G^{-N}$ to $\hS\times[0,N+1)$ is a
  homeomorphism onto $\hS\times[0,1)$. Lemma~\ref{lem:conjugation} gives
  \begin{equation}
  \label{eq:HGPsi}
    \Psi|_{\hS\times[0,N+1)} = \hH^N\circ \Psi|_{\hS\times[0,1)} \circ
    G^{-N}|_{\hS\times[0,N+1)}.
  \end{equation} 
  Since $\Psi$ is evidently continuous and injective on $\hS\times[0,1)$, it is
  continuous and injective on $\hS\times [0,N+1)$ for each~$N$, and hence
  on~$D$. Therefore (using Lemma~\ref{lem:Psi-surjective}) $\Psi$ is a
  continuous bijection.

  $\Psi^{-1}$ is clearly continuous on $\Psi(\hS\times[0,1))$, and it is
            continuous on $\Psi(\hS\times(0,\infty))$ by invariance of domain.
\end{proof}

Combining Lemma~\ref{lem:conjugation} and Corollary~\ref{cor:Psi-homeo} gives:
\begin{cor}
\label{cor:Psi-conjugates} 
  $\Psi$ is a topological conjugacy between~$G\colon D\to D$ and $\hH\colon
   U\to U$. \qed
\end{cor}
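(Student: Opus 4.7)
The plan is essentially to package the two preceding results, since a topological conjugacy between self-maps $G\colon D\to D$ and $\hH\colon U\to U$ is by definition a homeomorphism $\Psi\colon D\to U$ satisfying $\Psi\circ G = \hH\circ \Psi$, and each of these two properties has just been proved.

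First I would cite Corollary~\ref{cor:Psi-homeo} to obtain that $\Psi\colon D\to U$ is a homeomorphism; in particular $\Psi^{-1}\colon U\to D$ exists and is continuous. Next I would cite Lemma~\ref{lem:conjugation} to obtain the intertwining identity $\hH\circ\Psi = \Psi\circ G$ on $D$. Composing with $\Psi^{-1}$ on the right (which is legal now that $\Psi$ is known to be bijective) yields the conjugacy relation $\hH = \Psi\circ G\circ \Psi^{-1}$ on $U$, or equivalently $G = \Psi^{-1}\circ\hH\circ\Psi$ on $D$. This is exactly the assertion of the corollary.

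There is no substantive obstacle here: all the work has already been done in the verification of Lemma~\ref{lem:conjugation} (the three case computation showing that $\Psi$ carries the push $\lambda$ on the second coordinate and the shift by $\hB$ on the first coordinate to the action of $\hH$ on threads in $U$) and in the proof of Corollary~\ref{cor:Psi-homeo} (where continuity and injectivity of $\Psi$ on the strip $\hS\times[0,1)$ were extended to all of $D$ by conjugating with powers of $G$ and $\hH$ via equation~\eqref{eq:HGPsi}, together with invariance of domain for the continuity of the inverse). Accordingly, the proof is a one-line combination, which is why the paper simply states the corollary with the flag ``Combining Lemma~\ref{lem:conjugation} and Corollary~\ref{cor:Psi-homeo} gives'' and a $\qed$ at the end.
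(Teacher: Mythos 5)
Your proposal is correct and is exactly the paper's argument: the corollary is stated immediately after the line ``Combining Lemma~\ref{lem:conjugation} and Corollary~\ref{cor:Psi-homeo} gives'', so the conjugacy is obtained precisely by pairing the homeomorphism property with the intertwining identity $\hH\circ\Psi=\Psi\circ G$. Nothing further is needed.
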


\subsection{Extension to the circle at infinity}
\label{sec:loc-unif-land} We now investigate the extension of $\Psi$ to points
\mbox{$(\by,\infty)\in\hS_\infty$}.

\begin{defns}[The rays $R_\by$, the landing set~$\cL$, the landing function
    $\omega\colon\cL\to\hI$,\,\,$\sD$, $\Psi\colon \sD\to\hT$]
\label{def:landing-etc} 
  For each~$\by\in\hS$, let $R_\by\colon[0,\infty)\to U$ be the ray defined by
  $R_{\by}(s)=\Psi(\by,s)$. Define the {\em landing set} $\cL\subset\hS$ to be
  the set of $\by\in\hS$ for which $R_\by$ lands; and let
  $\omega\colon\cL\to\hI$ denote the {\em landing function}, which takes each
  $\by\in\cL$ to the landing point of $R_\by$. We write $\sD =
  D\cup\cL_\infty\subset\bD$, and extend $\Psi\colon D\to U$ to a function
  $\Psi\colon\sD\to\hT$ by setting $\Psi(\by,\infty)=\omega(\by)$ for each
  $\by\in\cL$.
\end{defns}

The main results of this section are:
\begin{enumerate}[(a)]
\item If all of the entries of the thread~$\by$ after the $(N+1)^\text{st}$ lie
  in $S\setminus\ingam$, then the first~$N+1$ entries of the thread
  $\Psi(\by,s)$ are independent of~$s$, provided that~$s\ge N+1$
  (Lemma~\ref{lem:formula-disjoint-gamma}). In particular
  (Corollary~\ref{cor:ray-lands}), $\by\in\cL$. For this reason we say that an
  element $\by$ of $\hS$ satisfying this condition is {\em landing of
  level~$N$}. We also show (Lemma~\ref{lem:landing-injective}) that the landing
  function~$\omega$ is injective on the set of all points which are landing of
  some level.
\item If all threads sufficiently close to~$\by$ are also landing of level~$N$
  (that is, if $\by$ has a locally uniformly landing neighborhood), then $\Psi$
  is continuous at~$(\by, \infty)$ (see
  Lemma~\ref{lem:Psi-extension-continuous} and
  Corollary~\ref{cor:Psi-maximal-extension}).
\end{enumerate}

\begin{defns}[Landing, $\cL_N$, uniformly landing, locally uniformly landing
    set~$\cR$]
\label{def:landing} 
    Let $N\in\N$. We say that $\by\in\hS$ is {\em landing of
    level~$N$} if $y_i\not\in\ingam$ for all $i>N$; and we write
    $\cL_N\subset\hS$ for the set of such points. (Therefore
    $\cL_0\subset\cL_1\subset\cL_2\subset\cdots$.) We say that a subset~$J$
    of~$\hS$ is {\em uniformly landing (of level~$N$)} if $J\subset\cL_N$. We
    write $\cR$ for the set of elements of~$\hS$ which have a uniformly landing
    neighborhood in~$\hS$.
\end{defns}

\begin{lem}
\label{lem:formula-disjoint-gamma} 
  Let $\by\in\cL_N$, and let $s\ge N+1$. Then,
  writing $P(s) = (t,v)$,
  \[
    \Psi(\by,s) = \thr{ f^N(\tau(y_N)), \ldots, f(\tau(y_N)), \tau(y_N),
      \tau(y_{N+1}),  \ldots, \tau(y_{t-2}),
    \tau(y_{t-1}), (y_t,v), (y_{t+1}, v/2), \ldots }.
  \]
\end{lem}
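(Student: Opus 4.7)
The plan is to apply the alternative formula~(\ref{eq:convenient-2}) and unwrap each entry using Lemma~\ref{lem:u-key} together with the intertwining identity~(\ref{eq:commute}). I would begin by setting $P(s) = (t,v)$, noting that the hypothesis $s \ge N+1$ forces $t \ge N+1$, and invoking~(\ref{eq:convenient-2}) to write $\Psi(\by,s)_r = f^{t-1-r}(H(y_t,v))$ for $0 \le r \le t-1$ and $\Psi(\by,s)_r = (y_r, v/2^{r-t})$ for $r \ge t$. The entries with $r \ge t$ already match the claim, so the task reduces to identifying the first~$t$ entries.

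Next I would compute $H(y_t,v)$. Since $\by \in \cL_N$ and $t > N$, the entry $y_t$ lies outside~$\ingam$; hence $u(y_t) = 0 \le 2v - 1$, and Lemma~\ref{lem:u-key} yields $H(y_t,v) = f(\tau(y_t))$, which~(\ref{eq:commute}) (again using $y_t \not\in \ingam$) rewrites as $\tau(B(y_t)) = \tau(y_{t-1})$. I would then iterate this observation: whenever $i > N$, the entry $y_i$ is outside~$\ingam$, so~(\ref{eq:commute}) gives $f(\tau(y_i)) = \tau(y_{i-1})$, and a straightforward induction on~$k$ produces $f^k(\tau(y_{t-1})) = \tau(y_{t-1-k})$ for $0 \le k \le t-1-N$.

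Finally I would substitute back into~(\ref{eq:convenient-2}). For $N \le r \le t-1$, this gives $\Psi(\by,s)_r = f^{t-1-r}(\tau(y_{t-1})) = \tau(y_r)$, matching the middle portion of the claimed thread. For $0 \le r < N$, taking the induction up to $k = t-1-N$ yields $f^{t-1-N}(\tau(y_{t-1})) = \tau(y_N)$, and applying~$f$ a further $N-r$ times produces $\Psi(\by,s)_r = f^{N-r}(\tau(y_N))$, the leading portion. I do not anticipate any serious obstacle here; the whole role of the landing hypothesis $\by \in \cL_N$ is precisely to guarantee that the identity $\tau \circ B = f \circ \tau$ can be pushed from index~$t$ all the way down to index~$N$.
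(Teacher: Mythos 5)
Your proposal is correct and follows essentially the same route as the paper's proof: both start from the expansion~(\ref{eq:convenient})/(\ref{eq:convenient-2}), identify $H(y_t,v)=\tau(y_{t-1})$ using $y_t\not\in\ingam$ (the paper via Remark~\ref{rmk:barf}~(a), you via Lemma~\ref{lem:u-key} plus~(\ref{eq:commute}), which amounts to the same computation), and then iterate $f(\tau(y_i))=\tau(y_{i-1})$ for $i>N$ to obtain the stated thread.
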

\begin{proof} 
  We have $t\ge N+1$. Now
  \begin{eqnarray*}
    \Psi(\by, s) &=&
    \thr{ f^{t-1}(H(y_t,v)), \ldots, f(H(y_t,v)), H(y_t,v), (y_t,v), (y_{t+1},
    v/2), \ldots } \\
    &=&
    \thr{ f^{t-1}(\tau(y_{t-1})), \ldots, f(\tau(y_{t-1})), \tau(y_{t-1}), (y_t,v),
    (y_{t+1}, v/2), \ldots }\\
    &=&
    \thr{ f^N(\tau(y_N)), \ldots, f(\tau(y_N)), \tau(y_N), \tau(y_{N+1}),  \ldots,
    \tau(y_{t-2}),
    \tau(y_{t-1}), (y_t,v), (y_{t+1}, v/2), \ldots }
  \end{eqnarray*} as required. Here the first equality
  is~(\ref{eq:convenient}); for the second, we use Remark~\ref{rmk:barf}~(a) to
  give that $\barf(y_t,v)_1 = B(y_t)=y_{t-1}$, since $y_t\not\in\ingam$, so
  that $H(y_t,v) =
  \Upsilon(\barf(y_t,v))=\tau(y_{t-1})$; and for the third, we use that
  $f(\tau(y_i))=\tau(B(y_i))=\tau(y_{i-1})$ for all~$i>N$
  by~(\ref{eq:commute}), since $y_i\not\in\ingam$ for these values of~$i$.
\end{proof}

\begin{cor}
\label{cor:ray-lands} 
  Let $\by\in\cL_N$. Then $\by\in\cL$ and
  \begin{equation}
  \label{eq:omega}
     \omega(\by) = \thr{ f^N(\tau(y_N)), \ldots, f(\tau(y_N)), \tau(y_N),
    \tau(y_{N+1}),
    \tau(y_{N+2}), \ldots }.
  \end{equation} 
  In particular, $\cR\subset\cL$. \qed
\end{cor}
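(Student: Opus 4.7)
The plan is to deduce the landing property directly from the explicit formula in Lemma~\ref{lem:formula-disjoint-gamma}: that lemma already spells out every coordinate of $\Psi(\by,s)$ for $s\ge N+1$, so all that remains is to take $s\to\infty$ coordinatewise and to check that the limiting sequence is a genuine element of $\hI$.

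First I fix $\by\in\cL_N$ and, applying Lemma~\ref{lem:formula-disjoint-gamma} with $P(s)=(t,v)$, observe that for every fixed $r\ge 0$, once $s\ge\max(r+1,N+1)$ (so that $t-1\ge r$), the $r$-th coordinate of $\Psi(\by,s)$ is \emph{exactly} the $r$-th coordinate of the candidate thread $\bz$ defined by the right-hand side of~\eqref{eq:omega} --- namely $f^{N-r}(\tau(y_N))$ when $r\le N$ and $\tau(y_r)$ when $r>N$. Since the metric on $\invlim(T,H)$ has the form $d(\bx,\bx')=\sum_{i\ge 0}d(x_i,x_i')/2^i$ and $T$ has finite diameter, this coordinatewise eventual equality forces $d(\Psi(\by,s),\bz)\to 0$ as $s\to\infty$: given $\varepsilon>0$, choose $R$ with $\diam(T)\sum_{i>R}2^{-i}<\varepsilon$, and then for $s$ sufficiently large the first $R+1$ coordinates of $\Psi(\by,s)$ and $\bz$ agree while the remainder is bounded by $\varepsilon$.

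Next I verify that $\bz$ is genuinely an element of $\hI$, i.e.\ that $f(z_{r+1})=z_r$ for every $r\ge 0$. For $r<N$ this is immediate from the formula $z_r=f^{N-r}(\tau(y_N))$. For $r\ge N$, since $y_{r+1}\in S\setminus\ingam$ (because $r+1>N$ and $\by\in\cL_N$), equation~\eqref{eq:commute} gives $f(\tau(y_{r+1}))=\tau(B(y_{r+1}))=\tau(y_r)$, matching the stated entries of $\bz$. Combined with the convergence above, together with the observation that $R_\by([0,s_0])$ is a compact subset of $U=\hT\setminus\hI$ for every $s_0$, this yields $\Rem(R_\by)=\{\bz\}$. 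Hence $\by\in\cL$ and $\omega(\by)=\bz$, and the ``in particular'' clause $\cR\subset\cL$ is then automatic, since every $\by\in\cR$ lies in some $\cL_N$ by the very definition of $\cR$.

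There is no serious obstacle here; the argument is essentially coordinatewise bookkeeping grounded in Lemma~\ref{lem:formula-disjoint-gamma}. The only point requiring a little care is handling the tail coordinates at positions $r\ge t$, which have the form $(y_r,v/2^{r-t})\in T\setminus I$ and cannot be made to match the corresponding entries of $\bz$ exactly; but the exponential weighting $1/2^i$ in the inverse limit metric absorbs this tail automatically, as described in the second paragraph.
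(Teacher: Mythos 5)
Your argument is correct and is exactly the route the paper intends: the corollary is stated with no separate proof precisely because it follows by letting $s\to\infty$ in the explicit formula of Lemma~\ref{lem:formula-disjoint-gamma}, with the exponentially weighted inverse-limit metric absorbing the tail coordinates. Your additional checks (that the limit is a genuine thread of $\hI$ via~\eqref{eq:commute}, and that $\cR\subset\bigcup_N\cL_N$ by definition) are the right bookkeeping and raise no issues.
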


\begin{remark} 
  Therefore $\bigcup_{N\ge 0}\cL_N \subset \cL$. We will see later that these
  two sets are equal, except in the late left endpoint case: see
  Remark~\ref{rmk:unusual-landing}.
\end{remark}

\begin{lem}
\label{lem:landing-injective} 
  Let $\cL' = \bigcup_{N\ge 0}\cL_N$.  Then $\omega\colon \cL'\to\hI$ is
  injective.
\end{lem}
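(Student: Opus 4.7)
The plan is to deduce $\by = \by'$ from $\omega(\by) = \omega(\by')$. Since $\cL_N \subseteq \cL_M$ whenever $N \le M$, I set $M = \max(N, N')$ so that both threads lie in $\cL_M$. Comparing entries in~(\ref{eq:omega}) immediately gives $\tau(y_i) = \tau(y'_i)$ for every $i \ge M$; call this common value $x_i$.

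The strategy is then to locate a single index $j \ge M$ with $y_j = y'_j$, and to propagate this equality both downward (which is automatic, since $y_{i-1} = B(y_i) = B(y'_i) = y'_{i-1}$) and upward. For the upward step I would use a short case analysis of Definition~\ref{def:outside-map} to show that every point of $S \setminus \ingam$ admits at most one $B$-preimage in $S \setminus \ingam$, the sole exception being $f(a)_\ell$, whose two such preimages are $a$ and $\re_u$; since $\tau(a) = a$ and $\tau(\re_u) = \re$ are distinct, the equality $\tau(y_{i+1}) = \tau(y'_{i+1})$ resolves even this exceptional case, so $y_i = y'_i$ together with $i+1 > M$ gives $y_{i+1} = y'_{i+1}$.

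To produce~$j$, I observe that for $i > M$ both $y_i, y'_i \in S \setminus \ingam$; and if $x_i \in [a, \re) \cup \{b\}$, then $(x_i)_\ell$ is the only $\tau$-preimage of $x_i$ in $S \setminus \ingam$ (with the usual identifications at $a$ and $b$), so $y_i = y'_i$. Failing this, $x_i \in [\re, b)$ for every $i > M$, and combining $x_i = f(x_{i+1})$ with $f([\re, b)) = (a, f(a)]$ forces $x_i \in [\re, b) \cap (a, f(a)] = [\re, f(a)]$ for all $i > M$. Provided $f(a) < \re$, this intersection is empty, yielding the desired contradiction.

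The main obstacle is therefore the inequality $f(a) < \re$, which is not immediate from Definition~\ref{def:unimodal} but follows from Convention~\ref{conv:unimodal}~(a); I would place it among the routine technical lemmas in Appendix~\ref{app:technical}. The quickest derivation compares the itineraries $\iota(f(a)) = \sigma^2(\kappa(f))$ and $\iota(\re) = 1\,\sigma^2(\kappa(f))$ in the unimodal order: writing $r$ for the least index $\ge 2$ with $\kappa(f)_r = 0$, Convention~(a) forces $r$ to be even, and a parity computation then yields $\iota(f(a)) \prec \iota(\re)$. Since $\kappa(f) \ne 10\Infs{1}$ (again by Convention~(a)), the two itineraries are distinct, so $f(a) \ne \re$, and thus $f(a) < \re$.
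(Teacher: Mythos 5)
Your proof is correct and follows essentially the same route as the paper's: both rest on the two facts that points of $[a,\re)$ have a unique $\tau$-preimage in $S\setminus\ingam$, and that $f([\re,b])\subseteq[a,f(a)]$ with $f(a)<\re$ (which the paper asserts with exactly your justification, $\kappa(f)\succ 10\Infs{1}$). The only difference is organisational: your own dichotomy already shows that at least one of any two consecutive entries $x_i$, $x_{i+1}$ lies in $[a,\re)$, so the agreement $y_i=y_i'$ holds at arbitrarily large indices and propagates downward alone --- which is how the paper argues --- making your upward-propagation step (the classification of $B$-preimages within $S\setminus\ingam$) dispensable, though correct.
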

\begin{proof} 
  Let $\by,\bz\in \cL'$ be such that $\omega(\by)=\omega(\bz)$. Pick~$N$ such
  that $\by, \bz \in\cL_N$. Then $\tau(y_{N+r}) =
  \tau(z_{N+r})$ for all $r\ge 0$ by~(\ref{eq:omega}). However, at least one of
  any two successive entries of a thread of $\hI$ must lie in~$[a,\re)$ (as
  $f([\re,b]) = [a,f(a)]$, and $f(a)<\re$ since $\kappa(f)\succ 10\Infs{1}$).
  Since $y_{N+r},z_{N+r}\not\in\ingam$ for all~$r$, it follows that
  $y_{N+r}=z_{N+r}$ for arbitrarily large~$r$, so that $\by=\bz$ as required.
\end{proof}

\begin{lem}
\label{lem:Psi-extension-continuous} 
  Let $J$ be a uniformly landing subset of~$\hS$. Then $\Psi|_{ J\times
  [0,\infty]}$ is continuous.
\end{lem}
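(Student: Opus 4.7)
The plan is to reduce the problem to continuity of $\Psi$ at points of $J_\infty = J\times\{\infty\}$, since continuity on $J\times[0,\infty)$ is immediate from Corollary~\ref{cor:Psi-homeo}. Fix $N$ so that $J\subset\cL_N$, fix $\by\in J$, and let $\left((\by^{(k)}, s_k)\right)$ be a sequence in $J\times[0,\infty]$ converging to $(\by,\infty)$; I want to show $\Psi(\by^{(k)},s_k)\to\omega(\by)$ in $\hT$. For those $k$ with $s_k<\infty$, set $P(s_k)=(t_k,v_k)$; Lemma~\ref{lem:formula-disjoint-gamma} (applicable because $\by^{(k)}\in\cL_N$ and $s_k\ge N+1$ eventually) expresses $\Psi(\by^{(k)},s_k)$ as an explicit thread. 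For those $k$ with $s_k=\infty$, $\Psi(\by^{(k)},\infty)=\omega(\by^{(k)})$ is given by~\eqref{eq:omega}. The target $\omega(\by)$ has the same form as~\eqref{eq:omega}.

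The key observation is that, because $s_k\to\infty$, we have $t_k\to\infty$ in the finite-$s_k$ case, so for every fixed index $r$ we eventually have $r<t_k$, at which point the $r^{\text{th}}$ entry of $\Psi(\by^{(k)},s_k)$ lies in $I$ and has the simple form given by Lemma~\ref{lem:formula-disjoint-gamma}. Specifically, for $0\le r\le N$ the $r^{\text{th}}$ entries of $\Psi(\by^{(k)},s_k)$ and of $\omega(\by)$ are $f^{N-r}(\tau(y^{(k)}_N))$ and $f^{N-r}(\tau(y_N))$ respectively, while for $N<r\le t_k-1$ they are $\tau(y^{(k)}_r)$ and $\tau(y_r)$. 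Continuity of $\tau$ and of the iterates of~$f$, together with coordinatewise convergence $y^{(k)}_i\to y_i$, gives entrywise convergence.

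Using the inverse limit metric $d(\bx,\by)=\sum_{i\ge 0} d(x_i,y_i)/2^i$, given $\varepsilon>0$, choose $M$ so that $\sum_{i>M}\diam(T)/2^i<\varepsilon/2$. Then pick $K$ so that for $k\ge K$ we have $t_k>M$ (or $s_k=\infty$), and so that each of the finitely many entries in positions $0,1,\ldots,M$ of $\Psi(\by^{(k)},s_k)$ is within $\varepsilon/(2(M+1))$ of the corresponding entry of $\omega(\by)$ — this is possible by the entrywise convergence just established. Combining the head (bounded by $\varepsilon/2$) with the tail (bounded by $\varepsilon/2$) yields $d(\Psi(\by^{(k)},s_k),\omega(\by))<\varepsilon$, proving continuity at $(\by,\infty)$.

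The case $s_k=\infty$ is handled identically, using~\eqref{eq:omega} directly in place of Lemma~\ref{lem:formula-disjoint-gamma}, which amounts to showing that $\omega|_J$ is continuous. The main (and only real) point to be careful about is that the index range $N<r\le t_k-1$ genuinely covers each fixed $r$ for large enough $k$ — this is precisely where the hypothesis $s_k\to\infty$ is used, and where the uniformity in $N$ (guaranteed by $J\subset\cL_N$) is essential, since it ensures that the "prefix" $f^N(\tau(y^{(k)}_N)),\ldots,\tau(y^{(k)}_N)$ involves a number of iterates of $f$ independent of $k$.
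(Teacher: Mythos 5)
Your proof is correct and follows essentially the same route as the paper's: reduce to continuity at points of $J_\infty$, apply Lemma~\ref{lem:formula-disjoint-gamma} (valid uniformly since $J\subset\cL_N$) to get the explicit thread, and deduce convergence to $\omega(\by)$ from entrywise convergence, with the $s_k=\infty$ terms handled via~\eqref{eq:omega}. The paper states the entrywise-convergence step without the explicit $\varepsilon$-estimate, but the argument is the same.
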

\begin{proof} 
  Since $\Psi$ is continuous on~$D$ (Corollary~\ref{cor:Psi-homeo}), it
  suffices to prove continuity at points of $J_\infty$. So let $\by\in J$, and
  let~$N$ be such that~$J\subset\cL_N$. Pick sequences $\by^{(i)}\to\by$ in~$J$
  and $s^{(i)}\to\infty$ in $[0,\infty)$.

  Let $P(s^{(i)}) = (t^{(i)}, v^{(i)})$. Lemma~\ref{lem:formula-disjoint-gamma}
  gives that, for sufficiently large~$i$,
  \[
  \Psi(\by^{(i)}, s^{(i)}) = \thr{ f^N(\tau(y_N^{(i)})), \ldots,
  f(\tau(y_N^{(i)})), \tau(y_N^{(i)}),
  \tau(y_{N+1}^{(i)}), \ldots, \tau(y_{t^{(i)}-2}^{(i)}),
  \tau(y_{t^{(i)}-1}^{(i)}), (y_{t^{(i)}}^{(i)}, v^{(i)}), \ldots },
  \] which converges to $\Psi(\by,\infty)=\omega(\by)$ as
  \mbox{$i\to\infty$}. Similarly, it follows from~(\ref{eq:omega}) that
  $\Psi(\by^{(i)}, \infty) \to \Psi(\by,\infty)$ as $i\to\infty$.
\end{proof}

If~$J$ is uniformly landing but not open in $\hS$, then $\Psi$ (as opposed to
its restriction to $J\times[0,\infty]$) may not be continuous at $(\by,\infty)$
when $\by$ is a boundary point of $J$. However, $\Psi$ is continuous at
interior points of~$J_\infty$, and in particular is continuous at
$(\by,\infty)$ for all $\by$ in the locally uniformly landing set~$\cR$.

The following immediate corollary, which will be used frequently in the
remainder of the paper, states that $\Psi$ extends continuously and injectively
from the disk~$D$ over the locally uniformly landing set at~$\infty$. We
will see later that this is the maximal set over which~$\Psi$ has such an
extension.

\begin{notation}[$\tD$] 
\label{notn:tilde-disk}
  We write $\tD = D \cup \cR_\infty\subset\sD\subset\bD$.
\end{notation}

\begin{cor}
\label{cor:Psi-maximal-extension} 
  $\Psi\colon\tD\to\hT$ is injective and continuous. In particular, its
  restriction to any compact subset of~$\tD$ is a homeomorphism onto its image.
\end{cor}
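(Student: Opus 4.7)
The plan is to split the statement into three pieces: continuity of $\Psi$ on $\tD$, injectivity of $\Psi$ on $\tD$, and the homeomorphism property on compact subsets. Each piece will reduce to combining a fact already established in the preceding development with a small observation about $\cR$ and about the disjoint decomposition $\hT = U \sqcup \hI$.

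For continuity, I would argue separately on $D$ and at points of $\cR_\infty$. On $D$, continuity is exactly Corollary~\ref{cor:Psi-homeo}. For $\by\in\cR$, the definition of the locally uniformly landing set supplies an open uniformly landing neighborhood $J\subseteq\hS$ of $\by$. The key observation is that $J\subseteq\cR$: every $\bz\in J$ has $J$ itself as a uniformly landing neighborhood. Consequently $J\times[0,\infty]$ is contained in $\tD$ and, since $J$ is open in $\hS$, it is an open neighborhood of $(\by,\infty)$ in $\tD$. Lemma~\ref{lem:Psi-extension-continuous} shows that $\Psi$ is continuous on this neighborhood, which gives continuity of $\Psi$ at $(\by,\infty)$.

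For injectivity, let $p,q\in\tD$ with $\Psi(p)=\Psi(q)$. By construction $\Psi$ sends $D$ into $U$ and $\cR_\infty$ into $\hI$, and these targets are disjoint; so either both points lie in $D$ or both in $\cR_\infty$. The former case is handled by Corollary~\ref{cor:Psi-homeo}, and in the latter case we have $p=(\by,\infty)$, $q=(\bz,\infty)$ with $\omega(\by)=\omega(\bz)$. Since $\cR\subseteq\bigcup_{N\ge 0}\cL_N=\cL'$, Lemma~\ref{lem:landing-injective} forces $\by=\bz$, so $p=q$.

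Finally, the restriction to a compact set $K\subseteq\tD$ is a continuous injection from a compact space into the Hausdorff space $\hT$, hence a homeomorphism onto its image by the standard closed-map argument. I don't anticipate a real obstacle here: the whole content lies in the bookkeeping that $\cR$ is open in $\hS$ (so that a uniformly landing neighborhood around a point of $\cR$ actually sits inside $\tD$) together with the fact that the landing function is injective wherever it has been described explicitly. Both facts are already in place.
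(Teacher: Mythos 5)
Your proposal is correct and follows essentially the same route as the paper: injectivity is obtained by combining Corollary~\ref{cor:Psi-homeo} on $D$ with Lemma~\ref{lem:landing-injective} on $\cR_\infty$ (using that $\Psi(D)\subset U$ while $\Psi(\cR_\infty)\subset\hI$), and continuity at points of $\cR_\infty$ comes from Lemma~\ref{lem:Psi-extension-continuous} applied to an open uniformly landing neighborhood. Your extra observation that such an open neighborhood $J$ satisfies $J\subseteq\cR$, so that $J\times[0,\infty]$ really is a neighborhood of $(\by,\infty)$ inside $\tD$, is a detail the paper leaves implicit but is exactly the right justification.
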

\begin{proof} 
  $\Psi$ is injective since it is injective on~$D$ and on $\cR_\infty$
  (Corollary~\ref{cor:Psi-homeo} and Lemma~\ref{lem:landing-injective}), and
  $\Psi(\by,s)\in\hI$ if and only if $s=\infty$. It is continuous since it is
  continuous on~$D$ (Corollary~\ref{cor:Psi-homeo}) and on
  $\cR\times[0,\infty]$ (Lemma~\ref{lem:Psi-extension-continuous}).
\end{proof}

So far in this section we have been concerned with the behavior of
$\Psi(\by,s)$ as $s\to\infty$. Our final result is a technical lemma with a
different flavor: it states that if there are several consecutive entries in a
thread~$\by$ which do not lie in~$\ingam$, then one entry (and hence all
earlier entries) of the thread $\Psi(\by,s)$ is constant for $s$ in a
corresponding interval.

\begin{lem}
\label{lem:constant-block} 
  Let $\by\in\hS$, and suppose that $r\ge 1$ and $k\ge 1$ are such that
  $y_{r+i}\not\in\ingam$ for all $1\le i\le k$. Then
  \[
    \Psi(\by,s)_{r-1} = f(\tau(y_r)) \quad\text{ for all \ } s\in[r+u(y_r),\,
    r+k+1].
  \] 
  In particular, if $\by\in\cL_r$, so that $y_{r+i}\not\in\ingam$ for all $i\ge
  1$, then $\Psi(\by,s)_{r-1}=f(\tau(y_r))$ for all $s\ge r+u(y_r)$.
\end{lem}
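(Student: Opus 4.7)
The plan is to unwind the explicit formula~(\ref{eq:convenient-2}), namely $\Psi(\by,s)_{r-1} = f^{t-r}(H(y_t, v))$ when $P(s) = (t,v)$ and $r \le t$, and verify that the right-hand side equals $f(\tau(y_r))$ throughout the given range by splitting into cases on the value of $t = \floor{s}$. Since $s \in [r+u(y_r),\,r+k+1]$ with $u(y_r) \in [0,1)$, one checks that $t$ takes values in $\{r, r+1, \ldots, r+k+1\}$, so the formula is always applicable. The "in particular" clause will then follow by letting $k$ be arbitrarily large.

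For $t = r$, we have $u := s - r \in [u(y_r), 1)$, and Lemma~\ref{lem:u-key} gives $H(y_r, v) = f(\tau(y_r))$, so $\Psi(\by,s)_{r-1} = f^0(H(y_r, v)) = f(\tau(y_r))$ directly. For $t = r+j$ with $1 \le j \le k$, the hypothesis $y_{r+j} \notin \ingam$ means $u(y_{r+j}) = 0$, so Lemma~\ref{lem:u-key} gives $H(y_{r+j}, v) = f(\tau(y_{r+j}))$, which equals $\tau(y_{r+j-1})$ by~(\ref{eq:commute}). Iterating the identity $f(\tau(y_{r+i})) = \tau(y_{r+i-1})$ for $i = j-1, j-2, \ldots, 1$ --- each step permitted because $y_{r+i} \notin \ingam$ --- telescopes $\Psi(\by,s)_{r-1} = f^j(\tau(y_{r+j-1}))$ down to $f(\tau(y_r))$.

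The remaining boundary case $t = r+k+1$ (that is, $s = r+k+1$ exactly) is the one subtlety: $y_{r+k+1}$ is not constrained by the hypothesis and could lie in $\ingam$, in which case Lemma~\ref{lem:u-key} would place $H(y_{r+k+1}, 1/2)$ on its second branch $\phi(1/2) = f(a)$ rather than the first branch $f(\tau(y_{r+k+1}))$. Rather than reconcile the two branches by a direct thread-coherence calculation (which would use that $B(y_{r+k+1}) = f(a)_\ell$ whenever $y_{r+k+1} \in \ingam$, forcing $\tau(y_{r+k}) = f(a)$), I would simply invoke continuity: the map $s \mapsto \Psi(\by,s)_{r-1}$ is continuous on $(0, \infty)$ by Corollary~\ref{cor:Psi-homeo}, and we have just shown it equals the constant $f(\tau(y_r))$ on the half-open interval $[r+u(y_r),\,r+k+1)$, so the identity extends to the closed endpoint by continuity. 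This is where the main obstacle lies, and routing around it via continuity of $\Psi$ rather than a branch-by-branch check is what keeps the proof short.
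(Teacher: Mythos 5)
Your proof is correct and takes essentially the same route as the paper's: split on $t=\floor{s}$, handle $t=r$ via Lemma~\ref{lem:u-key}, handle $r+1\le t\le r+k$ by converting $H(y_t,v)$ to $\tau(y_{t-1})$ and telescoping with~(\ref{eq:commute}), and settle the closed endpoint $s=r+k+1$ by continuity of $\Psi$, which is exactly how the paper concludes.
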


\begin{proof} 
  Suppose first that $s\in[r+u(y_r), r+1)$, so that $P(s)=(r,v)$ for some
  $v\ge\frac{u(y_r)+1}{2}$.  Then $\Psi(\by,s)_{r-1} = H(y_r, v)$
  by~(\ref{eq:convenient-2}); and $H(y_r,v)=f(\tau(y_r))$ by
  Lemma~\ref{lem:u-key}.

  Next suppose that $s\in[t,t+1)$ for some integer~$t$ with $r+1\le t\le r+k$,
    so that $P(s)=(t,v)$ for some $v\in[1/2,1)$. Then
  $
  \Psi(\by,s)_{r-1} =  f^{t-r}(H(y_t, v)) = f^{t-r}(\tau(B(y_t))) =
  f^{t-r}(\tau(y_{t-1})) = f(\tau(y_r))
  $
  as required. Here the first equality uses~(\ref{eq:convenient-2}), the second
  uses~Remark~\ref{rmk:barf}~(a), and the fourth uses~(\ref{eq:commute})
  applied $t-r-1$ times.

  That $\Psi(\by, r+k+1)_{r-1} = f(\tau(y_r))$ follows from the continuity
  of~$\Psi$.
\end{proof}

\subsection{Good chains of crosscuts}
\label{sec:good-chain}

In this section we establish (Theorem~\ref{thm:corr-homeo}) that there is a
natural homeomorphism between $\hS$  and the circle~$\PP$ of prime ends of
$(\hT, \hI)$, with the property that, for each $\by\in\hS$, the ray $R_{\by}$
converges (in the sense of Section~\ref{sec:prime-ends}) to the prime
end corresponding to~$\by$. Moreover (Lemma~\ref{lem:PP-conj-hS}), this
homeomorphism conjugates the natural extension $\hB\colon\hS\to\hS$ of the
outside map to the action of $\hH$ on~$\PP$, so that the prime end rotation
number of $(\hT, \hI)$ is equal to the Poincar\'e rotation number of $\hB$.

The arguments require two conditions which, while they always hold, we will
only be able to establish, on a case by case basis, later. We therefore treat
them as hypotheses for the time being. The first is that the locally uniformly
landing set~$\cR$ is dense in~$\hS$. The second is that there exist chains of
crosscuts in $(\bD, \hS_\infty)$ whose images under~$\Psi$ are well-behaved
chains of crosscuts in $(\hT, \hI)$, as expressed by
Definition~\ref{def:good-chain} below. We carry over the definitions and
notation of Section~\ref{sec:prime-ends} to the (topologically trivial)
pair~$(\bD, \hS_\infty)$: a crosscut in $(\bD,
\hS_\infty)$ is an arc~$\xi'$ in $\bD$, disjoint from~$\partial'$, which
intersects $\hS_\infty$ exactly at its endpoints; $U(\xi')$ denotes the
component of $D\setminus\xi'$ which doesn't contain~$\partial'$; $\xi_2' <
\xi_1'$ means that $U(\xi_2')\subset U(\xi_1')$; and $(\xi_k')$ is a chain of
crosscuts in $(\bD, \hS_\infty)$ if the $\xi_k'$ are disjoint crosscuts with
$\xi'_{k+1}<\xi'_k$ for each~$k$ and $\diam(\xi_k')\to 0$ as $k\to\infty$.

\begin{remark}
\label{rmk:crosscut-unif-landing-endpoint} 
  If a crosscut~$\xi_k'$ in $(\bD, \hS_\infty)$ has endpoints in $\cR_\infty$
  then, by Corollary~\ref{cor:Psi-maximal-extension}, $\Psi|_{\xi_k'}$ is a
  homeomorphism onto its image~$\xi_k$, which is therefore a crosscut in~$(\hT,
  \hI)$.
\end{remark}

\begin{defn}[Good chain of crosscuts]
\label{def:good-chain} 
Let $\by\in\hS$. A chain~$(\xi'_k)$ of crosscuts in
$(\bD, \hS_\infty)$ is called a {\em good chain for~$\by$} if
\begin{enumerate}[(a)]
  \item The endpoints of each $\xi'_k$ are in~$\cR_\infty$, so that
  $\xi_k=\Psi(\xi_k')$ is a crosscut in~$(\hT, \hI)$ by
  Remark~\ref{rmk:crosscut-unif-landing-endpoint};
  \item $(\by,\infty)\in \overline{U(\xi_k')}$ for each~$k$, so in particular
    $\xi_k'\to(\by,\infty)$ as $k\to\infty$;
  \item $\diam(\xi_k)\to 0$ as $k\to\infty$; and
  \item if $\by\not\in\cL$, then $(\xi_k)$ does not converge to a point~$\bx$
    of $\hI$.
\end{enumerate}
\end{defn}

\begin{remarks}\mbox{}
\label{rmk:good-chain}
\begin{enumerate}[(a)]
\item By Definition~\ref{def:good-chain}~(a) and~(c), if $(\xi'_k)$ is a good
  chain of crosscuts for~$\by$, then $(\xi_k)$ is a chain of crosscuts in
  $(\hT, \hI)$.
\item Suppose that there is a good chain of crosscuts $(\xi'_k)$ for
$\by\in\hS$.
\begin{itemize}
\item If $\by\in\cL$ then, since $R_{\by}$ lands at $\omega(\by)$ and
  intersects every~$\xi_k$, we have $\xi_k\to\omega(\by)$ as $k\to\infty$. It
  follows that for every ray $\sigma'\colon[0,\infty)\to D$ which lands
  at~$(\by,\infty)$, the ray $\sigma=\Psi\circ\sigma'$ either lands
  at~$\omega(\by)$, or does not land.
\item If $\by\not\in\cL$, then for every such ray $\sigma'$, the ray $\sigma =
  \Psi\circ\sigma'$ intersects $\xi_k$ for all sufficiently large~$k$, and
  therefore does not land, by condition~(d) of the definition.
\end{itemize}

\item By Corollary~\ref{cor:Psi-maximal-extension}, there is a good chain of
crosscuts for every~$\by\in\cR$.

\item We will continue to use the notational convention introduced above:
functions $f'\colon X\to\tD$ and subsets $Y'\subset\tD$ will be denoted with
primed symbols, and the corresponding functions
\mbox{$f=\Psi\circ f'\colon X\to \hT$} and subsets $Y = \Psi(Y')\subset\hT$
with the corresponding unprimed symbols.
\end{enumerate}
\end{remarks}

\begin{lem}
\label{lem:crosscut-preimage-lands} Suppose that $\cR$ is dense in $\hS$, and
let $\sigma\colon[0,\infty)\to U$ be a ray which lands at a point of~$\hI$.
Then the ray $\sigma' = \Psi^{-1}\circ\sigma$ lands at a point of $\hS_\infty$.
\end{lem}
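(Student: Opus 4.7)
The plan is to transfer the ray $\sigma$ across the homeomorphism $\Psi\colon D\to U$ (Corollary~\ref{cor:Psi-homeo}) to a ray $\sigma'\colon[0,\infty)\to D$ and analyse its remainder in the compact disk $\bD$. The remainder of $\sigma'$, being the intersection of the nested family of compact connected sets $\overline{\sigma'([t,\infty))}$, is compact and connected. I will show it is contained in $\hS_\infty$ (so $\sigma'$ escapes every compact subset of $D$) and then use the density of $\cR$ together with the injectivity of $\omega$ on $\cR$ to rule out the possibility that this remainder is larger than a single point.

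For the first step, suppose a sequence $\sigma'(s_n)$ accumulates at a point $p\in D$. By continuity of $\Psi$ on~$D$, $\sigma(s_n)=\Psi(\sigma'(s_n))$ accumulates at $\Psi(p)\in U$. But $\sigma$ lands in $\hI$, so $\sigma(s_n)$ converges to a point of $\hI$, contradicting $\Psi(p)\in U=\hT\setminus\hI$. Hence every accumulation point of $\sigma'(s)$ in $\bD$ lies in $\hS_\infty$; equivalently, $\sigma'$ leaves every compact subset of $D$, and its remainder $L'$ is a compact connected subset of the circle $\hS_\infty$.

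For the second step, suppose for contradiction that $L'$ is not a single point. Then $L'$ is either the whole circle $\hS_\infty$ or a closed arc of positive length; in either case, its interior (in $\hS_\infty$) is a nonempty open arc. Since $\cR$ is dense in $\hS$ by hypothesis, $\cR_\infty$ is dense in $\hS_\infty$, so the interior of $L'$ contains at least two distinct points $(\by_1,\infty)$ and $(\by_2,\infty)$ with $\by_1,\by_2\in\cR$. For each $i\in\{1,2\}$, choose a sequence $s_n^{(i)}\to\infty$ with $\sigma'(s_n^{(i)})\to(\by_i,\infty)$. Since $\by_i\in\cR$, Corollary~\ref{cor:Psi-maximal-extension} guarantees that $\Psi$ is continuous at $(\by_i,\infty)$ with value $\omega(\by_i)$, so $\sigma(s_n^{(i)})\to\omega(\by_i)$. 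But $\sigma$ lands at a single point $\bx\in\hI$, forcing $\omega(\by_1)=\omega(\by_2)=\bx$.

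The final step invokes the injectivity of $\omega$ on $\bigcup_{N\ge0}\cL_N$ established in Lemma~\ref{lem:landing-injective}; since $\cR\subset\bigcup_{N\ge 0}\cL_N$ by definition, this yields $\by_1=\by_2$, the desired contradiction. Hence $L'$ is a single point of $\hS_\infty$, which is by definition the landing point of $\sigma'$. The only potential obstacle is confirming that the remainder is connected and lies in $\hS_\infty$, but this is routine given that $\bD$ is compact and $\Psi$ is a homeomorphism $D\to U$; the real content of the argument is the use of the density of~$\cR$ to produce two distinct accumulation points on which the injectivity of $\omega$ can be applied.
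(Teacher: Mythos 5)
Your proof is correct and follows essentially the same route as the paper's: both reduce to the observation that a nondegenerate remainder in $\hS_\infty$ would meet $\cR_\infty$ in more than one point, which is incompatible with $\sigma$ landing because of the continuity and injectivity of $\Psi$ on $\tD$ (Corollary~\ref{cor:Psi-maximal-extension} and Lemma~\ref{lem:landing-injective}). The only difference is cosmetic: you use density of $\cR$ to extract two points and invoke injectivity of $\omega$, where the paper uses openness of $\cR$ to extract a whole subinterval $J$ and the homeomorphism $\Psi\vert_{J\times[0,\infty]}$; you also make explicit the (correct) preliminary step that the remainder of $\sigma'$ lies in $\hS_\infty$.
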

\begin{proof} The remainder $\Rem(\sigma')$ is a connected subset of
$\hS_\infty$, so if $\sigma'$ didn't land then, since~$\cR$ is open and dense
in~$\hS$, there would be a non-trivial closed subinterval~$J$ of $\cR$ with
$J_\infty\subset\Rem(\sigma')$. This would contradict the fact that $\sigma$
lands, since $\Psi\vert_{J\times[0,\infty]}$ is a homeomorphism onto its image
by Corollary~\ref{cor:Psi-maximal-extension}.
\end{proof}

\begin{cor}
\label{cor:crosscuts-pull-back} 
  Suppose that~$\cR$ is dense in~$\hS$. If $\xi$ is a crosscut in $(\hT, \hI)$, then $\xi' = \Psi^{-1}(\xi)$ is a crosscut in $(\bD, \hS_\infty)$. Moreover, if $\xi_2<\xi_1$ are crosscuts in $(\hT, \hI)$, then $\xi_2' < \xi_1'$
\end{cor}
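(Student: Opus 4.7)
The plan is to construct $\xi'$ from its interior. Let $p,q$ denote the endpoints of $\xi$, so that $\xi^\circ := \xi\setminus\{p,q\}\subset U$. By Corollary~\ref{cor:Psi-homeo}, $\xi'^\circ := \Psi^{-1}(\xi^\circ)$ is an open arc in $D$; parameterizing $\xi\colon[0,1]\to\hT$ with $\xi(0)=p$ and $\xi(1)=q$ gives $\xi'^\circ(t) = (\bz(t),r(t))$ for $t\in(0,1)$. The aim is to show that $\xi'^\circ$ extends continuously over $t=0$ and $t=1$ to points of $\hS_\infty$, producing a crosscut $\xi'$ in $(\bD, \hS_\infty)$.

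First I would rule out bounded accumulation of $r(t)$ as $t\to 0^+$. If $r(t_n)$ stayed bounded along a subsequence $t_n\to 0$, we could pass to a further subsequence so that $\xi'^\circ(t_n)\to(\bz_*,r_*)$ in $\bD$ with $r_*<\infty$. If $r_*>0$ this limit lies in $D$ and continuity of $\Psi$ on $D$ forces $\xi(t_n)\to\Psi(\bz_*,r_*)\in U$, contradicting $\xi(t_n)\to p\in\hI$; if $r_*=0$ the limit is $\partial'$, with image $\partial\notin\hI$, again a contradiction. Hence $r(t)\to\infty$ as $t\to 0^+$. To upgrade this to convergence in $\bD$, apply Lemma~\ref{lem:crosscut-preimage-lands} to the ray $\sigma(s) := \xi(1/(s+2))$, which lands at $p\in\hI$: its preimage ray $\Psi^{-1}\circ\sigma$ lands at some point $(\by_0,\infty)\in\hS_\infty$, pinning down $\xi'^\circ(t)\to (\by_0,\infty)$ as $t\to 0^+$. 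The symmetric argument at $t\to 1^-$ furnishes an endpoint $(\by_1,\infty)$. A short coordinatewise check, using the thread formula~\eqref{eq:convenient} together with $\bz(t)\to\by_0$, confirms that $\by_0\in\cL$ with $\omega(\by_0)=p$ and similarly $\omega(\by_1)=q$; hence the closure of $\xi'^\circ$ in $\bD$ is exactly $\Psi^{-1}(\xi)$. Since $p\neq q$ we also have $\by_0\neq\by_1$, so $\xi'$ is a genuine arc meeting $\hS_\infty$ only at its two distinct endpoints, i.e.\ a crosscut in $(\bD,\hS_\infty)$.

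The monotonicity assertion is then immediate. Since $\Psi\colon D\to U$ is a homeomorphism sending $\partial'$ to $\partial$, the complement $D\setminus\xi_i'$ is carried onto $U\setminus\xi_i$ with the two components matched up, giving $\Psi(U(\xi_i')) = U(\xi_i)$. Consequently $U(\xi_2)\subset U(\xi_1)$ pulls back to $U(\xi_2')\subset U(\xi_1')$, that is $\xi_2'<\xi_1'$.

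The main technical hurdle will be the identification $\omega(\by_0)=p$: Lemma~\ref{lem:crosscut-preimage-lands} locates the landing point of the preimage ray in $\hS_\infty$, but $\by_0$ need not lie in the locally uniformly landing set $\cR$, so one cannot simply invoke continuity of $\Psi$ on $\tD$. Instead the identification must be extracted from formula~\eqref{eq:convenient} together with the componentwise convergence of $\Psi(\bz(t),r(t))=\xi(t)$ to $p$, using that the $k$-th coordinate of $\Psi(\by_0,r)$ stabilizes at $\tau((y_0)_k)$ as soon as the entries $(y_0)_i$ with $i>k$ avoid $\ingam$.
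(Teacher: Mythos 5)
Your argument is essentially the paper's: the published proof of this corollary is the single line ``immediate from Lemma~\ref{lem:crosscut-preimage-lands} and the fact that $U(\xi')=\Psi^{-1}(U(\xi))$'', and you have correctly expanded exactly that — apply Lemma~\ref{lem:crosscut-preimage-lands} to the two end-rays of the crosscut to obtain endpoints in $\hS_\infty$, and pull back the complementary components via the homeomorphism $\Psi\colon D\to U$ for the ordering statement. Your preliminary step ruling out bounded accumulation of $r(t)$ is a worthwhile addition, since it is what makes $\Psi^{-1}\circ\sigma$ a ray in $(\bD,\hS_\infty)$ in the first place (the lemma's own proof takes this for granted).

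One caveat on the step you flag as the main hurdle. Your proposed identification of $\omega(\by_0)=p$ rests on the $k$-th coordinate of $\Psi(\by_0,r)$ stabilizing at $\tau((y_0)_k)$ once the entries $(y_0)_i$ with $i>k$ avoid $\ingam$ — i.e.\ on $\by_0$ being landing of some finite level. The paper shows this can fail for landing points: in the late left endpoint case the points of $\bQ$ lie in $\cL\setminus\bigcup_N\cL_N$ (Lemma~\ref{lem:late-endpoint-landing} and Remark~\ref{rmk:unusual-landing}), their entries meet $\ingam$ infinitely often, and $\omega(\bq_{n-1})$ is nevertheless accessible, so a crosscut can perfectly well end there. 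For such $\by_0$ the stabilization argument is vacuous and your coordinatewise check does not go through; you would need the separate limit computation of Lemma~\ref{lem:late-endpoint-landing} (or, once good chains are available, Remark~\ref{rmk:good-chain}~(b)) to conclude $\omega(\by_0)=p$ and hence the distinctness of the two endpoints of $\xi'$. To be fair, the paper's one-line proof does not address this point either, so this is a refinement of a genuine omission rather than a divergence from the intended argument.
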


\begin{proof}
  Immediate from Lemma~\ref{lem:crosscut-preimage-lands} and the fact that
  $U(\xi') = \Psi^{-1}(U(\xi))$.
\end{proof}

We now associate a point of $\hS$ with each prime end~$\cP$ in $(\hT,\hI)$,
under the assumption that~$\cR$ is dense in~$\hS$. Suppose that~$\cP$ is
represented by a chain~$(\xi_k)$, and write $U_k = U(\xi_k)$. Then each $\xi_k'
=
\Psi^{-1}(\xi_k)$ is a crosscut in $(\bD, \hS_\infty)$, and $U_k':=U(\xi_k') =
\Psi^{-1}(U_k)$.

Let $J_k'=\overline{U_k'}\cap\hS_\infty$, a compact arc with endpoints the
endpoints of $\xi_k'$. Then $\bigcap_{k\ge 0}J_k'$ is a single point. For if
not then, since $\cR$ is open and dense in~$\hS$, the intersection would
contain $K_\infty$ for some \mbox{$K=[\by_1,\by_2]\subset\cR$}, and every
$\xi_k$ would intersect both $\Psi(\{\by_1\}\times[0,\infty])$ and
$\Psi(\{\by_2\}\times[0,\infty])$, contradicting $\diam(\xi_k)\to0$ as
$\Psi|_{K\times[0,\infty]}$ is a homeomorphism onto its image by
Corollary~\ref{cor:Psi-maximal-extension}.

Since the point of $\bigcap_{k\ge 0}J_k'$ is independent of the choice of chain
representing~$\cP$, we can make the following definition:

\begin{defn}[$\by\colon\PP\to\hS$]
\label{def:y-corr-prime-end} Suppose that~$\cR$ is dense in~$\hS$. Let~$\cP$ be
a prime end of $(\hT, \hI)$. We write $\by(\cP)$ for the element of~$\hS$
defined by
\[\bigcap_{k\ge0}\left(\overline{\Psi^{-1}(U(\xi_k))} \cap
\hS_\infty\right) = \{(\by(\cP), \infty)\},\] where $(\xi_k)$ is a chain
representing~$\cP$.
\end{defn}

\begin{lem}
\label{lem:y-corr-prime-end-cts} Suppose that~$\cR$ is dense in~$\hS$. Then
$\by\colon\PP\to\hS$ is continuous.
\end{lem}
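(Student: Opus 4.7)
The plan is to use the basis $\{\cB(\xi)\}$ of the topology on~$\PP$ and show directly that for every prime end~$\cP$ and every neighborhood~$V$ of~$\by(\cP)$ in~$\hS$, there is a crosscut~$\xi$ in $(\hT,\hI)$ such that $\cP\in\cB(\xi)$ and $\by(\cQ)\in V$ for every $\cQ\in\cB(\xi)$.

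Fix $\cP\in\PP$ and a chain $(\xi_k)$ of crosscuts representing~$\cP$. Write $\xi_k' = \Psi^{-1}(\xi_k)$ and $U_k' = U(\xi_k') = \Psi^{-1}(U(\xi_k))$, which are crosscuts and their associated complementary regions in $(\bD,\hS_\infty)$ by Corollary~\ref{cor:crosscuts-pull-back}. Since $\xi_{k+1}<\xi_k$ gives $U_{k+1}'\subset U_k'$, the arcs $J_k' = \overline{U_k'}\cap \hS_\infty$ form a nested decreasing sequence of compact arcs in the circle~$\hS_\infty$. By the argument used to justify Definition~\ref{def:y-corr-prime-end}, $\bigcap_{k\ge 0}J_k' = \{(\by(\cP),\infty)\}$, and hence by compactness the diameter of $J_k'$ tends to zero as $k\to\infty$. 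In particular, given any neighborhood~$V$ of $\by(\cP)$ in~$\hS$, there exists $k_0$ with $J_k'\subset V\times\{\infty\}$ for all $k\ge k_0$.

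Now I claim that $\by(\cB(\xi_{k_0}))\subset V$. Let $\cQ\in\cB(\xi_{k_0})$, and let $(\zeta_j)$ be a chain representing~$\cQ$ with $\zeta_j < \xi_{k_0}$ for some~$j$. Writing $\zeta_j' = \Psi^{-1}(\zeta_j)$, we have $U(\zeta_j')\subset U_{k_0}'$ and therefore $\overline{U(\zeta_j')}\cap\hS_\infty \subset J_{k_0}'$. Since $(\by(\cQ),\infty)$ lies in $\overline{U(\zeta_j')}\cap\hS_\infty$ by the definition of~$\by(\cQ)$, it follows that $(\by(\cQ),\infty)\in J_{k_0}'\subset V\times\{\infty\}$, so $\by(\cQ)\in V$ as required. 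Since $\cB(\xi_{k_0})$ is a neighborhood of~$\cP$ in~$\PP$ (note $\xi_{k_0+1}<\xi_{k_0}$, so $\cP\in\cB(\xi_{k_0})$), this establishes continuity of~$\by\colon\PP\to\hS$ at~$\cP$.

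The only non-routine ingredient is the shrinking of the arcs~$J_k'$, which was already handled in the paragraph preceding Definition~\ref{def:y-corr-prime-end} using density of~$\cR$ together with $\diam(\xi_k)\to 0$ and the fact that $\Psi\vert_{K\times[0,\infty]}$ is a homeomorphism onto its image for any compact arc $K\subset\cR$ (Corollary~\ref{cor:Psi-maximal-extension}); everything else is an unpacking of definitions.
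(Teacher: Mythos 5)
Your proof is correct and is essentially the paper's argument: both reduce continuity to the fact (established just before Definition~\ref{def:y-corr-prime-end} using density of~$\cR$) that the nested arcs $J_k'=\overline{\Psi^{-1}(U(\xi_k))}\cap\hS_\infty$ shrink to the single point $(\by(\cP),\infty)$, and then observe that $\cB(\xi_{k})$ maps into the corresponding neighborhood because $\zeta_j<\xi_{k}$ forces $(\by(\cQ),\infty)\in J_{k}'$. The only difference is presentational: the paper phrases it as openness of $\by^{-1}(J)$ for open $J$, while you verify continuity at each point.
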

\begin{proof} Let~$J$ be an open subset of~$\hS$, and let~$\cP\in\by^{-1}(J)$
be represented by a chain~$(\xi_k)$. Then there is some~$k$ such that
$\overline{\Psi^{-1}(U(\xi_k))} \cap \hS_\infty \subset J$, and we have
$\cP\in\cB(\xi_k) \subset \by^{-1}(J)$, where $\cB(\xi_k)$ is the basic open
subset defined in Section~\ref{sec:prime-ends}.
\end{proof}

\begin{thm}
\label{thm:corr-homeo} Suppose that $\cR$ is dense in~$\hS$, and that there is
a good chain of crosscuts for every $\by\in\hS$. Then
\begin{enumerate}[(a)]
\item $\by\colon\PP\to\hS$ is a homeomorphism;
\item For each $\by\in\hS$, the unique prime end~$\cP$ with $\by(\cP)=\by$ is
  defined by the chain~$(\Psi(\xi_k'))$, where $(\xi_k')$ is any good chain of
  crosscuts for~$\by$: or, indeed, any chain of crosscuts which
  satisfies~(a)\,--\,(c) of Definition~\ref{def:good-chain}.
\item For each $\by\in\hS$, the ray $R_\by$ converges to the unique prime
  end~$\cP$ with $\by(\cP)=\by$; and
\item the set of accessible points of $\hI$ is $\{\omega(\by)\,:\,\by\in\cL\}$.
\end{enumerate}
\end{thm}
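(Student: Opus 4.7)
The plan is to establish (b), (c), and (d) directly, and then deduce (a) by constructing a continuous right inverse $\cP\colon\hS\to\PP$ for~$\by$ and applying a circle-topology argument.

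For (b), given a chain $(\xi_k')$ in $(\bD,\hS_\infty)$ satisfying (a)\,--\,(c) of Definition~\ref{def:good-chain}, the image $(\xi_k)=(\Psi(\xi_k'))$ will be shown to be a chain in $(\hT,\hI)$: condition~(a) together with Remark~\ref{rmk:crosscut-unif-landing-endpoint} guarantees that each $\xi_k$ is a crosscut; since $\Psi|_D\colon D\to U$ is a homeomorphism, the $\xi_k$ are pairwise disjoint with $U(\xi_{k+1})\subset U(\xi_k)$; and (c) gives $\diam(\xi_k)\to 0$. So $(\xi_k)$ defines a prime end~$\cP$. Condition~(b) of the good chain definition then places $(\by,\infty)\in\overline{U(\xi_k')}$ for every~$k$, so $(\by,\infty)\in\bigcap_k(\overline{U(\xi_k')}\cap\hS_\infty) = \{(\by(\cP),\infty)\}$ by Definition~\ref{def:y-corr-prime-end}, giving $\by(\cP)=\by$, and in particular surjectivity of~$\by$.

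For (c), I would fix a good chain $(\xi_k^{*\prime})$ for~$\by$ with $(\by,\infty)$ in the interior of each arc $J_k^{*\prime} := \overline{U(\xi_k^{*\prime})}\cap\hS_\infty$. This interior condition is automatic when $\by\notin\cR$, since the endpoints of $\xi_k^{*\prime}$ lie in $\cR_\infty$ by Definition~\ref{def:good-chain}(a); and when $\by\in\cR$, one constructs such a chain by density of~$\cR$, taking small crosscuts in $\tD$ with endpoints in $\cR_\infty$ on either side of $(\by,\infty)$. Since $(\by,\infty)\notin\xi_k^{*\prime}$, there is a $\bD$-neighborhood $V_k$ of $(\by,\infty)$ disjoint from $\xi_k^{*\prime}$ and contained in $\overline{U(\xi_k^{*\prime})}$, so that $V_k\cap D\subset U(\xi_k^{*\prime})$. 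The ray $s\mapsto(\by,s)$ in $D$ eventually enters $V_k$, hence $R_\by$ eventually enters $U(\xi_k^*)$, establishing convergence of $R_\by$ to the prime end $\cP^*$ defined by $(\xi_k^*)$.

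For (a), I would define $\cP\colon\hS\to\PP$ by sending $\by$ to the prime end produced from a good chain chosen as in the proof of (c). The main obstacle is checking that $\cP$ is well defined: for two such good chains $(\xi_k^{*\prime}),(\tilde\xi_k^{*\prime})$ for the same~$\by$, the equivalence of the pushed-forward chains in $(\hT,\hI)$ requires a Jordan-curve analysis in the disk $\bD$, using $\diam(\xi_k^{*\prime})\to 0$ in $\bD$ together with the interior condition to conclude that the closed lens $\overline{U(\xi_K^{*\prime})}$ lies inside $\overline{U(\tilde\xi_k^{*\prime})}$ for large~$K$, and symmetrically. Continuity of $\cP$ will then follow: the set of $\by'$ with $(\by',\infty)$ interior to $J_k^{*\prime}$ is an open neighborhood of $\by_0$, and for such $\by'$ the good chain for~$\by'$ eventually has crosscuts inside $U(\xi_k^{*\prime})$, placing $\cP(\by')\in\cB(\xi_k^*)$. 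By~(b), $\by\circ\cP=\id_{\hS}$, so $\cP$ is a continuous injection from the compact circle $\hS$ into the circle $\PP$, hence a homeomorphism onto its image. Since the only compact connected subsets of a topological circle are points, arcs, and the whole circle, and $\cP(\hS)$ is itself a topological circle, we conclude $\cP(\hS)=\PP$, so $\cP$ is a homeomorphism and $\by=\cP^{-1}$ is its inverse.

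For (d), the inclusion $\{\omega(\by)\,:\,\by\in\cL\}\subset$ accessible points follows immediately from the definition of~$\cL$. For the converse, if $\bx\in\hI$ is the landing point of a ray~$\sigma$, then $\sigma'=\Psi^{-1}\circ\sigma$ lands at some $(\by,\infty)\in\hS_\infty$ by Lemma~\ref{lem:crosscut-preimage-lands}. If $\by\notin\cL$, Remarks~\ref{rmk:good-chain}(b) would give that~$\sigma$ does not land, a contradiction; so $\by\in\cL$. The prime end to which~$\sigma$ converges has $\by$-value~$\by$ (any defining chain $(\xi_k)$ has $\sigma'$ eventually in the $U(\xi_k')$, forcing $(\by,\infty)$ into the intersection) and so equals $\cP(\by)$ by~(a). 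Since $R_\by$ also converges to $\cP(\by)$ and lands at $\omega(\by)$, Lemma~\ref{lem:landing-principal-set} forces $\Pi(\cP(\by))=\{\bx\}=\{\omega(\by)\}$, giving $\bx=\omega(\by)$.
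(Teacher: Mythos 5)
Your proposal is correct, and parts (b), (c) and (d) follow the paper's proof essentially verbatim (the paper's (d) likewise combines Lemma~\ref{lem:crosscut-preimage-lands} with Remark~\ref{rmk:good-chain}~(b)). Where you genuinely diverge is the injectivity half of (a). The paper takes an \emph{arbitrary} chain $(\Xi_k)$ representing a prime end $\cQ$ with $\by(\cQ)=\by$ and compares it directly with a good chain: it needs condition~(d) of Definition~\ref{def:good-chain}, via Remark~\ref{rmk:good-chain}~(b), to see that the pullbacks $\Xi_k'$ are pairwise disjoint (hence that $(\by,\infty)$ is interior to the arcs $\overline{\Psi^{-1}(V_\ell)}\cap\hS_\infty$), and then a compactness argument --- $\Psi$ restricted to the region between two nested crosscuts with endpoints in $\cR_\infty$ is a homeomorphism, so arcs crossing that region have diameter bounded below --- to force $\Int(\Xi_k)\subset U_\ell$ from $\diam(\Xi_k)\to 0$. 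You avoid handling arbitrary chains altogether: you build the candidate inverse $\cP$ from distinguished good chains, verify it is well defined and continuous (this is the mirror image of the paper's Lemma~\ref{lem:y-corr-prime-end-cts}, which proves continuity of $\by$ instead), and then extract surjectivity of $\cP$ --- hence bijectivity of $\by$, since $\by\circ\cP=\id$ --- from the soft fact that a topological circle cannot be a proper subset of a topological circle. The trade-off is clear: the paper pays with the two-chain comparison and the compactness estimate but gets well-definedness of the inverse for free afterwards; you pay up front with the well-definedness and continuity of $\cP$ (currently only sketched as a ``Jordan-curve analysis'', though the ingredients you name --- $\diam(\xi_k')\to0$ in $\bD$ together with the interior condition at $(\by,\infty)$ --- are exactly what is needed) and in exchange never have to pull back an arbitrary chain, so condition~(d) of Definition~\ref{def:good-chain} enters your argument only through part~(d) of the theorem. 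Both arguments are sound; if you write yours up, the well-definedness step should be expanded to the same level of detail as the paper's injectivity paragraph.
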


\begin{proof}
\begin{enumerate}[(a)]
  \item Let $\by\in\hS$, and let $(\xi_k')$ be a good chain of crosscuts
  for~$\by$. Write $U_k'=U(\xi_k')$, $\xi_k=\Psi(\xi_k')$, and $U_k = U(\xi_k)=
  \Psi(U_k')$. By Remark~\ref{rmk:good-chain}~(a), $(\xi_k)$ is a chain of
  crosscuts in $(\hT,\hI)$, which therefore represents a prime end~$\cP\in\PP$.
  By condition~(b) of Definition~\ref{def:good-chain} we have $\by(\cP)=\by$.
  In particular, $\by\colon\PP\to\hS$ is surjective.

  To show injectivity, suppose that~$(\Xi_k)$ is another chain of crosscuts in
  $(\hT, \hI)$ which defines a prime end $\cQ\in\PP$ with
  $\by(\cQ)=\by(\cP)=\by$. Write $V_k = U(\Xi_k)$, and $V_k'=\Psi^{-1}(V_k)$.
  By Corollary~\ref{cor:crosscuts-pull-back}, each $\Xi'_k = \Psi^{-1}(\Xi_k)$
  is a crosscut in $(\bD, \hS_\infty)$. In order to show that $\cQ=\cP$, we
  need to show that each $V_{\ell}$ contains all but finitely many $U_k$, and
  each $U_{\ell}$ contains all but finitely many $V_k$.

  Now for each~$\ell$, since $\by(\cQ)=\by$, we have that
  $\overline{V_{\ell}'}$ contains an arc~$J_{\ell}'$ in $\hS_\infty$ with
  $(\by,\infty)\in J_{\ell}'$. Moreover, since the $\Xi_k$ are mutually
  disjoint, so are the $\Xi_k'$ by Remark~\ref{rmk:good-chain}~(b) (this is
  where we use condition~(d) of Definition~\ref{def:good-chain}). Therefore
  $(\by,\infty)$ cannot be an endpoint of more than one of the crosscuts
  $\Xi'_k$, and hence is in the interior of $J_{\ell}'$.  Since
  $\xi_k'\to(\by,\infty)$, it follows that $U_k'\subset V_{\ell}'$ --- and
  hence $U_k\subset V_{\ell}$ --- for all sufficiently large~$k$.

  To show that each~$U_{\ell}$ contains all but finitely many $V_k$, let
  $\xi'<\xi_{\ell}'$ be a crosscut disjoint from~$\xi_{\ell}'$ whose endpoints
  are in the same components of~$\cR_\infty$ as the endpoints of $\xi_{\ell}'$,
  and which satisfies $(\by,\infty)\in \overline{U(\xi')}$. Let $X$ be the
  compact subset of $\bD$ bounded by $\xi_{\ell}'$ and $\xi'$. Since $\Psi|_X$
  is a homeomorphism onto its image, arcs which intersect both $\Psi(U(\xi'))$
  and the complement of $\Psi(U_{\ell}')$ have diameter bounded below. Now
  $\Xi_k$ intersects $\Psi(U(\xi'))$ for all sufficiently large~$k$ (since
  $\by(\cQ)=\by$), and $\diam(\Xi_k)\to 0$, so that $\Int(\Xi_k)
  \subset \Psi(U_{\ell}')=U_{\ell}$ --- and hence $V_k\subset U_{\ell}$ --- for
  all sufficiently large~$k$ as required.

  \item Follows immediately from the first paragraph of the proof of~(a), which
    doesn't make use of condition~(d) of Definition~\ref{def:good-chain}.

  \item For each~$k$ there is some~$t$ with $\{\by\}\times[t,\infty)
    \subset U(\xi_k')$, and therefore
  \[ R_{\by}([t,\infty)) = \Psi(\{\by\}\times[t,\infty)) \subset
      \Psi(U(\xi_k')) = U(\Psi(\xi_k')),
  \] so that $R_{\by}$ converges to the prime end defined by the
  chain~$(\Psi(\xi_k'))$ as required.

  \item Clearly $\omega(\by)$ is accessible for all $\by\in\cL$, since it is the
  landing point of the ray $R_\by$.

  Let $\bx$ be an accessible point of $\hI$, so that there is a ray
  $\sigma\colon[0,\infty)\to U$ which lands at~$\bx$. By
  Lemma~\ref{lem:crosscut-preimage-lands}, the ray $\sigma' =
    \Psi^{-1}\circ \sigma$ lands at some $(\by,\infty)\in
    \hS_\infty$. By Remark~\ref{rmk:good-chain}~(b), $\by\in\cL$ and $\bx =
    \omega(\by)$.
\end{enumerate}

\end{proof}

\begin{notation}[$\cP\colon\hS\to\PP$] Suppose that $\cR$ is dense in~$\hS$,
and that there is a good chain of crosscuts for every $\by\in\hS$. Then we
write $\cP\colon\hS\to\PP$ for the inverse of the homeomorphism
$\by\colon\PP\to\hS$.
\end{notation}

\begin{lem}
\label{lem:PP-conj-hS} Suppose that $\cR$ is dense in~$\hS$, and that there is
a good chain of crosscuts for every $\by\in\hS$. Then $\cP\colon\hS\to\PP$
conjugates $\hB\colon\hS\to\hS$ to $\hH\colon
\PP\to\PP$. In particular, the prime end rotation number of
$\hH\colon(\hT,\hI)\to(\hT,\hI)$ is equal to $\rho(\hB)$.
\end{lem}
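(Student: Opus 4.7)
The plan is to exploit the conjugation $\hH\circ\Psi = \Psi\circ G$ from Corollary~\ref{cor:Psi-conjugates} and the fact that $G$ acts on the circle at infinity $\hS_\infty$ essentially as $\hB$, since $G(\by,s)=(\hB(\by),\lambda(s))$ and $\lambda$ fixes $\infty$. Rather than working directly with $\cP\colon\hS\to\PP$, I would verify the conjugacy identity in the form $\by\circ \hH = \hB\circ \by$ on $\PP$, where $\by\colon\PP\to\hS$ is the homeomorphism from Theorem~\ref{thm:corr-homeo}; this is equivalent, by inverting, to $\cP\circ\hB = \hH\circ\cP$.

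Here is how I would organize the verification. Let $\cP\in\PP$ be represented by a chain~$(\xi_k)$, and write $\xi_k'=\Psi^{-1}(\xi_k)$ and $U_k'=\Psi^{-1}(U(\xi_k))=U(\xi_k')$, so that by Definition~\ref{def:y-corr-prime-end},
\[
\bigcap_{k\ge 0}\left(\overline{U_k'}\cap \hS_\infty\right) = \{(\by(\cP),\infty)\}.
\]
The prime end $\hH(\cP)$ is represented by the chain $(\hH(\xi_k))$ (this is the defining action on $\PP$, see Definition~\ref{def:topology-PP}). Using Corollary~\ref{cor:Psi-conjugates}, we have $\Psi^{-1}(\hH(\xi_k))=G(\xi_k')$ and $\Psi^{-1}(U(\hH(\xi_k)))=G(U_k')$. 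Since $G$ is a homeomorphism of $\bD$ that preserves $\hS_\infty$ and acts there as $\hB$ (because $\lambda(\infty)=\infty$), we obtain
\[
\overline{G(U_k')}\cap \hS_\infty \;=\; G\bigl(\overline{U_k'}\cap \hS_\infty\bigr) \;=\; \hB\bigl(\overline{U_k'}\cap \hS_\infty\bigr).
\]
Intersecting over $k$ and applying $\hB$ (a homeomorphism) to the identity above yields $\{(\hB(\by(\cP)),\infty)\}$, so by Definition~\ref{def:y-corr-prime-end} $\by(\hH(\cP))=\hB(\by(\cP))$. Thus $\by$ conjugates $\hH|_\PP$ to $\hB$, and hence so does its inverse $\cP$.

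For the rotation number statement, both $\hS$ and $\PP$ are topological circles and $\cP$ is a conjugacy between the circle homeomorphisms $\hB$ and $\hH|_\PP$; I would note that $\cP$ is orientation-preserving, since $\Psi$ is a homeomorphism of the open disks built from the orientation-preserving unwrapping $\barf$, and the identification of $\by(\cP)$ via $\overline{\Psi^{-1}(U(\xi_k))}\cap\hS_\infty$ carries the natural orientation of $\PP$ (induced as the boundary of the disk of Remark~\ref{remark:whole-topology}) to the natural orientation of $\hS_\infty$. Since the Poincar\'e rotation number is a conjugacy invariant for orientation-preserving circle homeomorphisms, $\rho(\hH|_\PP)=\rho(\hB)$, which is the stated prime end rotation number.

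The main potential obstacle is bookkeeping rather than conceptual: one must confirm that the naive action of $\hH$ on chains really induces the action on $\PP$ with $\by\circ\hH=\hB\circ\by$ (this is transparent once $\Psi$-coordinates are used), and the orientation-preservation of $\cP$, which is easy to believe but requires tracking orientations through the construction of $\Psi$ and the identification in Definition~\ref{def:y-corr-prime-end}.
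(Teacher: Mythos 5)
Your argument is correct, but it takes a different route from the paper's. The paper proves $\hH(\cP(\by))=\cP(\hB(\by))$ dynamically via the rays: by Theorem~\ref{thm:corr-homeo}~(c) the ray $R_\by$ converges to $\cP(\by)$, so $\hH\circ R_\by$ converges to $\hH(\cP(\by))$; and since Lemma~\ref{lem:conjugation} gives $\hH\circ R_\by(s)=R_{\hB(\by)}(\lambda(s))$, this ray has the same image as $R_{\hB(\by)}$, which converges to $\cP(\hB(\by))$ --- so the two prime ends coincide. You instead verify the equivalent identity $\by\circ\hH=\hB\circ\by$ straight from Definition~\ref{def:y-corr-prime-end}, pulling a representing chain back to $(\bD,\hS_\infty)$ with $\Psi^{-1}$ (legitimate by Corollary~\ref{cor:crosscuts-pull-back}, which needs the density of $\cR$ you assume) and using $\Psi\circ G=\hH\circ\Psi$ together with the fact that $G$ restricts to $\hB$ on $\hS_\infty$. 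Both proofs rest on the same conjugacy $\Psi$; the paper's is shorter because Theorem~\ref{thm:corr-homeo}~(c) has already done the work of tying $\by$ to the rays, whereas yours is more self-contained at the level of the crosscut definition of $\by(\cP)$ and avoids invoking convergence of rays to prime ends. Your set-theoretic steps ($\Psi^{-1}(U(\hH(\xi_k)))=G(U_k')$, $\overline{G(U_k')}\cap\hS_\infty = G(\overline{U_k'}\cap\hS_\infty)$, and commuting $G$ with the intersection over $k$) are all valid since $G$ is a homeomorphism of the compact space $\bD$ preserving $\hS_\infty$. On the rotation number, you are right that conjugacy invariance of the Poincar\'e rotation number requires the conjugacy to be orientation-compatible; the paper passes over this silently, so your explicit flagging of it is a small improvement rather than a defect, though a fully rigorous treatment would pin down the orientations as you indicate.
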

\begin{proof} Let $\by\in\hS$. By Theorem~\ref{thm:corr-homeo}~(c), the ray
$R_\by$ converges to $\cP(\by)$, and hence $\hH\circ R_\by$ converges to
$\hH(\cP(\by))$. By Lemma~\ref{lem:conjugation}, $\hH\circ R_{\by}(s) =
R_{\hB(\by)}(\lambda(s))$, so that the image of $\hH\circ R_\by$ coincides with
the image of $R_{\hB(\by)}$, which converges to $\cP(\hB(\by))$ by
Theorem~\ref{thm:corr-homeo}~(c). Therefore $\hH(\cP(\by))=\cP(\hB(\by))$ as
required.
\end{proof}

We will see later (Corollary~\ref{cor:rot-no}) that
$\rho(\hB)=\rho(B)=q(\kappa(f))$. The following lemma summarizes those parts of
the results above which are relevant to the classification of prime ends, for
future reference.

\begin{lem}
\label{lem:general-summary} Suppose that $\cR$ is dense in~$\hS$, and that
there is a good chain of crosscuts for every $\by\in\hS$. Then
\begin{enumerate}[(a)]
\item If $\by\in\cL$ then $\Pi(\cP(\by)) = \{\omega(\by)\}$.
\item If $\by\in\cR$ then $\cI(\cP(\by)) = \{\omega(\by)\}$.
\end{enumerate} In particular, a prime end~$\cP\in\PP$ is of the first kind if
$\by(\cP)\in \cR$; and is of the first or second kind if $\by(\cP)\in\cL$.
\end{lem}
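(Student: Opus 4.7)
For part~(a), the argument is immediate. By the definitions of $\cL$ and~$\omega$, the ray $R_{\by}\colon[0,\infty)\to U$ lands at $\omega(\by)$ whenever $\by\in\cL$. By Theorem~\ref{thm:corr-homeo}(c), $R_{\by}$ converges to the prime end $\cP(\by)$, and Lemma~\ref{lem:landing-principal-set} then yields $\Pi(\cP(\by))=\{\omega(\by)\}$.

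For part~(b), the plan is to exhibit an explicit chain of crosscuts representing $\cP(\by)$ whose closures in $\hT$ shrink to the single point $\omega(\by)$. Since $\cR$ is open in $\hS$, I would first choose a compact arc $J=[\by_1,\by_2]\subset\cR$ which is a neighborhood of~$\by$. By Corollary~\ref{cor:Psi-maximal-extension}, $\Psi$ restricts to a homeomorphism on the compact set $J\times[0,\infty]\subset\tD$. Next, I would pick points $\by_1^k,\by_2^k\in J$ converging to~$\by$ from opposite sides and integers $N_k\to\infty$, and build ``box-shaped'' crosscuts $\xi_k'$ in $(\bD,\hS_\infty)$ by concatenating the vertical arcs $\{\by_i^k\}\times[N_k,\infty]$ with a horizontal arc at height $N_k$ that joins them through the $\hS$-arc containing~$\by$. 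Each such $\xi_k'$ is disjoint from~$\partial'$, has both endpoints in $\cR_\infty$, and $\overline{U(\xi_k')}$ coincides with the closed box $[\by_1^k,\by_2^k]\times[N_k,\infty]\subset J\times[0,\infty]$, which shrinks in~$\bD$ to the single point $(\by,\infty)$ as $k\to\infty$.

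The verification that $(\xi_k')$ is a good chain for~$\by$ is then routine: condition~(a) of Definition~\ref{def:good-chain} follows from the choice of endpoints; (b) and~(c) follow from the shrinking of $\overline{U(\xi_k')}$ combined with the uniform continuity of $\Psi|_{J\times[0,\infty]}$; and~(d) is vacuous since $\by\in\cR\subset\cL$ by Corollary~\ref{cor:ray-lands}. Theorem~\ref{thm:corr-homeo}(b) then identifies $\cP(\by)$ with the prime end represented by the chain $(\xi_k)=(\Psi(\xi_k'))$. Since $\Psi(\overline{U(\xi_k')})$ is the continuous image of a compact set, it is closed in $\hT$ and contains $U(\xi_k)=\Psi(U(\xi_k'))$, so $\overline{U(\xi_k)}\subset\Psi(\overline{U(\xi_k')})$. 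Taking intersections over~$k$ and using that $\Psi(\overline{U(\xi_k')})$ Hausdorff-converges to $\{\omega(\by)\}$, one obtains $\cI(\cP(\by))=\bigcap_k\overline{U(\xi_k)}\subseteq\{\omega(\by)\}$, which together with the nonemptiness of the impression gives equality.

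The final assertion follows from Carath\'eodory's classification. When $\by(\cP)\in\cR$, part~(b) combined with $\cR\subset\cL$ and part~(a) force $\Pi(\cP)=\cI(\cP)=\{\omega(\by)\}$, so $\cP$ is of the first kind; when $\by(\cP)\in\cL$, part~(a) says $\Pi(\cP)$ is a single point, placing $\cP$ in the first or second kind. The main technical care in this plan lies in the second paragraph: the crosscuts must be designed so that their closures stay inside the compact subset $J\times[0,\infty]$ on which $\Psi$ is well-controlled, which is precisely why the ``box'' shape (rather than arbitrary horizontal or diagonal arcs) is the natural choice.
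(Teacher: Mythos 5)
Your proposal is correct and follows essentially the same route as the paper: part (a) is exactly the paper's argument (convergence of $R_\by$ to $\cP(\by)$ plus Lemma~\ref{lem:landing-principal-set}), and part (b) is the paper's one-line appeal to Corollary~\ref{cor:Psi-maximal-extension} with the details filled in — your box-shaped crosscuts are precisely the crosscuts $\xi'(\bJ,s)$ of \Notation~\ref{notn:crosscuts-xi-J-s}, and the shrinking of their images under the homeomorphism $\Psi|_{J\times[0,\infty]}$ is exactly why the impression collapses to $\{\omega(\by)\}$.
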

\begin{proof} (a) follows from the fact that $R_\by$ converges to $\cP(\by)$
and lands at $\omega(\by)$ (see Section~\ref{sec:prime-ends}). (b)~is
immediate from the homeomorphism established in
Corollary~\ref{cor:Psi-maximal-extension}.
\end{proof}

\subsection{Dynamics of the outside map}
\label{sec:outside-dynamics} In order to determine the prime ends of $(\hT,
\hI)$, it suffices, in view of the homeomorphism between $\PP$ and $\hS$
(Theorem~\ref{thm:corr-homeo}) and the triviality of prime ends $\cP(\by)$ with
$\by\in\cR$ (Lemma~\ref{lem:general-summary}), to prove that $\cR$ is dense in
$\hS$ and that there is a good chain of crosscuts for every $\by\in\hS$; and
then to analyze the prime ends which the rays $R_{\by}$ converge to in the
cases when $\by\not\in\cR$. The arguments and conclusions are quite different
depending on whether $f$ is of rational or irrational type, and we will
consider these cases separately.

In this section we state and prove the main result which will be needed about
the dynamics of the outside map $B\colon S\to S$. Because the locally uniformly
landing set~$\cR$ of Definitions~\ref{def:landing} depends on occurrences of
elements of~$\ingam$ in the threads $\by\in\hS$, it is primarily necessary to
understand the recurrence properties of $\gamma$. Since $B$ collapses~$\gamma$
to the single point~$B(a)$, the main question is: when does the orbit of~$B(a)$
first enter~$\gamma$? We will see that if~$f$ is of rational type with
$q(\kappa(f))=m/n$, then~$n$ is the smallest positive integer with
$B^n(a)\in\gamma$, except when~$f$ is of early left endpoint type; while if $f$
is of irrational type, or of early left endpoint type, then the orbit of $B(a)$
is disjoint from~$\gamma$.

\begin{notation}[$N(f)$] Let $f\colon[a,b]\to [a,b]$ be a unimodal map, and
$B\colon S\to S$ be the corresponding outside map. We define
$N(f)\in\N\cup\{\infty\}$ by $N(f)=\infty$ if $B^r(a)\not\in\gamma$ for all
$r\ge 1$, and otherwise
\[ N(f) = \min\{r\ge 1\,:\,B^r(a)\in\gamma\}.
\]

\end{notation}

Theorem~\ref{thm:outside-dynamics} below is an extension (both to more general
hypotheses and to stronger conclusions) of a result of~\cite{gpa}. Because of
the central role which this theorem plays in the paper, we prove it in full,
although we do rely on some technical lemmas from~\cite{gpa}.

Before stating the theorem, we remark that the outside map $B\colon S\to S$ is
a monotone degree~1 circle map, and therefore has a Poincar\'e rotation number
$\rho(B)$. Recall that we denote by~$\re$ the unique element of~$(c,b]$ with
$f(\re)=f(a)$. The reader is encouraged to review the notation and results of
Section~\ref{sec:height} before proceeding.

\begin{thm}[Dynamics of the outside map]
\label{thm:outside-dynamics} Let $f\colon[a,b]\to[a,b]$ be a unimodal map with
kneading sequence $\kappa(f)=\syma$, and let $B\colon S\to S$ be the
corresponding outside map. Then
\begin{enumerate}[(a)]
\item $\rho(B) = q(\syma)$.
\item If $q(\syma)=m/n$ is rational and~$f$ is not of early left endpoint type, then
\begin{enumerate}[(i)]
\item $N(f)=n$;
\item $B^n(a)=a \iff \syma = \lhe(m/n)$ and $B^n(a)=\re_u \iff \syma =
  \rhe(m/n)$; and
\item The set $S\setminus\bigcup_{r\ge0}B^{-r}(\gamma)$ of points whose orbits
  never fall into~$\gamma$ is:
  \begin{itemize}
     \item empty if $f$ is of normal endpoint type;
     \item the union of~$n$ half-open intervals, with open endpoint at a
     point of the orbit of~$B(a)$ and closed endpoint at a point of a second
     period~$n$ orbit of~$B$, if $f$ is of quadratic-like strict left endpoint
     type; and
     \item a single~period~$n$ orbit of~$B$ otherwise.
   \end{itemize} 
\end{enumerate}
\item If $q(\syma)=m/n$ is rational and~$f$ is of early left endpoint type,
then
\begin{enumerate}[(i)]
\item $N(f)=\infty$; and
\item $B$ has a period~$n$ orbit~$Q$ disjoint from~$\gamma$ which attracts the
  orbit of~$B(a)$.
\end{enumerate}
\item If $q(\syma)$ is irrational, then
\begin{enumerate}[(i)]
\item $N(f)=\infty$;
\item The set $\bigcup_{r\ge 0}B^{-r}(\gamma)$ of points whose orbits fall
  into~$\gamma$ is dense in~$S$; and
\item The orbit $\{B^r(a)\,:\, r\ge 1\}$ of $B(a)$ is dense in
  $S\setminus\bigcup_{r\ge 0}B^{-r}(\gamma)$.
\end{enumerate}
\end{enumerate}
\end{thm}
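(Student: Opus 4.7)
My plan is to reduce everything to a single symbolic computation: tracking the $B$-orbit of $a$ via the kneading sequence, and then reading off the rotation behavior from it. A direct unfolding of Definition~\ref{def:outside-map} shows that, as long as the orbit avoids $\gamma$, one has $B^k(a) = (x_k)_{s_k}$ where $x_k = f^{k-1}(a)$ and the side $s_k \in \{\ell,u\}$ is determined by $s_1=\ell$ and the rule: $s_{k+1}=u$ iff $s_k=\ell$ and $\kappa_k=1$. Thus the orbit enters the open set $\gamma$ at step $k$ precisely when $s_k=u$ (so $\kappa_{k-1}=1$) and $x_k\in(a,\re)$; using the characterization $x\in(a,\re)\iff f(x)>f(a)$, this translates via itineraries to $\kappa_{k-1}=1$ together with $\sigma^{k+1}(\kappa)\succ\sigma^2(\kappa)$ (strict). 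Hitting the boundary $\{a,\re_u\}$ of $\gamma$ corresponds to equality $\sigma^{k+1}(\kappa)=\sigma^2(\kappa)$. This symbolic dictionary is the engine for all four parts.

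For part (a), I would lift $B$ to $\tilde B\colon\R\to\R$ so that every occurrence of $s_k=u$ (i.e.\ every kneading symbol $1$ followed by a $0$) contributes one full turn across $\gamma$ after collapsing, and count using the explicit form of $c_q$ from Lemma~\ref{lem:height-intervals}(c): the block $c_q$ of length $n{+}1$ contains exactly $m$ blocks $11$, yielding $m$ complete rotations in $n$ steps and hence $\rho(B)=m/n$ in the rational case; the irrational case follows from the characterization in Lemma~\ref{lem:height-intervals}(a) together with monotonicity of $\rho$ and of the height function in the unimodal order (Lemma~\ref{lem:cq1-maximal}). For parts (b)(i)--(ii), with $q(\kappa)=m/n$ and $\kappa$ beginning with $c_q$, I would induct on $k=1,\dots,n{-}1$ to verify $B^k(a)\notin\overline\gamma$: at each $k$ with $\kappa_{k-1}=1$, maximality of $\kappa$ combined with $\kappa\preceq\rhe(m/n)$ and the explicit form $c_q=10^{\kappa_1}11\dots 110^{\kappa_m}1$ forces $\sigma^{k+1}(\kappa)\prec\sigma^2(\kappa)$ strictly. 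At $k=n$ one gets equality iff $\kappa\in\{\lhe(m/n),\rhe(m/n)\}$ (using the palindromic structure of $c_q$ and Lemma~\ref{lem:height-intervals}(d)); a direct reading of $w_q$ and $\hw_q$ then pinpoints $B^n(a)=a$ in the left case and $B^n(a)=\re_u$ in the right case.

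For (b)(iii), the set $X:=S\setminus\bigcup_r B^{-r}(\gamma)$ is forward $B$-invariant and $B$ acts there as an injective degree-one monotone map with rotation number $m/n$, so its structure is forced: either a single period-$n$ orbit (rational interior general case), or two period-$n$ orbits joined by heteroclinic half-open arcs (quadratic-like strict left endpoint case, where Convention~\ref{conv:unimodal}(c) permits a second period-$n$ point with itinerary $\lhe(m/n)$), or empty when the orbit of $B(a)$ closes up on $\partial\gamma$ (normal endpoint case). Part (c) is handled similarly: in early left endpoint type, the itinerary $\Inf{w_q1}$ is shared by $b$ and a second period-$n$ point, and the $B$-orbit of $a$ is attracted to the corresponding repelling periodic orbit of $B$ sitting in the complement of $\gamma$, giving $N(f)=\infty$. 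For (d), irrationality of $\rho(B)$ rules out periodic points, so if the orbit of $B(a)$ ever entered the open arc $\gamma$ (which is collapsed to a single point by $B$) it would be eventually periodic, a contradiction; density of $\bigcup_r B^{-r}(\gamma)$ and of the forward orbit of $B(a)$ in its complement then follows from the standard semi-conjugacy of $B$ to an irrational rotation.

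The main obstacle will be the inductive verification in (b)(i): showing that for every intermediate $k<n$ with $\kappa_{k-1}=1$ the strict inequality $\sigma^{k+1}(\kappa)\prec\sigma^2(\kappa)$ holds. This requires a careful combinatorial argument comparing shifts of $\kappa$ with $\sigma^2(\kappa)$ using only the fact that $\kappa$ starts with $c_q$ and lies in the height window $[\lhe(m/n),\rhe(m/n)]$. The companion step of showing equality exactly at $k=n$ in the two endpoint cases, together with the structural classification in (b)(iii) via the quadratic-like/tent-like dichotomy, is where the bulk of the technical work from Appendix~\ref{app:technical} (and the cited results from \cite{gpa,HS}) will be brought to bear.
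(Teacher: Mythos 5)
Your overall strategy coincides with the paper's: the recursive side-tracking rule you describe is exactly the content of Lemma~\ref{lem:gamma-observation} (your recursion for $s_k$ unrolls to the paper's set $\cJ$ of times preceded by an odd block of $1$s), the rotation number is computed by counting upper-half visits along the periodic orbit of $B(a)$, and the verification that the intermediate inequalities are strict for $k<n$ is delegated, as in the paper, to the combinatorial lemmas imported from~\cite{gpa} (Lemma~\ref{lem:outside-lemma}). So the rational part of your plan is sound in outline, modulo the indexing slip $x_k=f^{k-1}(a)$ versus $\tau(B^k(a))=f^k(a)$ and the imprecise "$\kappa_{k-1}=1$" where you mean "preceded by an odd block of $1$s".

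There is, however, a genuine circularity in your treatment of the irrational case. You propose to prove (d)(i) by arguing that irrationality of $\rho(B)$ forbids periodic points, and that entering $\gamma$ would make $B(a)$ periodic. The implication is valid, but it presupposes $\rho(B)=q(\syma)$, and your proof of (a) in the irrational case ("monotonicity of $\rho$ and of the height function") is not available: for a fixed $f$ there is a single outside map, no monotonicity of $\rho(B_f)$ as a function of $\kappa(f)$ is established anywhere, and the only way to compute $\rho(B)$ from the orbit of $a$ is via the symbolic dictionary, which is valid precisely up to the first entry into $\gamma$. If the orbit did fall into $\gamma$ at some step $r$, then $B(a)$ would be periodic and $\rho(B)$ would be the rational rotation number of that periodic orbit, with no contradiction to be extracted. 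The paper breaks this loop by first proving $N(f)=\infty$ purely combinatorially (Lemma~\ref{lem:outside-lemma}(d): every tail $10^{\kappa_j}110^{\kappa_{j+1}}11\cdots$ exceeds $\iota(\re)=10^{\kappa_1-1}110^{\kappa_2}11\cdots$), and only then computes $\rho(B)=\lim m/\sum_{i\le m}(\kappa_i+2)=q$; you need that combinatorial step, and it cannot be replaced by an appeal to irrationality of $\rho(B)$. A second, smaller gap is in (b)(iii): "the structure is forced" by monotonicity and rotation number is not enough, since a monotone degree-one map of rotation number $m/n$ may have arbitrarily many period-$n$ orbits. The paper pins down the count by showing that any periodic orbit of $B$ disjoint from $\gamma$ lies over a periodic orbit of $f$ containing a point of itinerary $\Inf{w_q1}$ or $\Inf{w_q0}$, ruling out excess orbits of the first kind via Convention~\ref{conv:unimodal}(c) and the second kind via the isolated-$1$ argument; similarly (d)(ii)--(iii) rest on Convention~\ref{conv:unimodal}(b) and Lemma~\ref{lem:outside-lemma}(e) rather than on a generic semi-conjugacy to a rotation, which by itself does not control how $\gamma$ and the orbit of $B(a)$ sit relative to the collapsed intervals.
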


We will use two lemmas. The first, Lemma~\ref{lem:gamma-observation} below,
provides tools for determining $N(f)$ and the rotation number~$\rho(B)$.
Although the lemma is straightforward, its statement may be hard to parse, and
we start with an informal description. For $r\le N(f)$ we have that
$\tau(B^r(a))=f^r(a)$ by~\eqref{eq:commute}. In order to determine whether or
not $B^r(a)\in\gamma$, we need to decide whether $B^r(a)$ is equal to
$f^r(a)_u$ or to $f^r(a)_\ell$; and, in the former case, whether or not
$f^r(a)\le \re$. The set $\cJ$ defined in the statement of the lemma has the
property that, for $1\le r\le N(f)$, $B^r(a)=f^r(a)_u$ if and only if
$r\in\cJ$. Since $\iota(f^r(a))=\sigma^{r+1}(\kappa(f))$ and
$\iota(\re)=1\sigma^2(\kappa(f))$, the smallest~$r$ with $B^r(a)\in\gamma$ is
equal to the smallest~$r$ for which $r\in\cJ$ and
$\sigma^{r+1}(\kappa(f))\preceq 1\sigma^2(\kappa(f))$: this is the content of
parts~(a) and~(b). We will see that $\rho(B)$ depends on how many points of the
orbit of~$B(a)$ lie in the upper half of the circle, and part~(c) of the lemma
enables us to calculate this. Finally, part~(d) extends the ideas of~(a)
and~(b) to give conditions under which there is a periodic orbit of~$B$,
disjoint from~$\gamma$, above a periodic orbit of~$f$.

\begin{lem}
\label{lem:gamma-observation} 
  Let $f\colon[a,b]\to[a,b]$ be a unimodal map with kneading sequence
  $\kappa(f)=\syma$, and let $B\colon S\to S$ be the corresponding outside map.
  Write
  \begin{equation}
  \label{eq:cJ}
    \cJ = \{r\in\N\,:\,\text{there is some $0\le k\le
    (r-1)/2$ such that }\sigma^{r-(2k+1)}(\syma) =
    01^{2k+1}\sigma^{r+1}(\syma)\}.
  \end{equation}
  \begin{enumerate}[(a)]
    \item Suppose that $\sigma^{r+1}(\syma)\succ 1\sigma^2(\syma)$ for all
      $r\in\cJ$. Then $N(f)=\infty$, provided that~$c$ is not a periodic point
      of~$f$.

    \item Otherwise, let~$r$ be least such that $r\in\cJ$
      and $\sigma^{r+1}(\syma)\preceq 1\sigma^2(\syma)$. Then $N(f)=r$,
      provided that $f^i(a)\not=c$ for $1\le i<r$.

    \item For each $N\le N(f)$ we have
    \[
    \#\,\{r\le N\,:\,B^r(a) = f^r(a)_u\} = \#\,\{r\le N\,:\,r\in\cJ\},
    \] provided that $f^i(a)\not=c$ for $1\le i<N(f)$.

    \item Suppose that~$f$ has a period~$N$ point~$x$ whose orbit does not
      contain~$c$; and that $\iota(x)=\Infs{W}$, where $W=10\,V\,01^{2j+1}$ for
      some~$j\ge 0$ and some word~$V$ of length $N-2j-4$. Suppose, moreover,
      that whenever $\sigma^i(\Infs{W}) = 01^{2k+1}\symb$ for some~$k\ge 0$ and
      $\symb\in\{0,1\}^\N$, we have $\symb\succ 1\sigma^2(\syma)$. Then~$x_u$
      is a period~$N$ point of~$B$ whose orbit is disjoint from~$\gamma$.
  \end{enumerate}
\end{lem}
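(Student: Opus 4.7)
The whole lemma will be driven by a single symbolic bookkeeping. For $1\le r\le N(f)$ I would write $B^r(a) = f^r(a)_{\epsilon_r}$ with $\epsilon_r\in\{u,\ell\}$; the hypothesis on $c$ (respectively on $f^i(a)$) ensures that the orbit of $a$ avoids the turning point throughout the relevant initial segment, so this label is well-defined. Since $B(a) = f(a)_\ell$ we have $\epsilon_1 = \ell$; and the defining formulas~\eqref{eq:outside-map} of $B$, together with the observation that $B^r(a)\notin\gamma$ forces any upper iterate to lie in $[\re,b]$, yield the two-state transition rule: from $\ell$ one passes to $\ell$ or to $u$ according as $\syma_{r+1}$ is $0$ or $1$ (i.e.\ whether $f^r(a)\in[a,c]$ or $[c,b]$), while from $u$ one necessarily returns to $\ell$.

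A direct induction on $r$ will then yield the key characterisation: $\epsilon_r = u$ iff the block $\syma_2\syma_3\cdots\syma_r$ ends with a run of the form $0\,1^{2k+1}$ for some $k\ge 0$, which is precisely the condition $\sigma^{r-(2k+1)}(\syma) = 01^{2k+1}\sigma^{r+1}(\syma)$ defining $r\in\cJ$. Part~(c) follows immediately by counting. Next, observing that $B^r(a)\in\gamma = [\re_u,a]$ iff $\epsilon_r=u$ and $f^r(a)\in[a,\re]$, and computing $\iota(\re) = 1\sigma^2(\syma)$ from $f(\re)=f(a)$ and $\re\in(c,b]$, I can translate $f^r(a)\le\re$ into the symbolic inequality $\sigma^{r+1}(\syma)\preceq 1\sigma^2(\syma)$. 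This gives $B^r(a)\in\gamma$ iff $r\in\cJ$ and $\sigma^{r+1}(\syma)\preceq 1\sigma^2(\syma)$, from which~(a) and~(b) fall out: in~(a) the strict $\succ$ in the hypothesis rules out both the equality case $f^r(a)=\re$ and the boundary case $f^r(a)=a$ (whose itinerary $\sigma(\syma)$ begins with $0$ and so is $\prec 1\sigma^2(\syma)$ in the unimodal order), so no orbit point of $a$ ever enters $\gamma$; in~(b), $N(f)$ is identified as the first $r$ at which the symbolic inequality fails.

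For part~(d) I would replay the same automaton on $\Infs{W}$, this time initialised at $\epsilon_0 = u$. The first step requires $x\in(\re,b]$, which is where the standing hypothesis first bites: applied at $i = N-2j-2$, the position where the trailing block $01^{2j+1}$ of one period begins, it gives $\sigma^i(\Infs{W}) = 01^{2j+1}\sigma^N(\Infs{W}) = 01^{2j+1}\Infs{W}$ by $N$-periodicity, whence $\Infs{W}\succ 1\sigma^2(\syma)$ and hence $x>\re$. For $1\le r\le N-1$ the induction runs exactly as in the $a$ case, and the hypothesis applied at each $i$ for which $\sigma^i(\Infs{W})$ starts with $0\,1^{2k+1}$ ensures $f^r(x)>\re$, keeping $B^r(x_u)$ out of $\gamma$. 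The terminal block $01^{2j+1}$ in $W$ drives $\epsilon_N$ back to $u$, so $B^N(x_u) = f^N(x)_u = x_u$ and the orbit closes up with period exactly $N$, disjoint from $\gamma$ throughout.

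The hard part will be the starting step of~(d): I have to recognise that it is the periodic structure of $W$ itself, not any direct hypothesis on $x$'s position, that forces $x>\re$, by feeding the tail of one period into the head of the next and invoking the standing hypothesis at exactly that position. The rest is a clean but somewhat intricate DFA-style exercise that must carefully coordinate the $u/\ell$ labels, the symbols of $\syma$ (or of $\Infs{W}$), and the geometric inequality $f^r(\cdot)\le\re$.
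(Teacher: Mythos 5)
Your proposal is correct and follows essentially the same route as the paper: both rest on the two-state $u/\ell$ transition rule for the outside map, leading to the characterization $B^r(a)=f^r(a)_u\iff r\in\cJ$ (the paper's equation~\eqref{eq:in-upper}), and then translate $f^r(a)\le\re$ into $\sigma^{r+1}(\syma)\preceq 1\sigma^2(\syma)$ via $\iota(\re)=1\sigma^2(\syma)$. Your treatment of part~(d), including the observation that the trailing block $01^{2j+1}$ of $W$ both forces $x>\re$ (via the standing hypothesis applied at $i=N-2j-2$) and closes the orbit at step $N$, fills in details the paper leaves as ``similar to (a) and (b).''
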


\begin{proof} 
  By~(\ref{eq:commute}) we have $\tau(B^r(a))=f^r(a)$ for $r\le N(f)$, so that
  $B^r(a)$ is either $f^r(a)_\ell$ or $f^r(a)_u$ when $r\le N(f)$. By the
  definition~(\ref{eq:outside-map}) of the outside map we have that, for $1\le
  r\le N(f)$,
  \[ B^r(a)=f^r(a)_u \iff B^{r-1}(a)=f^{r-1}(a)_\ell \text{ and }f^{r-1}(a)\ge c.
  \]

  Provided that $f^{r-1}(a)\not=c$ for $r\le N(f)$ (so that there is no
  ambiguity in the corresponding entries of~$\syma$) it follows that, for $r\le
  N(f)$, we have $B^r(a)=f^r(a)_u$ if and only if there is some~$k\ge 0$ with
  $f^{r-2k-2}(a)<c$ and $f^j(a)>c$ for $r-2k-1\le j < r$ (there is an odd
  number of $1$s in $\syma$ preceding the entry corresponding to $f^r(a)$).
  This in turn is equivalent to the existence of $k\ge 0$ such that
  $\sigma^{r-(2k+1)}(\syma)=01^{2k+1}\sigma^{r+1}(\syma)$. By definition of
  $\cJ$ we therefore have, under the assumption that $f^{r-1}(a)\not=c$ for
  $1\le r\le N(f)$,
  \begin{equation}
    \label{eq:in-upper} 
    B^r(a)=f^r(a)_u \iff r\in\cJ \qquad (1\le r\le N(f)).
  \end{equation}

  \begin{enumerate}[(a)]
    \item If~$c$ is not a periodic point of~$f$ then $f^r(a)\not=c$ for all
      $r\ge 0$. Since $\sigma^{r+1}(\syma)\succ 1\sigma^2(\syma)=\iota(\re)$
      whenever $r\in\cJ$ we have $f^r(a)>\re$ whenever
      $B^r(a)=f^r(a)_u$ (note that~$\re$ has a unique itinerary since
      $f^r(\re)=f^r(a)\not=c$ for all $r\ge 1$). Therefore
      $B^r(a)\not\in\gamma$ for all $r\ge 1$, i.e.\ $N(f)=\infty$ as required.
    \item Let $r$ be least such that $r\in\cJ$ and
      $\sigma^{r+1}(\syma)\preceq 1\sigma^2(\syma)$, and suppose that
      $f^i(a)\not=c$ for $1\le i<r$. As in~(a), we have $B^i(a)\not\in\gamma$
      for $1\le i<r$. On the other hand, $B^r(a)=f^r(a)_u$ and
      $\iota(f^r(a))=\sigma^{r+1}(\syma)\preceq 1\sigma^2(\syma) = 
      \iota(\re)$. Therefore $f^r(a)\le \re$ (in the borderline case
      $\iota(f^r(a))=\sigma^{r+1}(\syma) = 1\sigma^2(\syma)$ we have
      $\syma=10\Inf{\syma_2\syma_3\ldots \syma_r1}$, which is not periodic, so
      that $f^r(a)=\re$ by Convention~\ref{conv:unimodal}~(b)). Hence
      $B^r(a)\in\gamma$, and $N(f)=r$ as required.
    \item Immediate from~(\ref{eq:in-upper}).
    \item The proof is similar to that of~(a) and~(b): the condition that
      $\symb\succ 1\sigma^2(\syma)$ whenever
      $\sigma^i(\Infs{W})=01^{2k+1}\symb$ ensures that every point of the orbit
      of $x_u$ which lies on the upper half of~$S$ is not in~$\gamma$.
  \end{enumerate}

\end{proof}

It is clear from Lemma~\ref{lem:gamma-observation} that a key question is how
certain sequences compare with $1\sigma^2(\syma)$ in the unimodal order. The
next lemma, which contains and extends results of~\cite{gpa}, addresses this
and related issues.

\begin{lem}\mbox{}
\label{lem:outside-lemma}
\begin{enumerate}[(a)]
\item Let $q=m/n\in\Q\cap(0,1/2)$. For each integer~$j$ with $1\le j\le m$, the
  word
\[ 10^{\kappa_j(q)}110^{\kappa_{j+1}(q)}11\ldots 110^{\kappa_m(q)}1
\] disagrees with the word
\[ 10^{\kappa_1(q)-1}110^{\kappa_2(q)}11\ldots 110^{\kappa_m(q)}1
\] within the shorter of their lengths, and is greater than it in the unimodal
order.
\item  Let $q=m/n\in\Q\cap(0,1/2)$ and $\syma\in\KS(q)$. If $\syma=c_qd$ for
  some $d\in\{0,1\}^\N$, then $d\preceq 1\sigma^2(\syma)$.
\item Let $q=m/n\in\Q\cap(0,1/2)$ and $\syma\in\KS(q)\setminus\{
  \rhe(q)\}$. Let $\symb\in\{0,1\}^\N$ be on the $\sigma$-orbit of $\Inf{w_q1}$
  and of the form $\symb=1^k0\ldots$ with~$k$ odd. Then $\symb\succ
  1\sigma^2(\syma)$.
\item Let $q\in(0,1/2)$ be irrational. Then for each integer $j\ge 1$ we have
\[ 10^{\kappa_j(q)}110^{\kappa_{j+1}(q)}11\ldots \,\,\succ\,\,
10^{\kappa_1(q)-1}11 0^{\kappa_2(q)} 11 \ldots.
\]
\item Let $q\in(0,1/2)$ be irrational. Then for every $N\ge 1$ there is an
  $r\ge 1$ such that $\kappa_{r+i}(q)=\kappa_{i}(q)$ for $1\le i\le N$; and
  there is an $s\ge 1$ such that $\kappa_{s+1}(q) =
  \kappa_1(q)-1$, and $\kappa_{s+i}(q) = \kappa_i(q)$ for $2\le i\le N$.
\end{enumerate}
\end{lem}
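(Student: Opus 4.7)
The five parts of this lemma are combinatorial statements about the words $c_q, w_q, \hw_q$ and the sequence $(\kappa_i(q))$, and my plan is to prove them all by exploiting three core inputs: (I) the elementary facts $\kappa_1(q) = K := \lfloor 1/q \rfloor - 1$, $\kappa_i(q) \in \{K-1, K\}$ for $i \ge 2$, and, in the rational case $q = m/n$ with $m \ge 2$, $\kappa_m(q) = K$ (via $\lceil n/m \rceil = \lfloor n/m \rfloor + 1$); (II) the \emph{Sturmian balance property} --- setting $\lambda_1 = K+1$ and $\lambda_i = \kappa_i + 2$ for $i \ge 2$, any window sum satisfies $\sum_{i=r}^{r+L-1}\lambda_i \in \{\lfloor L/q\rfloor, \lceil L/q\rceil\}$, with the window starting at $r=1$ always attaining $\lfloor L/q\rfloor$; and (III) the maximality of $\lhe(q)$, $\rhe(q)$ and (in the rational case) of $\NBT(q)=\Inf{c_q 0}$.

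For (a) I fix $1 \le j \le m$ and compare $P_j$ with $P_1'$ position by position. When $\kappa_j = K$ the two words already disagree at position $K - 1$, where $P_j$ has a $0$ and $P_1'$ has a $1$, and the parity count gives $P_j \succ P_1'$ at once. When $\kappa_j = K - 1$ --- which by (I) forces $2 \le j \le m - 1$ --- the words coincide through position $K + 1$, and the comparison recurses to successively match $\kappa_{j+s}$ with $\kappa_{s+1}$ for $s = 1, 2, \ldots$. The only configuration that would yield $P_j \prec P_1'$ is a first disagreement at some step $s$ with $\kappa_{j+s} = K - 1$ and $\kappa_{s+1} = K$; I will rule this out by evaluating the window sums $\sum_{i=j}^{j+s}\lambda_i$ and $\sum_{i=1}^{s+1}\lambda_i$ and observing that the former would be strictly smaller than the latter, contradicting (II) since $s + 1 \le m - 1 < m$ together with $\gcd(m,n) = 1$ forces $(s+1)/q = (s+1)n/m$ to be non-integer. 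Part (d) is the irrational analogue of (a) and follows by the identical case analysis, with non-integrality of $L/q$ for irrational $q$ automatic.

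Parts (b) and (c) I handle using the location of $\syma$ in the interval $[\lhe(q),\rhe(q)]$ given by Lemma~\ref{lem:height-intervals}(c). For (b), the hypothesis $\syma \preceq \rhe(q)$ together with the common prefix $c_q$ (whose number of $1$s is $2m$, even) reduces the claim to $d \preceq d_{\max}$, where $d_{\max} = \rhe(q)[n+1..]$; unpacking $\rhe(q) = 10\Inf{\hw_q 1}$ gives the explicit initial segment of $d_{\max}$, and a direct position-by-position comparison with the explicit initial segment of $1\sigma^2(\syma)$ then yields $d \preceq 1\sigma^2(\syma)$. For (c), I first observe that in the rational case the $1$-runs of $\lhe(q) = \Inf{w_q 1}$ all have length $1$ (at the boundaries of the period) or even length (as can be read from the $\kappa_i$ pattern of $w_q 1$), so the only element on the $\sigma$-orbit of $\lhe(q)$ of the form $1^k 0 \ldots$ with $k$ odd is $\symb = \lhe(q)$ itself; the inequality $\lhe(q) \succ 1\sigma^2(\syma)$ then follows by comparing position $1$, using $\lhe(q) \preceq \syma$ together with the observation that deleting the zero at position $1$ of $\syma$ strictly decreases it in the unimodal order.

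Part (e) I handle by standard Sturmian theory: for irrational $q$ the sequence $(\kappa_i(q))$ codes an irrational rotation of the circle, so every finite prefix $(\kappa_1, \ldots, \kappa_N)$ recurs with bounded gap (giving the required $r$), and the ``defective'' initial block $(K - 1, \kappa_2, \ldots, \kappa_N)$ arises from the renormalization/substitution structure of Sturmian sequences controlled by the continued fraction expansion of $q$ (giving the required $s$). The hardest step in the whole lemma is part (a): the exact window-sum bookkeeping and the non-integrality argument that activates the balance property. Once (a) is in hand, the other parts follow from the structural arguments just described.
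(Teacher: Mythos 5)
Your treatment of part~(c) contains a genuine error. You reduce to the single case $\symb=\lhe(q)$ by arguing that, since the $1$-runs of $\Inf{w_q1}$ have length $1$ or even length, no other point of the $\sigma$-orbit can begin $1^k0$ with $k$ odd. But a shift may begin in the \emph{middle} of a run: a maximal run $1^{2\ell}$ contributes orbit points beginning $1^j0$ for every $1\le j\le 2\ell$, including odd $j\ge 3$ whenever $\ell\ge 2$. Runs of length $\ge 4$ do occur in $\Inf{w_q1}$ (e.g.\ whenever $q\in(1/3,1/2)$, since then $\kappa_m(q)=1$ and the period boundary produces $\ldots1111 0^{\kappa_1}\ldots$). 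Concretely, for $q=2/5$ one has $w_q1=10111$, $\lhe(2/5)=\Inf{10111}$, and $\sigma^3(\lhe(2/5))=\Inf{11101}$ begins $1^30$; this $\symb$ satisfies the hypotheses of~(c) but is not covered by your argument. Separately, even for $\symb=\lhe(q)$ your justification is a non sequitur: from $\lhe(q)\preceq\syma$ and $\syma\succ 1\sigma^2(\syma)$ you cannot conclude $\lhe(q)\succ 1\sigma^2(\syma)$, since both inequalities only place their left-hand sides below $\syma$. (The conclusion does hold, but by directly comparing $10^{\kappa_1}1\ldots$ with $10^{\kappa_1-1}1\ldots$, not by transitivity.) Note also that the strictness in~(c) is delicate: the paper records that the natural argument yields only $\symb\succeq 1\sigma^2(\syma)$, with equality possible exactly when $\syma=\rhe(q)$, which is why $\rhe(q)$ is excluded; your sketch never engages with this.

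On the remaining parts, be aware that the paper does not prove (a), (b), (c) at all --- it cites Lemmas~7--9 of~\cite{gpa} --- so you are attempting something the paper outsources. Your window-sum/balance mechanism for~(a) is the right tool, but you must also exclude the possibility that the two words \emph{never} disagree within the shorter length (the shorter being a prefix of the longer); that is where the coprimality of $m$ and $n$ is genuinely needed, whereas the wrong-signed disagreement is already forbidden unconditionally by the lower bound $\sum_{i=r}^{r+L-1}(\kappa_i+2)\ge\floor{L/q}$ for $r\ge 2$, since in that configuration the window sum works out to $\floor{L/q}-1$. The same omission recurs in~(d): for irrational $q$ you must exclude the two infinite sequences being equal, which the paper does by noting that equality would make $(\kappa_i(q))$ eventually periodic and hence $1/q$ rational; the paper then proves the inequality itself by rational approximation back to~(a) rather than by redoing the balance argument. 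Your~(b) correctly reduces to the self-referential inequality $d\preceq 1\hw_q d$ but dismisses it as a ``direct comparison''; it needs an induction on occurrences of the prefix $1\hw_q$ in $d$. Your~(e) appeals to general Sturmian recurrence, where the paper gives the concrete geometric argument that $L_q$ passes arbitrarily close to lattice points from below and from above.
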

\begin{proof} Statements~(a) and~(b) are lemmas~7 and 8 of~\cite{gpa}.
Statement~(c) is closely related to lemma~9 of~\cite{gpa}, whose hypotheses
allow~$\syma$ to be~$\rhe(q)$, and whose conclusion is that $\symb\succeq
1\sigma^2(\syma)$. It is easily shown that $\symb=1\sigma^2(\syma)$ is only
possible when $\syma=\rhe(q)$.  (The statements of lemmas~8 and~9 in~\cite{gpa}
have an additional hypothesis relevant to that paper, but this hypothesis is
not used in their proofs.)

To prove~(d), observe that:
\begin{enumerate}[(i)]
\item It is impossible to have $10^{\kappa_j(q)}110^{\kappa_{j+1}(q)}11\ldots =
10^{\kappa_1(q)-1}11 0^{\kappa_2(q)} 11 \ldots$, since then the sequence
$(\kappa_i(q))$ would be eventually periodic, and
$
\lim_{r\to\infty}\frac{\sum_{i=1}^r (\kappa_i(q)+2)}{r} $ would be rational:
but this limit is equal to $1/q$ by~(\ref{eq:kappa-i}).
\item It is impossible to have $10^{\kappa_j(q)}110^{\kappa_{j+1}(q)}11\ldots
 \prec 10^{\kappa_1(q)-1}11 0^{\kappa_2(q)} 11 \ldots$, since then there would
 be some~$M$ such that
\[ 10^{\kappa_j(q)}110^{\kappa_{j+1}(q)}11\ldots 110^{\kappa_{M}(q)}1
\prec 10^{\kappa_1(q)-1}110^{\kappa_2(q)}11 \ldots 11 0^{\kappa_{M}(q)}1.
\] Taking a rational approximation $m/n$ to $q$ with $m\ge M$ and
$\kappa_i(m/n)=\kappa_i(q)$ for $i\le M$ would give a contradiction to~(a).
\end{enumerate}

For~(e), recall (Definition~\ref{def:cq}) that the $\kappa_i(q)$ are defined by
intersections of a straight line~$L_q$ of slope~$q$ with lines of the
coordinate grid.  Since~$L_q$ passes arbitrarily close to integer lattice
points below the lattice point, any initial segment of the sequence
$(\kappa_i(q))$ occurs infinitely often in the sequence; and since it passes
arbitrarily close to lattice points above the lattice point, the same is true
of the sequence in which $\kappa_1(q)$ is replaced by $\kappa_1(q)-1$.
\end{proof}

\begin{proof}[Proof of Theorem~\ref{thm:outside-dynamics}] Recall
(Lemma~\ref{lem:height-intervals}~(b)) that $q(\syma)=0$ if and only if
$\syma=1\Infs{0}$, and that then~$f$ is of tent-like strict left endpoint type
by Definition~\ref{def:type}, and $\mu=\lhe(0)$ by
Definitions~\ref{def:lhe,rhe,etc}. In this case, by
Convention~\ref{conv:unimodal}~(b) and the fact that~$\syma$ is not periodic,
we have $B(a)=a$, and statements~(a) and~(b) are immediate, using
$\gamma=[b,a]$. We therefore assume in the remainder of the proof that
$q(\syma)\in(0,1/2)$.

Assume first that $q=q(\syma)=m/n$ is rational and~$f$ is not of early endpoint
type. We will suppose for the proof of~(b)(i) that $\syma\not=\lhe(q)$, so that
$\syma=c_{q}d$ for some $d\in\{0,1\}^\N$ by
Lemma~\ref{lem:height-intervals}~(c): a similar argument applies when
$\syma=\lhe(q)$ (noting that in this case we have $f^n(a)=a\in\gamma$, since
$f$ is not of early endpoint type, so that it is only necessary to show that
$B^r(a)\not\in\gamma$ for $1\le r<n$). In particular, if~$c$ is periodic then
it has period at least~$n+2$ by Lemma~\ref{lem:height-intervals}~(c) (if
$\syma=\Inf{w_q0}$ then~$c$ is not a period~$n$ point by
Definition~\ref{def:kneading-sequence}). Therefore $f^r(a)\not=c$ for $r<n$.

Recall that $c_{q} = 10^{\kappa_1(q)}110^{\kappa_2(q)}11\ldots
110^{\kappa_m(q)}1$ is a word of length~$n+1$. Defining~$\cJ$
by~(\ref{eq:cJ}), the values of $r\le n$ with $r\in\cJ$ are
\[r_i = (2i-1)+\sum_{j=1}^i \kappa_j, \qquad(1\le i\le m),\] and the
corresponding itineraries $\symb_i = \sigma^{r_i+1}(\mu)$ are
\[
\symb_i = 10^{\kappa_{i+1}(q)}110^{\kappa_{r+2}(q)}11\ldots 11
0^{\kappa_m(q)}1d \quad (1\le i\le m-1), \quad \text{ and } \quad \symb_m=d.
\] Observe that this statement is true whether or not all of the $\kappa_i(q)$
are positive: if~$\kappa_i(q)>0$, then
$\sigma^{r_i-(2k+1)}(\syma)=01^{2k+1}\symb_i$ with $k=0$, while if
$\kappa_i(q)=0$ then this equality holds for some $k>0$.

Now Lemma~\ref{lem:outside-lemma}~(a) gives $\symb_i \succ 1\sigma^2(\syma)$
for $1\le i<m$, while Lemma~\ref{lem:outside-lemma}~(b) gives $\symb_m\preceq
1\sigma^2(\syma)$. Since $r_m=n$, statement~(b)(i) follows from
Lemma~\ref{lem:gamma-observation}~(b).

\medskip

Since $B(\gamma)=B(a)$, it follows that $B(a)$ is a period~$n$ point of~$B$.
Therefore $\rho(B)$ is the rotation number of this periodic point, which we now
determine.

Let $\pi\colon \R\to S$ be a universal covering with fundamental
domain~$F=[0,1)$ and covering transformation group $\{x\mapsto
x+n\,:\,n\in\Z\}$ such that \mbox{$\pi(0)=\pi(1)=a$}, $\pi(1/2)=b$, and
$\pi(x)$ is in the lower half of~$S$ for $x\in[0,1/2]$. Let $\tB\colon\R\to\R$
be the lift of~$B$ with $\tB(0)\in F$. It follows from~(\ref{eq:outside-map})
that $\tB(x)\in F$ for $x\in[0, 1/2)$, while $\tB(x)\in F+1$ for $x\in[1/2,1)$.

Now there are exactly~$m$ points on the periodic orbit containing $B(a)$ which
lie in $\pi([1/2,1))$ by Lemma~\ref{lem:gamma-observation}~(c).  Therefore
$\rho(B)=m/n$, establishing~(a) in the rational non-early endpoint case.

\medskip

For (b)(ii), observe first that since $B^r(a)\not\in\ingam$ for $0\le r < n$,
it follows from~(\ref{eq:commute}) that $\tau(B^n(a))=f^n(\tau(a))=f^n(a)$.
Therefore $B^n(a)=a \iff
\tau(B^n(a))=a \iff f^n(a)=a$, and similarly $B^n(a)=\re_u \iff
\tau(B^n(a))=\re \iff f^n(a)=\re$ (where, for the first equivalence, we use
that $B^n(a)\in\gamma$).

Now if $\syma = \lhe(m/n)$ then, since~$f$ is not of early endpoint type, we
have $f^n(a)=a$. Conversely, if $f^n(a)=a$ then $f^{n-1}(a)=b$ (since
$\syma\not=1\Infs{0}$, so that~$a$ has only one preimage), and hence $f^n(b)=b$
and $f^n(c)=c$. Therefore~$\syma$ is a periodic kneading sequence of period~$n$
and height~$m/n$, and so is equal either to $\lhe(m/n)$ or to $\Inf{w_{m/n}0}$
by Lemma~\ref{lem:height-intervals}~(c). However, since~$c$ itself is periodic,
the latter case is impossible (Definition~\ref{def:kneading-sequence}).

If $\syma=\rhe(m/n)=10\Inf{\hw_{m/n}1}$ then
$\iota(f^n(a))=\sigma^{n+1}(\syma)=\Inf{1\hw_{m/n}} = 1\sigma^2(\syma) =
\iota(\re)$, so that $f^n(a)=\re$ by Convention~\ref{conv:unimodal}~(b).
Conversely, suppose that $f^n(a)=\re$. By the previous paragraph we have
$\syma\not=\lhe(m/n)$, so that $\syma=c_{m/n}d$ for some $d\in\{0,1\}^\N$ by
Lemma~\ref{lem:height-intervals}~(c). Therefore
\[ d = \iota(f^n(a)) = \iota(\re) =
10^{\kappa_1(m/n)-1}110^{\kappa_2(m/n)}11\ldots 110^{\kappa_m(m/n)}1d,
\] so that $d=\Inf{10^{\kappa_1(m/n)-1}110^{\kappa_2(m/n)}11\ldots
110^{\kappa_m(m/n)}1} = \Inf{1\hw_{m/n}}$, and it follows that $\syma=c_qd =
10\hw_{m/n}\Inf{1\hw_{m/n}} = 10\Inf{\hw_{m/n}1}= \rhe(m/n)$ as required.

\medskip

For (b)(iii), write $\Lambda = S\setminus\bigcup_{r\ge 0}B^{-r}(\gamma)$, and
  suppose first that~$\mu\not\in\{\lhe(m/n), \rhe(m/n)\}$, so that
  $B^n(a)\in\ingam$ by~(b)(ii). We need to show that~$\Lambda = P$, where~$P$
  is a period~$n$ orbit of~$B$. Since $\kappa(f)\succ\lhe(q)=\Inf{w_q1}$, $f$
  has a period~$n$ point~$x$ with this itinerary. By
  Lemma~\ref{lem:gamma-observation}~(d) and Lemma~\ref{lem:outside-lemma}~(c)
  (and the fact that $\Inf{w_q1}$ only contains blocks of $1$s of even length),
  $x_u$ lies on a period~$n$ orbit~$P\subset\Lambda$ of~$B$.

Since~$B$ is a monotone degree one circle map and the orbit of~$B(a)$ is an
attracting periodic orbit (as~$B$ is locally constant at $B^n(a)$), it only
remains to show that~$B$ has no other periodic orbits. 

Suppose for a contradiction that~$B$ has another periodic orbit~$R$, which must
be disjoint from~$\gamma$, have period~$n$, and have one point between each
pair of consecutive points of~$P$. By~\eqref{eq:commute}, since~$R$ is disjoint
from~$\gamma$, it lies above a periodic orbit of~$f$. Now every point of~$P$
and~$R$ in the upper half of~$S$ lies to the right of $\re_u$, and hence
of~$c_u$, so there is only one point of~$R$ which could lie either to the right
or to the left of~$c$, namely the one between the two points of~$P$ which bound
an interval containing~$c_\ell$. Therefore the periodic orbit of~$f$
corresponding to~$R$ contains a point~$y$ with either
$\iota(y)=\iota(x)=\lhe(m/n)=\Inf{w_q1}$, or
$\iota(y)=\Inf{w_q0}=\Inf{10^{\kappa_1(m/n)}11 \ldots 11 0^{\kappa_m(m/n)}}$.
The former is impossible by Convention~\ref{conv:unimodal}~(c), since
$\mu\succ\lhe(m/n)$; while the latter is impossible since~$\iota(y)$ has an
isolated~1 and so cannot be the itinerary of a point in~$\Lambda$ (we would
have $f^{n-1}(y)<c$, so the point of~$R$ above~$y=f^n(y)$ would be $y_\ell$;
but then $B(y_\ell) = f(y)_u$ since $y>c$, and hence $B(y_\ell)\in\gamma$ since
$f(y)<c$). This contradiction completes the proof of~(b)(iii) in the rational
interior case.

\smallskip

We next consider (b)(iii) in the case where $\mu=\lhe(m/n)$, so that $f$ is of
strict left endpoint type. In this case the period~$n$ orbit~$Q$ of~$a$ is
disjoint from~$\ingam$, so that $\tau(B^r(a)) = f^r(a)$ for all $r\ge 0$. As in the interior case, any other periodic orbit~$P$ of~$B$ must lie above a second period~$n$ orbit of~$f$ containing a point of itinerary $\lhe(m/n)$. 

If~$f$ is of tent-like type, then there is no such periodic orbit, so that~$Q$
is the only periodic orbit of~$B$, and is semi-stable. Since~$a\in Q$ is stable
through~$\gamma$, the orbit of any point of~$S$ eventually falls into~$\gamma$.

If~$f$ is of quadratic-like type, then~$f$ has exactly one such periodic orbit,
and there is an unstable periodic orbit~$P$ of~$B$ above it by
Lemma~\ref{lem:gamma-observation}~(d) and Lemma~\ref{lem:outside-lemma}~(c).
The $B^n$-orbits of points on one side of~$a\in Q$ converge to~$a$
through~$\gamma$, and so enter~$\gamma$; while those on the other side have
orbits which remain in the lower half of the circle, and so lie in~$\Lambda$.

\smallskip

The proof of~(b)(iii) when $\mu=\rhe(m/n)$ is similar: in this case, since
$\kappa(f)$ is not periodic, there is only one point of itinerary~$\lhe(m/n)$,
which lies on the orbit of~$B(a)$ by Lemma~\ref{lem:height-intervals}~(d):
therefore the orbit of~$B(a)$ is the only periodic orbit of~$B$, and since
$\re_u$ lies on this orbit and is stable through~$\gamma$, we have
$\Lambda=\emptyset$. This completes the proof of~(b).

\medskip\medskip

For~(c), assume that $q=q(\syma)=m/n$ and~$f$ is of early endpoint type, so
that $\syma=\Inf{w_q1}$ and $f^n(a)\not=a$. There is therefore a non-trivial
$f^n$-invariant subinterval~$J$ of~$I$, containing $a$ and $f^n(a)$, consisting
of all points with itinerary $\sigma(\syma)$. Now $f^n|_J\colon J\to J$ is
increasing, since $w_q1$ contains an even number of~$1$s, so that there is a
periodic point~$z\not=a$ in $J$ with $f^{rn}(a)\to z$ as $r\to\infty$. By the
same argument as in the previous case, every~$x\in J$ has the property that
$\{B^r(x_\ell)\,:\,r\ge 1\}$ is disjoint from~$\gamma$, and in particular~$B$
has a periodic orbit~$Q$, containing~$z_\ell$, which attracts the orbit
of~$B(a)$ and is disjoint from~$\gamma$. The rotation number of this periodic
orbit is $m/n$ by the same argument as in the previous case, and hence
$\rho(B)=m/n$. This establishes~(c), and~(a) in the early endpoint case.

\medskip\medskip

For~(d), and~(a) in the irrational case, assume that $q=q(\syma)$ is
irrational, so that $\syma=10^{\kappa_1(q)}110^{\kappa_2(q)}11\ldots$ by
Lemma~\ref{lem:height-intervals}~(a). That $B^r(a)\not\in\gamma$ for all~$r\ge
1$ is immediate from Lemma~\ref{lem:gamma-observation}~(a),
Lemma~\ref{lem:outside-lemma}~(d), and the fact that~$c$ is not periodic. By
the same argument as in the rational case, using
Lemma~\ref{lem:gamma-observation}~(c), we have
\[
\rho(B) = \lim_{m\to\infty} \frac{m}{\sum_{i=1}^m(\kappa_i(q)+2)},\] since~$m$
of the first $\sum_{i=1}^m(\kappa_i(q)+2)$ points of the orbit of~$a$ lie in
$[b,a)$. Therefore $\rho(B)=q$ by~(\ref{eq:kappa-i}).

To show that $\bigcup_{r\ge 0} B^{-r}(\gamma)$ is dense in~$S$ assume, for a
contradiction, that there is a non-trivial interval~$J = [x,y]$ in~$S$ whose
orbit is disjoint from~$\gamma$. Neither $a$ nor $b$ is in~$J$, since $B(b)=a$
and $a\in\gamma$. Therefore $\tau(x)\not=\tau(y)$ and, since $\kappa(f)$ isn't
periodic, we have $\iota(\tau(x))\not=\iota(\tau(y))$. Therefore, if~$r$ is
least with $\iota(\tau(x))_r \not= \iota(\tau(y))_r$, then $B^r(J)$ contains
either $c_\ell$ or $c_u$. In the former case we have
$B^{r+2}(J)\cap\gamma\not=\emptyset$, and in the latter we have
$B^r(J)\cap\gamma\not=\emptyset$, which is the required contradiction.

Finally, to show that the orbit of~$B(a)$ is dense in $S\setminus\bigcup_{r\ge
0}B^{-r}(\gamma)$, observe that the $\omega$-limit set $\omega(B(a), B)$
contains both $a$ and $\re_u$ by Lemma~\ref{lem:outside-lemma}~(e) and the fact
that distinct points have distinct itineraries. Let~$U$ be any interval in~$S$
which contains a point of $S\setminus\bigcup_{r\ge 0}B^{-r}(\gamma)$. Since it
also contains points of the dense set $\bigcup_{r\ge 0}B^{-r}(\gamma)$, there
is some $r\ge 0$ such that $B^r(U)$ contains a neighborhood either of~$a$ or
of~$\re_u$, and hence contains the point $B^R(a)$ for some $R>r$. Therefore
$B^{R-r}(a)\in U$ as required.

\end{proof}

\begin{cor}
\label{cor:rot-no} Suppose that $\cR$ is dense in~$\hS$, and that there is a
good chain of crosscuts for every $\by\in\hS$. Then the prime end rotation
number of $\hH\colon(\hT,\hI)\to(\hT,\hI)$ is $q(\kappa(f))$.
\end{cor}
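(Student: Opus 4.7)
The corollary follows almost immediately by chaining together two results already established in the excerpt.

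The plan is as follows. First I invoke Lemma~\ref{lem:PP-conj-hS}: under the two standing hypotheses of the corollary, the map $\cP\colon\hS\to\PP$ is a homeomorphism conjugating $\hB$ to the induced action of $\hH$ on the circle of prime ends. Since conjugate circle homeomorphisms have equal Poincar\'e rotation numbers, the prime end rotation number of $\hH\colon(\hT,\hI)\to(\hT,\hI)$ equals $\rho(\hB)$.

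Second, I reduce from $\hB$ on $\hS$ to $B$ on $S$. The projection $\pi_0\colon\hS\to S$ defined by $\pi_0(\by)=y_0$ is a continuous surjection of degree~$1$ satisfying $\pi_0\circ\hB=B\circ\pi_0$ by the definition of the natural extension. Choosing a lift $\tB\colon\R\to\R$ of~$B$, it induces a lift $\widetilde{\hB}$ of~$\hB$ to the universal cover of~$\hS$ whose orbits project to orbits of~$\tB$. Consequently the translation numbers of $\widetilde{\hB}$ and $\tB$ agree, giving $\rho(\hB)=\rho(B)$.

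Finally, I apply Theorem~\ref{thm:outside-dynamics}(a), which states $\rho(B)=q(\kappa(f))$. Combining the three equalities
\[
\rho(\hH|_\PP) \;=\; \rho(\hB) \;=\; \rho(B) \;=\; q(\kappa(f))
\]
yields the corollary.

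The only step that is not literally quoted from an earlier result is the equality $\rho(\hB)=\rho(B)$, and even this is a textbook fact about degree-one monotone semi-conjugacies of circle maps. So there is no genuine obstacle here; the work has already been done in Theorem~\ref{thm:outside-dynamics} and Lemma~\ref{lem:PP-conj-hS}, and the corollary is a one-line consequence once the existence of a good chain for every $\by\in\hS$ and the density of $\cR$ are granted as hypotheses.
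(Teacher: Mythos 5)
Your proof is correct and follows essentially the same route as the paper: the paper's proof likewise combines Lemma~\ref{lem:PP-conj-hS} with Theorem~\ref{thm:outside-dynamics}~(a), noting that $B$ is a factor of $\hB$ via the degree-one semi-conjugacy $\by\mapsto y_0$, so that $\rho(\hB)=\rho(B)=q(\kappa(f))$. Your slightly more explicit lift argument for $\rho(\hB)=\rho(B)$ is the standard justification of the same fact.
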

\begin{proof} We have $\rho(\hB)=q(\kappa(f))$ by
Theorem~\ref{thm:outside-dynamics}~(a), since $B$ is a factor of~$\hB$ by the
degree one semi-conjugacy $\by\mapsto y_0$. The result follows from
Lemma~\ref{lem:PP-conj-hS}.
\end{proof}

\subsection{The irrational case}
\label{sec:irrational} 
Let $f\colon[a,b]\to[a,b]$ be a unimodal map whose kneading sequence
$\syma=\kappa(f)$ has irrational height $q=q(\syma)\in(0,1/2)$. In this section
we determine the prime ends of $(\hT, \hI)$.

We first use Theorem~\ref{thm:outside-dynamics} to analyze the dynamics of the
natural extension $\hB\colon\hS\to\hS$, showing that it is a Denjoy
counterexample (i.e.\ it has an orbit of wandering intervals). It is
straightforward to show that the landing set $\cL=\hS$
(Lemma~\ref{lem:landing-irrational}), and that the locally uniformly landing
set $\cR$ is the union of the interiors of the wandering intervals
(Lemma~\ref{lem:cantor}), the complement of $\cR$ being a Cantor set~$\Lambda$.

In particular, this establishes that $\cR$ is dense in~$\hS$.
Lemma~\ref{lem:irrat-good-chain} asserts the existence of a good chain of
crosscuts for every $\by\in\hS$. Therefore, by Lemma~\ref{lem:general-summary},
the prime ends $\cP(\by)$ with $\by\not\in\Lambda$ are of the first kind, while
those with $\by\in\Lambda$ are of the first or second kind. We complete the
analysis by showing that these are of the second kind, and that in fact
$\cI(\cP(\by))=\hI$ when $\by\in\Lambda$
(Lemma~\ref{lem:irrational-prime-ends-type-2}).

\medskip\medskip

Let $\cO = \{B^r(a)\,:\,r\ge 1\}$ be the orbit of~$B(a)$, which is disjoint
from~$\gamma$ by Theorem~\ref{thm:outside-dynamics}. Since $B(\gamma)=B(a)$,
and $B$ is injective away from~$\gamma$, the backwards orbit
$\{B^{-r}(y)\,:\,r\ge 0\}$ of any point $y\in S\setminus\cO$ is well-defined,
and is disjoint from~$\gamma$ except perhaps at its first point~$y$. On the
other hand, the backwards orbits of points of $\cO$ are ill-defined at one
point only: the preimage of $B(a)$ is $\gamma$. The elements of $\hS$ can
therefore be described straightforwardly.

\begin{notations}[Threads $\bt(y,r)$ and $\bt(y)$ in~$\hS$] \mbox{}
\label{notn:threads-irrat}
\begin{enumerate}[(a)]
\item For every $y\in\gamma$ and $r\in\Z$, define $\bt(y,r)\in \hS$ by
\begin{equation}
\label{eq:thread-hS-1}
\bt(y,r) =
\begin{cases}
\thr{ B^r(a), \ldots, B(a), y, B^{-1}(y), B^{-2}(y), \ldots } & \text{ if }r>
0,\\
\thr{ B^r(y), B^{r-1}(y), B^{r-2}(y), \ldots } & \text{ if }r\le 0.
\end{cases}
\end{equation}
\item For every $y\in S\setminus \bigcup_{r\in\Z} B^{-r}(\gamma)$, define
  $\bt(y)\in\hS$ by
\begin{equation}
\label{eq:thread-hS-2}
\bt(y) = \thr{y, B^{-1}(y), B^{-2}(y), \ldots}.
\end{equation}
\end{enumerate}
\end{notations}

Every element $\by$ of~$\hS$ can be written in exactly one way as either
$\bt(y,r)$ or $\bt(y)$: $\by$ is of the form~(\ref{eq:thread-hS-1}) if and only
if there is some (unique) $r\in\Z$ with $\hB^r(\by)_0\in\gamma$, in which case
$\by = \bt(\hB^r(\by)_0, -r)$; and $\by=\bt(y_0)$ otherwise. We have
$\hB(\bt(y,r)) = \bt(y, r+1)$, and $\hB(\bt(y)) = \bt(B(y))$.

\begin{lem}
\label{lem:landing-irrational} $\cL = \hS$.
\end{lem}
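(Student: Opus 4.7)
The plan is to show, using the explicit description of elements of $\hS$ given in \Notation~\ref{notn:threads-irrat}, that every $\by \in \hS$ lies in $\cL_N$ for some $N \geq 0$; then Corollary~\ref{cor:ray-lands} delivers $\by \in \cL$ immediately. Since $\cL \subset \hS$ is automatic, this gives the desired equality.

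The key input from the irrational hypothesis is Theorem~\ref{thm:outside-dynamics}(d): the forward orbit $\cO = \{B^r(a) : r \geq 1\}$ is disjoint from $\gamma$, so the preimage of each point in $\cO$ under $B$ is a single point not in $\gamma$. Combined with the fact, noted in the paragraph preceding \Notation~\ref{notn:threads-irrat}, that the backwards orbit of any $y \in S \setminus \cO$ is well-defined and disjoint from~$\gamma$ except possibly at $y$ itself, this controls where entries of a thread in $\hS$ can lie.

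The proof will then proceed by a short case analysis on the form of $\by$:
\begin{enumerate}[(i)]
\item If $\by = \bt(y)$ with $y \in S \setminus \bigcup_{r \in \Z} B^{-r}(\gamma)$, then $y_i = B^{-i}(y) \notin \gamma$ for every $i \geq 0$, so $\by \in \cL_0$.
\item If $\by = \bt(y,r)$ with $y \in \gamma$ and $r > 0$, then for $0 \leq i \leq r-1$ we have $y_i = B^{r-i}(a) \in \cO$, which lies in $S \setminus \gamma$; the entry $y_r = y$ lies in $\gamma$; and for $i > r$ the entry $y_i = B^{-(i-r)}(y)$ is a strict backwards iterate of a point of $\gamma$, so lies in $S \setminus \gamma$. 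Hence $\by \in \cL_r$.
\item If $\by = \bt(y,r)$ with $y \in \gamma$ and $r \leq 0$, then for every $i \geq 1$ the entry $y_i = B^{r-i}(y)$ is a strict backwards iterate of $y$ (since $i - r \geq 1$), hence lies in $S \setminus \gamma$, so $\by \in \cL_0$.
\end{enumerate}

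Since $\gamma \supset \ingam$, in each case $y_i \notin \ingam$ for all $i > N$ (with $N = r$ in case (ii) and $N = 0$ otherwise), so $\by \in \cL_N \subset \cL$ by Corollary~\ref{cor:ray-lands}. There is no genuine obstacle here: the work was done in establishing Theorem~\ref{thm:outside-dynamics} and in identifying the structure of $\hS$ via the two families of threads. The lemma is simply the observation that in the irrational case, the `trouble' (entries in $\ingam$) can occur at most once in any thread of $\hS$, after which the ray $R_\by$ is forced to land by Lemma~\ref{lem:formula-disjoint-gamma}.
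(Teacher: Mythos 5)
Your proof is correct and follows essentially the same route as the paper: the paper's proof is the one-line observation that $\bt(y,r)$ is landing of level at most $\max(r,0)$ and $\bt(y)$ is landing of level~$0$, followed by an appeal to Corollary~\ref{cor:ray-lands}. Your case analysis simply spells out the justification (disjointness of $\cO$ from $\gamma$ via Theorem~\ref{thm:outside-dynamics}(d), and disjointness of strict backward orbits from $\gamma$) that the paper leaves implicit.
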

\begin{proof} $\bt(y,r)$ is landing of level at most~$\max(r,0)$ (it is landing
of level exactly~$\max(r,0)$ if $y\in\ingam$, and of level~$0$ if $y=a$ or
$y=\re$), and $\bt(y)$ is landing of level~$0$. The result follows from
Corollary~\ref{cor:ray-lands}.
\end{proof}

\begin{defn}[The gaps~$G_r$]
\label{def:gaps} For each~$r\in\Z$, define the {\em gap} $G_r\subset\hS$ by
$
G_r = \{\bt(y,r)\,:\,y\in\gamma\}.
$
\end{defn}

The gaps are compact intervals, since the functions $y\mapsto
\bt(y,r)$ are homeomorphisms $\gamma\to G_r$. Since $\hB(G_r)=G_{r+1}$ for
each~$r$, and the $G_r$ are mutually disjoint, the gaps form an orbit of
wandering intervals of $\hB$, which is therefore a Denjoy counterexample.

\begin{remark}
\label{rmk:irrat-order} The map $\pi_0\colon \hS\to S$ defined by
$\pi_0(\by)=y_0$ is continuous and surjective. Moreover,
$\pi_0(\by)=\pi_0(\by')$ for $\by\not=\by'$ if and only if $\by$ and $\by'$
belong to the same gap $G_r$ for some $r>0$. Therefore $\pi_0$ is a monotone
circle map which collapses these gaps. It follows that threads are ordered
around $\hS$ in the same way that points are ordered around $S$, except that
the points $B^r(a)$ of~$S$ for $r>0$ are blown up into gaps $G_r$.
\end{remark}

\begin{defn}[The set~$\Lambda$]
\label{def:lambda} The set $\Lambda\subset\hS$ is defined by
\[\Lambda = \displaystyle{\hS\setminus \bigcup_{r\in\Z}
  \accentset{\circ}{G}_r}.\]
\end{defn}

\begin{lem}
\label{lem:cantor} $\Lambda$ is a Cantor set, and $\cR = \hS\setminus\Lambda =
\bigcup_{r\in\Z}\accentset{\circ}{G}_r$. In particular, $\cR$ is dense
in~$\hS$.
\end{lem}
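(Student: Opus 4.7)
The plan is to prove the three assertions in tandem using the gap structure of $\hS$ and the dynamical facts supplied by Theorem~\ref{thm:outside-dynamics}(d). First I would verify that each $G_r$ is a compact arc in $\hS$: the map $y\mapsto\bt(y,r)$ is evidently a continuous injection from $\gamma$ onto $G_r$. The gaps are mutually disjoint because, since $B(\gamma)=\{B(a)\}$, a thread with two entries $y_i,y_j\in\gamma$ (with $i<j$) would force $y_i=B^{j-i}(a)$, contradicting Theorem~\ref{thm:outside-dynamics}(d)(i); combined with the uniqueness of the thread decomposition stated after Notations~\ref{notn:threads-irrat}, this gives $G_r\cap G_s=\emptyset$ for $r\ne s$. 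The same uniqueness shows that any $\by\in G_r$ with $r\ge 0$ has its sole $\gamma$-entry at position~$r$, so $\by\in\cL_r$; while any $\by\in G_r$ with $r<0$ has no $\gamma$-entry at all, so $\by\in\cL_0$. Since each $\accentset{\circ}{G}_r$ is open, this proves $\bigcup_r\accentset{\circ}{G}_r\subseteq\cR$.

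Next, to prove $\bigcup_r\accentset{\circ}{G}_r$ is dense in $\hS$, I would suppose for contradiction that some non-empty open set $W\subset\hS$ misses every $\accentset{\circ}{G}_r$. By Remark~\ref{rmk:irrat-order}, $\pi_0\colon\hS\to S$ is injective off $\bigcup_{r\ge 1}G_r$, and $W$ cannot lie inside any such collapsed gap (which meets the complement of $\bigcup_r\accentset{\circ}{G}_r$ in only two points). Hence $\pi_0(W)$ contains a non-trivial open arc $V\subset S$. The open set $\bigcup_{s\ge 0}B^{-s}(\ingam)$ has closure containing $\bigcup_{s\ge 0}B^{-s}(\gamma)$, which is dense in $S$ by Theorem~\ref{thm:outside-dynamics}(d)(ii); so I can pick $y\in V\cap B^{-s}(\ingam)$ for some $s\ge 0$. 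By Theorem~\ref{thm:outside-dynamics}(d)(i), $y$ is not on the orbit of $B(a)$, so $\pi_0^{-1}(y)$ consists of a single thread $\by\in W$ satisfying $B^s(\by_0)\in\ingam$, i.e.\ $\by\in\accentset{\circ}{G}_{-s}$, contradicting the hypothesis on $W$.

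For the reverse inclusion $\cR\subseteq\bigcup_r\accentset{\circ}{G}_r$, I would suppose $\by\in\Lambda$ has a neighborhood $U\subset\cL_N$; then $U\cap G_r=\emptyset$ for every $r>N$. The key step is to exhibit a sequence $r_k\to\infty$ with $G_{r_k}\to\by$ in Hausdorff distance, which yields the desired contradiction. Theorem~\ref{thm:outside-dynamics}(d)(iii) provides orbit points $B^{r_k}(a)\to\pi_0(\by)$ (approaching from an appropriate side of $\pi_0(\by)$ when $\by$ is a gap-endpoint), so $\pi_0(G_{r_k})\to\pi_0(\by)$; and since $\{G_r\}_{r\in\Z}$ is a family of pairwise disjoint arcs on the compact circle $\hS$, their diameters tend to zero, so the monotonicity of $\pi_0$ forces $G_{r_k}\to\by$ as compact sets. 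This is the main obstacle of the proof: the transfer of convergence from $S$, where it is supplied by the dynamics of $B$, to $\hS$, where it is needed. Finally, $\Lambda$ is closed in the compact $\hS$ and hence compact; it is totally disconnected by the density just established; and it is perfect because each of its points is accumulated by endpoints of the gaps $G_{r_k}$, which themselves lie in $\Lambda$. Therefore $\Lambda$ is a Cantor set.
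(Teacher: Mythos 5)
Your architecture is sound, and most of the argument is correct, though you route the convergence claims through the projection $\pi_0\colon\hS\to S$ where the paper instead verifies coordinate-wise convergence of explicit threads. The step that genuinely fails as written is the inclusion $\cR\subseteq\bigcup_r\accentset{\circ}{G}_r$ in the case where $\by$ is an endpoint $\bt(e,r)$ of a gap with $r>0$. There $\pi_0(\by)=B^r(a)$ and $\pi_0^{-1}(B^r(a))$ is the \emph{whole gap} $G_r$, not a singleton, so ``the monotonicity of $\pi_0$ forces $G_{r_k}\to\by$'' does not follow: knowing $\pi_0(G_{r_k})\to B^r(a)$ and $\diam(G_{r_k})\to 0$ only tells you that $G_{r_k}$ accumulates on one of the \emph{two} endpoints of $G_r$, and you must still produce gaps of arbitrarily large index on the correct side of $G_r$. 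Your parenthetical about ``approaching from an appropriate side'' is precisely the missing content, not a remark one can wave at. The paper avoids the issue entirely by checking convergence coordinate by coordinate: taking $r_i\to\infty$ with $B^{r_i}(a)\to e$ (possible since $\omega(B(a),B)\supseteq\{a,\re_u\}$), one verifies directly that $\bt(z,r_i+r)\to\bt(e,r)$, using $B(e)=B(a)$ for the first $r$ coordinates and continuity of $B^{-j}$ at $e$ for the rest; no side information is needed. Alternatively, you could salvage your route by observing that $\cR$ is open, that the non-gap-endpoint points of $\Lambda$ are dense in $\Lambda$ (a perfect set minus a countable set), and that you have already excluded those points from $\cR$.

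Two smaller points. First, your density argument quietly upgrades Theorem~\ref{thm:outside-dynamics}~(d)(ii) from density of $\bigcup_{s\ge0}B^{-s}(\gamma)$ to density of $\bigcup_{s\ge0}B^{-s}(\ingam)$. The claimed inclusion of $\bigcup_s B^{-s}(\gamma)$ in the closure of $\bigcup_s B^{-s}(\ingam)$ is true but is not automatic for a non-open map (and $B^s$ is locally constant in places); it needs the observation that if $B^j(x)\notin\ingam$ for $j<s$ and $B^s(x)\in\partial\gamma$, then $B^s$ is injective, hence open, on a neighborhood of $x$, so nearby points do enter $\ingam$. Second, ``$U\cap G_r=\emptyset$ for every $r>N$'' is not quite right, since $U$ may contain gap endpoints $\bt(a,r)$ and $\bt(\re_u,r)$, which are landing of level~$0$; but your contradiction only requires $U\cap\accentset{\circ}{G}_r=\emptyset$, so this is cosmetic. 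Your density argument and your perfectness argument (via accumulation by gap endpoints, which the paper gets more cheaply from the disjointness of the gap closures) are otherwise fine.
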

\begin{proof} $\Lambda$ is compact, and is perfect since it is the complement
of a union of open intervals with disjoint closures. To show that it is totally
disconnected, it is enough to show that $\bigcup_{r\in\Z}G_r$ is dense in
$\hS$. To do this, let $\bt(y)$ be a point in the complement of this set. By
Theorem~\ref{thm:outside-dynamics}~(d)(ii), there is a sequence $y_i\to y$
in~$S$ with $B^{r_i}(y_i)\in\gamma$ for some~$r_i\ge0$. Then the sequence
$\bt(B^{r_i}(y_i), -r_i) = \thr{y_i, B^{-1}(y_i), \ldots}$ in
$\bigcup_{r\in\Z}G_r$ converges to $\bt(y) = \thr{y, B^{-1}(y), \ldots}$. (Note
that, for each $k>0$, when $i$ is sufficiently large~$y_i$ lies in a
neighborhood~$N$ of~$y$ which doesn't contain any point $B^r(a)$ with $r\le k$,
so that $B^{-r}$ is well-defined and continuous in~$N$ for all $r\le k$.)

Each $\accentset{\circ}{G}_r$ is uniformly landing of level~$\max(r,0)$, so
that $\bigcup_{r\in\Z}\accentset{\circ}{G}_r\subset\cR$. For the converse,
suppose that $\by\in\Lambda$. Consider first the case where~$\by$ is not a gap
endpoint, so that $\by=\bt(y)$ for some $y\in
S\setminus\bigcup_{r\in\Z}B^{-r}(\gamma)$. By
Theorem~\ref{thm:outside-dynamics}~(d)(iii) there is a sequence $r_i\to\infty$
of positive integers with $B^{r_i}(a) \to y$. Then for any $z\in\ingam$,
$
\left(\bt(z,r_i)\right)_{i\ge0} = \left(\thr{ B^{r_i}(a), \ldots, B(a), z,
B^{-1}(z),\ldots }\right)_{i\ge0}
$
is a sequence converging to $\by$ which is not uniformly landing.

The proof in the case where~$\by$ is a gap endpoint is similar. We have
$\by=\bt(e,r)$ where $e=a$ or $e=\re_u$, and~$r\in\Z$. As in the proof of
Theorem~\ref{thm:outside-dynamics}~(d)(iii), there is a sequence $r_i\to\infty$
with $B^{r_i}(a)\to e$ (and $B^{r_i}(a)\not\in\gamma$). Then for any
$z\in\ingam$, $(\bt(z, r_i+r))_{i\ge0}$ is a sequence converging to~$\by$ which
is not uniformly landing.
\end{proof}

We next show that there is a good chain of crosscuts for every $\by\in\hS$. The
following notation will be convenient when defining chains of crosscuts.

\begin{notation}[The crosscuts $\xi'(\bJ, s)$ and $\xi(\bJ,s)$]
\label{notn:crosscuts-xi-J-s} Let~$\bJ$ be an interval in $\hS$ with endpoints
\mbox{$\by_1,\by_2\in\cL$}, and let $s\in(0,\infty)$. Write $\xi'(\bJ,s)$ for
the crosscut
\[
\xi'(\bJ,s) = (\{\by_1\}\times[s,\infty]) \,\,\cup\,\, (\bJ\times\{s\})\,\,
\cup\,\, (\{\by_2\}\times[s,\infty])
\] in $(\bD, \hS_\infty)$; and $\xi(\bJ,s)$ for the crosscut
$\Psi(\xi'(\bJ,s))$ in $(\hT, \hI)$.
\end{notation}

The requirement that $\by_1,\by_2\in\cL$ is automatically satisfied in the
irrational case, but this definition will be used later in situations in which
$\cL\not=\hS$.

\begin{lem}
\label{lem:irrat-good-chain} Let $\by\in\hS$. Then there is a good chain of
crosscuts for~$\by$.
\end{lem}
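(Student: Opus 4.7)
The argument splits according to whether $\by\in\cR$ or $\by\in\Lambda=\hS\setminus\cR$.

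If $\by\in\cR$, a good chain exists by Remark~\ref{rmk:good-chain}(c): any chain of small crosscuts in $(\bD,\hS_\infty)$ shrinking onto $(\by,\infty)$ with endpoints in $\cR_\infty$ maps through $\Psi$ to a good chain, since $\Psi$ is a homeomorphism of a neighborhood of $(\by,\infty)$ in $\tD$ onto its image by Corollary~\ref{cor:Psi-maximal-extension}.

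Suppose $\by\in\Lambda$. Using density of $\cR$ in $\hS$ (Lemma~\ref{lem:cantor}), pick a nested decreasing sequence of compact arcs $\bJ_k=[\by_1^k,\by_2^k]\subset\hS$ with $\by$ in the interior of each, $\by_i^k\in\cR$ on opposite sides of $\by$, and $\bigcap_k\bJ_k=\{\by\}$. Take crosscuts $\xi_k'=\xi'(\bJ_k,s_k)$ for positive integers $s_k\to\infty$ to be chosen. Conditions (a), (b), and (d) of Definition~\ref{def:good-chain} hold automatically, with (d) vacuous since $\cL=\hS$ by Lemma~\ref{lem:landing-irrational}. The task is (c): $\diam\xi_k\to0$, where $\xi_k=\Psi(\xi_k')$.

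The crosscut $\xi_k$ decomposes into two vertical tails $\Psi(\{\by_i^k\}\times[s_k,\infty])$ and a horizontal arc $\Psi(\bJ_k\times\{s_k\})$. The tails shrink as $s_k\to\infty$ by uniform continuity of $\Psi$ on each compact set $\{\by_i^k\}\times[0,\infty]\subset\tD$ (Corollary~\ref{cor:Psi-maximal-extension}). For the horizontal arc, Lemma~\ref{lem:formula-disjoint-gamma} expresses the entries of $\Psi(\bz,s_k)$ for $\bz\in\bJ_k\cap\cL_N$ with $s_k>N$: those of index $r$ with $N<r<\lfloor s_k\rfloor$ equal $\tau(z_r)$, and hence cluster around $\tau(y_r)$ as $\bJ_k$ shrinks. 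A direct calculation using Lemma~\ref{lem:u-key} and \eqref{eq:commute} shows that for $\bz\in G_r\cap\bJ_k$ with $r>s_k$, the first $s_k$ entries of $\Psi(\bz,s_k)$ depend only on $r$ (being $f^{r-i}(a)$), so such high-level threads contribute in a controlled way.

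The main obstacle is that landing levels on $\bJ_k$ are unbounded, because every neighborhood of $\by\in\Lambda$ meets gaps $G_r$ of arbitrarily large index. The plan is a diagonal construction: for each $k$, pick $R=R(k)$ so entries past index $R$ contribute less than $1/(3k)$ to the $\hT$-metric; shrink $\bJ_k$ so that the $r$-th coordinate of $\bz\in\bJ_k$ varies by less than $1/(3kR)$ for each $r\le R$; then choose $s_k$ exceeding $R$ and also the finitely many landing levels arising from gaps $G_r\cap\bJ_k$ with $r\le R$. Iterated applications of Lemma~\ref{lem:constant-block} control threads in $\bJ_k\cap G_r$ with $R<r\le s_k$, and summing the contributions yields $\diam\xi_k<1/k$, completing the construction.
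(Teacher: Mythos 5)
Your overall strategy --- crosscuts $\xi'(\bJ_k,s_k)$ with $\bJ_k$ shrinking onto $\by$, $s_k\to\infty$, and control of finitely many coordinates of the image threads --- is the right one and matches the paper's. But the diagonal step at the end contains a genuine error, precisely at the point you identify as "the main obstacle". The problem is the gaps $G_r$ with $R<r\le s_k$ that meet $\bJ_k$: for such a gap, Lemma~\ref{lem:formula-disjoint-gamma} (equivalently, iterating Lemma~\ref{lem:constant-block}) gives $\Psi(\bt(Y,r),s_k)_0=f^{r}(\tau(Y))$ once $s_k\ge r+1$, and as $Y$ ranges over $\gamma$ the value $f^r(\tau(Y))$ ranges over $f^r([a,\re])\supseteq f^r([a,c])=[a,b]$ as soon as $r\ge N$ (Lemma~\ref{lem:eventually-onto}). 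So if the horizontal segment of $\xi'_k$ crosses even one such gap above its "fold", then $\diam(\xi_k)\ge|b-a|$: these threads are not "controlled in a controlled way" by Lemma~\ref{lem:constant-block} --- that lemma is exactly what shows their $\Psi$-images sweep out essentially the whole attractor. Your prescription for $s_k$ (exceed $R$ and the landing levels coming from gaps of index $\le R$) does nothing to exclude this situation, and since every neighborhood of $\by\in\Lambda$ meets gaps of arbitrarily large index, it will occur for generic choices.

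The fix is to reverse the relationship between $s_k$ and the gap indices: the crosscut must pass \emph{below} every gap it crosses, so that on each thread $\bz\in\bJ_k$ the relevant coordinates of $\Psi(\bz,s_k)$ are still determined by $z_0$ alone (namely $\Psi(\bz,s_k)_j=\tau(B^{-j}(z_0))$), rather than by a gap coordinate $Y\in\gamma$. Concretely, this is what the paper does: it chooses $J_k\subset S$ to avoid $B^i(a)$ for $1\le i\le 2k$ and sets $s_k=2k$, so every gap meeting $\bJ_k$ has index $r>s_k$ and only your "$r>s_k$" computation is ever needed; it then controls the single coordinate $\Psi(\bz,2k)_k=\tau(B^{-k}(z_0))$ by shrinking $J_k$, and propagates this to coordinates $0,\dots,k$ by uniform continuity of $f,\dots,f^k$. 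With that modification (and the separate but routine treatment of gap endpoints $\by$, where one endpoint of $\bJ_k$ must be taken inside the adjacent gap), your argument becomes correct and is essentially the paper's proof.
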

\begin{proof} We can assume that $\by\not\in\cR$, i.e.\ that $\by\in\Lambda$
(Remark~\ref{rmk:good-chain}~(c)), so that $\by$ is either a gap endpoint or is
in the complement of the gaps.

\noindent\textbf{Case 1: }$\by = \bt(y)$ for some $y\in
S\setminus\bigcup_{r\in\Z} B^{-r}(\gamma)$, that is, $\by$ is in the complement
of the gaps. We construct crosscuts $\xi_k'$ in $(\bD,
\hS_\infty)$ inductively for $k\ge 1$.
\begin{enumerate}[(a)]
\item Choose $\epsilon_k>0$ small enough that if $x,z\in I$ with
  $|x-z|<2\epsilon_k$ then $|f^r(x)-f^r(z)|<1/2^k$ for $0\le r\le k$.
\item Pick a closed interval~$J_k\subset S$ with~$y$ in its interior, of length
  less than~$\epsilon_k$, which is small enough that it doesn't contain any of
  the points $B^r(a)$ with $1\le r\le 2k$; and that $J_k\subset\Int(J_{k-1})$
  if $k>1$. We may shrink~$J_k$ in step~(c), and we do this in such a way that
  $y$ remains in its interior.
\item It follows that $B^{-k}$ is well-defined and continuous on~$J_k$, and we
  make~$J_k$ smaller if necessary in order to ensure that $|\tau(B^{-k}(\eta))-
  \tau(B^{-k}(y))|<\epsilon_k$ for all $\eta\in J_k$. We shrink~$J_k$ again so
  that its endpoints~$L$ and~$R$ are preimages of~$c_u$ (which is possible by
  Theorem~\ref{thm:outside-dynamics}~(d)(ii)). Let $i$ and $j$ be such that
  $B^i(L)=c_u$ and $B^j(R)=c_u$.
\item Let $\bJ_k$ be the interval in $\hS$, containing~$\by$, with endpoints
  $\bt(c_u, -i) = \thr{L,\ldots}$ and \mbox{$\bt(c_u, -j) =
  \thr{R,\ldots}$}.
\item Set $\xi'_k = \xi'(\bJ_k,2k)$.
\end{enumerate}

By Remark~\ref{rmk:irrat-order} and~(b) above, the points $\bv\in\bJ_k$ are
exactly the following:

\begin{enumerate}[(i)]
\item $\bv=\bt(v) = \thr{v,\ldots}$, for $v\in J_k\setminus\bigcup_{r\in\Z}
  B^{-r}(\gamma)$;
\item $\bv=\bt(B^r(v), -r)=\thr{v,\ldots}$ where $v\in J_k$ and
  $B^r(v)\in\gamma$ for some $r\ge 0$; and
\item $\bv=\bt(Y, r)=\thr{B^r(a),\ldots}$ where $Y\in\gamma$ and $B^r(a)\in
J_k$ for some $r>2k$.
\end{enumerate} In particular, $\bJ_k \subset \Int\bJ_{k-1}$ when $k>1$, so
that $(\xi_k')$ is a chain of crosscuts in $(\bD, \hS_\infty)$.

$(\xi_k')$ satisfies conditions~(a) and~(b) of Definition~\ref{def:good-chain},
so, since $\cL=\hS$, it only remains to show that \mbox{$\diam(\xi_k)\to 0$} as
$k\to\infty$, where $\xi_k = \Psi(\xi'_k)$. To do this we will show that for
all $\bx\in\xi_k$ we have \mbox{$|x_k-\tau(B^{-k}(y))|<\epsilon_k$.} It will
follow that if $\bx,\bz\in\xi_k$ we have $|x_k-z_k|<2\epsilon_k$, so that
$|x_r-z_r|<1/2^k$ for all $r\le k$ by choice of~$\epsilon_k$, establishing the
result.

Consider first points $\bx=\Psi(\bv, 2k)\in \Psi(\bJ_k\times\{2k\})$.
By~(\ref{eq:convenient-2}) we have $x_k = f^{k-1}(H(v_{2k}, 1/2))$, and
$H(v_{2k}, 1/2) = \Upsilon\circ\barf(v_{2k},1/2) = \tau(B(v_{2k}))$ by~(U1) of
Definition~\ref{def:unwrapping-unimodal}. Therefore
\[ x_k = f^{k-1}(\tau(B(v_{2k}))) = \tau(B^k(v_{2k})) = \tau(B^{-k}(v_0)),
\] where we use~(\ref{eq:commute}) together with the fact that
$v_r\not\in\ingam$ for $0\le r\le 2k$.

By (i)\,--\,(iii) above, every $\bv\in\bJ_k$ satisfies $v_0\in J_k$, so that
\[|x_k-\tau(B^{-k}(y))|=|\tau(B^{-k}(v_0)) -
\tau(B^{-k}(y))|<\epsilon_k\] by~(c) as required.

\medskip

Now consider points $\bx = \Psi(\bt(c_u, -i), \,s)$ or $\bx =
\Psi(\bt(c_u, -j), \,s)$ with $s\in[2k,\infty]$. Since $\bt(c_u, -i)$ and
$\bt(c_u, -j)$ are landing of level~0 and $s>k$,
Lemma~\ref{lem:formula-disjoint-gamma} gives $x_k =
\tau(B^{-k}(L))$ or $x_k = \tau(B^{-k}(R))$, and the argument goes through as
before.

\medskip\medskip

\noindent\textbf{Case 2: }$\by = \bt(e, r)$ (with $e=a$ or $e=\re_u$), i.e.\
$\by$ is an endpoint of $G_r$ for some~$r$. Choose $\bJ_k$ to have one endpoint
$\bt(c_u, -i)$ as above, and the other endpoint $\bt(v_k, r)$, where $(v_k)$ is
a sequence in $\ingam$ converging to~$e$. Then $\diam(\xi_k \cap \Psi(G_r\times
[0,\infty]))$ converges to~$0$ since $\Psi|_{G_r\times[0,\infty]}$ is a
homeomorphism; while
$\diam\left(\overline{\xi_k\setminus\Psi(G_r\times[0,\infty])}\right)$
converges to~$0$ by the same argument as in case~1.
\end{proof}

It follows from Theorem~\ref{thm:corr-homeo} and
Lemma~\ref{lem:general-summary} that $\cP\colon\hS\to\PP$ is a homeomorphism;
that the prime end $\cP(\by)$ is of the first kind if $\by\not\in\Lambda$; and
that $\Pi(\cP(\by)) = \{\omega(\by)\}$ for all~$\by$. It therefore only remains
to calculate the impressions of the prime ends $\cP(\by)$ for $\by\in\Lambda$.

\begin{lem}
\label{lem:irrational-prime-ends-type-2} 
  $\cI(\cP(\by)) = \hI$ for all $\by\in\Lambda$.
\end{lem}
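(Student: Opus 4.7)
Let $(\xi_k')$ be the good chain of crosscuts for $\by$ produced in Lemma~\ref{lem:irrat-good-chain}, and $\bJ_k$ the arc of $\hS$ bounded by its endpoints. Since $\cI(\cP(\by)) = \bigcap_k \left(\overline{U(\xi_k)} \cap \hI\right)$, the goal reduces to proving $\overline{U(\xi_k)} \cap \hI = \hI$ for every $k$. For each $\bv$ in the interior of $\bJ_k$, the ray $R_\bv(s) = \Psi(\bv,s)$ is contained in $U(\xi_k)$ for $s > 2k$ and lands at $\omega(\bv) \in \hI$, using $\cL = \hS$ from Lemma~\ref{lem:landing-irrational}. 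Hence $\omega(\bJ_k) \subseteq \overline{U(\xi_k)} \cap \hI$, and the task reduces to showing that $\omega(\bJ_k)$ is dense in $\hI$.

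For this density I would exploit the gaps contained in $\bJ_k$. By Theorem~\ref{thm:outside-dynamics}(d)(iii) together with Remark~\ref{rmk:irrat-order}, $\bJ_k$ contains gaps $G_r$ for an unbounded sequence of indices $r$. For any such $r$ and any $y \in (a, \re)$, the thread $\bv = \bt(y_u, r) \in \accentset{\circ}{G}_r \subseteq \bJ_k$ is landing of level $r$, and~\eqref{eq:omega} gives
\[
\omega(\bv) = \langle f^r(y), f^{r-1}(y), \ldots, f(y), y, \tau(B^{-1}(y_u)), \tau(B^{-2}(y_u)), \ldots\rangle,
\]
so its first $r+1$ entries form the reverse of the forward orbit segment of $y$ of length $r+1$. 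Given a target thread $\bx = \langle x_0, x_1, \ldots\rangle \in \hI$, an integer $N$, and $\epsilon > 0$, the strategy is to find a gap index $r > N$ with $G_r \subseteq \bJ_k$ and a $y \in (a, \re)$ such that $f^{r-N}(y)$ is close enough to $x_N$ that, using $f(x_j) = x_{j-1}$ and uniform continuity of $f$, the propagated approximation $|f^{r-j}(y) - x_j| < \epsilon$ holds for all $0 \le j \le N$. This yields $d(\omega(\bv), \bx) < 2\epsilon + \diam(I)/2^N$, and letting $N \to \infty$ and $\epsilon \to 0$ gives density.

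The main obstacle is the purely one-dimensional claim that $x_N \in f^{r-N}((a, \re))$ for all sufficiently large $r - N$. This amounts to topological exactness of $f$ on $I$, which follows from the standing hypotheses of Convention~\ref{conv:unimodal}: the condition $h(f) > \frac12\log 2$ excludes 2-adic renormalization, and in the irrational-height case the absence of attracting periodic orbits (as $\kappa(f)$ is non-periodic and lies outside every rational renormalization window, by Lemma~\ref{lem:height-intervals}(a)) rules out the remaining obstructions. Once $f^k((a, \re)) = I$ for $k$ large, any $x_N \in I$ lies in $f^{r-N}((a,\re))$ for $r-N$ large; since the set of gap indices $r$ with $G_r \subseteq \bJ_k$ is cofinal in $\N$, both constraints on $r$ can be met simultaneously, completing the density argument and with it the proof that $\cI(\cP(\by)) = \hI$.
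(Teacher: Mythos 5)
Your proposal is correct and follows essentially the same route as the paper: both arguments approximate an arbitrary $\bx\in\hI$ by landing points $\omega(\bt(y_u,r))$ of threads lying in gaps $G_r\subset\bJ_k$ with $r$ arbitrarily large, using Theorem~\ref{thm:outside-dynamics}~(d)(iii) to locate such gaps and the eventual surjectivity of $f^N$ on $[a,c]$ to control a prescribed coordinate $x_i$. The only differences are cosmetic: the paper chooses $z$ with $f^{r_i-i}(z)=x_i$ exactly rather than approximately, and the ``eventually onto'' fact that you justify heuristically via non-renormalizability is precisely Lemma~\ref{lem:eventually-onto}, which is proved combinatorially from Convention~\ref{conv:unimodal}~(a) alone and should simply be cited.
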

\begin{proof} By Theorem~\ref{thm:corr-homeo}~(b), $\cP=\cP(\by)$ is defined by
the chain $(\Psi(\xi_k'))$, where $(\xi_k')$ is the good chain of crosscuts
constructed in the proof of Lemma~\ref{lem:irrat-good-chain}. Write $\xi_k =
\Psi(\xi_k')$ and $U_k = U(\xi_k)$. Fix~$k$, and any element $\bx\in\hI$. We
show that $\bx\in\overline{U_k}$, which will establish the result. We use the
notation of the proof of Lemma~\ref{lem:irrat-good-chain}.

By Lemma~\ref{lem:eventually-onto} in Appendix~\ref{app:technical}, there is
some~$N$ with $f^N([a,c])=I$. For each~$i\ge 1$ there exists, by
Theorem~\ref{thm:outside-dynamics}~(d)(iii), an integer $r_i>i+N$ with
$B^{r_i}(a)\in J_k$, so that $G_{r_i} \subset\bJ_k$. Since $r_i-i>N$, there is
some $z\in[a,c]$ with $f^{r_i-i}(z)=x_i$. Then $\bt(z_u,r_i)\in
G_{r_i}\subset\bJ_k$, and by Corollary~\ref{cor:ray-lands} we have
$\omega(\bt(z_u,r_i))_i = f^{r_i-i}(\tau(z_u))=x_i$.

Therefore $\omega(\bt(z_u,r_i)) \to \bx$ as $i\to\infty$: since all points of
this sequence are in $\overline{U_k}$, we have $\bx\in\overline{U_k}$ as
required.
\end{proof}

The following theorem provides a summary of what we have proved in the
irrational case.

\begin{thm}[Prime ends in the irrational case]
\label{thm:summary-irrational} Let~$f$ be a unimodal map satisfying the
conditions of Convention~\ref{conv:unimodal}, and suppose that $f$ is of
irrational type. Then
\begin{enumerate}[(a)]
\item There is a Cantor set of prime ends of $(\hT, \hI)$ of the second kind,
  for which the impression is~$\hI$. All of the other prime ends are of the
  first kind.
\item The prime end rotation number is $q(\kappa(f))$.
\end{enumerate}
\end{thm}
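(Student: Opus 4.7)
The plan is to assemble the statement from results already proved in Section~\ref{sec:irrational}, using the general framework of Section~\ref{sec:good-chain} as the engine. The key observation is that this theorem is essentially a packaging of the preceding lemmas. First, I would verify that both hypotheses needed for Theorem~\ref{thm:corr-homeo} and Lemma~\ref{lem:general-summary} are in place in the irrational case: Lemma~\ref{lem:cantor} gives density of the locally uniformly landing set~$\cR$ in~$\hS$, and Lemma~\ref{lem:irrat-good-chain} produces a good chain of crosscuts for every $\by\in\hS$. Theorem~\ref{thm:corr-homeo} then supplies the homeomorphism $\cP\colon\hS\to\PP$, which allows us to index prime ends by points of~$\hS$.

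For part~(a), I would split the analysis according to whether $\by=\by(\cP)$ lies in~$\cR$ or in the Cantor set $\Lambda=\hS\setminus\cR$ identified in Lemma~\ref{lem:cantor}. If $\by\in\cR$, Lemma~\ref{lem:general-summary}(b) directly gives $\cI(\cP(\by))=\{\omega(\by)\}$, so that $\cP(\by)$ is of the first kind. If $\by\in\Lambda$, then since $\cL=\hS$ by Lemma~\ref{lem:landing-irrational}, Lemma~\ref{lem:general-summary}(a) yields $\Pi(\cP(\by))=\{\omega(\by)\}$, a single point; while Lemma~\ref{lem:irrational-prime-ends-type-2} gives $\cI(\cP(\by))=\hI$. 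Because $\hI$ contains more than one point (since $h(f)>\tfrac12\log 2>0$ forces many distinct periodic threads), we conclude $\Pi(\cP(\by))\subsetneq\cI(\cP(\by))$, so $\cP(\by)$ is of the second kind. Transporting the Cantor structure of~$\Lambda$ through the homeomorphism~$\cP$ then shows that the second-kind prime ends form a Cantor set in~$\PP$.

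For part~(b), I would simply cite Corollary~\ref{cor:rot-no}, whose hypotheses (density of $\cR$ and existence of good chains for every thread) have just been verified; this gives the prime end rotation number as $q(\kappa(f))$.

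There is no genuine obstacle: the difficult work has already been done in Lemmas~\ref{lem:landing-irrational}--\ref{lem:irrational-prime-ends-type-2} and Corollary~\ref{cor:rot-no}, and this theorem is a clean summary. The only minor verification is that~$\hI$ is non-degenerate, which is immediate from the entropy hypothesis in Convention~\ref{conv:unimodal}.
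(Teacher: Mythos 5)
Your proposal is correct and follows exactly the route the paper takes: the theorem is a summary assembled from Lemmas~\ref{lem:landing-irrational}, \ref{lem:cantor}, \ref{lem:irrat-good-chain}, \ref{lem:general-summary}, \ref{lem:irrational-prime-ends-type-2}, Theorem~\ref{thm:corr-homeo}, and Corollary~\ref{cor:rot-no}, with the Cantor structure transported through the homeomorphism $\cP\colon\hS\to\PP$. Your extra remark that $\hI$ is non-degenerate (so that the prime ends over $\Lambda$ are genuinely of the second kind) is a harmless and correct precaution.
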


\begin{remark} \label{rmk:irrat-accessible}
By Theorem~\ref{thm:corr-homeo}~(d), the set of accessible
points of~$\hI$ is precisely $\{\omega(\by)\,:\,\by\in\hS\}$. This set is
partitioned into countably many compact arcs $\omega(G_r)$ for $r\in\Z$, and
uncountably many points $\omega(\bt(y))$ for $y\in
S\setminus\bigcup_{r\in\Z}B^{-r}(\gamma)$.
\end{remark}
\subsection{The rational interior case}
\label{sec:rational} Let $f\colon[a,b]\to[a,b]$ be a unimodal map whose
kneading sequence $\syma=\kappa(f)$ has rational height
$q=q(\syma)=m/n\in(0,1/2)$; and suppose that $\Inf{w_q0}\prec \syma\prec
\rhe(q)$, so that~$f$ is of rational interior type. In this section we
determine the prime ends of $(\hT, \hI)$.

By Theorem~\ref{thm:outside-dynamics}~(b)(i) we have that $B^r(a)\not\in\gamma$
for $1\le r<n$, and $B^n(a)\in\gamma$. Therefore~$B(a)$ is a period~$n$ point
of~$B$, whose orbit~$Q$ contains a single point of~$\gamma$. There is a
corresponding period~$n$ orbit~$\bQ$ of the natural extension
$\hB\colon\hS\to\hS$. By Theorem~\ref{thm:outside-dynamics}~(b)(iii), $B$ has
exactly one other periodic orbit~$P$, which has period~$n$ and is disjoint
from~$\gamma$; and therefore $\hB$ has exactly one other periodic orbit~$\bP$,
of period~$n$.

After describing the threads of~$\hS$, we will show that the points of~$\bQ$
are not landing, and that every other point of $\hS$ is locally uniformly
landing, so that $\cR$ is dense in~$\hS$
(Lemma~\ref{lem:R-dense-rat-interior}). We then construct good chains of
crosscuts for each $\by\in\hS$ (Lemma~\ref{lem:good-chain-NML-rat-interior}).
In the irrational case the construction of the good chains was rather ad hoc;
here, by contrast, there are natural choices for the crosscuts about the points
of~$\bQ$, which form an invariant system of subsets of stable sets
(Lemmas~\ref{lem:crosscut-orbit} and~\ref{lem:gamma-stable}).

By Lemma~\ref{lem:general-summary}, all of the prime ends $\cP(\by)$ with
$\by\not\in\bQ$ are of the first kind. We show that if $\by\in\bQ$ then
$\cI(\cP(\by))=\hI$ (Lemma~\ref{lem:impression-rat-interior}); and that
$\Pi(\cP(\by))$ is equal to~$\hI$ except in the case where~$f$ can be subjected
to a particular type of renormalization, in which case $\Pi(\cP(\by))$ is
homeomorphic to the inverse limit of the renormalized map
(Lemmas~\ref{lem:principal-rat-int-non-renorm}
and~\ref{lem:principal-rat-int-renorm}). Therefore these prime ends may be of
either the third or the fourth kind.

\medskip\medskip

Write $q_i = B^{i+1}(a)$ for $0\le i\le n-1$. By
Theorem~\ref{thm:outside-dynamics}~(b)(ii) we have $q_{n-1}\not\in\{a,\re_u\}$,
so that $q_{n-1}\in\ingam$, while the other $q_i$ are not in~$\gamma$. We have
$B(q_i)=q_{i+1\bmod{n}}$ for each~$i$, so that $Q=\{q_0,q_1,\ldots,q_{n-1}\}$
is a period~$n$ orbit of~$B$. Since $B^{-1}$ is well defined on
$S\setminus\{q_0\}$, the backwards orbit $\{B^{-r}(y)\,:\,r\ge 0\}$ of any
point $y\in S\setminus Q$ is well-defined. Moreover, $B^{-r}(y)\not\in\gamma$
for all $r\ge 1$ for such points~$y$.

 Since $\rho(B)=m/n$, there are~$m-1$ points of $Q$ in each interval
 $(q_i,q_{i+1\bmod{n}})$, and the first point of~$Q$ which is encountered when
 moving counterclockwise around~$S$ from~$q_i$ is $q_{i+m^{-1} \bmod{n}}$.
 Write~$p_0$ for the point of~$P$ between~$q_0$ and~$q_{m^{-1}\bmod{n}}$, and
 $p_i = B^i(p_0)$ for $1\le i\le n-1$.

\begin{notations}[Threads $\bq_i$, $\bp_i$, and $\bt(y,k,i)$ in $\hS$; the
   periodic orbits~$\bQ$ and~$\bP$]\mbox{}
\label{notn:threads-rat-interior}
\begin{enumerate}[(a)]
\item For each $0\le i\le n-1$, define $\bq_i, \bp_i\in\hS$ by
\begin{eqnarray*}
\bq_i &=& \thr{(q_i,q_{i-1},\ldots,q_0,q_{n-1},\ldots, q_{i+1})^\infty} \quad\text{ and} \\
\bp_i &=& \thr{(p_i,p_{i-1},\ldots,p_0,p_{n-1},\ldots, p_{i+1})^\infty}.
\end{eqnarray*}
\item For each $y\in\gamma\setminus\{q_{n-1}\}$, $k\in\Z$, and $0\le i\le n-1$,
  define $\bt(y,k,i) = \hB^{kn+i}\left(\thr{q_0, y, B^{-1}(y),
  \ldots}\right)\in\hS$, so that
\begin{equation}
\label{eq:tyki}
\bt(y,k,i) = \thr{ q_i,q_{i-1},\ldots,q_0, (q_{n-1},\ldots,q_0)^k, y,
B^{-1}(y),\ldots }\quad\text{ when }k\ge 0.
\end{equation}
\end{enumerate} 
Write $\bQ = \{\bq_0,\ldots,\bq_{n-1}\}$ and $\bP =
\{\bp_0,\ldots,\bp_{n-1}\}$, period~$n$ orbits of~$\hB$.
\end{notations}

\medskip

Every element $\by$ of $\hS$ can be written in exactly one way as $\bq_i$,
$\bp_i$, or $\bt(y,k,i)$. To see this, observe that the set
$Z(\by)=\{r\in\Z\,:\,\hB^{-(r+1)}(\by)_0\in\gamma\}$ is empty if and only if
$\by\in\bP$, and is not bounded above if and only if $\by\in\bQ$. For any other
$\by\in\hS$, let~$R=\max Z(\by)$, and let $y=\hB^{-(R+1)}(\by)_0\in\gamma$.
Then $\by=\bt(y, \floor{R/n}, R
\bmod{n})$.

\begin{defns}[The intervals $L_{k,i}$ and $R_{k,i}$]
\label{def:intervalsL,R,I} For each $k\in\Z$ and $0\le i\le n-1$, define
subsets $L_{k,i}$ and $R_{k,i}$ of~$\hS$ by
\[ L_{k,i} = \{\bt(y,k,i)\,:\, y\in(q_{n-1},a]\} \qquad\text{ and }\qquad
R_{k,i} =
\{\bt(y,k,i)\,:\, y\in[\re_u,q_{n-1})\}.
\] 
\end{defns} 
These subsets partition $\hS\setminus(\bQ\cup\bP)$, and are half-open intervals
since $y\mapsto \bt(y,k,i)$ defines homeomorphisms $(q_{n-1},a]\to L_{k,i}$ and
$[\re_u,q_{n-1})\to R_{k,i}$.

 The following lemma, which describes the ordering of the intervals $L_{k,i}$,
 and $R_{k,i}$ around the circle~$\hS$, is illustrated by
 Figure~\ref{fig:circle-order}.

\begin{lem}
\label{lem:ordering-intervals} Let $0\le i\le n-1$, and write
$j=i-m^{-1}\bmod{n}$.
\begin{enumerate}[(a)]
\item For each $k\in\Z$, the open endpoint of $L_{k,i}$ (respectively
  $R_{k,i}$) is equal to the closed endpoint of $L_{k+1,i}$ (respectively
  $R_{k+1,i}$).
\item As $k\to\infty$ we have $L_{k,i}\to \bq_i$ and $R_{k,i}\to
  \bq_i$; while as $k\to-\infty$ we have $L_{k,i}\to \bp_i$ and
  $R_{k,i}\to\bp_j$.
\end{enumerate}
\end{lem}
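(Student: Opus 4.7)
My plan is to prove (a) by direct computation of one-sided limits. It suffices to treat $k\ge 0$, since $\hB^n$ maps $L_{k,i}$ to $L_{k+1,i}$ orientation-preservingly. The open endpoint of $L_{k,i}$ is $\lim_{y\to q_{n-1}^+}\bt(y,k,i)$ with $y$ approaching from within $(q_{n-1},a]$; by the explicit formula~(\ref{eq:tyki}) this reduces to computing the one-sided limits of $B^{-r}(y)$ at $q_{n-1}$. Since $q_0$ is the unique point where $B^{-1}$ has multiple branches, and the backward orbit $q_{n-1},q_{n-2},\ldots,q_1$ of $q_{n-1}$ avoids $q_0$, the maps $B^{-r}$ are continuous at $q_{n-1}$ with $B^{-r}(q_{n-1})=q_{n-1-r}$ for $0\le r\le n-2$. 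At $r=n-1$ the orbit reaches $q_0$, and for $r\ge n$ one must identify which branch of $B^{-1}$ at $q_0$ is selected. The key observation is that $B$ restricted to $S\setminus\ingam$ is an orientation-preserving monotone map wrapping that arc once around $S$, so that the $a$-endpoint of $\gamma$ corresponds to the counterclockwise approach of $q_0$ and the $\re_u$-endpoint to the clockwise approach. Orientation-preservation then gives that $y\to q_{n-1}^+$ iterates forward to $B^{-(n-1)}(y)\to q_0$ from the counterclockwise side, so $B^{-n}(y)\to a$, and inductively $B^{-(n+r)}(y)\to B^{-r}(a)$. Substituting into~(\ref{eq:tyki}) yields $\bt(a,k+1,i)$, the closed endpoint of $L_{k+1,i}$. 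The argument for $R_{k,i}$ is symmetric, with the clockwise approach giving $\bt(\re_u,k+1,i)$.

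For (b), the case $k\to+\infty$ is immediate: by~(\ref{eq:tyki}) the first $i+nk+1$ entries of $\bt(y,k,i)$ agree with those of $\bq_i$ uniformly in $y\in(q_{n-1},a]$, so $d(\bt(y,k,i),\bq_i)=O(2^{-(i+nk)})$, and the same bound holds for $R_{k,i}$.

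The case $k\to-\infty$ is the main content. By part (a), the union $\bigcup_{k\in\Z}L_{k,i}$ is a connected arc in $\hS$, and it is $\hB^n$-invariant since $\hB^n(L_{k,i})=L_{k+1,i}$. Its two endpoints on $\hS$ are therefore fixed by $\hB^n$, hence lie in $\bP\cup\bQ$. One endpoint is $\bq_i$ (from the $k\to+\infty$ case). The other endpoint $\bz$ must be repelling under $\hB^n$ on this arc (since $L_{k,i}$ moves forward toward $\bq_i$ as $k$ increases), so $\bz\in\bP$. To identify $\bz$ I will analyze the cyclic order on $\hS$: for $y\in(q_{n-1},a]$ the thread $\bt(y,0,i)$ agrees with $\bq_i$ on entries $0,\ldots,i$, but at entry $i+1$ it has $y$, which lies counterclockwise of $q_{n-1}$ (the $(i+1)$-th entry of $\bq_i$) in $S$. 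Since $\pi_{i+1}\colon\hS\to S$, $\by\mapsto y_{i+1}$, is an orientation-preserving monotone degree-1 map, this places the entire arc $\bigcup_k L_{k,i}$ on the counterclockwise side of $\bq_i$ in $\hS$. By the description of $p_i$ as the point of $P$ between $q_i$ and $q_{i+m^{-1}\bmod n}$, the first element of $\bP\cup\bQ$ encountered counterclockwise from $\bq_i$ is $\bp_i$, so $\bz=\bp_i$. For $R_{k,i}$, the analogous argument with $y\in[\re_u,q_{n-1})$ places the arc on the clockwise side of $\bq_i$, whose first $\bP\cup\bQ$ point is $\bp_{i-m^{-1}\bmod n}=\bp_j$.

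The main obstacle is the orientation bookkeeping: in (a), selecting the correct branch of $B^{-1}$ at $q_0$ based on the side of approach; in (b), using entry $i+1$ to place the $L$- and $R$-arcs on the correct sides of $\bq_i$ via the monotone map $\pi_{i+1}$. Both rely on the observation that at $q_0$ the two one-sided branches of $B^{-1}$ correspond precisely to the two endpoints $a$ and $\re_u$ of $\gamma$, with the side determined by the orientation-preserving wrapping of $B$ around $S$.
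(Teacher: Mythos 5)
Your proposal is correct and follows essentially the same route as the paper, whose own proof is only a two-sentence sketch: part (a) via the one-sided limits $B^{-1}(y)\to a$ and $B^{-1}(y)\to\re_u$ as $y\to q_0$ from the two sides (your branch-selection bookkeeping at $q_0$ is exactly this), and part (b) via formula~\eqref{eq:tyki} for $k\to\infty$ and the cyclic-order placement of the $p_i$ for $k\to-\infty$. Your write-up just fills in the details the paper omits.
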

\begin{proof} (a) is a straightforward computation of the open endpoints
of the intervals, using the facts that $B^{-1}(y)\to a$ as $y\to q_0$ through
$(q_0,b)$, and $B^{-1}(y)\to \re_u$ as $y\to q_0$ through $(a,q_0)$. For~(b),
the limits as $k\to\infty$ are immediate from~\eqref{eq:tyki} and those as
$k\to-\infty$ from the choice of labeling of the $p_i$ (and hence of the
$\bp_i$).
\end{proof}

\begin{figure}[htbp]
\begin{center}
\includegraphics[width=0.95\textwidth]{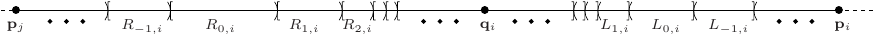}
\end{center}
\caption{Ordering of intervals around the circle in the rational interior
case.}
\label{fig:circle-order}
\end{figure}

\begin{remark}
\label{rmk:alternative-intervals} For each $y\in\gamma\setminus\{q_{n-1}\}$,
$k\in\Z$, and $0\le i\le n-1$ we have
\[
\hB(\bt(y,k,i)) =
\begin{cases}
\bt(y,k,i+1) & \text{ if }i<n-1,\\
\bt(y,k+1,0) & \text{ if }i=n-1.
\end{cases}
\] Therefore
\begin{equation}
\label{eq:alternative-intervals}
\hB(L_{k,i}) =
\begin{cases} L_{k,i+1} & \text{ if }i<n-1,\\ L_{k+1,0} & \text{ if }i=n-1,
\end{cases}
\end{equation} and analogously for $\hB(R_{k,i})$.
\end{remark}

\begin{lem}
\label{lem:R-dense-rat-interior} $\cR = \cL = \hS\setminus\bQ$. In particular,
$\cR$ is dense in~$\hS$.
\end{lem}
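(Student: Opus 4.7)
The plan is to establish the three-way equality by proving $\hS\setminus\bQ\subset\cR$ and $\bQ\cap\cL=\emptyset$; combined with $\cR\subset\cL$ from Corollary~\ref{cor:ray-lands}, this forces $\cR=\cL=\hS\setminus\bQ$, and density of $\cR$ is immediate since $\bQ$ is a finite orbit.

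For $\hS\setminus\bQ\subset\cR$, I will exhibit a uniformly landing neighborhood of each $\by\in\hS\setminus\bQ$ using the decomposition from Definitions~\ref{def:intervalsL,R,I}. Reading off formula~\eqref{eq:tyki}, a thread $\bt(y,k,i)$ with $k\ge 0$ has entries in~$\ingam$ confined to the $k$ positions where $q_{n-1}$ sits (namely $i+1,\,i+1+n,\ldots,\,i+1+(k-1)n$) and possibly position $kn+i+1$ (where $y$ sits, if $y\in\ingam$); hence $L_{k,i}\cup R_{k,i}\subset\cL_{kn+i+1}$. For $k<0$ there are no $q_{n-1}$ entries at all and the entries $B^{-r}(y)$ for $r\ge 1$ all lie outside~$\gamma$, so $L_{k,i}\cup R_{k,i}\subset\cL_0$. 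Every $\by\in\hS\setminus(\bQ\cup\bP)$ lies in some $L_{k,i}$ or $R_{k,i}$ and, by Lemma~\ref{lem:ordering-intervals}~(a), has a sufficiently small neighborhood contained in the union of it and at most one adjacent interval of the same family, which is uniformly landing. Similarly, by Lemma~\ref{lem:ordering-intervals}~(b), each $\bp_i\in\bP$ has a neighborhood contained in $\{\bp_i\}\cup\bigcup_{k<-K}(L_{k,i}\cup R_{k,j})\subset\cL_0$ for $K$ sufficiently large, where $j=i-m^{-1}\bmod n$.

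The main step is $\bQ\cap\cL=\emptyset$: I show that $R_{\bq_i}$ does not land. For large $k$ and $s\in[kn+i+1,\,kn+i+1+u(q_{n-1})]$, write $P(s)=(t,v)$; then $t=kn+i+1$ and $y_t=(\bq_i)_t=q_{n-1}$, which lies in $\ingam$ by Theorem~\ref{thm:outside-dynamics}~(b)(ii) since~$f$ is of rational interior type. As $u=2v-1$ varies over $[0,u(q_{n-1})]$, Lemma~\ref{lem:u-key} gives $H(q_{n-1},v)=\phi(v)$, which runs continuously from $\phi(1/2)=f(a)$ to $f(\tau(q_{n-1}))=f^{n+1}(a)$. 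By~\eqref{eq:convenient-2}, the zeroth entry $\Psi(\bq_i,s)_0=f^{kn+i}(\phi(v))$ therefore sweeps continuously through the interval $f^{kn+i}\bigl([f(a),f^{n+1}(a)]\bigr)$ as $s$ traverses this parameter range. The interval $[f(a),f^{n+1}(a)]$ is non-degenerate, since equality would force $u(q_{n-1})=0$ and hence $q_{n-1}\notin\ingam$, a contradiction.

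The main obstacle I anticipate is showing that the diameter of $f^{kn+i}([f(a),f^{n+1}(a)])$ stays bounded below by some fixed $\delta>0$ for all large~$k$. This should follow from the eventually-onto property of~$f$ under the standing hypothesis $h(f)>\frac{1}{2}\log 2$, in the spirit of Lemma~\ref{lem:eventually-onto}: some iterate of any non-degenerate subinterval of~$I$ contains $[a,c]$, and thereafter all iterates equal~$I$. Granted this, $\Psi(\bq_i,s)_0$ takes values in sets of diameter $\ge\delta$ on arbitrarily late $s$-intervals, so it cannot converge; hence $R_{\bq_i}$ fails to land and $\bq_i\notin\cL$, completing the proof.
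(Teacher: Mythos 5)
Your overall structure is sound and close to the paper's: the inclusion $\hS\setminus\bQ\subset\cR$ via the uniform landing levels of the intervals $L_{k,i}$ and $R_{k,i}$ is exactly the paper's first paragraph, and your ``sweep'' argument for $\bq_i\notin\cL$ --- tracking $\Psi(\bq_i,s)_0=f^{kn+i}(H(q_{n-1},v))$ as $v$ runs over $[1/2,\,\phi^{-1}(f^{n+1}(a))]$ --- is a legitimate forward-image variant of the paper's argument, which instead fixes two distinct points $x,z\in[a,\theta]$ and produces $f^{kn}$-preimages of them in $[a,\theta)$ for all large~$k$.

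The genuine gap is in the step you yourself flag as the main obstacle. You propose to bound $\diam f^{kn+i}\bigl([f(a),f^{n+1}(a)]\bigr)$ below by appealing to the principle that ``some iterate of any non-degenerate subinterval of $I$ contains $[a,c]$''. That locally-eventually-onto property is \emph{false} for unimodal maps of rational interior type lying in the renormalization window, i.e.\ when $\Inf{w_q0}\prec\kappa(f)\preceq w_q0\Inf{w_q1}$ --- a subcase that genuinely occurs here and is in fact responsible for the fourth-kind prime ends in Theorem~\ref{thm:summary-rational-interior}~(b). In that window Lemma~\ref{lem:images-of-tight}~(a) gives $f^n([a,f^n(a)))=[a,f^n(a)]$, so the non-degenerate interval $[a,f^n(a))$ has all its iterates trapped in $[a,f^n(a)]$ and never covers $[a,c]$; Lemma~\ref{lem:eventually-onto} concerns only the specific interval $[a,c]$ and does not extend to arbitrary subintervals. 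The bound you need is nevertheless true: in the renormalizable case $f^{kn+i}([f(a),f^{n+1}(a)])=f^{i+1}([a,f^n(a)])$ is independent of~$k$ and non-degenerate, and in the non-renormalizable case Lemma~\ref{lem:images-of-tight}~(b) gives $f^N([a,\theta))=[a,b]$, so the images eventually equal $[a,b]$. But establishing it requires precisely the case split of Lemma~\ref{lem:images-of-tight}, which is the lemma the paper invokes at the corresponding point; without it your proof is incomplete. (A minor indexing slip elsewhere: the $R$-intervals converging to $\bp_i$ as $k\to-\infty$ are $R_{k,\,i+m^{-1}\bmod n}$, not $R_{k,\,i-m^{-1}\bmod n}$; this does not affect the argument.)
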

\begin{proof} 
Points of $L_{k,i}$ and $R_{k,i}$ are landing of level $\max(kn+i+1, 0)$, and
points of~$\bP$ are landing of level~0. Therefore, by
Lemma~\ref{lem:ordering-intervals}, every point of $\hS\setminus\bQ$ has a
uniformly landing neighborhood, so that $\hS\setminus\bQ\subset\cR\subset\cL$.

We next show that $\bq_{n-1}\not\in\cL$ (and hence $\bq_{n-1}\not\in\cR$).
Write $\theta = \min(f^n(a), \widehat{f^n(a)})\in(a, c]$, and let~$x$ and~$z$
be any two distinct elements of $[a,\theta]$. By
Lemma~\ref{lem:images-of-tight}~(a), if $\mu\preceq w_q0\Inf{w_q1}$ then
$f^n([a,\theta))=[a,\theta]$; and by Lemma~\ref{lem:images-of-tight}~(b), if
$\mu\succ w_q0\Inf{w_q1}$ then there is some~$N$ with $f^N([a,\theta))=[a,b]$,
so that $f^{N+i}([a, \theta)) \supset [a, \theta]$ for all~$i\ge 0$. Hence in
either case there are sequences~$(x^{(k)})$ and~$(z^{(k)})$ in $[a, \theta)$
with $f^{kn}(x^{(k)})=x$ and $f^{kn}(z^{(k)})=z$ for all sufficiently
large~$k$.

Since $x^{(k)}\in[a, \theta)$ and $\theta\le c$, we have $f(a)\le f(x^{(k)}) <
f(\theta) = f^{n+1}(a)$, and hence, by
Definition~\ref{def:unwrapping-unimodal}, there is some $v_1^{(k)}\in[1/2,
\phi^{-1}(f^{n+1}(a))]$ with $\phi(v_1^{(k)}) = f(x^{(k)})$. Since
$\tau(q_{n-1})=f^n(a)$, it follows from~(U3) and~(U4) of
Definition~\ref{def:unwrapping-unimodal} that $\barf(q_{n-1}, v_1^{(k)}) =
(f(x^{(k)})_\ell, v_1^{(k)})$, and hence that $H(q_{n-1}, v_1^{(k)}) =
f(x^{(k)})$. Similarly, there is a sequence $(v_2^{(k)})$ in $[1/2, 1)$ with
$H(q_{n-1}, v_2^{(k)}) = f(z^{(k)})$ for each~$k$.

By~(\ref{eq:convenient-2}) we have that, for sufficiently large~$k$,
$R_{\bq_{n-1}}(kn + 2v_1^{(k)}-1)_0 = f^{kn}(x^{(k)})=x$ and similarly
$R_{\bq_{n-1}}(kn+ 2v_2^{(k)}-1)_0 = f^{kn}(z^{(k)})=z$. Therefore
$R_{\bq_{n-1}}$ does not land, so that $\bq_{n-1}\not\in\cL$ as required.

Since $\hH^{i+1}$ maps $R_{\bq_{n-1}}$ onto $R_{\bq_i}$ for $0\le i< n-1$, it
follows that $\bq_i\not\in\cL$ for all~$i$.
\end{proof}

For future reference, we record the landing points corresponding to the
elements $\bt(y,k,i)$ of $\hS$ with $k\ge 0$ which are given, using
(\ref{eq:omega}) together with $\bt(y,k,i)\in\cL_{kn+i+1}$ and
$\bt(y,k,i)_{kn+i+1}=y$, by
\begin{equation}
\label{eq:rat-thread-land}
\omega(\bt(y,k,i)) = \thr{f^{kn+i+1}(\tau(y)), \ldots, f(\tau(y)),
  \tau(y), \tau(B^{-1}(y)), \ldots}\qquad(k\ge 0).
\end{equation}

\medskip

We next define some good chains of crosscuts for each $\bq_i$. The class of
crosscuts which we use is introduced in Definition~\ref{defn:crosscuts-gamma},
and it will be shown in Lemma~\ref{lem:good-chain-NML-rat-interior} how these
combine to give good chains.

\begin{notations}[$\hy$, $\min(y, \hy)$.] For each~$y\in\gamma$, denote by
$\hy$ the ``symmetric'' element of~$\gamma$ satisfying
\begin{enumerate}[(a)]
\item $f(\tau(\hy)) = f(\tau(y))$, and
\item $\hy\not=y$ unless $y=c_u$.
\end{enumerate} We set $\min(y,\hy)=y$ if $y\in[c_u,a]$, and $\min(y,\hy)=\hy$
otherwise. (This convention is so that $\tau(\min(y, \hy)) =
\min(\tau(y), \tau(\hy))$).
\end{notations}

\begin{defn}[The crosscuts $\Gamma'(y, k, i)$ and $\Gamma(y,k,i)$]
\label{defn:crosscuts-gamma} 
  For each $y\in(c_u,a]\setminus\{\min(q_{n-1},\hq_{n-1})\}$, each $k\in\Z$,
  and each $0\le i\le n-1$, we define a crosscut $\Gamma'(y,k,i)$ in $(\bD,
  \hS_\infty)$ by
  \[
    \Gamma'(y,k,i) = 
    \begin{cases}
      \xi'([\bt(\hy,k,i), \bt(y,k,i)], \,\, kn + i + 1 + u(y)) 
                        & \text{ if }k \ge 0, \\
      \xi'([\bt(\hy,k,i), \bt(y,k,i)], \,\, (1+u(y))/2^{|nk+i|}) 
                        & \text{ if }k < 0,
    \end{cases}
  \] 
  where $u(y)$ is given by \Notation~\ref{notn:u-y} and $\xi'(\bJ, s)$ is as in
  Definition~\ref{notn:crosscuts-xi-J-s}. Here $[\bt(\hy,k,i),
  \bt(y,k,i)]$ is the interval in~$\hS$ with the given endpoints which is
  disjoint from~$\bP$. 

  We define $\Gamma(y,k,i)=\Psi(\Gamma'(y,k,i))$, a crosscut in $(\hT, \hI)$.
\end{defn}

\begin{remark}
\label{rmk:gammas} $(\bq_i,\infty)\in U(\Gamma'(y,k,i))$ if and only if $y\in
(\min(q_{n-1}, \hq_{n-1}), a]$. See Figure~\ref{fig:gammas}.
\end{remark}

\begin{figure}[htbp]
\begin{center}
\includegraphics[width=0.95\textwidth]{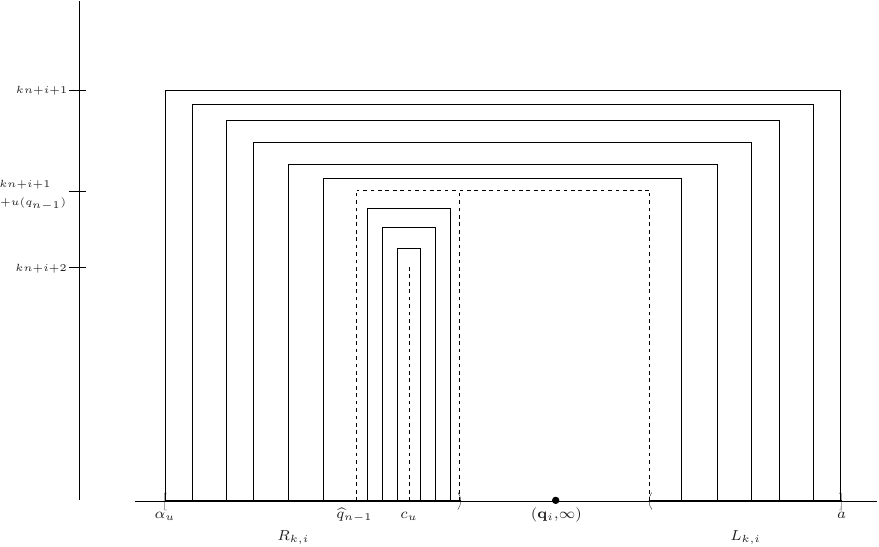}
\end{center}
\caption{The crosscuts $\Gamma'(y,k,i)$, in the case~$k\ge 0$, drawn under the
  assumption that $q_{n-1}\in (c_u,a]$, so that the interval~$L_{k,i}$ is
  shorter than the interval~$R_{k,i}$. In this figure the labels $\re_u$,
  $\hq_{n-1}$, $c_u$, and $a$ are abbreviations of $(\bt(\re_u,k,i), \infty)$,
  $(\bt(\hq_{n-1},k,i),\infty)$, $(\bt(c_u,k,i),\infty)$, and
  $(\bt(a,k,i),\infty)$. The dotted lines represent limits of the crosscuts,
  which are not themselves of the form $\Gamma'(y,k,i)$: see
  Remark~\ref{rmk:dotted}.}
\label{fig:gammas}
\end{figure}

\begin{lem}
\label{lem:crosscut-orbit} For each
$y\in(c_u,a]\setminus\{\min(q_{n-1},\hq_{n-1})\}$, each $k\in\Z$, and each
$0\le i\le n-1$, we have $\Gamma(y,k,i) = \hH^{kn+i}(\Gamma(y,0,0))$.
\end{lem}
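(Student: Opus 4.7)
\medskip

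\noindent\textbf{Proof plan for Lemma~\ref{lem:crosscut-orbit}.}

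The plan is to reduce the statement to an identity on the disk side via the conjugacy of Corollary~\ref{cor:Psi-conjugates}. Since $\Gamma(y,k,i) = \Psi(\Gamma'(y,k,i))$ by definition and $\hH\circ\Psi = \Psi\circ G$ on~$D$ by Lemma~\ref{lem:conjugation}, it suffices to prove that
\[
  \Gamma'(y,k,i) = G^{kn+i}\bigl(\Gamma'(y,0,0)\bigr)
\]
as subsets of~$\bD$. Once this is established, applying~$\Psi$ to both sides and using $\Psi\circ G^{kn+i} = \hH^{kn+i}\circ\Psi$ yields the lemma.

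To establish the displayed identity, I would write $G^{kn+i}(\by,s) = (\hB^{kn+i}(\by),\,\lambda^{kn+i}(s))$ (extended in the obvious way so that $G$ fixes~$\hS_\infty$ pointwise in the second coordinate) and then evaluate on the three pieces of the crosscut $\Gamma'(y,0,0) = \xi'([\bt(\hy,0,0),\bt(y,0,0)],\,1+u(y))$ separately. The two vertical arcs $\{\bt(\hy,0,0)\}\times[1+u(y),\infty]$ and $\{\bt(y,0,0)\}\times[1+u(y),\infty]$ map onto $\{\hB^{kn+i}(\bt(\hy,0,0))\}\times[\lambda^{kn+i}(1+u(y)),\infty]$ and $\{\hB^{kn+i}(\bt(y,0,0))\}\times[\lambda^{kn+i}(1+u(y)),\infty]$. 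Using \Notation~\ref{notn:threads-rat-interior}~(b), a direct unwinding of the definition of $\bt(y,k,i)$ gives $\hB^{kn+i}(\bt(y,0,0)) = \bt(y,k,i)$ and similarly with~$\hy$ in place of~$y$, so these pieces land on the correct vertical arcs of~$\Gamma'(y,k,i)$.

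For the horizontal piece $[\bt(\hy,0,0),\bt(y,0,0)]\times\{1+u(y)\}$, I need two checks. First, the parameter transforms correctly under $\lambda$: for $k\ge 0$, since $1+u(y)\ge 1$ and $\lambda(t)=t+1$ on $[1,\infty)$, iteration gives $\lambda^{kn+i}(1+u(y)) = kn+i+1+u(y)$, matching the definition of $\Gamma'(y,k,i)$ in the case $k\ge 0$; for $k<0$, $1+u(y)\in[1,2)$ and $\lambda^{-1}(t)=t/2$ on $[0,2]$, and each successive preimage halves a value that remains in~$[0,1]$, so $\lambda^{kn+i}(1+u(y)) = (1+u(y))/2^{|nk+i|}$, again matching the definition. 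Second, $\hB^{kn+i}([\bt(\hy,0,0),\bt(y,0,0)])$ must be the interval $[\bt(\hy,k,i),\bt(y,k,i)]$ \emph{that is disjoint from}~$\bP$; this holds because~$\hB$ is an orientation-preserving circle homeomorphism which invariates~$\bP$, and the original interval is disjoint from~$\bP$ by the convention in Definition~\ref{defn:crosscuts-gamma}, so its image under~$\hB^{kn+i}$ is a connected subset of $\hS\setminus\bP$ with the prescribed endpoints, hence the correct complementary arc.

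The only subtlety is bookkeeping: ensuring that the height parameter $1+u(y)$ lies in the regime where $\lambda$ acts additively (for $k\ge 0$) or, after inversion, stays in the regime where $\lambda^{-1}$ acts as halving (for $k<0$). Both follow immediately from $u(y)\in[0,1]$. No new dynamical input beyond Lemma~\ref{lem:conjugation} and the definitions is required.
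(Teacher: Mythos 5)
Your proposal is correct and follows essentially the same route as the paper: reduce via the conjugacy of Corollary~\ref{cor:Psi-conjugates} to the identity $G^{kn+i}(\Gamma'(y,0,0))=\Gamma'(y,k,i)$, which the paper deduces from Remark~\ref{rmk:alternative-intervals} together with the explicit form of~$\lambda$ — exactly the two computations ($\hB^{kn+i}(\bt(y,0,0))=\bt(y,k,i)$ and $\lambda^{kn+i}(1+u(y))$) that you carry out in detail.
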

\begin{proof} By Corollary~\ref{cor:Psi-conjugates} we have
$\hH^{kn+i}(\Gamma(y,0,0)) = \Psi(G^{kn+i}(\Gamma'(y,0,0))$,
where~$G\colon\bD\to\bD$ is given (see Definition~\ref{notn:G-et-al}) by
$G(\by,s)=(\hB(\by),
\lambda(s))$. Now $G^{kn+i}(\Gamma'(y,0,0))=\Gamma'(y,k,i)$ by
Remark~\ref{rmk:alternative-intervals} and the fact that $\lambda(s) = s+1$ for
$s\ge 1$ and $\lambda(s)=2s$ for $s<1$. The result follows.
\end{proof}

The following is a key lemma for the remainder of the paper. It implies, in
particular, that each crosscut $\Gamma(y,k,i)$ is contained in a stable set
for~$\hH$; and hence, by Lemma~\ref{lem:crosscut-orbit}, that
\mbox{$\diam(\Gamma(y,k,i))\to 0$} as $k\to\infty$.

\begin{lem}
\label{lem:gamma-stable} Let $y\in(c_u,a]\setminus\{\min(q_{n-1},
\hq_{n-1})\}$, $k\ge 0$, and $0\le i\le n-1$. Then every $\bx\in\Gamma(y,k,i)$
has $x_{kn+i}=f(\tau(y))$.

\end{lem}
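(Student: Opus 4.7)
The crosscut $\Gamma'(y,k,i)$ consists of vertical arcs over $\bt(\hy,k,i)$ and $\bt(y,k,i)$ together with the horizontal arc $[\bt(\hy,k,i),\bt(y,k,i)]\times\{s_0\}$, where $s_0:=kn+i+1+u(y)$. On the two vertical arcs, both $\bt(y,k,i)$ and $\bt(\hy,k,i)$ lie in $\cL_{kn+i+1}$, since for any $w\in\gamma$ the backward orbits $B^{-j}(w)$ ($j\ge 1$) lie outside $\gamma$. Invoking Lemma~\ref{lem:constant-block} with $r=kn+i+1$, and using that $u(\hy)=u(y)$ (which follows from $f(\tau(\hy))=f(\tau(y))$), yields $\Psi(\bv,s)_{kn+i}=f(\tau(v_{kn+i+1}))=f(\tau(y))$ for all $s\ge s_0$ in both cases.

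For the horizontal arc, assume first that $u(y)>0$, so $P(s_0)=(kn+i+1,v)$ with $v=(u(y)+1)/2$; by~(\ref{eq:convenient-2}) the claim reduces to showing $H(v_{kn+i+1},v)=f(\tau(y))$ for every $\bv$ in the interval. A direct computation from the definitions of $\bq_i$ and $\bt(z,k',i')$ shows that $v_{kn+i+1}$ takes one of two forms: either $v_{kn+i+1}=q_{n-1}$ (covering $\bv=\bq_i$ and every $\bv\in L_{k',i}\cup R_{k',i}$ with $k'>k$, both of which occur only when the interval contains $\bq_i$), or $v_{kn+i+1}=z$ with $\bv=\bt(z,k,i)$ for some $z$ in the sub-arc of $\gamma$ joining $\hy$ to $y$ cut out by the interval.

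To finish, I apply Lemma~\ref{lem:u-key}. When $v_{kn+i+1}=q_{n-1}$: by Remark~\ref{rmk:gammas} the interval contains $\bq_i$ only when $y>\min(q_{n-1},\hq_{n-1})$; monotonicity of $\tau$ on $(c_u,a]$ then gives $\tau(y)<\tau(\min(q_{n-1},\hq_{n-1}))$, both values lying in $[a,c]$ where $f$ is increasing, so that $f(\tau(y))<f^{n+1}(a)=f(\tau(q_{n-1}))$, i.e., $u:=u(y)<u(q_{n-1})$, and hence $H(q_{n-1},v)=\phi(v)=f(\tau(y))$. When $v_{kn+i+1}=z$: on the relevant sub-arc of $\gamma$, which traverses $c_u$ (where $f\circ\tau$ attains its maximum $b$), we have $f(\tau(z))\ge f(\tau(y))$ with equality precisely at $z\in\{y,\hy\}$, so Lemma~\ref{lem:u-key} gives $H(z,v)=f(\tau(z))=f(\tau(y))$ in the equality case and $H(z,v)=\phi(v)=f(\tau(y))$ otherwise.

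The boundary case $u(y)=0$ (i.e., $y=a$, $\hy=\re_u$, $v=1/2$) is handled directly from $H(z,1/2)=\tau(B(z))$: this equals $f(a)=f(\tau(y))$ both for $z\in\ingam$ (since $B(\ingam)=\{q_0\}$ and $\tau(q_0)=f(a)$) and for $z\in\{a,\re_u\}$. The main technical burden is the thread enumeration for the horizontal arc, which subdivides into four sub-cases according to whether $q_{n-1}$ lies in $(c_u,a]$ or in $[\re_u,c_u)$ and whether or not the interval contains $\bq_i$; in each sub-case the explicit description of the $L_{k',i}$ and $R_{k',i}$ near $\bq_i$ from Lemma~\ref{lem:ordering-intervals} is what allows the enumeration to be carried out.
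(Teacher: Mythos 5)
Your proof is correct and follows essentially the same route as the paper's: Lemma~\ref{lem:constant-block} for the vertical arcs, and~(\ref{eq:convenient-2}) together with Lemma~\ref{lem:u-key} for the horizontal arc, with the same enumeration of threads (the sub-arc $[\hy,y]$ of $\gamma$ versus the threads $\bq_i$ and $L_{k',i}\cup R_{k',i}$ with $(kn{+}i{+}1)^{\text{st}}$ entry $q_{n-1}$) and the same inequalities $u(y)\le u(z)$ and $u(y)<u(q_{n-1})$. The only cosmetic difference is that the paper first reduces to $k=i=0$ via the equivariance $\Gamma(y,k,i)=\hH^{kn+i}(\Gamma(y,0,0))$ of Lemma~\ref{lem:crosscut-orbit}, whereas you apply the cited lemmas directly at general $(k,i)$, which is equally valid.
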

\begin{proof} In view of Lemma~\ref{lem:crosscut-orbit}, we need only show that
every $\bx\in\Gamma(y,0,0)$ has $x_{0}=f(\tau(y))$. Now
\begin{eqnarray*}
\Gamma(y,0,0) &=& \Psi\left(
\xi'([\bt(\hy,0,0), \bt(y,0,0)],\,\, 1+u(y))
\right)\\ &=& \Psi\left(
\xi'([\thr{q_0,\hy,B^{-1}(\hy),\ldots}, \thr{q_0, y, B^{-1}(y),
    \ldots}],\,\, 1+u(y))
\right).
\end{eqnarray*}

\begin{enumerate}[(a)]
\item Since $\bt(y,0,0)_1=y$ and $\bt(y,0,0)_{1+i}\not\in\ingam$ for all~$i\ge
  1$, Lemma~\ref{lem:constant-block} gives that $\Psi(\bt(y,0,0), s)_0 =
  f(\tau(y))$ for all $s\ge 1+u(y)$; similarly $\Psi(\bt(\hy,0,0),s)_0 =
  f(\tau(\hy))=f(\tau(y))$ for all $s\ge 1+u(y)$.

\item It remains to show that $\Psi(\by, 1+u(y))_0 = f(\tau(y))$ for all
  $\by\in[\bt(\hy,0,0), \bt(y,0,0)]$. Now in the case $y\in(\min(q_{n-1},
  \hq_{n-1}), a]$ we have, by Lemma~\ref{lem:ordering-intervals},
\[ [\bt(\hy,0,0), \bt(y,0,0)] =
\{\bt(y',0,0)\,:\,y'\in[\hy,y]\setminus\{q_{n-1}\}\} \,\,\,\,\cup\,\,\,\,
\{\bq_0\} \,\,\,\,\cup\,\,\,\,
\bigcup_{k=1}^\infty\,\, (L_{k,0}\cup R_{k,0}),
\] while in the case $y\in(c_u, \min(q_{n-1}, \hq_{n-1}))$ we have
$[\bt(\hy,0,0),
  \bt(y,0,0)] = \{\bt(y',0,0)\,:\,y'\in[\hy,y]\}$.

If $y'\in[\hy,y]$ with $y'\not=q_{n-1}$ then
\begin{eqnarray*}
\Psi(\bt(y',0,0), 1+u(y))_0 &=& H(y', \phi^{-1}(f(\tau(y)))) \\ &=& f(\tau(y))
\end{eqnarray*} as required. Here the first equality
uses~(\ref{eq:convenient-2}) and that $(1+u(y))/2 = \phi^{-1}(f(\tau(y)))$,
while the second uses Lemma~\ref{lem:u-key} and the fact that $u(y)\le u(y')$,
since $y'\in[\hy,y]$.

On the other hand, if we are in the case $y\in(\min(q_{n-1},\hq_{n-1}),a]$, and
if $\by=\bq_0$ or $\by$ is in $L_{k,0}$ or $R_{k,0}$ for some~$k\ge 1$, then
$y_1= q_{n-1}$, and~(\ref{eq:convenient-2}) and Lemma~\ref{lem:u-key} give
\[
\Psi(\by, 1+u(y))_0 = H(q_{n-1}, \phi^{-1}(f(\tau(y)))) = f(\tau(y))
\] as required, since $\phi^{-1}(f(\tau(y))) \le u(q_{n-1})$.
\end{enumerate}
\end{proof}

\begin{remark}
\label{rmk:dotted} There are two connected components of dotted lines on
Figure~\ref{fig:gammas}, which are limits of the crosscuts $\Gamma'(y,k,i)$.
One is the arc $\{\bt(c_u,k,i)\}\times[kn+i+2,
  \infty]$, and the other is the union of the crosscut
$\xi'([\bt(\hq_{n-1},k,i), \bt(a, k+1, i)],\,\,kn+i+1+u(q_{n-1}))$ and the
crosscut $\xi'([\bt(\hq_{n-1},k,i), \bt(\re_u, k+1, i)],\,kn+i+1+u(q_{n-1}))$,
each interval in~$\hS$ being the one which contains $\bq_i$.

By the continuity of~$\Psi$ on $\tD$, every point~$(\by,s)$ of the former has
$\Psi(\by,s)_{kn+i} = f(\tau(c_u))=b$, and every point~$(\by,s)$ of the latter
has $\Psi(\by,s)_{kn+i} = f(\tau(q_{n-1})) = f^{n+1}(a)$.
\end{remark}

\begin{lem}
\label{lem:good-chain-NML-rat-interior} \mbox{}
\begin{enumerate}[(a)]
  \item Let~$0\le i\le n-1$. For every sequence~$(y^{(k)})$ in
  $(\min(q_{n-1},\hq_{n-1}),a]$, the sequence $(\Gamma'(y^{(k)},k,i))_{k\ge 0}$
  satisfies conditions (a)\,--\,(c) of Definition~\ref{def:good-chain} (of a
  good chain of crosscuts for~$\bq_i$).
  \item Let~$\by\in\hS$. Then there is a good chain of crosscuts for~$\by$.
\end{enumerate}
\end{lem}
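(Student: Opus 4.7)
For part~(a), I would first verify that $(\Gamma'(y^{(k)}, k, i))_{k\ge 0}$ is a chain of crosscuts in $(\bD, \hS_\infty)$. The heights $s_k = kn+i+1+u(y^{(k)})$ increase strictly to $\infty$, and the intervals $[\bt(\hy^{(k)}, k, i), \bt(y^{(k)}, k, i)]$ in $\hS$ (the short side, containing $\bq_i$ and disjoint from $\bP$) shrink monotonically to $\{\bq_i\}$ by Lemma~\ref{lem:ordering-intervals}; together these give disjointness, nesting of the regions $U(\Gamma'(y^{(k)}, k, i))$, and diameter in~$\bD$ tending to zero. Condition~(a) of Definition~\ref{def:good-chain} holds because the endpoint threads lie in $L_{k,i}\cup R_{k,i}\subset \cR$ (from the proof of Lemma~\ref{lem:R-dense-rat-interior}); condition~(b) is immediate from Remark~\ref{rmk:gammas} and the hypothesis $y^{(k)}\in(\min(q_{n-1},\hq_{n-1}), a]$. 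For condition~(c), Lemma~\ref{lem:gamma-stable} gives $x_{kn+i}=f(\tau(y^{(k)}))$ for every $\bx\in\Gamma(y^{(k)}, k, i)$; the inverse-limit relation $x_r=H(x_{r+1})$ then forces the entries $x_0,\dots,x_{kn+i-1}$ to be determined by $x_{kn+i}$, so any two points of the crosscut agree on their first $kn+i+1$ entries, and the weighted metric on $\hT$ yields $\diam(\Gamma(y^{(k)}, k, i))\le \diam(T)/2^{kn+i}\to 0$.

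For part~(b), when $\by\in\cR$ a good chain exists by Remark~\ref{rmk:good-chain}~(c), and by Lemma~\ref{lem:R-dense-rat-interior} this covers every point of $\hS\setminus\bQ$; in particular each $\bp_i\in\bP$ is handled. The only case that remains is $\by=\bq_i$, where $\bq_i\not\in\cL$. The plan is to use the chain from part~(a) with a sequence $(y^{(k)})$ still to be chosen; conditions (a)\,--\,(c) are then automatic, leaving only the fourth condition of Definition~\ref{def:good-chain} to verify, namely that the crosscuts do not Hausdorff-converge to any point of $\hI$.

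The hard part will be verifying this fourth condition. The plan is to exploit that $\bq_i\not\in\cL$: since $R_{\bq_i}$ fails to land, the map $s\mapsto\Psi(\bq_i,s)$ has at least two distinct accumulation points $\bx_1,\bx_2\in\hI$ as $s\to\infty$. The midpoint $\Psi(\bq_i, s_k)$ lies on the crosscut $\Gamma(y^{(k)}, k, i)$, and by varying $y^{(k)}$ through its allowed range we can steer $s_k$ through a nontrivial subinterval of $[kn+i+1,kn+i+2]$. I would then choose $(y^{(k)})$ so that $\Psi(\bq_i, s_k)$ accumulates both on $\bx_1$ and on $\bx_2$, by interleaving two subsequences. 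The combinatorial point here is that, by Lemma~\ref{lem:conjugation} together with $\hB^n(\bq_i)=\bq_i$, the shift $s\mapsto s+n$ intertwines $\Psi(\bq_i,\cdot)$ with $\hH^n$, so the accumulation structure on any window of parameters of length~$n$ determines it globally; this makes it possible to place the required approximating parameters inside each successive window. The crosscuts then fail to Hausdorff-converge to any single point, verifying condition~(d).
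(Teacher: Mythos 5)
Part~(a) of your proposal is correct and follows the paper's route: conditions (a)--(c) of Definition~\ref{def:good-chain} come from Lemma~\ref{lem:R-dense-rat-interior}, Remark~\ref{rmk:gammas}/Lemma~\ref{lem:ordering-intervals}, and Lemma~\ref{lem:gamma-stable} respectively, and your derivation of the bound $\diam(\Gamma(y^{(k)},k,i))\le \diam(T)/2^{kn+i}$ from the single shared coordinate $x_{kn+i}=f(\tau(y^{(k)}))$ is exactly how the paper gets condition~(c).

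In part~(b), however, your verification of condition~(d) has a genuine gap. The parameters you can reach by varying $y^{(k)}$ are only $s_k=kn+i+1+u(y^{(k)})$ with $u(y^{(k)})\in[0,u(q_{n-1}))$, i.e.\ one point in a short sub-window of each period-$n$ block. The statement ``$\bq_i\not\in\cL$'' tells you only that the \emph{full} ray $R_{\bq_i}$ has two accumulation points; it does not tell you that two distinct accumulation points are attained along parameter sequences confined to these sub-windows, and the intertwining $\Psi(\bq_i,s+n)=\hH^n(\Psi(\bq_i,s))$ does not let you move an accumulating parameter sequence into them (it translates by whole periods, not within a period). What is actually needed is a quantitative non-degeneracy statement: $\Psi(\bq_i,kn+i+1+u)_{i+1}=f^{kn-1}(\phi((1+u)/2))$ with $\phi((1+u)/2)$ ranging over $[f(a),f^{n+1}(a))$, so one must know that $f^{kn}([a,\theta))$ (where $\theta=\min(f^n(a),\widehat{f^n(a)})$) contains two prescribed points $x\neq z$ for all large $k$. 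This is precisely Lemma~\ref{lem:images-of-tight}, which the paper invokes: it chooses $x^{(k)},z^{(k)}\in[a,\theta)$ with $f^{kn}(x^{(k)})=x$ and $f^{kn}(z^{(k)})=z$, sets $y^{(k)}=x_u^{(k)}$ or $z_u^{(k)}$ alternately, and then Lemma~\ref{lem:gamma-stable} forces every point of $\Gamma(y^{(k)},k,i)$ to have $(i+1)^{\text{st}}$ coordinate equal to $x$ or $z$ alternately, so the crosscuts cannot converge to a point. (This is also the mechanism already used in the proof of Lemma~\ref{lem:R-dense-rat-interior} to show $\bq_{n-1}\notin\cL$; citing the constructive content of that proof, rather than just its conclusion, would close your gap.)
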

\begin{proof}

The sequence $(\Gamma'(y^{(k)},k,i))_{k\ge 0}$ is a chain of crosscuts in
$(\bD, \hS_\infty)$ which satisfies condition~(a) of
Definition~\ref{def:good-chain} by Lemma~\ref{lem:R-dense-rat-interior}; it
satisfies condition~(b) by Lemma~\ref{lem:ordering-intervals} (see
Remark~\ref{rmk:gammas}); and it satisfies condition~(c) by
Lemma~\ref{lem:gamma-stable}, which gives that $\diam(\Gamma(y,k,i)) \le
|b-a|/2^{kn+i}$.

\medskip

For part~(b) of the lemma, it suffices by Remark~\ref{rmk:good-chain}~(c) to
find a good chain of crosscuts for each~$\bq_i$; that is, to show that we can
choose the sequence~$(y^{(k)})$ in such a way that $(\Gamma(y^{(k)}, k,
i))_{k\ge 0}$ does not converge to a point of~$\hI$. The argument is similar to
that used in the proof of Lemma~\ref{lem:R-dense-rat-interior}.

Pick two distinct points $x,z\in[a, \min(\tau(q_{n-1}),
  \tau(\hq_{n-1}))) = [a, \theta)$, where $\theta = \min(f^n(a),
    \widehat{f^n(a)})$.  By Lemma~\ref{lem:images-of-tight}, there are
    sequences $(x^{(k)})$ and $(z^{(k)})$ in~$[a, \theta)$ with
    $f^{kn}(x^{(k)})=x$ and $f^{kn}(z^{(k)})=z$ for all sufficiently large~$k$.
    Since $x^{(k)}, z^{(k)}\in[a,\theta)$ we have $x_u^{(k)}, z_u^{(k)}\in
    (\min(q_{n-1}, \hq_{n-1}),a]$ for each~$k$. Then, by
    Lemma~\ref{lem:gamma-stable}, every~$\bx\in
      \Gamma(x_u^{(k)}, k, i)$ has $x_{i+1} = x$, and
      every~$\bx\in\Gamma(z_u^{(k)}, k, i)$ has $x_{i+1}=z$, provided that~$k$
      is sufficiently large.

Choosing $y^{(k)}=x_u^{(k)}$ when~$k$ is even, and $y^{(k)}=z_u^{(k)}$ when~$k$
is odd therefore gives a good chain of crosscuts.
\end{proof}

It follows from Theorem~\ref{thm:corr-homeo} and
Lemma~\ref{lem:general-summary} that $\cP\colon\hS\to\PP$ is a homeomorphism,
and that the prime end~$\cP(\by)$ is of the first kind for all~$\by\not\in\bQ$.
It therefore only remains to calculate the principal sets and impressions of
the prime ends $\cP(\bq_i)$. We will do this for $\cP(\bq_{n-1})$: the
analogous results for the other $\cP(\bq_i)$ follow on observing that
$\cP(\bq_i) =
\hH^{i+1}(\cP(\bq_{n-1}))$ for each~$i$ by Lemmas~\ref{lem:crosscut-orbit}
and~\ref{lem:good-chain-NML-rat-interior}.

\begin{lem}
\label{lem:impression-rat-interior} Let $f$ be of rational interior type, with
$q(\kappa(f))=m/n$. Then $\cI(\cP(\bq_{n-1})) = \hI$.
\end{lem}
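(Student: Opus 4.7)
The plan is to follow the strategy of Lemma~\ref{lem:irrational-prime-ends-type-2}: fix a chain representing~$\cP(\bq_{n-1})$ and show that every thread in~$\hI$ arises as a limit of accessible landing points $\omega(\bt(\cdot,K,n-1))$ whose rays eventually lie in the region cut off by each crosscut. Concretely, I would start with a chain $(\xi_k)$ of the form $\xi_k = \Gamma(y^{(k)}, k, n-1)$ provided by Lemma~\ref{lem:good-chain-NML-rat-interior}, write $U_k = U(\xi_k)$, and note that $\cI(\cP(\bq_{n-1})) = \bigcap_{k \ge 0} \overline{U_k}$ is trivially contained in~$\hI$; it therefore suffices to show that every $\bx \in \hI$ lies in $\overline{U_k}$ for each $k \ge 0$.

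Fix such an $\bx$ and $k$. Using Lemma~\ref{lem:eventually-onto} from Appendix~\ref{app:technical}, pick $N$ with $f^N([a, c]) = I$. For each large $M$, choose $K > k$ with $Kn + n - M \ge N$ and a preimage $z^{(M)} \in [a, c]$ of $x_M$ under $f^{Kn+n-M}$, replacing it by another preimage (or increasing $K$ slightly) if necessary to arrange $z^{(M)} \ne f^n(a) = \tau(q_{n-1})$; this is possible because $f^L|_{[a,c]}$ has exponentially many monotone branches for large $L$. Then $z_u^{(M)} \in \gamma \setminus \{q_{n-1}\}$, so the thread $\bt(z_u^{(M)}, K, n-1)$ is well-defined and lies in $L_{K, n-1} \cup R_{K, n-1}$.

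By Lemma~\ref{lem:ordering-intervals}, $L_{K, n-1} \cup R_{K, n-1}$ accumulates on $\bq_{n-1}$ as $K \to \infty$, so for $K$ sufficiently large it is contained in the interval $\bJ_k$ (the one containing $\bq_{n-1}$ and disjoint from~$\bP$) used to define $\xi'_k$. Hence for all sufficiently large $s$ the point $(\bt(z_u^{(M)}, K, n-1), s)$ lies in $U(\xi'_k)$, its image under $\Psi$ lies in $U_k$, and letting $s \to \infty$ yields $\omega(\bt(z_u^{(M)}, K, n-1)) \in \overline{U_k}$. Finally, formula~\eqref{eq:rat-thread-land} gives
\[
\omega(\bt(z_u^{(M)}, K, n-1))_I \;=\; f^{Kn+n-I}(z^{(M)}) \;=\; f^{M-I}(x_M) \;=\; x_I \qquad (0 \le I \le M),
\]
so $\omega(\bt(z_u^{(M)}, K, n-1)) \to \bx$ as $M \to \infty$, and thus $\bx \in \overline{U_k}$.

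The main obstacle is the combinatorial bookkeeping: one must simultaneously ensure $K$ is large enough that $L_{K, n-1} \cup R_{K, n-1} \subset \bJ_k$, that $Kn + n - M \ge N$ so that $x_M$ has a preimage in $[a, c]$, and that this preimage can be chosen away from the single bad value $f^n(a)$. All three conditions are compatible for any $\bx$ and $k$, after which the landing formula and the geometric convergence of the intervals to $\bq_{n-1}$ force $\overline{U_k}$ to catch every thread of~$\hI$.
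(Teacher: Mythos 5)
Your proposal follows essentially the same route as the paper's proof: fix a good chain of crosscuts $\Gamma(\,\cdot\,,k,n-1)$ defining $\cP(\bq_{n-1})$, and for each $\bx\in\hI$ produce threads $\bt(z_u,K,n-1)\in L_{K,n-1}\cup R_{K,n-1}$ whose landing points converge to $\bx$ (via~\eqref{eq:rat-thread-land}) and which lie in $\overline{U(\xi_k)}$ once $K>k$. The only structural difference is bookkeeping: the paper fixes $N$ once and pulls $x_{jn-N}$ back under $f^N$ to a point of $[a,\re]$, whereas you pull $x_M$ all the way back under $f^{Kn+n-M}$ to a point of $[a,c]$; both yield $\omega(\cdot)\to\bx$.

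The one step whose justification does not hold up as written is the choice of $z^{(M)}\neq f^n(a)=\tau(q_{n-1})$. Exponential growth of the lap number of $f^L|_{[a,c]}$ does not imply that a \emph{specific} point has more than one $f^L$-preimage in $[a,c]$: only those monotone branches whose image happens to contain $x_M$ contribute preimages, and nothing you have said rules out there being exactly one such branch, with its preimage equal to $f^n(a)$ (increasing $K$ shifts $L$ by multiples of $n$ and does not obviously escape this, e.g.\ when $a$ is eventually periodic). The paper's repair is to take preimages in $[a,\re]$ rather than $[a,c]$ and use the symmetric point: if $f^N(z)=x$ with $z\in[a,\re]$, $z\neq c$, then also $f^N(\hz)=x$ with $\hz\neq z$, so at most one of the two can equal $\tau(q_{n-1})$ (with a short separate argument in the degenerate case $\tau(q_{n-1})=\tau(\hq_{n-1})=c$). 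Since $z_u\in\gamma$ for every $z\in[a,\re]$, nothing in your construction actually requires $z^{(M)}\in[a,c]$, so this substitution fixes the step without altering the rest of your argument.
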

\begin{proof} By Theorem~\ref{thm:corr-homeo}~(b) and
Lemma~\ref{lem:good-chain-NML-rat-interior}, $\cP(\bq_{n-1})$ is defined by the
chain $(\Gamma(a,k,n-1))_{k\ge 0}$, so it suffices to show that for every fixed
$\bx\in\hI$ and $k\ge 0$, we have $\bx\in\overline{U(\Gamma(a,k,n-1))}$.

By Lemma~\ref{lem:eventually-onto} there is some~$N$ (which we take to be at
least~3) with $f^N([a,\re])\supset f^N([a,c])=[a,b]$. For each $j$ with $jn\ge
N$, we can therefore choose $z^{(j)}\in[a,\re]\setminus\{\tau(q_{n-1})\}$ with
$f^N(z^{(j)}) = x_{jn-N}$. (If $x_{jn-N}=f^N(\tau(q_{n-1}))$ then we also have
$x_{jn-N} = f^N(\tau(\hq_{n-1}))$, and either
$\tau(\hq_{n-1})\not=\tau(q_{n-1})$ or $x_{jn-N}=f^N(c)$. In the latter case we
have $x_{jn-N}=f^{N-2}(a)=f^{N-2}(\re)$, and since~$\re$ has two $f$-preimages
there is some $z^{(j)}\not=c$ with $f^N(z^{(j)})=x_{jn-N}$.)

For each such~$j$, let
\[\by^{(j)} = \bt(z_u^{(j)}, j-1,n-1) =\thr{(q_{n-1},\ldots,q_0)^j, z_u^{(j)},
B^{-1}(z_u^{(j)}), \ldots},
\] which is landing of level~$jn$. By~\eqref{eq:rat-thread-land} we have
$\omega(\by^{(j)})_{jn-N}=f^N(z^{(j)}) = x_{jn-N}$, so that
$\omega(\by^{(j)})\to\bx$ as $j\to\infty$. Since $(\by^{(j)},
\infty)\in \overline{U(\Gamma'(a,k,n-1)}$ for all $j>k$, we have
$\omega(\by^{(j)})\in\overline{U(\Gamma(a,k,n-1))}$ for all $j>k$, and hence
$\bx\in\overline{U(\Gamma(a,k,n-1))}$ as required.
\end{proof}

\begin{lem}
\label{lem:principal-rat-int-non-renorm} Let $f$ be of rational interior type,
with $q(\kappa(f))=q=m/n$. If $\kappa(f)\succ w_q0\Inf{w_q1}$ then
$\Pi(\cP(\bq_{n-1})) = \hI$.
\end{lem}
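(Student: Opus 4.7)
The plan is to fix an arbitrary $\bx \in \hI$ and construct a chain of crosscuts representing $\cP(\bq_{n-1})$ whose successive terms actually converge to $\bx$ in the metric of $\hT$, witnessing $\bx \in \Pi(\cP(\bq_{n-1}))$. The natural candidates are the chains $(\Gamma(y^{(k)}, k, n-1))_{k \ge 0}$ from Definition~\ref{defn:crosscuts-gamma}, with $y^{(k)} \in (\min(q_{n-1}, \hq_{n-1}), a]$ chosen in terms of $\bx$: Lemma~\ref{lem:good-chain-NML-rat-interior}(a) combined with Theorem~\ref{thm:corr-homeo}(b) guarantees that any such chain represents $\cP(\bq_{n-1})$, so the task reduces to arranging convergence of the crosscuts to $\bx$.

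The key mechanism is Lemma~\ref{lem:gamma-stable}, which pins $\bw_{(k+1)n - 1} = f(\tau(y^{(k)}))$ for every $\bw \in \Gamma(y^{(k)}, k, n-1)$. Since this coordinate lies in $I$ and $H|_I = f$, the relation $H(\bw_{r+1}) = \bw_r$ propagates backwards to yield $\bw_r = f^{(k+1)n - r}(\tau(y^{(k)}))$ for all $0 \le r \le (k+1)n - 1$. The entire initial block of coordinates of every point in the crosscut is therefore determined by the single choice of $\tau(y^{(k)}) \in [a, \theta)$, where $\theta = \min(f^n(a), \widehat{f^n(a)})$; the strategy is to pick this value so that the resulting forward orbit matches the backwards orbit of $\bx$ for as many terms as possible.

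The hypothesis $\kappa(f) \succ w_q 0 \Inf{w_q 1}$ enters essentially through Lemma~\ref{lem:images-of-tight}(b), which supplies an integer $M$ with $f^M([a, \theta)) = I$, and hence $f^\ell([a, \theta)) = I$ for every $\ell \ge M$. Picking $R_k \to \infty$ with $R_k \le (k+1)n - M$ (for instance $R_k = k$ once $k$ is large), this surjectivity yields $z^{(k)} \in [a, \theta)$ with $f^{(k+1)n - R_k}(z^{(k)}) = x_{R_k}$; since $\bx \in \hI$, this single equality automatically propagates to $f^{(k+1)n - r}(z^{(k)}) = x_r$ for every $0 \le r \le R_k$. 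Taking $y^{(k)}$ to be the unique point of $(\min(q_{n-1}, \hq_{n-1}), a]$ with $\tau(y^{(k)}) = z^{(k)}$, every $\bw \in \Gamma(y^{(k)}, k, n-1)$ then agrees with $\bx$ in coordinates $0$ through $R_k$, so $d(\bw, \bx) \le \diam(I)/2^{R_k} \to 0$. The only substantive step beyond the machinery already developed is this surjectivity, which is exactly where the hypothesis on $\kappa(f)$ is needed; without it, $f^\ell([a, \theta))$ may remain a proper $f$-invariant subset of $I$ and yield a strictly smaller principal set.
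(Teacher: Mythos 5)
Your proposal is correct and is essentially the paper's own argument: the same chain $(\Gamma(y^{(k)},k,n-1))$ justified by Lemma~\ref{lem:good-chain-NML-rat-interior}(a) and Theorem~\ref{thm:corr-homeo}(b), the same use of Lemma~\ref{lem:gamma-stable} to pin a late coordinate and propagate it backwards via $H|_I=f$, and the same appeal to Lemma~\ref{lem:images-of-tight}(b) to find $z^{(k)}\in[a,\theta)$ hitting a prescribed coordinate of $\bx$. The only (immaterial) differences are bookkeeping --- the paper fixes $N$ and matches the single coordinate $kn-N$ rather than a growing block of length $R_k$ --- and the constant in your final diameter bound should be $\diam(T)$ rather than $\diam(I)$, since the crosscut points lie in $\hT$, not $\hI$.
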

\begin{proof} Let $\bx\in\hI$. We show that $\bx\in\Pi(\cP(\bq_{n-1}))$ by
exhibiting a chain of crosscuts defining $\cP(\bq_{n-1})$ which converges
to~$\bx$.

By Lemma~\ref{lem:images-of-tight}~(b), there is some~$N\in\N$ with
$f^N([a,\theta))=[a,b]$, where $\theta=\min(f^n(a),
  \widehat{f^n(a)})$. For each~$k$ with $kn\ge N$, pick $z^{(k)}\in[a,\theta)$
  with $f^N(z^{(k)}) = x_{kn-N}$. Since $z^{(k)}\in[a,\theta)$ we have
  $z^{(k)}_u\in(\min(q_{n-1},
      \hq_{n-1}), a]$ for each~$k$. Therefore, by
    Lemma~\ref{lem:good-chain-NML-rat-interior}~(a), $\cP(\bq_{n-1})$ is
    defined by the chain $(\Gamma(z_u^{(k)}, k-1, n-1))_{k\ge N/n}$.

By Lemma~\ref{lem:gamma-stable}, every $\bv\in\Gamma(z^{(k)}_u, k-1, n-1)$ has
$v_{kn-1}=f(z^{(k)})$, and hence $v_{kn-N}=x_{kn-N}$. Therefore
$\Gamma(z^{(k)}_u, k-1,n-1)\to\bx$ as $k\to\infty$ as required.
\end{proof}

\begin{lem}
\label{lem:principal-rat-int-renorm} Let $f$ be of rational interior type, with
$q(\kappa(f))=q=m/n$. If $\kappa(f) \preceq w_q0\Inf{w_q1}$ then
$\Pi(\cP(\bq_{n-1})) =
\{\bx\in\hI\,:\,x_{\ell n}\in[a,f^n(a)]\text{ for all }\ell\ge0\}$.
\end{lem}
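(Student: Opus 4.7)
The plan is to prove the two set inclusions separately. A preliminary observation: under the hypothesis $\kappa(f)\preceq w_q0\Inf{w_q1}$, the common prefix $c_q$ (of length $n+1$) has even digit sum, and an unimodal-order comparison at position $n+1$ forces $\kappa(f)_{n+1}=0$; hence $f^n(a)\le c$, so $\theta := \min(f^n(a),\widehat{f^n(a)})=f^n(a)$. Lemma~\ref{lem:images-of-tight}~(a) then yields $f^n([a,f^n(a)))=[a,f^n(a)]$, so $[a,f^n(a)]$ is $f^n$-invariant.

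For the inclusion $(\supseteq)$, I imitate the proof of Lemma~\ref{lem:principal-rat-int-non-renorm}, using $f^n$ in place of $f^N$. Given $\bx\in\hI$ satisfying the condition, for each $k\ge 2$ the surjectivity $f^n([a,f^n(a)))=[a,f^n(a)]$ allows me to pick $z^{(k)}\in[a,f^n(a))$ with $f^n(z^{(k)})=x_{(k-1)n}$. Setting $y^{(k)}=z_u^{(k)}\in(\min(q_{n-1},\hq_{n-1}),a]$, Lemma~\ref{lem:good-chain-NML-rat-interior}~(a) ensures that $(\Gamma(y^{(k)},k-1,n-1))_{k\ge 2}$ represents $\cP(\bq_{n-1})$. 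By Lemma~\ref{lem:gamma-stable}, every $\bv$ on this crosscut has $v_{kn-1}=f(z^{(k)})$, and iterating $f$ gives $v_{(k-1)n}=f^n(z^{(k)})=x_{(k-1)n}$ and thence $v_r=x_r$ for all $r\le (k-1)n$. The remaining coordinates contribute at most $O(2^{-(k-1)n})$ to the Hausdorff distance, so the chain converges to $\bx$, whence $\bx\in\Pi(\cP(\bq_{n-1}))$.

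For the inclusion $(\subseteq)$, given $\bx\in\Pi(\cP(\bq_{n-1}))$ I aim to produce a sequence $y^{(k)}\in(\min(q_{n-1},\hq_{n-1}),a]$ for which the $\Gamma$-chain $(\Gamma(y^{(k)},k-1,n-1))_k$ converges to $\bx$ in Hausdorff distance. Once this is in hand, Lemma~\ref{lem:gamma-stable} gives $v_{\ell n}=f^{(k-\ell)n}(\tau(y^{(k)}))$ for every $\bv$ on the crosscut and every $\ell<k$; since $\tau(y^{(k)})\in[a,f^n(a))$ and $[a,f^n(a)]$ is $f^n$-invariant, $v_{\ell n}\in[a,f^n(a)]$. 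Letting $k\to\infty$ and using closedness of $[a,f^n(a)]$ then yields $x_{\ell n}\in[a,f^n(a)]$ for every $\ell\ge 0$.

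The principal obstacle is the reduction in $(\subseteq)$ to a $\Gamma$-chain: an arbitrary chain representing $\cP(\bq_{n-1})$ and Hausdorff-converging to $\bx$ need not itself consist of $\Gamma$-crosscuts. I expect to handle this either by a direct approximation (sandwiching each $\xi_k$ between consecutive $\Gamma$-crosscuts using the cofinality of $\Gamma$-chains in the prime end, and showing the sandwiched $\Gamma$-crosscut approximates $\bx$ via the diameter bound of Lemma~\ref{lem:gamma-stable}) or, more robustly, by a direct coordinate analysis of $\Psi(\bu,s)$ for $(\bu,s)\in U(\Gamma'(a,K-1,n-1))$, using Lemma~\ref{lem:u-key} and the invariance of $[a,f^n(a)]$ to show $\Psi(\bu,s)_{\ell n}\in[a,f^n(a)]$ whenever $\bu$ lies in the portion of the arc $\bJ$ on the $L$-side of $\bq_{n-1}$ (the $R$-side contributing negligibly by the geometry of the crosscuts).
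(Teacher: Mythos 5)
Your $(\supseteq)$ direction is correct and is essentially the paper's own argument (the paper uses the chain $(\Gamma(z_u^{(k)},k,n-1))$ built from preimages $z^{(k)}\in[a,f^n(a))$ of $x_{kn}$ under $f^n$, exactly as you do up to an index shift), and your preliminary observation that $\theta=f^n(a)$ is also correct. The gap is in $(\subseteq)$, and neither of your two proposed repairs closes it. The sandwiching idea fails because a crosscut trapped between two crosscuts of the chain $(\xi_j)$ need not be close to $\bx$: the region between consecutive $\xi_j$ has closure containing essentially all of $\hI$ (the impression is $\hI$ by Lemma~\ref{lem:impression-rat-interior}), and by Lemma~\ref{lem:gamma-stable} the crosscut $\Gamma(a,K,n-1)$ lies within $|b-a|/2^{(K+1)n-1}$ of its endpoint $\omega(\bt(a,K,n-1))$, so the sandwiched chain accumulates only on threads built from $\omega$-limit points of $a$ under $f^n$, not on an arbitrary $\bx\in\Pi(\cP(\bq_{n-1}))$. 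Your second repair is closer: the coordinate computation on the $L$-side is sound (using \eqref{eq:convenient-2}, Lemma~\ref{lem:u-key} and $f^{n-1}([f(a),f^{n+1}(a)])=[a,f^n(a)]$ one checks that $\Psi(\bu,s)_{\ell n}\in[a,f^n(a)]$ for all $\bu\in\{\bq_{n-1}\}\cup\bigcup_k L_{k,n-1}$ and $s\ge \ell n+1$), but the assertion that ``the $R$-side contributes negligibly'' is false as stated: the points $\Psi(\bu,s)$ with $\bu\in R_{k,n-1}$ are precisely what makes $\cI(\cP(\bq_{n-1}))=\hI$, so they approximate every point of $\hI$, including points outside your target set. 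To rescue your line of argument you would instead have to observe that each $\Psi^{-1}(\xi_j)$ must straddle $(\bq_{n-1},\infty)$, hence $\xi_j$ has an endpoint $\omega(\bt(y_j,k_j,n-1))$ with $y_j\in(q_{n-1},a]$ and $k_j\to\infty$; since $\diam(\xi_j)\to 0$ and $d(\xi_j,\bx)\to 0$, these $L$-side endpoints converge to $\bx$, and their $(\ell n)$-th coordinates lie in $[a,f^n(a)]$ by \eqref{eq:rat-thread-land}.

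The paper avoids all of this with a one-line reduction you have missed: by Theorem~\ref{thm:corr-homeo}~(c) the ray $R_{\bq_{n-1}}$ converges to $\cP(\bq_{n-1})$, so Lemma~\ref{lem:landing-principal-set} gives $\Pi(\cP(\bq_{n-1}))\subseteq\Rem(R_{\bq_{n-1}})$, and it then suffices to show $R_{\bq_{n-1}}(s)_{\ell n}\in[a,f^n(a)]$ for $s\ge \ell n+1$ --- which is exactly your $L$-side computation specialised to the single thread $\bu=\bq_{n-1}$. I recommend taking that route.
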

\begin{proof} This is a consequence of Lemma~\ref{lem:images-of-tight}~(a),
which states that $f^n([a,f^n(a))) = [a, f^n(a)]$ whenever $\Inf{w_q0} \prec
  \kappa(f) \preceq w_q0\Inf{w_q1}$.

Write $X=\{\bx\in\hI\,:\,x_{\ell n}\in[a,f^n(a)]\text{ for all }\ell\ge0\}$. To
show that $X\subset\Pi(\cP(\bq_{n-1}))$, we exhibit, for each $\bx\in X$, a
chain of crosscuts defining $\cP(\bq_{n-1})$ which converges to~$\bx$. By
Lemma~\ref{lem:images-of-tight}~(a), $f^n(a)_u=\min(q_{n-1}, \hq_{n-1})$, and
for each~$k\ge 0$ there is some $z^{(k)}\in[a, f^n(a))$ with
$f^n(z^{(k)})=x_{kn}$. By Lemma~\ref{lem:gamma-stable}, every
$\bv\in\Gamma(z_u^{(k)}, k, n-1)$ has $v_{(k+1)n-1}=f(z^{(k)})$, and hence
$v_{kn}=x_{kn}$. Therefore $(\Gamma(z^{(k)}, k, n-1))_{k\ge 0}$ is a chain of
crosscuts defining $\cP(\bq_{n-1})$ which converges to~$\bx$.

To show that $\Pi(\cP(\bq_{n-1}))\subset X$, it is enough, by
Theorem~\ref{thm:corr-homeo}~(c), to show that $\Rem(R_{\bq_{n-1}})\subset X$.
To do this, we fix $\ell\ge 0$ and show that $R_{\bq_{n-1}}(s)_{\ell n}\in[a,
f^n(a)]$ for all $s\ge\ell n+1$.

We therefore fix $s\ge\ell n+1$, and write $P(s) = (t,v)$. Recalling that
$\bq_{n-1} =
\thr{(q_{n-1}, \ldots, q_0)^\infty}$, using~(\ref{eq:convenient-2}), and
abbreviating $R_{\bq_{n-1}}$ to $R$:
\begin{enumerate}[(a)]
\item If $t=rn$ for some $r\ge\ell+1$ then
\[ R(s)_{rn-1} = H(q_{n-1},v) \in [f(a), f(\tau(q_{n-1}))] = [f(a),
f^{n+1}(a)].
\] Since $f^{n-1}([f(a), f^{n+1}(a)])=[a,f^n(a)]$ by
Lemma~\ref{lem:images-of-tight}~(a), we have $R(s)_{(r-1)n} \in[a,f^n(a)]$, and
hence $R(s)_{\ell n}\in[a,f^n(a)]$ as required.

\item If $t=rn+i$ for some $r\ge\ell$ and $1\le i\le n-1$ then
\[ R(s)_{rn+i-1} = R(s)_{t-1} = H(q_{n-1-i}, v) = \tau(B(q_{n-1-i})) =
\tau(q_{n-i}) = f^{n-i+1}(a),
\] since $q_{n-1-i}\not\in\ingam$. Therefore $R(s)_{rn} = f^n(a)$, and hence
$R(s)_{\ell n} \in [a, f^n(a)]$ as required.
\end{enumerate}
\end{proof}

\begin{remark} Since $\cP(\bq_i) = \hH^{i+1}(\cP(\bq_{n-1}))$ for $0\le i<
n-1$, it follows that, whenever we have $\Inf{w_q0} \prec \kappa(f) \preceq
w_q0\Inf{w_q1}$,
\[
\Pi(\cP(\bq_i)) = \{\bx\in\hI\,:\, x_{\ell n + i + 1}\in[a,f^n(a)] \text{ for
all }\ell\ge 0\}.
\] These principal sets are therefore homeomorphic to the inverse limit
$\invlim([a, f^n(a)], f^n)$ of the renormalized map.
\end{remark}

The following theorem provides a summary of what we have proved in the rational
interior case.

\begin{thm}[Prime ends in the rational interior case]
\label{thm:summary-rational-interior} Let~$f$ be a unimodal map satisfying the
conditions of Convention~\ref{conv:unimodal}, and suppose that
$q(\kappa(f))=m/n\in(0,1/2)$ is rational, and that
$\kappa(f)\not\in\{\lhe(m/n), (w_{m/n}0)^\infty, \rhe(m/n)\}$. Then
\begin{enumerate}[(a)]
\item All except~$n$ of the prime ends of~$(\hT,\hI)$ are of the first kind;
\item If $\kappa(f) \preceq w_{m/n}0\Inf{w_{m/n}1}$, then the~$n$ remaining
  prime ends are of the fourth kind, with principal set~$\invlim([a,f^n(a)],
  f^n)$ and impression~$\hI$;
\item If $\kappa(f) \succ w_{m/n}0\Inf{w_{m/n}1}$, then the~$n$ remaining prime
ends are of the third kind, with principal set and impression~$\hI$; and
\item The prime end rotation number is $m/n$.
\qed
\end{enumerate}
\end{thm}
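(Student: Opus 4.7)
The plan is to assemble the results already established in the preceding lemmas of Section~\ref{sec:rational}; the stated hypothesis that $\kappa(f) \notin \{\lhe(m/n), (w_{m/n}0)^\infty, \rhe(m/n)\}$ is precisely what it means for $f$ to be of rational interior type (Definition~\ref{def:type}), so the entire machinery of the section applies. First I would invoke Lemma~\ref{lem:R-dense-rat-interior} (density of $\cR = \hS \setminus \bQ$) together with Lemma~\ref{lem:good-chain-NML-rat-interior}(b) (good chains of crosscuts for every $\by \in \hS$) in order to verify the hypotheses of Theorem~\ref{thm:corr-homeo}. This supplies the homeomorphism $\cP \colon \hS \to \PP$, and then part~(d) is immediate from Corollary~\ref{cor:rot-no}.

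Part~(a) is then a direct consequence of Lemma~\ref{lem:general-summary}(b): every $\by \in \cR$ yields a prime end of the first kind, and since $\hS \setminus \cR = \bQ$ has exactly $n$ elements, the only prime ends that remain to be classified are $\cP(\bq_0), \ldots, \cP(\bq_{n-1})$. For the impressions of these $n$ exceptional prime ends, Lemma~\ref{lem:impression-rat-interior} gives $\cI(\cP(\bq_{n-1})) = \hI$. The identity $\cP(\bq_i) = \hH^{i+1}(\cP(\bq_{n-1}))$, which follows from Lemmas~\ref{lem:crosscut-orbit} and~\ref{lem:good-chain-NML-rat-interior}, together with the fact that $\hH$ preserves $\hI$, extends this to $\cI(\cP(\bq_i)) = \hI$ for every $i$.

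The split between cases~(b) and~(c) matches the dichotomy of Lemmas~\ref{lem:principal-rat-int-non-renorm} and~\ref{lem:principal-rat-int-renorm}. In case~(c), Lemma~\ref{lem:principal-rat-int-non-renorm} gives $\Pi(\cP(\bq_{n-1})) = \hI$, and conjugation by $\hH^{i+1}$ propagates this equality to each $\cP(\bq_i)$; since principal set and impression coincide and are not a single point, these are prime ends of the third kind. In case~(b), Lemma~\ref{lem:principal-rat-int-renorm} identifies $\Pi(\cP(\bq_{n-1}))$ as $\{\bx \in \hI : x_{\ell n} \in [a, f^n(a)] \text{ for all } \ell \ge 0\}$; the subsequence map $\bx \mapsto \langle x_0, x_n, x_{2n}, \ldots \rangle$ provides the required homeomorphism with $\invlim([a, f^n(a)], f^n)$, and this set is strictly contained in $\hI$ (for instance, no thread with $x_0 > f^n(a)$ lies in it), so each $\cP(\bq_i)$ is of the fourth kind with the stated principal set and impression. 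Since all the substantive content has already been handled in the cited lemmas, the proof really amounts to bookkeeping, and I do not expect any genuine obstacle beyond this assembly.
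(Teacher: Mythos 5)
Your proposal is correct and follows exactly the route the paper intends: the theorem is a summary whose content is entirely contained in Lemmas~\ref{lem:R-dense-rat-interior}, \ref{lem:good-chain-NML-rat-interior}, \ref{lem:general-summary}, \ref{lem:impression-rat-interior}, \ref{lem:principal-rat-int-non-renorm}, and~\ref{lem:principal-rat-int-renorm}, together with Theorem~\ref{thm:corr-homeo}, Corollary~\ref{cor:rot-no}, and the relation $\cP(\bq_i)=\hH^{i+1}(\cP(\bq_{n-1}))$. Your assembly, including the identification of the principal set with $\invlim([a,f^n(a)],f^n)$ via the subsequence map and the verification of strict containment, matches the paper's argument.
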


\begin{remark} \label{rmk:accessible-rat-interior}
By Theorem~\ref{thm:corr-homeo}~(d), the set of accessible
points of~$\hI$ is precisely $\{\omega(\by)\,:\,\by\in\hS\setminus\bQ\}$. This
set is partitioned into~$n$ immersed copies of the line.
\end{remark}

\subsection{The rational endpoint case}
\label{sec:rat-endpoint} We finish by considering the rational endpoint case,
where
\mbox{$\syma=\kappa(f)$} has rational height $q=q(\syma)\in(0,1/2)$ and
  $\syma=\lhe(q)$, $\syma=\rhe(q)$, or $\syma=\Inf{w_q0}$; or where $q=0$. The
  following theorem summarizes this case.

\begin{thm}[Prime ends in the rational endpoint case]
\label{thm:summary-rational-endpoint} Let~$f$ be a unimodal map satisfying the
conditions of Convention~\ref{conv:unimodal}, and suppose that
$q(\kappa(f))=m/n\in[0,1/2)$ is rational, and that $\kappa(f)\in\{\lhe(m/n),
(w_{m/n}0)^\infty, \rhe(m/n)\}$. Then
\begin{enumerate}[(a)]
\item All except~$n$ of the prime ends of~$(\hT,\hI)$ are of the first kind;
\item The~$n$ remaining prime ends are of the second kind, with
  impression~$\hI$; and
\item The prime end rotation number is $m/n$.
\end{enumerate}
\end{thm}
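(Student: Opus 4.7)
The proof follows the template of the rational interior case (Theorem~\ref{thm:summary-rational-interior}), with modifications reflecting the endpoint structure. By Theorem~\ref{thm:outside-dynamics} I distinguish the \emph{non-early} endpoint subcases (strict left endpoint, late left endpoint, and right endpoint), in which $N(f)=n$ and the orbit of $B(a)$ is a period-$n$ orbit of $B$ meeting $\gamma$ at the single point $q_{n-1}\in\{a,\re_u\}$, from the \emph{early} left endpoint subcase, in which $N(f)=\infty$ and $B$ has a period-$n$ orbit $Q$ disjoint from $\gamma$ that attracts the orbit of $B(a)$. In either case $\hB$ has a period-$n$ orbit $\bQ=\{\bq_0,\ldots,\bq_{n-1}\}$, and in the quadratic-like strict left endpoint subcase a second period-$n$ orbit $\bP$ as in Section~\ref{sec:rational}.

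The crucial new observation --- in contrast with Lemma~\ref{lem:R-dense-rat-interior} for the rational interior case, where $\bq_i\not\in\cL$ --- is that here every entry of each thread $\bq_i$ lies outside $\ingam$: in the non-early subcases the only entry in $\gamma$ is $q_{n-1}\in\{a,\re_u\}$, a boundary point of $\gamma$ and hence not in $\ingam$, while in the early subcase $Q$ is disjoint from $\gamma$ altogether. Thus $\bq_i\in\cL_0$ in every subcase, and Corollary~\ref{cor:ray-lands} shows directly that the ray $R_{\bq_i}$ lands at the periodic point $\omega(\bq_i)=\thr{(\tau(q_i),\tau(q_{i-1}),\ldots,\tau(q_{i+1}))^\infty}\in\hI$. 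Nevertheless $\bq_i\not\in\cR$, because any neighborhood of $\bq_i$ in $\hS$ contains threads with arbitrarily late entries in $\ingam$ (in the non-early subcases, threads of the form $\bt(y,k,i)$ with $y\in\ingam$ and $k$ large, as in Section~\ref{sec:rational}; in the early subcase, threads tracking preimages of points on the $B(a)$-orbit, which accumulate on $\bQ$). An argument analogous to Lemma~\ref{lem:R-dense-rat-interior} then gives $\cR=\hS\setminus\bQ$ (with $\bP$, when present, still in $\cR$), which is dense in $\hS$.

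Next I would construct good chains of crosscuts for each $\bq_i$ by adapting Definition~\ref{defn:crosscuts-gamma} and Lemma~\ref{lem:good-chain-NML-rat-interior}. In the non-early subcases the crosscuts $\Gamma(y,k,i)$ are built just as before, with the range of admissible~$y$ modified to account for the boundary position of $q_{n-1}$; Lemmas~\ref{lem:crosscut-orbit} and~\ref{lem:gamma-stable} continue to force $\diam\Gamma(y^{(k)},k,i)\to 0$ as $k\to\infty$. Theorem~\ref{thm:corr-homeo} then yields the homeomorphism $\cP\colon\hS\to\PP$; Lemma~\ref{lem:general-summary}(a) gives $\Pi(\cP(\bq_i))=\{\omega(\bq_i)\}$; and Corollary~\ref{cor:rot-no} gives the rotation number assertion~(c). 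The identification $\cI(\cP(\bq_i))=\hI$ --- which forces the $n$ prime ends $\cP(\bq_i)$ to be of the second kind rather than the first, completing~(a) and~(b) --- is established exactly as in Lemma~\ref{lem:impression-rat-interior}, by using Lemma~\ref{lem:eventually-onto} to produce, for any prescribed $\bx\in\hI$, landing threads $\by^{(j)}\in\hS\setminus\bQ$ with $(\by^{(j)},\infty)\in\overline{U(\Gamma'(y,k,i))}$ eventually and $\omega(\by^{(j)})\to\bx$.

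The main obstacle I anticipate is bookkeeping the combinatorial distinctions among the four endpoint subcases, and in particular the early left endpoint subcase: there, because the orbit of $B(a)$ only accumulates on, but never meets, $\bQ$, the local structure of $\hS$ near $\bQ$ differs from the non-early cases. Both the construction of good chains of crosscuts and the exhaustion argument for the impression require a separate analysis, in which I would exploit the attraction of $B(a)$ to $Q$ together with the fact that backward orbits of points in $\gamma$ accumulate densely in the basin of $Q$, to show that arbitrarily small crosscuts about $\bq_i$ contain threads whose landing points approximate any prescribed point of $\hI$.
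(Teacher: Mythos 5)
Your overall architecture matches the paper's: split into the early left endpoint subcase (treated like the irrational case, with gaps $G_r$ accumulating on $\bQ$) and the remaining subcases, establish $\cR=\hS\setminus\bQ$ dense, build good chains, get $\Pi(\cP(\bq_i))$ a point from landing and $\cI(\cP(\bq_i))=\hI$ from an exhaustion argument. For the normal endpoint, quadratic-like strict left endpoint, and early endpoint subcases your argument is essentially the paper's.

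There is, however, a genuine error in your treatment of the \emph{late} left endpoint subcase, $\kappa(f)=\Inf{w_{m/n}0}$, which is included in the hypothesis of the theorem. You assert that in all non-early subcases the only entry of $\bq_i$ lying in $\gamma$ is $q_{n-1}\in\{a,\re_u\}$, hence $\bq_i\in\cL_0$. But by Theorem~\ref{thm:outside-dynamics}~(b)(ii), $B^n(a)\in\{a,\re_u\}$ holds precisely when $\kappa(f)\in\{\lhe(m/n),\rhe(m/n)\}$; for $\kappa(f)=\Inf{w_{m/n}0}$ one has $f^n(a)\neq a$ and $q_{n-1}=B^n(a)\in\ingam$. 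Consequently $\bq_i$ contains entries in $\ingam$ infinitely often and is not landing of any level, so Corollary~\ref{cor:ray-lands} does not apply, and your route to $\Pi(\cP(\bq_i))=\{\omega(\bq_i)\}$ via Lemma~\ref{lem:general-summary}~(a) is blocked: without $\bq_i\in\cL$ you cannot rule out that these prime ends are of the third or fourth kind, which is exactly what happens in the rational interior case. The paper closes this gap with a separate dynamical argument (Lemma~\ref{lem:late-endpoint-landing}): since $w_{m/n}0$ contains an odd number of $1$s, $f^n$ is decreasing on $[a,f^n(a)]$ and, using Convention~\ref{conv:unimodal}~(c) to exclude period-two points of $f^n|_{[a,f^n(a)]}$, has a unique fixed point $p$ attracting all of $[a,f^n(a)]$ under $f^{2n}$; a direct computation with~(\ref{eq:convenient-2}) then shows $R_{\bq_{n-1}}$ lands at $\thr{(p,f^{n-1}(p),\ldots,f(p))^\infty}$. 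This is the one case in the paper where $\cL\neq\bigcup_N\cL_N$, and some such argument is indispensable. A secondary consequence of the same misidentification: in the late endpoint case the good chains are the unmodified crosscuts $\Gamma'(y,k,i)$ of the interior case (no ``boundary position of $q_{n-1}$'' adjustment), whereas the degenerate construction of Lemma~\ref{lem:good-crosscut-rat-endpoint} is what is needed in the normal endpoint case where $L_{k,i}$ collapses.
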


The arguments in the four cases where~$f$ is of early endpoint, normal
endpoint, quadratic-like strict left endpoint, or late endpoint type are
different, and we consider each of them briefly in turn, pointing out how they
differ from similar arguments in the rational interior and irrational cases,
and leaving the reader to fill in some details.

\subsubsection{The normal endpoint case} In this case either $\syma=\lhe(m/n)$
and $B^n(a)=a$, or $\syma=\rhe(m/n)$ and $B^n(a)=\re_u$; and the orbit
of~$B(a)$ is the only periodic orbit of~$B$. We will consider the case where
$\syma=\lhe(m/n)$: the other case can be treated in exactly the same way. Minor
modifications are needed in the particular case $m/n=0$ (i.e.\ when
$\syma=1\Infs{0}$): we will assume that $m/n>0$.

The analysis starts in the same way as the rational interior case. We write
$q_i=B^{i+1}(a)$ for $0\le i\le n-1$, so that $Q=\{q_0,q_1,\ldots,q_{n-1}\}$ is
a period~$n$ orbit of~$B$, with $q_{n-1}=a\in\gamma$.  Threads $\bq_i$ and
$\bt(y,k,i)$ in~$\hS$, and the periodic orbit~$\bQ$ of $\hB$, are introduced
exactly as in Definitions~\ref{notn:threads-rat-interior}. 

Intervals $R_{k,i}$ can then be constructed as in
Definitions~\ref{def:intervalsL,R,I}. However, since $q_{n-1}=a$, the
intervals~$L_{k,i}$ of the rational interior case are empty. This means that
the intervals~$R_{k, i}$ converge to $\bq_i$ as $k\to\infty$, and to $\bq_j$ as
$k\to-\infty$, where $j=i - m^{-1}\bmod{n}$
(Figure~\ref{fig:circle-order-endpoint}).

\begin{figure}[htbp]
\begin{center}
\includegraphics[width=0.85\textwidth]{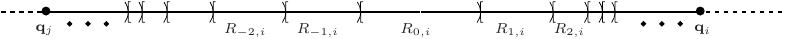}
\end{center}
\caption{The intervals~$R_{k, i}$ in the normal endpoint case when $B^n(a)=a$.}
\label{fig:circle-order-endpoint}
\end{figure}

Since the threads~$\bq_i$ do not contain any entries from~$\ingam$, the points
of~$\bQ$ are landing of level~0, and hence $\cL=\hS$. On the other hand,
$\cR=\hS\setminus\bQ$, since the interior points of~$R_{k,i}$ are not landing
of any level less than $kn+i+1$.

The construction of good chains of crosscuts for each~$\bq_i$ is reminiscent of
the irrational gap endpoint case.

\begin{lem}
\label{lem:good-crosscut-rat-endpoint} Let $0\le i\le n-1$.
Write~$V=\bigcup_{k<0}R_{k, i+m^{-1}\bmod{n}}$, and let $\left(\by^{(k)}\right)$
be any sequence in $V$ which converges strictly monotonically to~$\bq_i$. For
each~$k\ge 1$, let $\bJ_k$ be the interval in~$\hS$ with endpoints $\by^{(k)}$
and $\bt(\re_u, k, i)$ which contains $\bq_i$. Let
\[\xi'_k = \xi'(\bJ_k, \,nk+i).\] Then $(\xi'_k)$ is a good chain of crosscuts
for~$\bq_i$.
\end{lem}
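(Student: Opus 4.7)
The plan is to verify the four conditions of Definition~\ref{def:good-chain} along with the basic chain properties. Conditions~(a), (b), and~(d), together with the nesting and disjointness of the crosscuts, are essentially immediate from the construction. For~(a), the endpoint~$\by^{(k)}$ lies in the interior of some $R_{k'_k, j}$ with $k'_k < 0$, which is uniformly landing of level~$0$, so $\by^{(k)} \in \cR$; meanwhile $\bt(\re_u, k, i)$ is the common endpoint of two adjacent uniformly landing intervals ($R_{k,i}$ and its neighbor in the spiral ordering), hence in~$\cR$ as well. For~(b), $\bq_i$ lies in the interior of~$\bJ_k$ by construction. Condition~(d) is vacuous: every entry of~$\bq_i$ lies in $\{q_0, \ldots, q_{n-1}\}$, and in the normal endpoint case $q_{n-1} = a$ sits on the boundary of~$\gamma$ but not in the open interval~$\ingam$, so $\bq_i \in \cL_0 \subset \cL$. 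Nesting $\xi'_{k+1} < \xi'_k$ and mutual disjointness follow from the strictly monotone convergence of $\by^{(k)}$ from the $V$-side, together with the convergence of $\bt(\re_u, k, i)$ to $\bq_i$ from the $R$-side (per the spiral structure described just before the lemma), combined with the strictly increasing heights $nk+i$.

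The main content is condition~(c), that $\diam(\xi_k) \to 0$, and I plan to prove it by a single uniform argument based on the landing function rather than splitting $\bJ_k$ at~$\bq_i$. The crucial structural fact is that every thread in~$\bJ_k$ lies in the landing-level-$0$ set~$\cL_0$: for $\bq_i$ this was noted above; for $\bv \in R_{k', i}$ with $k' > k$ the entries $\bv_r$ for $r \le k'n+i$ lie on the periodic orbit (none in~$\ingam$), while for $r > k'n+i$ they are backward images under $B^{-1}$ of a point in $[\re_u, a)$ and hence lie in $S \setminus \gamma$; an analogous argument works for $\bv \in R_{k'', j}$ with $k'' < 0$, the only entry that can fail to lie in $S \setminus \gamma$ being the~$0$th one, which is irrelevant for membership in~$\cL_0$. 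Given this, Corollary~\ref{cor:ray-lands} yields the explicit formula $\omega(\by) = \thr{\tau(y_0), \tau(y_1), \tau(y_2), \ldots}$ throughout~$\cL_0$, from which continuity of $\omega\colon\cL_0 \to \hI$ follows by dominated convergence in the product metric. Because $\by^{(k)}$ and $\bt(\re_u, k, i)$ converge to $\bq_i$ from opposite sides of the circle order, the arc~$\bJ_k$ shrinks to $\{\bq_i\}$ in Hausdorff distance on~$\hS$, and hence $\omega(\bJ_k) \to \{\omega(\bq_i)\}$ in~$\hI$.

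To finish, I will apply Lemma~\ref{lem:formula-disjoint-gamma} with $N = 0$: for every $\bv \in \bJ_k$, the thread $\Psi(\bv, nk+i)$ agrees with $\omega(\bv)$ in coordinates~$0$ through $nk+i-1$, while the remaining coordinates lie in the region $\{s \le 1/2\} \subset T$ and together contribute at most $C/2^{nk+i}$ to the $\hT$-metric for some uniform constant~$C$. The same estimate applies pointwise along the two vertical parts of~$\xi'_k$, whose images are uniformly close to $\omega(\by^{(k)})$ and $\omega(\bt(\re_u, k, i))$ respectively, and both landing points converge to $\omega(\bq_i)$ by continuity of~$\omega$ on~$\cL_0$. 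Combining these estimates, $\xi_k = \Psi(\xi'_k)$ is contained in an arbitrarily small neighborhood of $\{\omega(\bq_i)\}$ for large~$k$, so $\diam(\xi_k) \to 0$. The main obstacle I anticipate is establishing the case-by-case inclusion $\bJ_k \subset \cL_0$ cleanly, as it relies on the fine structure of~$V$ and of the intervals~$R_{k', i}$ and~$R_{k'', j}$ in the endpoint case; once this inclusion and the Hausdorff-shrinking of~$\bJ_k$ are in hand, the uniform tail bound from Lemma~\ref{lem:formula-disjoint-gamma} together with continuity of~$\omega$ deliver condition~(c) essentially for free.
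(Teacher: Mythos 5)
Your treatment of conditions (a), (b), and (d) is fine, but the ``single uniform argument'' you propose for condition~(c) contains a fatal error. The inclusion $\bJ_k\subset\cL_0$ is false on the half of $\bJ_k$ lying between $\bq_i$ and $\bt(\re_u,k,i)$, which is $\{\bq_i\}\cup\bigcup_{k'\ge k}R_{k',i}$. A thread $\bv=\bt(y,k',i)\in R_{k',i}$ has $v_{k'n+i+1}=y$ with $y\in[\re_u,q_{n-1})=[\re_u,a)\subset\gamma$; for $y\ne\re_u$ this entry lies in $\ingam$, so $\bv$ is landing of level $k'n+i+1$ but \emph{not} of level~$0$. (Your assertion that the entries at positions $r>k'n+i$ are ``backward images under $B^{-1}$ of a point in $[\re_u,a)$ and hence lie in $S\setminus\gamma$'' fails at $r=k'n+i+1$, where the entry is the point of $[\re_u,a)$ itself.) Consequently Lemma~\ref{lem:formula-disjoint-gamma} with $N=0$ does not apply to these threads at height $s=nk+i$, since $s<k'n+i+2$; and the landing function $\omega$ is \emph{not} continuous at $\bq_i$ through the interiors of the $R_{k',i}$: the coordinate $\omega(\bv)_0=f^{k'n+i+1}(\tau(y))$ with $\tau(y)$ ranging over $(a,\re]$ sweeps out large subsets of $I$ for every $k'$. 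If $\omega(\bJ_k)$ really converged to $\{\omega(\bq_i)\}$ your argument would force $\cI(\cP(\bq_i))$ to be a single point, contradicting Lemma~\ref{lem:strict-impression}, which gives $\cI(\cP(\bq_i))=\hI$.

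The repair requires treating the two sides of $\bJ_k$ by genuinely different arguments, which is why the crosscut height $nk+i$ is chosen as it is. On the $\overline{V}$-side one uses that $\overline{V}$ \emph{is} uniformly landing of level~$0$, so $\Psi$ is continuous on $\overline{V}\times[0,\infty]$ by Lemma~\ref{lem:Psi-extension-continuous} and the diameter of $\xi_k\cap\Psi(\overline{V}\times[0,\infty])$ tends to~$0$. On the other side one does not compare $\Psi(\bv,nk+i)$ with $\omega(\bv)$ at all: instead one computes directly from~(\ref{eq:convenient-2}) that every $\by\in\bJ_k\setminus V$ has $y_{nk+i}=q_0$, hence $\Psi(\by,nk+i)_{nk+i-1}=H(q_0,1/2)=\tau(q_1)$, a value \emph{constant} over that whole arc (and likewise along the vertical over $\bt(\re_u,k,i)$, which genuinely is landing of level~$0$). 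Agreement of all these threads in coordinate $nk+i-1$, rather than proximity to their (wildly varying) landing points, is what forces the diameter to~$0$.
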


\begin{proof} Conditions (a) and (b) of Definition~\ref{def:good-chain} are
immediate, and condition~(d) is vacuous. It is therefore only necessary to show
that $\diam(\xi_k)\to 0$ as $k\to\infty$, where $\xi_k=\Psi(\xi'_k)$. 

We have $\diam(\xi_k \cap \Psi\left(\overline{V}
\times[0,\infty]\right))\to 0$, since $\Psi$ is continuous on $\overline{V}
\times[0,\infty]$ by Lemma~\ref{lem:Psi-extension-continuous}. To show that the
diameters of the remaining parts of $\xi_k$ go to zero, we will show that
every~$\bx$ belonging to the arc $\Psi(\{\bt(\re_u,k,i)\} \times [nk+i,
  \infty])$ or to the arc $\Psi((\bJ_k\setminus V) \times\{nk+i\})$ has
$x_{nk+i-1}=\tau(q_1)$.

For the former, we have $\Psi(\bt(\re_u,k,i),s)_{nk+i-1} =
\tau(\bt(\re_u,k,i)_{nk+i-1}) = \tau(q_1)$ for all~$s\ge nk+i$ by
Lemma~\ref{lem:formula-disjoint-gamma}, since $\bt(\re_u,k,i)$ is landing of
level~0 (and hence of level~$nk+i-1$).

For the latter, observe that if $\by\in \bJ_k\setminus V$ then $y =
\thr{q_i,\ldots,q_0,(q_{n-1},\ldots,q_0)^k,\ldots}$ and so $y_{nk+i}=q_0$.
Therefore, by~(\ref{eq:convenient-2}), $\Psi(\by, nk+i)_{nk+i-1} = H(q_0, 1/2)
= \tau(q_1)$ as required.
\end{proof}

It follows from Theorem~\ref{thm:corr-homeo} and
Lemma~\ref{lem:general-summary} that $\cP\colon \hS\to\PP$ is a homeomorphism;
that the prime end $\cP(\by)$ is of the first kind for all $\by\not\in\bQ$; and
that $\Pi(\cP(\by))=\{\omega(\by)\}$ is a point for $\by\in\bQ$. It therefore
only remains to calculate the impressions of the prime ends $\cP(\bq_i)$. The
proof of the following result works in exactly the same way as that of
Lemma~\ref{lem:impression-rat-interior}, using the chain of crosscuts from
Lemma~\ref{lem:good-crosscut-rat-endpoint} in place of the chain $(\Gamma(a, k,
n-1))_{k\ge 0}$.

\begin{lem}
\label{lem:strict-impression} 
Let~$f$ be of normal endpoint type, with $q(\kappa(f))=m/n$. Then
$\cI(\cP(\bq_{n-1}))=\hI$. \qed
\end{lem}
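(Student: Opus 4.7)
The plan is to imitate the proof of Lemma~\ref{lem:impression-rat-interior} verbatim, replacing the chain $(\Gamma(a,k,n-1))_{k\ge 0}$ by the chain $(\xi_k)$ furnished by Lemma~\ref{lem:good-crosscut-rat-endpoint} with $i=n-1$. By Theorem~\ref{thm:corr-homeo}(b) this chain represents $\cP(\bq_{n-1})$, so the task reduces to showing that for every $\bx\in\hI$ and every $k\ge 1$ one has $\bx\in\overline{U(\xi_k)}$, where $\xi_k=\Psi(\xi_k')$.

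To produce the required approximating points I would first fix, via Lemma~\ref{lem:eventually-onto}, some $N\ge 3$ with $f^N([a,c])=[a,b]$. In the normal endpoint case $q_{n-1}\in\{a,\re_u\}$ is an endpoint of $\gamma$, so $\ingam\subset\gamma\setminus\{q_{n-1}\}$ and the map $y\mapsto\bt(y,j-1,n-1)$ parameterizes the interval $R_{j-1,n-1}$ in $\hS$. For each $j$ with $jn\ge N$ I would choose $z^{(j)}\in[a,\re]$ with $f^N(z^{(j)})=x_{jn-N}$, and arrange (using that $\re$ has two $f$-preimages in $[a,c]$, exactly as in Lemma~\ref{lem:impression-rat-interior}) that $z^{(j)}_u\ne q_{n-1}$. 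Setting
\[
\by^{(j)}:=\bt(z^{(j)}_u,\,j-1,\,n-1)\in R_{j-1,n-1},
\]
this thread is landing of level $jn$, and~\eqref{eq:rat-thread-land} gives $\omega(\by^{(j)})_{jn-N}=f^N(z^{(j)})=x_{jn-N}$; hence $\omega(\by^{(j)})\to\bx$ in $\hI$ as $j\to\infty$.

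It remains to show that $(\by^{(j)},\infty)\in\overline{U(\xi_k')}$ for all sufficiently large $j$, since this will yield $\omega(\by^{(j)})\in\overline{U(\xi_k)}$ and hence $\bx\in\overline{U(\xi_k)}$ by passage to the limit. By the endpoint analogue of Lemma~\ref{lem:ordering-intervals} (illustrated in Figure~\ref{fig:circle-order-endpoint}) the intervals $R_{l,n-1}$ accumulate on $\bq_{n-1}$ as $l\to\infty$; on the other hand the interval $\bJ_k$ from Lemma~\ref{lem:good-crosscut-rat-endpoint} is a one-sided neighborhood of $\bq_{n-1}$ bounded by $\bt(\re_u,k,n-1)$, so $\by^{(j)}\in\bJ_k$ once $j$ is large enough. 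Beyond this bookkeeping, which is the only place where the endpoint case genuinely differs from the interior case, I do not anticipate any real obstacle: the argument is a direct translation of the proof of Lemma~\ref{lem:impression-rat-interior}.
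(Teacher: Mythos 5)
Your proposal is correct and is exactly the route the paper takes: the paper's proof of Lemma~\ref{lem:strict-impression} is precisely the argument of Lemma~\ref{lem:impression-rat-interior} run with the chain from Lemma~\ref{lem:good-crosscut-rat-endpoint} in place of $(\Gamma(a,k,n-1))_{k\ge 0}$, and your choice of $\by^{(j)}=\bt(z^{(j)}_u,j-1,n-1)\in R_{j-1,n-1}\subset\bJ_k$ supplies the needed approximating accessible points. (Only a cosmetic quibble: $\bJ_k$ is actually a two-sided neighborhood of $\bq_{n-1}$, but all that matters is that the side ending at $\bt(\re_u,k,n-1)$ contains every $R_{l,n-1}$ with $l\ge k$, which you use correctly.)
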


\subsubsection{The quadratic-like strict left endpoint case}
\label{sec:quadratic-like-endpoint}
In this case $\mu=\lhe(m/n)$ and $B^n(a)=a$, but~$B$ has a second period~$n$
orbit~$P$ in addition to the orbit of~$B(a)$. As in the rational interior case,
we write $q_i= B^{i+1}(a)$ for $0\le i\le n-1$, $p_0$ for the point of~$P$
between $q_0$ and $q_{m^{-1}\bmod{n}}$, and $p_i=B^i(p_0)$ for $1\le i\le n-1$.
Threads $\bq_i$, $\bp_i$, and $\bt(y,k,i)$ in $\hS$, and the periodic orbits
$\bQ$ and $\bP$ of~$\hB$ are introduced exactly as in
Definitions~\ref{notn:threads-rat-interior}. However, since the $B$-orbits of
points in each interval $(q_i, p_i]$ are disjoint from~$\gamma$, there are
 threads
\[
  \bt(y) = \thr{y, B^{-1}(y), B^{-2}(y),\ldots} \qquad\text{ for }y\in
  \bigcup_{i=0}^{n-1}(q_i, p_i]
\]
in $\hS$ (with $\bt(p_i)=\bp_i$). We write $I_i = \{\bt(y)\,:\,y\in(q_i,
p_i]\}$ for $0\le i\le n-1$, half-open intervals in~$\hS$ with
$\hB(I_i)=I_{i+1\bmod{n}}$. Defining half-open intervals~$R_{k,i}$ as in
Definitions~\ref{def:intervalsL,R,I}, the intervals are arranged around~$\hS$
as depicted in Figure~\ref{fig:quadratic-order-endpoint} (where $j=i -
m^{-1}\bmod{n}$).

\begin{figure}[htbp]
\begin{center}
\includegraphics[width=0.85\textwidth]{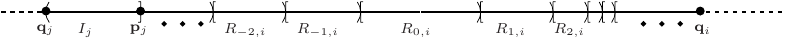}
\end{center}
\caption{Intervals around~$\hS$ in the quadratic-like left strict endpoint
case.}
\label{fig:quadratic-order-endpoint}
\end{figure}

The remainder of the analysis proceeds exactly as in the normal endpoint case,
except that in the statement of Lemma~\ref{lem:good-crosscut-rat-endpoint} we
take $V=I_i$ rather than $V=\bigcup_{k<0}R_{k, i+m^{-1}\bmod{n}}$.

\subsubsection{The late endpoint case}
\label{sec:late-endpoint} 
Here $q=m/n>0$ and $\syma=\Inf{w_q0}$. In this case $q_{n-1}=B^n(a)\in\ingam$
by Theorem~\ref{thm:outside-dynamics}~(b)(ii), and the treatment is identical
to that of the rational interior case up until
Lemma~\ref{lem:R-dense-rat-interior}. Here, because
Lemma~\ref{lem:images-of-tight} doesn't apply when $\kappa(f)=\Inf{w_q0}$, the
proof that $\cL=\hS\setminus\bQ$ breaks down. Instead we have:

\begin{lem}
\label{lem:late-endpoint-landing} Let~$f$ be of late endpoint type. Then
$\cL=\hS$.
\end{lem}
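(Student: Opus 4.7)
The plan is to prove $\cL = \hS$ by showing that, in contrast to the rational interior case, every point of~$\bQ$ lies in~$\cL$. Since points of $\hS \setminus \bQ$ are landing of some level by the same argument given in the proof of Lemma~\ref{lem:R-dense-rat-interior}, and since $\bQ = \{\bq_0, \ldots, \bq_{n-1}\}$ is a single $\hH$-orbit, it suffices to show that $R_{\bq_{n-1}}$ lands.

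By Theorem~\ref{thm:outside-dynamics}(b)(iii) (the ``otherwise'' clause, which covers the late endpoint case), $B$ has a second period-$n$ orbit, disjoint from $\bigcup_{r \ge 0} B^{-r}(\gamma)$, which projects under~$\tau$ to a period-$n$ orbit $\{p_0, p_1, \ldots, p_{n-1}\}$ of~$f$ with $f(p_i) = p_{i+1 \bmod n}$. Because $f^n(a) \ne a$ in the late endpoint case but the critical orbit is asymptotic to this periodic orbit, the $f$-dynamics near the orbit is contracting, and the iterates $f^{kn+j}(a)$ converge to a specific orbit point~$p_j$ for each fixed~$j$, with indexing chosen so that $p_0 = \lim_k f^{kn}(a)$. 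Let $\bp \in \hI$ be the periodic thread defined by $\bp_r = p_{(-r) \bmod n}$; I will show that $R_{\bq_{n-1}}$ lands at~$\bp$.

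To prove this, fix $r \ge 0$ and compute $R_{\bq_{n-1}}(s)_r$ for large~$s$. Writing $P(s) = (t,v)$, equation~\eqref{eq:convenient-2} gives $R_{\bq_{n-1}}(s)_r = f^{t-1-r}(H(y_t, v))$, where $y_t = q_{(n-1-t) \bmod n}$ since $\bq_{n-1} = \thr{(q_{n-1}, q_{n-2}, \ldots, q_0)^\infty}$. If $t \not\equiv 0 \pmod n$, then $y_t = q_i$ for some $i < n-1$, so $y_t \not\in \ingam$, and Lemma~\ref{lem:u-key} combined with~\eqref{eq:commute} yields $H(y_t, v) = f(\tau(y_t)) = f^{i+2}(a)$, independent of~$v$; a direct computation reduces $R_{\bq_{n-1}}(s)_r$ to $f^{(k+1)n - r}(a)$ where $k = \lfloor t/n \rfloor$, which tends to $p_{(-r) \bmod n}$ as $k \to \infty$. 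If $t = kn$, then $y_t = q_{n-1} \in \ingam$ and Lemma~\ref{lem:u-key} gives $H(q_{n-1}, v) \in [f(a), f^{n+1}(a)] = f([a, f^n(a)])$, so $R_{\bq_{n-1}}(s)_r \in f^{kn - r}([a, f^n(a)])$; as $k \to \infty$, this set Hausdorff-converges to the same limit~$\{p_{(-r) \bmod n}\}$, because $[a, f^n(a)]$ lies in the basin of the attracting orbit and $f^n$ contracts it to the fixed point~$p_0$. Combining the two cases yields $R_{\bq_{n-1}}(s) \to \bp$ as $s \to \infty$, establishing $\bq_{n-1} \in \cL$ and hence $\cL = \hS$.

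The main obstacle is the uniform contraction statement $f^{kn}([a, f^n(a)]) \to \{p_0\}$, which requires showing that the entire interval $[a, f^n(a)]$ lies in the immediate basin of the attracting period-$n$ orbit. Unlike the strict left endpoint case (where $f^n(a) = a$ and the orbit is super-attracting), here the orbit is merely attracting, so this contraction must be extracted from the kneading identity $\kappa(f) = \Inf{w_q 0}$, the existence of the second periodic $B$-orbit supplied by Theorem~\ref{thm:outside-dynamics}, and the fact that $q_{n-1} = B^n(a) \in \ingam$ forces $f^n(a) \in (a,\re)$, keeping $[a, f^n(a)]$ small and close to~$p_0$.
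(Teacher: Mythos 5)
Your reduction and the thread-entry computation match the paper's proof: it suffices to show that $R_{\bq_{n-1}}$ lands, and, writing $P(s)=(t,v)$ with $t=kn+i$, equation~\eqref{eq:convenient-2} gives $\Psi(\bq_{n-1},s)_r = f^{(k+1)n-r}(a)$ when $i\ne 0$ and $\Psi(\bq_{n-1},s)_r\in f^{kn-r}([a,f^n(a)])$ when $i=0$. But the step you yourself flag as ``the main obstacle'' --- that $f^{kn}([a,f^n(a)])$ shrinks to a single point --- is the entire content of the lemma, and you neither prove it nor sketch a viable route to it. Convention~\ref{conv:unimodal} carries no smoothness or hyperbolicity hypotheses, so there is no ``contracting dynamics near the attracting orbit'' to appeal to; the convergence must come from combinatorics. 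The paper gets it as follows: $a$ and $f^n(a)$ share the itinerary $\sigma(\Inf{w_q0})$, and $w_q0$ contains an odd number of $1$s, so $f^n$ is decreasing on the invariant interval $[a,f^n(a)]$ and has a unique fixed point $p$ there; then $f^{2n}$ is increasing on $[a,f^n(a)]$ and also has $p$ as its unique fixed point, since a second one would be a period-$2$ point of $f^n$ and such points come in pairs, contradicting Convention~\ref{conv:unimodal}~(c); monotone convergence $f^{kn}(x)\to p$ for every $x\in[a,f^n(a)]$ follows.

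Separately, your identification of the limiting orbit is wrong. The second period-$n$ orbit of $B$ supplied by Theorem~\ref{thm:outside-dynamics}~(b)(iii) lies above the period-$n$ orbit of $f$ with itinerary $\Inf{w_q1}$ (see the proof of that theorem), which in a smooth family is the repelling orbit born at the saddle-node; the critical orbit is not asymptotic to it. The ray $R_{\bq_{n-1}}$ in fact lands on the thread over the orbit of~$p$, whose itinerary is $\sigma(\Inf{w_q0})$ --- a different periodic orbit of~$f$, with no $B$-orbit above it. This misidentification does not affect the shape of your computation, but it means the object from which you propose to ``extract'' the contraction is the wrong one.
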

\begin{proof} The points~$a$ and $f^n(a)$ are distinct, but both have itinerary
$\sigma(\Inf{w_q0})$. Since $w_q0$ is a word of length~$n$ with an odd number
of $1$s, $f^n|_{[a,f^n(a)]}$ is decreasing, with $a<f^{2n}(a)<f^n(a)$. There is
therefore a unique fixed point~$p$ of $f^n$ in $[a, f^n(a)]$. Now the
increasing map $f^{2n}\colon [a, f^n(a)]\to[a,f^n(a)]$ also has~$p$ as its
unique fixed point (any other fixed points would be period~2 points of~$f^n$
and so would come in pairs, contradicting Convention~\ref{conv:unimodal}~(c)),
so that $f^{kn}(x)\to p$ as $k\to\infty$ for every $x\in[a,f^n(a)]$.

Since every point of $\hS\setminus\bQ$ is landing of some level, it is only
necessary to prove that the rays $R_{\bq_i}$ land. It is enough to show this
for $i=n-1$ since $R_{\bq_i} = \hH^{i+1}\circ R_{\bq_{n-1}}$ for $0\le i\le
n-2$.

Fix $r\ge 0$ and let~$s\ge r+1$. Write $P(s)=(t,v)$ and $t=kn+i$ with $0\le
i\le n-1$: then~(\ref{eq:convenient-2}) gives $\Psi(\bq_{n-1},s)_r =
f^{t-1-r}(H(q_{n-1-i}, v))$. If $i\not=0$ then $H(q_{n-1-i}, v) = \tau(q_{n-i})
= f^{n-i+1}(a)$, so that $\Psi(\bq_{n-1}, s)_r = f^{(k+1)n-r}(a)$. On the other
hand, if $i=0$ then $H(q_{n-1-i},v) \in[f(a), f^{n+1}(a)]$ by
Definition~\ref{def:unwrapping-unimodal}, so that $\Psi(\bq_{n-1}, s)_r =
f^{kn-r}(x)$ for some $x\in[a, f^n(a)]$.

Therefore $\Psi(\bq_{n-1}, s)_r \to f^{n-r}(p)$ as $s\to\infty$. It follows
that
\[
\Psi(\bq_{n-1},s) \to \omega(\bq_{n-1}) = \thr{(p, f^{n-1}(p), \ldots,
  f(p))^\infty} \text{ as } s\to\infty,
\] so that $\bq_{n-1}\in\cL$ as required.
\end{proof}

\begin{remark}
\label{rmk:unusual-landing} 
Thus, in the late endpoint case, the points of $\bQ$ are landing, despite not
being landing of any level. This is the only case in which
$\cL\not=\bigcup_{N\ge 0}\cL_N$.
\end{remark}

Since the interior points of $R_{k,i}$ are not landing of any level less
than~$kn+i+1$, the locally uniformly landing set is given by
$\cR=\hS\setminus\bQ$. The proof of
Lemma~\ref{lem:good-chain-NML-rat-interior}~(a) goes through without change to
show that $(\Gamma'(a, k, i))_{k\ge0}$ is a good chain of crosscuts for~$\bq_i$
(observing that condition~(d) of Definition~\ref{def:good-chain} is vacuous);
and the proof of Lemma~\ref{lem:impression-rat-interior} likewise carries over
to show that $\cI(\cP(\bq_{n-1})) = \hI$.

\subsubsection{The early endpoint case}
\label{sec:early-endpoint} In this case $q=m/n>0$ and $\mu=\lhe(m/n)$, but
$f^n(a)\not=a$. According to Theorem~\ref{thm:outside-dynamics}~(c)(ii), the
orbit $\cO=\{B^r(a)\,:\,r\ge 1\}$ is disjoint from~$\gamma$, and is attracted
to a period~$n$ orbit~$Q\subset S\setminus\gamma$ of~$B$: in particular,
$B^{-r}(y)$ is defined for all $r\ge 0$ provided that $y\not\in\cO$. There are
two possibilities: either~$Q$ is semi-stable and is the only periodic orbit
of~$B$, in which case the backwards orbit $\{B^{-r}(\gamma)\,:\,r\ge 0\}$
of~$\gamma$ is attracted to~$Q$; or~$Q$ is stable, and there is a repelling
period~$n$ orbit~$P\subset S\setminus\gamma$ of~$B$, which attracts the
backwards orbit of~$\gamma$.

The analysis initially follows that of the irrational case. Elements of $\hS$
can be written either as $\bt(y,r)$, with $y\in\gamma$ and $r\in\Z$; or as
$\bt(y)$ with $y\in S\setminus\bigcup_{r\in\Z} B^{-r}(\gamma)$, these threads
being defined exactly as in Definitions~\ref{notn:threads-irrat}. It follows,
as in the proof of Lemma~\ref{lem:landing-irrational}, that $\cL=\hS$. ``Gaps''
$G_r=\{\bt(y,r)\,:\,y\in\gamma\}$ can be defined as in
Definition~\ref{def:gaps}.

The difference with the irrational case is that the gaps $G_r$ converge as
$r\to\infty$ to the periodic orbit~$\bQ$ of~$\hB$ corresponding to~$Q$; and
they converge as $r\to-\infty$ either to $\bQ$ from the other side (in the case
where~$Q$ is the unique periodic orbit of~$B$), or to the periodic orbit~$\bP$
of~$\hB$ corresponding to~$P$. Since $G_r$ is uniformly landing of level
$\max(r,0)$, we have in either case that $\cR=\hS\setminus\bQ$.

The construction of a good chain of crosscuts for each point $\bq$ of $\bQ$ can
be carried out in exactly the same way as in the irrational case
(Lemma~\ref{lem:irrat-good-chain}); and the proof that $\cI(\cP(\bq))=\hI$ for
each such~$\bq$ is identical to the proof of
Lemma~\ref{lem:irrational-prime-ends-type-2}.

\begin{remark} \label{rmk:accessible-rat-endpoint}
By Theorem~\ref{thm:corr-homeo}~(d), the set of accessible
points of~$\hI$ is precisely $\{\omega(\by)\,:\,\by\in\hS\}$. Since the landing
function~$\omega$ is continuous from one side, but not from the other, at the
points of~$\bQ$, the set of accessible points is partitioned into~$n$ immersed
copies of $[0,\infty)$.
\end{remark}

\section{Semi-conjugacy to sphere homeomorphisms}
\label{sec:semi-conj} 

In this section we will prove (Theorem~\ref{thm:cont-var}) that if $\{f_t\}$ is
a continuously varying family of unimodal maps, then there is a corresponding
family $\{\chi_t\colon S^2\to S^2\}$ of sphere homeomorphisms such that each
$\chi_t$ is a factor of $\hf_t\colon \hI_t\to\hI_t$ by a semi-conjugacy with
mild point preimages. In order to simplify the exposition, we start by treating
the case of a single unimodal map~$f$ (Theorem~\ref{thm:semi-conj}).

We will also show (Theorem~\ref{thm:gpA}) that if~$\{f_t\}$ is a family of tent
maps, then $\chi_t$ is a {\em generalized pseudo-Anosov map} for those values
of~$t$ for which $f_t$ is post-critically finite (and is pseudo-Anosov
when~$f_t$ is of NBT type). Therefore, in the tent map case, $\{\chi_t\}$ is a
completion of the family of generalized pseudo-Anosovs constructed
in~\cite{gpa}.

\medskip

In order to construct the semi-conjugacy, we will define a non-separating
monotone upper semi-continuous decomposition~$\cG$ of~$\hT$, whose elements are
permuted by~$\hH$, and each of which intersects $\hI$. By Moore's
theorem~\cite{moore}, the quotient space $\Sigma=\hT/\cG$ is again a sphere,
and $F=\hH/\cG\colon
\Sigma\to\Sigma$ is a homeomorphism. Since each of the decomposition elements
intersects~$\hI$, the natural projection $\hT\to\Sigma$ induces a surjection
$\hI\to\Sigma$, which semi-conjugates $\hf=\hH|_{\hI}$ to~$F$.

To define the decomposition~$\cG$, we first introduce the {\em strongly stable}
  equivalence relation on~$\sD$ (Definition~\ref{def:strong-stable}). (Recall
  that~$\sD=D\cup\cL_\infty\subset\bD$ is the maximal domain of~$\Psi$.) The
  idea is that a strongly stable component of~$\hT$ is a maximal connected
  subset~$X$ of~$\hT$ with the property that, for all $\bx^{(1)},\bx^{(2)}\in
  X$, there is some $N\ge 0$ with $\hH^N(\bx^{(1)})_0 = \hH^N(\bx^{(2)})_0$. A
  consequence of this is that $d(\hH^i(\bx^{(1)}), \hH^i(\bx^{(2)}))\to 0$ as
  $i\to\infty$, so that strongly stable sets are stable: the converse is not
  true in general, since the unimodal map~$f$ may itself have non-trivial
  connected stable sets. Now such a subset~$X$ may intersect~$\hI$ (and hence
  leave the image of~$\Psi$) many times. A strongly stable component of~$\sD$
  is a component of the preimage $\Psi^{-1}(X)$. Our decomposition will be
  based on these components.

In Lemmas~\ref{lem:ssc-irrational}, \ref{lem:rat-general-strong-stable},
\ref{lem:rat-NBT-strong-stable}, \ref{lem:rat-endpoint-strong-stable}, and
\ref{lem:rat-quadratic-strong-stable} we describe the structure of the strongly
stable equivalence classes for each of the types of unimodal map of
Definition~\ref{def:type}. We then use these to construct a
decomposition~$\cG'$ of~$\bD$: one of the decomposition elements is the union
of $\partial'$ and all of the strongly stable equivalence classes whose closure
contains $\partial'$, while all of the other decomposition elements are single
strongly stable equivalence classes or single points not in~$\sD$. The
decomposition~$\cG$ is obtained by carrying over $\cG'$ with~$\Psi$, and adding
single inaccessible points of $\hI$ (which, by
Theorem~\ref{thm:corr-homeo}~(d), are precisely the points which are not in the
image of~$\Psi$).

\medskip

\begin{defn}[Strongly stable, strongly stable component]
\label{def:strong-stable} A subset~$Y$ of $\sD$ is said to be {\em strongly
stable} if, for all $\eta_1, \eta_2\in Y$, there is some~$N\ge 0$ such that
$\hH^N(\Psi(\eta_1))_0=\hH^N(\Psi(\eta_2))_0$.

The {\em strongly stable component} of $\eta\in\sD$ is the largest
connected strongly stable set which contains~$\eta$ (i.e.\ the union of all
such connected strongly stable sets).
\end{defn}

\begin{remarks}\mbox{}

\begin{enumerate}[(a)]
  \item 
    Since~$G$ and $\hH$ are topologically conjugate
    (Corollary~\ref{cor:Psi-conjugates}), the homeomorphism $G\colon
    \sD\to\sD$ permutes the strongly stable components.
  \item 
    $\{\partial'\}$ is a strongly stable component, since if $(\by,
    s)\not=\partial'$ then $\hH^N(\Psi(\partial'))_0 = \partial \not=
    \hH^N(\Psi(\by,s))_0$ for all $N\ge 0$.
\end{enumerate}
\end{remarks}

\subsection{Strongly stable components in the irrational and the rational early
endpoint cases}
\label{sec:semi-conj-irrat}

Recall from Section~\ref{sec:irrational} that if $f$ is of irrational type then
$\cL=\hS$, so that $\sD=\bD$; that $\hB\colon\hS\to\hS$ is a Denjoy
counterexample, having an orbit~$\{G_r\,:\,r\in\Z\}$ of wandering intervals;
and that the complement of the union of the interiors of these intervals is a
Cantor set~$\Lambda$, which is the set of points which are not locally
uniformly landing.

If $f$ is of rational early endpoint type (Section~\ref{sec:early-endpoint})
then the description is the same as in the irrational case, except that the
orbit of the intervals~$G_r$ converges as $r\to\infty$ and as $r\to -\infty$ to
periodic orbits~$\bQ$ and $\bP$ of $\hB$, and $\cR=\hS\setminus\bQ$. If $\bQ$
and $\bP$ are distinct, then the former is stable and the latter is unstable;
while if $\bP=\bQ$, then this is a semi-stable orbit, which is the limit on
one side of the intervals $G_r$ as $r\to\infty$, and on the other side of the
intervals $G_r$ as $r\to -\infty$.

The following lemma is illustrated in the irrational case by
Figure~\ref{fig:irrat-strong-stab}. The picture in the early endpoint case is
discussed in Remark~\ref{rmk:early-endpoint-strong-stable}.

\begin{lem}
\label{lem:ssc-irrational} Let $f$ be of irrational type or of rational early
endpoint type. Then the strongly stable components of~$\sD$ are
$\{\partial'\}$ and:
\begin{enumerate}[(a)]
\item for each $\by\in\hS\setminus\bigcup_{r\in\Z}G_r$, the line
  $L_{\by}=\{\by\}\times(0,\infty]$;
\item for each $r\in\Z$:
\begin{enumerate}[(i)]
\item the arc $A_r=\{\bt(c_u,r)\}\times [s_r, \infty]$; where
  $s_r=\lambda^r(1)$, i.e.
\[ s_r =
\begin{cases} r+1 & \text{ if }r\ge 1,\\ 1/2^{|r|} & \text{ if }r\le 0.
\end{cases}
\]
\item for each $y\in(c_u,a)$, the crosscut $C_{r,y}=\xi'(\bJ_{r,y}, t_{r,y})$;
  where $\bJ_{r,y}\subset G_r$ has endpoints $\bt(y,r)$ and $\bt(\hy,r)$, and
  $t_{r,y} = \lambda^r((1+u(y))/2)$, i.e.
\[ t_{r,y} =
\begin{cases} r+u(y) & \text{ if }r\ge 1,\\ (1+u(y))/2^{|r|+1} & \text{ if
}r\le 0.
\end{cases}
\]
\item the union~$D_r$ of the arcs $\bt(a,r)\times[u_r, \infty]$ and
  $\bt(\re_u,r)\times[u_r,\infty]$, and the set $G_r\times(0,u_r]$; where $u_r
  = s_{r-1} = \lambda^r(1/2)$, i.e.
\[ u_r=
\begin{cases} r & \text{ if }r\ge 1,\\ 1/2^{|r|+1} & \text{ if }r\le 0.
\end{cases}
\]
\end{enumerate}
\end{enumerate}
\end{lem}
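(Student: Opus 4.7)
The plan is in three stages. First, since $G$ is a homeomorphism conjugated by $\Psi$ to $\hH|_U$ (Corollary~\ref{cor:Psi-conjugates}), it permutes strongly stable components, and it suffices to establish the statements for $r=0$ and extend by $G$-equivariance. Direct computation gives $G^r(A_0)=A_r$, $G^r(C_{0,y})=C_{r,y}$, and $G^r(D_0)=D_r$, using $\lambda^r$ applied to $1$, $(1+u(y))/2$, and $1/2$ respectively, together with $\hB^r(\bt(z,0))=\bt(z,r)$ for $z\in\gamma$ (which holds because $B$ collapses $\gamma$).

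The core of the proof is showing each listed set is strongly stable by computing, for each one, the eventual constant value of $\hH^N(\Psi(\eta))_0=\Psi(G^N(\eta))_0$. For $L_\by$ with $\by\notin\bigcup_r G_r$, every entry of every $\hB^N(\by)$ avoids $\ingam$, so Lemma~\ref{lem:formula-disjoint-gamma} yields $\Psi(G^N(\by,s))_0 = \tau(B^N(y_0))$, independent of~$s$. For $A_0$, the same lemma applied to $\bt(c_u,0)\in\cL_0$ gives $\Psi(\bt(c_u,0),s)_0=\tau(c_u)=c$ for $s\geq 1$, hence $\hH^N(\Psi(A_0))_0=f^N(c)$. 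For $C_{0,y}$, each crossbar point $(\bt(z,0),(1+u(y))/2)$ satisfies $u(z)\geq u(y)$ (since $\bJ_{0,y}$ is the arc in $G_0$ passing through $c_u$), so Lemma~\ref{lem:u-key} gives $H(z,(1+u(y))/2)=\phi((1+u(y))/2)=f(\tau(y))$; Lemma~\ref{lem:constant-block} extends this value over the two end-arcs, so $\hH^N(\Psi(C_{0,y}))_0=f^N(\tau(y))$. For $D_0$, the collapsing property $B(\gamma)=\{f(a)_\ell\}$ makes the first $N-1$ entries of $\bt(z,N)=\hB^N(\bt(z,0))$ independent of $z\in\gamma$; for $s\in(0,1/2]$ and large $N$, $P(\lambda^N(s))=(t,v)$ with $t\leq N-1$, so Lemma~\ref{lem:formula-disjoint-gamma} produces $\Psi(G^N(\bt(z,0),s))_0 = f^{t-1}(H(B^{N-t}(a),v))=f^N(a)$; the endpoint arcs at $\bt(a,0)$ and $\bt(\re_u,0)$ contribute the same value using $\tau(a)=a$ and $f(\re)=f(a)$.

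For completeness, every $(\by,s)\in\sD\setminus\{\partial'\}$ is assigned to exactly one listed set by a direct case analysis on whether $\by\in\bigcup_r G_r$ and, if so, on the position of the gap coordinate $z$ relative to $c_u,a,\re_u$ and of $s$ relative to the thresholds $u_r$, $t_{r,y}$, and $s_r$. Each listed set is connected by inspection and they are pairwise disjoint, so each is a connected component of the union; the singleton $\{\partial'\}$ is separately a strongly stable component because $\Psi(\partial')=\partial$ is fixed by $\hH$, giving a characteristic value distinct from all others. Maximality within the strong-stability relation then reduces to showing that the characteristic values $\tau(B^N(y_0))$, $f^N(c)$, $f^N(\tau(y))$, and $f^N(a)$ lie in distinct $f$-tail classes for distinct components; this follows from Convention~\ref{conv:unimodal}~(b) and~(c) together with the fact that, in both the irrational and the early endpoint cases, the $B$-orbit of $a$ never re-enters $\gamma$ (Theorem~\ref{thm:outside-dynamics}).

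The principal obstacle is the uniform bookkeeping for $D_0$: one must track, simultaneously for all $s\in(0,1/2]$, how $\lambda^N(s)$ and $P(\lambda^N(s))$ evolve with $N$, and verify that they eventually land in the regime $t\leq N-1$ where the relevant entry of $\bt(z,N)$ is some $B^k(a)$ rather than $z$ itself. The simplifying miracle which makes this tractable is the collapsing of $\gamma$ by $B$, which forces the first $N-1$ coordinates of $\bt(z,N)$ to be independent of $z\in\gamma$ and hence concentrates the computation on a single eventual value $f^N(a)$.
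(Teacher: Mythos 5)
Your reduction to a single $r$ by $G$-equivariance, and your computations showing that each listed set is strongly stable (via Lemmas~\ref{lem:formula-disjoint-gamma}, \ref{lem:u-key} and~\ref{lem:constant-block}, and the collapsing of $\gamma$ by $B$ for the sets $D_r$), follow essentially the same route as the paper, up to the index shift ($r=0$ rather than $r=1$, which forces you to read off $\hH(\Psi(\eta))_0$ rather than $\Psi(\eta)_0$ on the horizontal parts, and a small edge case at $s=u_0=1/2$ in $D_0$ where $P(\lambda^N(s))=(N,1/2)$ rather than $t\le N-1$; the conclusion still holds there because $H(z,1/2)=f(a)$ for all $z\in\gamma$). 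The coverage argument is also fine.

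The genuine gap is in the maximality step. You claim that maximality ``reduces to showing that the characteristic values $\tau(B^N(y_0))$, $f^N(c)$, $f^N(\tau(y))$, and $f^N(a)$ lie in distinct $f$-tail classes for distinct components,'' and that this follows from Convention~\ref{conv:unimodal}~(b),(c). This is false: two distinct listed components can perfectly well lie in the same tail class. For instance, choose $y,y'\in(c_u,a)$ with $f(\tau(y'))=\widehat{f(\tau(y))}$, so that $f^2(\tau(y))=f^2(\tau(y'))$; then $C_{r,y}$ and $C_{r,y'}$ are distinct, disjoint components whose characteristic values are tail-equivalent. Likewise $C_{r,y}$ with $\tau(y)$ an $f$-preimage of $c$ is tail-equivalent to $A_r$. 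Conventions~(b) and~(c) concern itineraries and periodic points and do not exclude such coincidences, which occur for uncountably many pairs. Since partitioning $\sD$ into connected, pairwise disjoint, strongly stable sets does not by itself show that each piece is maximal (the strong-stability relation is an equivalence relation, so the union of two intersecting connected strongly stable sets is again connected and strongly stable), your argument does not rule out a connected strongly stable set meeting several listed pieces. The argument that actually works is a countability one: if a connected strongly stable set $Y$ strictly contained, say, $A_r$, it would have to intersect $C_{r,y}$ for all $y$ in a nondegenerate interval $(c_u,e)$, so $\{\Psi(\eta')_0:\eta'\in Y\}$ would contain the uncountable set $\{f(\tau(y)):y\in(c_u,e)\}$; but strong stability with a fixed point of $A_r$ forces $\Psi(\eta')_0\in\bigcup_{N\ge0}f^{-N}\bigl(\{f^N(b)\}\bigr)$, which is countable. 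You need this dichotomy (or an equivalent) to close the proof.
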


\begin{figure}[htbp]
\begin{center}
\includegraphics[width=0.7\textwidth]{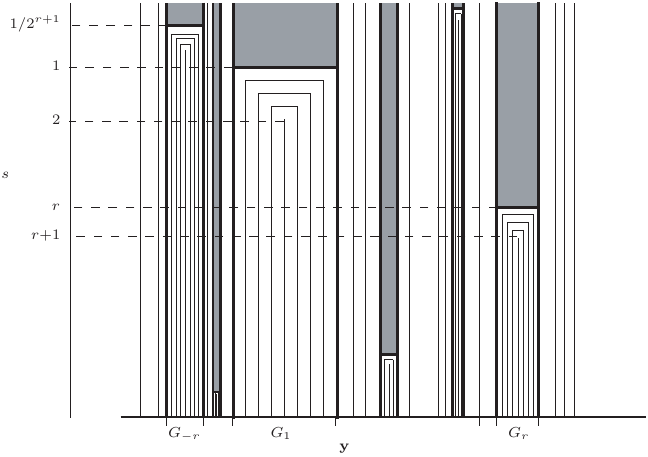}
\end{center}
\caption{Strongly stable components in the irrational case. The types of
  components are: (a) vertical lines~$L_\by$ above the buried points~$\by$ of
  the Cantor set~$\Lambda$; (b)~packets of crosscuts~$C_{r,y}$ above each
  gap~$G_r$, together with the ``central'' arc~$A_r$ which only intersects
  $\hS_\infty$ at a single point; and (c) for each gap $G_r$, the set $D_r$
  consisting of the outermost crosscut above $G_r$ together with the rectangle
  above this crosscut extending up to (but not including) $s=0$. Each set~$D_r$
  is shown as the union of a shaded rectangle and the bold arcs which intersect
  it.}
\label{fig:irrat-strong-stab}
\end{figure}

\begin{proof} We first show that each of the sets listed is strongly stable.
\begin{enumerate}[(a)]
\item Let $\by\in\hS\setminus\bigcup_{r\in\Z}G_r$, so that $\by=\thr{y,
  B^{-1}(y), B^{-2}(y), \ldots}$ for some~$y$ whose orbit under~$B$ is disjoint
  from~$\gamma$. Since~$\by$ is landing of level~0,
  Lemma~\ref{lem:formula-disjoint-gamma} gives $\Psi(\by,s)_0=\tau(y)$ for all
  $s\ge 1$. Applying~$G$ repeatedly (or arguing directly using that
  $\Psi(\by,s)=\thr{(y,s), (B^{-1}(y),s/2), \ldots}$ for $s\in[0,1)$) gives
  that, for each $m\ge 1$, $\hH^m(\Psi(\by,s))_0 = f^m(\tau(y))$ for all $s\in
  [1/2^m,
      \infty]$. Therefore $L_{\by}$ is strongly stable.
\item $G(A_r)=A_{r+1}$, $G(C_{r,y})=C_{r+1,y}$, and $G(D_r)=D_{r+1}$ for all
  $r\in\Z$ and $y\in(c_u,a)$. Since $G$ permutes strongly stable components, it
  suffices to consider the case~$r=1$.
\begin{enumerate}[(i)]
\item We have $\bt(c_u,1)=\thr{B(a), c_u, B^{-1}(c_u),\ldots}$, which is
  landing of level~1. By Lemma~\ref{lem:formula-disjoint-gamma},
  $\Psi(\bt(c_u,1), s)_0 = f(\tau(c_u))=b$ for all $s\in[2,\infty] =
  [s_1,\infty]$, so that $A_1$ is strongly stable.
\item Let $y\in(c_u,a)$. Since $\bt(y,1)_1=y$ and $\bt(y,1)_{1+i}\not\in\ingam$
  for all $i\ge 1$, Lemma~\ref{lem:constant-block} gives that
  $\Psi(\bt(y,1),s)_0 = f(\tau(y))$ for all $s\ge 1+u(y)$; similarly
  $\Psi(\bt(\hy,1),s)_0=f(\tau(\hy))=f(\tau(y))$ for all such~$s$.

Now let $z\in[\hy,y]$, so that $(\bt(z,1),\, 1+u(y))$ is on the horizontal
segment of the crosscut. Then $\Psi(\bt(z,1),\, 1+u(y))_0 = H(z,
\phi^{-1}(f(\tau(y))))$ by~(\ref{eq:convenient-2}) and the definition
of~$u(y)$. Since $z\in[\hy,y]$ we have $\phi^{-1}(f(\tau(y))) \le
\phi^{-1}(f(\tau(z)))$, so that $H(z, \phi^{-1}(f(\tau(y)))) = f(\tau(y))$ by
  Lemma~\ref{lem:u-key}, as required.

Therefore $\Psi(\eta)_0 = f(\tau(y))$ for all $\eta\in C_{1,y}$, so that
$C_{1,y}$ is strongly stable.

\item We have $\Psi(\by,s)_0=f(\tau(a))=f(a)$ for $(\by,s)\in
  \{\bt(a,1),\bt(\re_u,1)\}\times[1, \infty]$ as in~(ii), and hence
  $(\hH^m(\Psi(\by,s)))_0 = f^{m+1}(a)$ for all such $(\by,s)$ and all $m\ge
  0$.

 Now suppose that $s\in[1/2^m, 1/2^{m-1})$ for some $m\ge 1$, and that $\by\in
   G_1$, so that we have $\by=\thr{B(a), y, B^{-1}(y), \ldots}$ for some
   $y\in\gamma$. Then $\Psi(\by, s)_0=(B(a),s)$ by~(\ref{eq:Psi}), so that
   \[\hH^m(\Psi(\by,s))_0=\tau(B^{m+1}(a)) = f^{m+1}(a),\]
   using~(\ref{eq:commute}) and that the orbit of~$B(a)$ is disjoint
   from~$\gamma$. Therefore $D_1$ is strongly stable.
\end{enumerate}
\end{enumerate} The proof that there are no connected strongly stable sets
which strictly contain one of these sets is a routine consideration of cases.
We will only show that $A_1$ is a strongly stable component, and omit the
entirely analogous proofs in the other cases.

From the argument above, we have $\Psi(\eta)_0 = b$ for all $\eta\in A_1$.
Therefore, if $\eta'\in\sD$ satisfies $\hH^N(\Psi(\eta'))_0 =
\hH^N(\Psi(\eta))_0$ for some $N\ge 0$ then $f^N(\Psi(\eta')_0) = f^N(b)$.
There are therefore only countably many possible values which $\Psi(\eta')_0$
can take if $\eta'$ is in the strongly stable component containing~$A_1$.

Now any connected set~$Y$ which strictly contains~$A_1$ must intersect
$C_{1,y}$ for all~$y$ in some interval $(c_u, e)\subset (c_u,a)$. Since
$\Psi(\eta')_0=f(\tau(y))$ for all $\eta'\in C_{1,y}$, and since~$f$ is not
locally constant, it follows that~$\{\Psi(\eta')_0\,:\,\eta'\in Y\}$ is
uncountable, and hence~$Y$ cannot be strongly stable.
\end{proof}

\begin{remark}
\label{rmk:early-endpoint-strong-stable} In the early endpoint case, the
strongly stable components above each interval~$G_r$, and above points $\by$ of
$\hS\setminus\bigcup_{r\in\Z}G_r$, are exactly as depicted in
Figure~\ref{fig:irrat-strong-stab}, but the intervals~$G_r$ are arranged
differently. For each $0\le i\le n-1$, the intervals $G_{i+kn}$ converge
strictly monotonically to a point of~$\bQ$ as $k\to\infty$, and the intervals
$G_{i-kn}$ converge strictly monotonically to a point of~$\bP$. The open
intervals between each $G_{i+kn}$ and $G_{i+(k+1)n}$ are contained in
$\hS\setminus\bigcup_{r\in\Z}G_r$, so that the strongly stable components above
them are vertical lines. If $\bQ$ and $\bP$ are distinct, then there are also
intervals with one endpoint in~$\bQ$ and one in~$\bP$ which are likewise
contained in $\hS\setminus\bigcup_{r\in\Z}G_r$.
\end{remark}

\subsection{Strongly stable components in the rational case}
\label{sec:semi-conj-rat} 
Consider now the case where~$f$ is of rational type but not of early endpoint
type, and let $q(\kappa(f))=m/n\in\Q\cap[0,1/2)$. Recall that we write
$q_{n-1}=B^n(a)\in\gamma$. We will treat in turn the general case together with
the late endpoint case (Lemma~\ref{lem:rat-general-strong-stable}), the NBT
case (Lemma~\ref{lem:rat-NBT-strong-stable}), the normal endpoint case
(Lemma~\ref{lem:rat-endpoint-strong-stable}), and the quadratic-like strict
left endpoint case (Lemma~\ref{lem:rat-quadratic-strong-stable}).

Recall that in the interior case, we have $\cL=\cR = \hS\setminus\bQ$, while in
the endpoint case we have $\cL=\hS$ and $\cR = \hS\setminus\bQ$; and that in
the interior, quadratic-like endpoint, and late endpoint cases, there is a
second period~$n$ orbit~$\bP$ of $\hB\colon\hS\to\hS$, while in the normal
endpoint case $\bQ$ is the only periodic orbit of $\hB$.

The following lemma is illustrated by Figure~\ref{fig:rat-gen-strong-stab}.

\begin{lem}
\label{lem:rat-general-strong-stable} Let~$f$ be of rational general or late
endpoint type, with $q(\kappa(f))=m/n\in(0,1/2)\cap\Q$.  Then the strongly
stable components of $\sD$ are $\{\partial'\}$ and:
\begin{enumerate}[(a)]
\item for each $\bp\in\bP$, the line $L_{\bp}=\{\bp\}\times(0,\infty]$;
\item for each $k\in\Z$ and $0\le i\le n-1$:
\begin{enumerate}[(i)]
\item the arc $A_{k,i}=\{\bt(c_u, k, i)\} \times [r_{k,i}, \infty]$, where
  $r_{k,i} = \lambda^{kn+i}(2)$, i.e.
\[ r_{k,i} =
\begin{cases} kn+i+2 & \text{ if } k\ge 0,\\ 1/2^{|k|n-i-1} & \text{ if } k<0.
\end{cases}
\]
\item for each $y\in(c_u,a]\setminus\{\min(q_{n-1},\hq_{n-1})\}$, the crosscut
 $\Gamma'(y,k,i)$.
\item the union~$D_{k,i}$ of
\begin{itemize}
\item the crosscut $\xi'([\bt(\hq_{n-1},k,i), \bt(a, k+1, i)],\,\,u_{k,i})$;
\item the crosscut $\xi'([\bt(\hq_{n-1},k,i), \bt(\re_u, k+1,
  i)],\,\,u_{k,i})$; and
\item the set $[\bt(a, k+1, i), \bt(\re_u, k+1, i)] \,\,\,\times\,\,\,
  [u_{k,i}, v_{k,i}]$.
\end{itemize} Here $u_{k,i}=\lambda^{kn+i}(1+u(q_{n-1}))$,
$v_{k,i}=\lambda^{kn+i}(n+1)$, and all three of the intervals in~$\hS$ are
those with the given endpoints which are disjoint from~$\bP$.
\end{enumerate}
\end{enumerate}
\end{lem}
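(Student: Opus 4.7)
The plan is to adapt the template of Lemma~\ref{lem:ssc-irrational} to the rational general/late endpoint setting, with the main new feature being the periodic orbit~$\bQ$ of~$\hB$ onto which the wandering intervals $L_{k,i}\cup R_{k,i}$ accumulate as $k\to\infty$. First, since $G\colon\bD\to\bD$ permutes strongly stable components (Corollary~\ref{cor:Psi-conjugates}), and since the formula $G(\by,s)=(\hB(\by),\lambda(s))$ immediately yields $G(L_\bp)=L_{\hB(\bp)}$ and shows that $G$ cyclically shifts the parameters~$(k,i)$ of $A_{k,i}$, $\Gamma'(y,k,i)$, and $D_{k,i}$, it is enough to verify strong stability and maximality at a single representative in each family; I would work at $(k,i)=(0,0)$.

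Strong stability is then checked set-by-set. For~$L_\bp$ with~$\bp\in\bP$: since the orbit of~$B$ through~$P$ is disjoint from~$\gamma$ (Theorem~\ref{thm:outside-dynamics}~(b)(iii)), $\bp$ is landing of level~$0$, and Lemma~\ref{lem:formula-disjoint-gamma} gives $\Psi(\bp,s)_0=\tau(p_0)$ for all $s\ge 1$; hence $\hH^m(\Psi(\bp,s))_0=f^m(\tau(p_0))$ whenever $\lambda^m(s)\ge1$, which for any fixed $s>0$ holds for~$m$ sufficiently large, establishing strong stability. For~$A_{k,i}$: the thread~$\bt(c_u,k,i)$ is landing of level $kn+i+1$ (its last $\ingam$-entry being~$c_u$ itself), so Lemma~\ref{lem:formula-disjoint-gamma} gives $\Psi(\bt(c_u,k,i),s)_{kn+i}=f(\tau(c_u))=b$ for all $s\ge r_{k,i}$. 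For~$\Gamma'(y,k,i)$: this is precisely the content of Lemma~\ref{lem:gamma-stable}.

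The set~$D_{k,i}$ is the main obstacle, and I would split the verification into two parts. For the two boundary crosscuts at height~$u_{k,i}$, which are the limits of $\Gamma'(y,k,i)$ as $y\to\min(q_{n-1},\hq_{n-1})$ described in Remark~\ref{rmk:dotted}, constancy of $\Psi(\,\cdot\,)_{kn+i}=f(\tau(q_{n-1}))=f^{n+1}(a)$ follows by continuity of $\Psi|_{\tD}$ (Corollary~\ref{cor:Psi-maximal-extension}) applied to Lemma~\ref{lem:gamma-stable}. For the rectangle $[\bt(a,k+1,i),\bt(\re_u,k+1,i)]\times[u_{k,i},v_{k,i}]$, I would invoke Lemma~\ref{lem:constant-block} with $r=kn+i+1$: for any~$\by$ in that interval of~$\hS$, $y_{kn+i+1}=q_{n-1}\in\ingam$ while $y_{kn+i+1+j}=q_{n-1-j}\not\in\ingam$ for $1\le j\le n-1$, so Lemma~\ref{lem:constant-block} yields $\Psi(\by,s)_{kn+i}=f(\tau(q_{n-1}))=f^{n+1}(a)$ for $s\in[kn+i+1+u(q_{n-1}),\,kn+i+n+1]=[u_{k,i},v_{k,i}]$. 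Thus the constant value~$f^{n+1}(a)$ is realized throughout~$D_{k,i}$ at coordinate~$kn+i$.

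Maximality and exhaustion follow the pattern of Lemma~\ref{lem:ssc-irrational}. For maximality, any strict connected enlargement of one of the listed components must meet a nondegenerate parameter interval of crosscuts $\Gamma'(y,k,i)$, on which $\Psi(\,\cdot\,)_{kn+i}$ assumes the uncountably many values~$\{f(\tau(y))\}$, precluding strong stability. For exhaustion, I would use the partition of $\hS\setminus(\bQ\cup\bP)$ into the intervals $L_{k,i}$ and~$R_{k,i}$ (Lemma~\ref{lem:ordering-intervals}), together with the careful matching of heights: at height $kn+i+1+u(y)$ above each gap the crosscut~$\Gamma'(y,k,i)$ does the covering for $y\in(c_u,a]\setminus\{\min(q_{n-1},\hq_{n-1})\}$; $A_{k,i}$ covers the distinguished column above~$\bt(c_u,k,i)$; and~$D_{k,i}$ plugs the remaining hole at the excluded parameter, extending upward to height~$v_{k,i}$ so as to abut the next packet $\Gamma'(\,\cdot\,,k+1,i)$ without gap.
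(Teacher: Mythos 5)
Your proposal is correct and follows essentially the same route as the paper's proof: reduction to $(k,i)=(0,0)$ by $G$-equivariance, strong stability of the crosscuts from Lemma~\ref{lem:gamma-stable} and Remark~\ref{rmk:dotted}, the rectangle of $D_{0,0}$ handled by Lemma~\ref{lem:constant-block} with $r=1$, and the same maximality argument as in Lemma~\ref{lem:ssc-irrational}. The only cosmetic caveat is that your closed-form $s$-intervals such as $[kn+i+1+u(q_{n-1}),\,kn+i+n+1]$ are valid only when $kn+i\ge 0$; for $k<0$ one must, as you indicate at the outset, transport the statement by $G^{kn+i}$ rather than use those formulas directly.
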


\begin{figure}[htbp]
\begin{center}
\includegraphics[width=\textwidth]{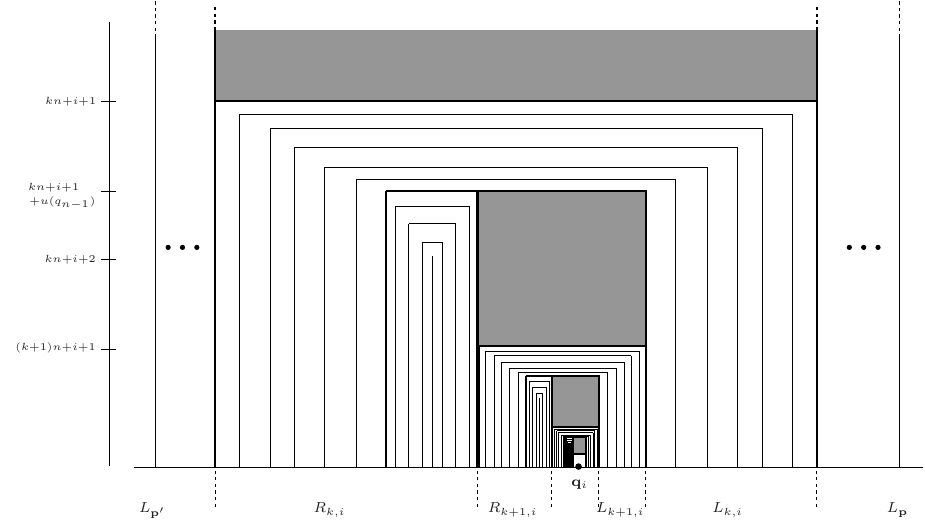}
\end{center}
\caption{Strongly stable components in the rational general or late endpoint
  cases. The types of components are: (a) vertical lines $L_{\bp}$ above the
  points~$\bp$ of the periodic orbit~$\bP$; (b) crosscuts~$\Gamma'(y,k,i)$
  joining points of each~$R_{k,i}$ to the corresponding points of $L_{k,i}$;
  (c) packets of crosscuts $\Gamma'(y,k,i)$ above the subinterval of $R_{k,i}$
  consisting of points which don't correspond to points of $L_{k,i}$, together
  with the ``central'' arc $A_{k,i}$ which only intersects $\hS_\infty$ at a
  single point; and (d) the sets $D_{k,i}$ which consist of a shaded region
  together with the bold arcs connected to it, and which intersect $\hS_\infty$
  at three points. The figure depicts the case where $q_{n-1}\in(c_u,a)$; if
  $q_{n-1}\in (\re_u,c_u)$, then the packets of crosscuts are in $L_{k,i}$
  rather than in $R_{k,i}$. The labeling on the $s$-axis is for the case $k\ge
  0$.}
\label{fig:rat-gen-strong-stab}
\end{figure}

\begin{proof}
\begin{enumerate}[(a)]
\item If $\bp\in\bP$ then $\bp$ is landing of level~0, and the proof that
  $L_{\bp}$ is strongly stable is identical to that of part~(a) of
  Lemma~\ref{lem:ssc-irrational}.
\item Since $A_{k,i}=G^{kn+i}(A_{0,0})$,
  $\Gamma'(y,k,i)=G^{kn+i}(\Gamma'(y,0,0))$, and $D_{k,i} = G^{kn+i}(D_{0,0})$
  for all $k\in\Z$, $0\le i\le n-1$, and $y\in (c_u,a]\setminus\{\min(q_{n-1},
  \hq_{n-1})\}$, it suffices to consider the case $k=i=0$.

That the sets~$A_{0,0}$ and~$\Gamma'(y,0,0)$, and the crosscuts of~$D_{0,0}$,
  are strongly stable is immediate from Lemma~\ref{lem:gamma-stable} and
  Remark~\ref{rmk:dotted}, which gives that $\Psi(\eta)_{0} = b$ for all
  $\eta\in A_{0,0}$; $\Psi(\eta)_{0}=f(\tau(y))$ for all
  $\eta\in\Gamma'(y,0,0)$; and $\Psi(\eta)_{0}=f(\tau(q_{n-1}))$ for all~$\eta$
  in the crosscuts of~$D_{0,0}$.

To complete the proof that~$D_{0,0}$ is strongly stable, it is therefore only
required to show that for all $\by\in[\bt(a,1, 0),\,\bt(\re_u,1,0)]$ and all
$s\in[1+u(q_{n-1}), n+1]$ we have $\Psi(\by,s)_{0} = f(\tau(q_{n-1}))$. Given
$\by\in [\bt(a,1, 0),\,\bt(\re_u,1,0)]=
\{\bq_0\}\cup\bigcup_{k=1}^\infty(L_{k,0}\cup R_{k,0})$, we have $y_0=q_0$ and
$y_r = q_{n-r}$ for $1\le r\le n$. Since $y_1=q_{n-1}$ and
$y_{1+i}\not\in\ingam$ for $1\le i\le n-1$, Lemma~\ref{lem:constant-block}
gives that $\Psi(\by,s)_0 = f(\tau(q_{n-1}))$ for all $s\in[1+u(q_{n-1}), n+1]$
as required.
  
\end{enumerate} The proof that there are no connected strongly stable sets
which strictly contain one of these sets proceeds in the same way as in the
proof of Lemma~\ref{lem:ssc-irrational}.
\end{proof}

In the NBT case, where $q_{n-1}=c_u$, the interval $(c_u,\min(q_{n-1},
\hq_{n-1}))$ degenerates, leaving the simpler situation described in the
following lemma, whose proof works in exactly the same way as that of
Lemma~\ref{lem:rat-general-strong-stable}. Its conclusions are illustrated by
Figure~\ref{fig:rat-NBT-strong-stab}.

\begin{lem}
\label{lem:rat-NBT-strong-stable} Let $f$ be of rational NBT type, with
$q(\kappa(f))=m/n\in(0,1/2)\cap\Q$. Then the strongly stable components of
$\sD$ are $\{\partial'\}$ and:
\begin{enumerate}[(a)]
\item for each $\bp\in\bP$, the line $L_{\bp}=\{\bp\}\times(0,\infty]$;
\item for each $k\in\Z$ and $0\le i\le n-1$:
\begin{enumerate}[(i)]
\item for each $y\in(c_u,a]$, the crosscut $\Gamma'(y,k,i)$.
\item the union $D_{k,i}$ of
\begin{itemize}
\item the crosscut $\xi'([\bt(\re_u, k+1, i), \bt(a, k+1, i)],\,\,u_{k,i})$,
  and
\item the set $[\bt(a, k+1, i), \bt(\re_u, k+1, i)] \,\,\times\,\, [u_{k,i},
  v_{k,i}]$.
\end{itemize} Here $u_{k,i} = \lambda^{kn+i}(2)$, $v_{k,i} =
\lambda^{kn+i}(n+1)$, and both of the intervals in~$\hS$ are those with the
given endpoints which are disjoint from~$\bP$. \qed
\end{enumerate}
\end{enumerate}
\end{lem}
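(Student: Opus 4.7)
The plan is to follow the template of the proof of Lemma~\ref{lem:rat-general-strong-stable} with the simplifications that arise from $q_{n-1}=c_u$. The first step is to record this identity and its consequences. Since $\kappa(f)=\NBT(m/n)=\Inf{c_q0}$, the turning point~$c$ is periodic of period $n+2$, so $f(c)=b$, $f(b)=a$, and $f^n(a)=c$. Therefore $\tau(q_{n-1})=\tau(B^n(a))=f^n(a)=c$, and because $q_{n-1}\in\gamma$ we conclude $q_{n-1}=c_u$. This immediately yields $\hq_{n-1}=c_u$ (so the excluded point in Lemma~\ref{lem:rat-general-strong-stable}(b)(i) is already outside $(c_u,a]$, which is why part~(b)(i) of the present lemma has no exclusion) and $u(q_{n-1})=u(c_u)=1$, so that the parameter $\lambda^{kn+i}(1+u(q_{n-1}))$ of Lemma~\ref{lem:rat-general-strong-stable}(b)(iii) coincides with the present $u_{k,i}=\lambda^{kn+i}(2)$. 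Geometrically, the arcs $A_{k,i}$ of the general case degenerate, and the two crosscuts involving $\hq_{n-1}$ that bound $D_{k,i}$ coalesce into the single crosscut $\xi'([\bt(\re_u,k+1,i),\bt(a,k+1,i)],u_{k,i})$.

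Having identified the components, I would verify strong stability of each in turn, reducing to $k=i=0$ throughout via the identities $L_{\hB(\bp)}=G(L_\bp)$, $\Gamma'(y,k,i)=G^{kn+i}(\Gamma'(y,0,0))$, and $D_{k,i}=G^{kn+i}(D_{0,0})$ together with Corollary~\ref{cor:Psi-conjugates}. For the lines $L_\bp$ with $\bp\in\bP$, the proof of Lemma~\ref{lem:rat-general-strong-stable}(a) transfers verbatim: $\bp$ is landing of level~$0$, and Lemma~\ref{lem:formula-disjoint-gamma} together with iteration of $G$ forces $\Psi$ to have constant first coordinate $f^m(\tau(p_0))$ on $\{\bp\}\times[1/2^m,\infty]$ after $m$ iterates of $\hH$. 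For $\Gamma'(y,0,0)$, Lemma~\ref{lem:gamma-stable} directly gives $\Psi(\eta)_0=f(\tau(y))$.

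For $D_{0,0}$, I would show $\Psi(\eta)_0=f(c)=b$ for every $\eta\in D_{0,0}$. On the crosscut $\xi'([\bt(\re_u,1,0),\bt(a,1,0)],2)$, the value at the endpoints follows from Lemma~\ref{lem:constant-block} applied to $\bt(\re_u,1,0)$ and $\bt(a,1,0)$ (both landing of level~$0$), and at interior points of the horizontal segment from \eqref{eq:convenient-2} together with Lemma~\ref{lem:u-key}, using $v=1=\phi^{-1}(f(c))=\phi^{-1}(b)$. On the rectangle $[\bt(a,1,0),\bt(\re_u,1,0)]\times[2,n+1]$, each $\by$ satisfies $y_1=q_{n-1}=c_u$ and $y_{1+i}=q_{n-1-i}\notin\ingam$ for $1\le i\le n-1$ (only $q_{n-1}$ lies in $\gamma$ along the orbit), so Lemma~\ref{lem:constant-block} with $r=1$, $k=n-1$ gives $\Psi(\by,s)_0=f(\tau(q_{n-1}))=b$ on the entire parameter range.

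Finally, maximality. Suppose a connected strongly stable set~$Y$ strictly contains one of the listed components. Following the argument in Lemma~\ref{lem:ssc-irrational}, $Y$ must then intersect $\Gamma'(y,k,i)$ for $y$ ranging over a nondegenerate subinterval of $(c_u,a]$; on each such crosscut $\Psi(\eta)_0=f(\tau(y))$ by Lemma~\ref{lem:gamma-stable}, and since $y\mapsto f(\tau(y))$ is non-constant on any interval, $\{\Psi(\eta)_0:\eta\in Y\}$ would be uncountable, contradicting the strong stability condition (which forces $\Psi(\eta)_0$ to lie in the finite set $f^{-N}(f^N(\Psi(\eta_0)_0))$ for a fixed reference $\eta_0$). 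The main place where care is needed is at the common boundary between $D_{k,i}$ and the neighboring packet of crosscuts $\{\Gamma'(y,k,i)\}$, which in the NBT case occurs along the degenerate locus where $\hq_{n-1}=q_{n-1}=c_u$; I expect this to be the main, but routine, obstacle, verifying that the boundary identifications line up correctly so that the decomposition given really is a partition of $\sD$.
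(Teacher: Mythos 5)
Your proposal is correct and takes essentially the same route as the paper, which proves this lemma simply by observing that in the NBT case $q_{n-1}=\hq_{n-1}=c_u$ and $u(q_{n-1})=1$, so that the arcs $A_{k,i}$ and the packet of crosscuts over $(c_u,\min(q_{n-1},\hq_{n-1}))$ degenerate and the proof of Lemma~\ref{lem:rat-general-strong-stable} goes through verbatim; in particular your verification that $\Psi(\eta)_0=b$ on $D_{0,0}$ via Lemma~\ref{lem:constant-block} with $r=1$, $k=n-1$ is exactly the computation used there. One harmless slip: $\bt(a,1,0)$ and $\bt(\re_u,1,0)$ are landing of level~$1$, not level~$0$ (their entry of index~$1$ is $q_{n-1}=c_u\in\ingam$), but the hypothesis you actually invoke from Lemma~\ref{lem:constant-block} — that $y_{1+i}\notin\ingam$ for all $i\ge1$ — does hold, so the conclusion is unaffected.
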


\begin{figure}[htbp]
\begin{center}
\includegraphics[width=0.85\textwidth]{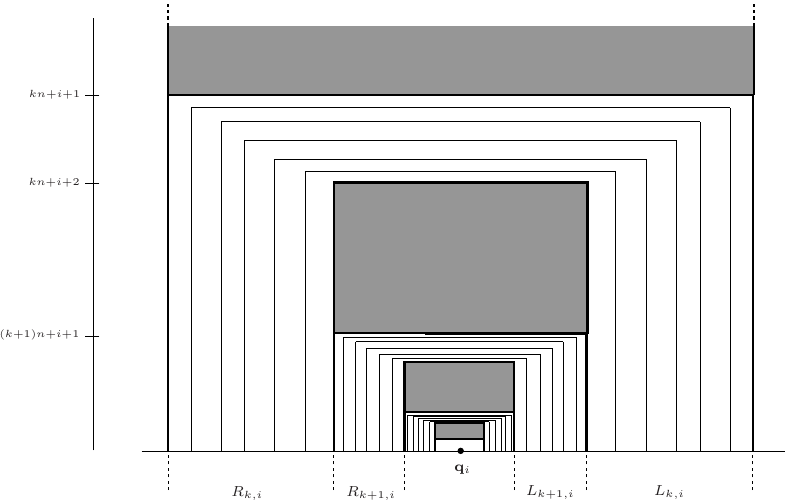}
\end{center}
\caption{Strongly stable components in the rational NBT case. The packets of
  crosscuts which do not surround~$\bq_i$ have degenerated, and every component
  except for the lines~$L_\bp$ ($\bp\in\bP$) touches $\hS_\infty$ at two
  points. The labeling on the $s$-axis is for the case $k\ge 0$.}
\label{fig:rat-NBT-strong-stab}
\end{figure}

In the rational normal endpoint case, on the other hand, the interval
$(\min(q_{n-1}, \hq_{n-1}), a]$ degenerates, giving rise to a more substantial
modification to the description. The following lemma is illustrated by
Figure~\ref{fig:rat-end-strong-stab}.

\begin{lem}
\label{lem:rat-endpoint-strong-stable} 
  Let $f$ be of rational normal endpoint type, with
  $q(\kappa(f))=m/n\in[0,1/2)\cap\Q$, and suppose that $f$ is of right hand
  endpoint type, so that $q_{n-1}=\re_u$ (the modifications in the left hand
  endpoint case are given at the end of the lemma statement). Then the strongly
  stable components of $\sD$ are $\{\partial'\}$ and:
  \begin{enumerate}[(a)]
    \item for each $k\in\Z$ and $0\le i\le n-1$:
    \begin{enumerate}[(i)]
      \item the arc $A_{k,i}=\{\bt(c_u, k, i)\} \times [r_{k,i}, \infty]$, where
        $r_{k,i}=\lambda^{kn+i}(2)$, i.e.
        \[ r_{k,i} =
          \begin{cases} 
              kn + i + 2 & \text{ if }k\ge 0, \\ 
              1/2^{|k|n - i-1} & \text{ if }k<0.
          \end{cases}
        \]
      \item for each $y\in(c_u,a)$, the crosscut $\Gamma'(y,k,i)$.
    \end{enumerate}
      \item For each $0\le i\le n-1$, the set
      \[ 
        D_i=L_{\bq_i} \cup\bigcup_{k\in\Z}L_{\bt(a,k,i)} \cup \bigcup_{k\in\Z}
          B_{k,i},
      \] 
      where $L_{\by}$ is the line $\{\by\}\times(0,\infty]$ and $B_{k,i} =
      L_{k,i}\times (0, u_{k,i}]$ with $u_{k,i}=\lambda^{kn+i}(1)$, i.e.
      \[ 
        u_{k,i} =
        \begin{cases} 
          kn + i + 1 & \text{ if }k\ge 0, \\ 
          1/2^{|k|n - i} & \text{ if }k<0.
        \end{cases}
      \]
  \end{enumerate} 
  In the left hand endpoint case $q_{n-1}=a$, the strongly stable
  components are given by replacing $\bq_i$ with $\bq_{i-m^{-1}\bmod n}$,
  $\bt(a,k,i)$ with $\bt(\re_u, k, i)$, and $L_{k,i}$ with $R_{k,i}$ in~(b).
\end{lem}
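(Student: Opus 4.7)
The proof will follow the template of Lemmas~\ref{lem:ssc-irrational} and~\ref{lem:rat-general-strong-stable}. By the symmetry noted in the lemma statement, it suffices to treat the right hand endpoint case $q_{n-1} = \re_u$. In this case the outside map has $\bQ$ as its unique periodic orbit, and the intervals $R_{k,i}$ of Definitions~\ref{def:intervalsL,R,I} degenerate (since $[\re_u, q_{n-1}) = \emptyset$), so only the $L_{k, i}$ remain on $\hS$, which together with $\bQ$ cover the circle; see Figure~\ref{fig:circle-order-endpoint}. Consequently, the barriers provided by the second periodic orbit $\bP$ in Lemma~\ref{lem:rat-general-strong-stable} disappear, and the former family of bounded components $\{D_{k, i}\}_{k \in \Z}$ merges into a single unbounded component $D_i$.

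First I would verify $G$-invariance of the listed families. The identities $G^{kn+i}(A_{0, 0}) = A_{k, i}$ and $G^{kn+i}(\Gamma'(y, 0, 0)) = \Gamma'(y, k, i)$ hold exactly as in the rational general case. For the sets $D_i$ one checks directly that $G(D_i) = D_{(i+1) \bmod n}$: the line $L_{\bq_i}$ maps to $L_{\bq_{(i+1) \bmod n}}$; the lines $L_{\bt(a, k, i)}$ permute appropriately by Remark~\ref{rmk:alternative-intervals}; and the rectangles $B_{k, i} = L_{k, i} \times (0, u_{k, i}]$ map to $B_{k, i+1}$, or $B_{k+1, 0}$ when $i = n - 1$, using that $\lambda(u_{k, i}) = u_{k, i+1}$. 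This reduces the strong stability verifications to the pieces $A_{0, 0}$, $\Gamma'(y, 0, 0)$ for $y \in (c_u, a)$, and $D_0$.

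The strong stability of $A_{0, 0}$ and each $\Gamma'(y, 0, 0)$ is proved by the same computations using Lemma~\ref{lem:constant-block} and Lemma~\ref{lem:u-key} as in Lemma~\ref{lem:rat-general-strong-stable}, giving $\Psi(\eta)_0 = b$ on $A_{0, 0}$ and $\Psi(\eta)_0 = f(\tau(y))$ on $\Gamma'(y, 0, 0)$. For $D_0$ I would show that for any $\eta^{(1)}, \eta^{(2)} \in D_0$ there exists some $m \ge 0$ with $\hH^m(\Psi(\eta^{(j)}))_0 = f^{m+1}(a)$ for $j = 1, 2$. This splits into three subcalculations: on the line $L_{\bq_0}$, the thread $\bq_0$ is landing of level~$0$ and~(\ref{eq:omega}) gives $\omega(\bq_0)_r = f^{r+1}(a)$, so iterating $\hH$ enough times produces the desired value for any $s > 0$; on each line $L_{\bt(a, k, 0)}$, Lemma~\ref{lem:formula-disjoint-gamma} yields the same value once $m$ exceeds the landing level of that thread; and on each rectangle $B_{k, 0}$, a point $(\bt(y, k, 0), s)$ with $s \le u_{k, 0}$ has its initial $\Psi$-coordinates computable from~(\ref{eq:Psi}), and iterating $\hH$ along the orbit of $\bq_0$ (which avoids $\ingam$ at all indices up to $n-1$) brings the zeroth coordinate to $f^{m+1}(a)$ uniformly for large $m$. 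This last computation is the rational analogue of the one performed for the sets $D_r$ in Lemma~\ref{lem:ssc-irrational}.

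Maximality of each listed component is then proved by the standard argument from Lemma~\ref{lem:ssc-irrational}: any strictly larger connected set would intersect a continuum of crosscuts $\Gamma'(y, k, i)$ or other strongly stable pieces on which $\Psi(\cdot)_0$ takes uncountably many distinct values, contradicting strong stability since $f$ is not locally constant. The hardest part will be the step above applied to $D_0$: unlike the bounded $D_{k, i}$ of Lemma~\ref{lem:rat-general-strong-stable}, this component spans all vertical levels indexed by $k \in \Z$, so one must simultaneously control infinitely many rectangles $B_{k, 0}$, lines $L_{\bt(a, k, 0)}$, and the single central line $L_{\bq_0}$, and check that a common iterate $m$ can be chosen sending every point's zeroth $\Psi$-coordinate to the single value $f^{m+1}(a)$. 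The absence of $\bP$ as a barrier is precisely why these pieces connect into one component, and is what makes the bookkeeping here the most delicate.
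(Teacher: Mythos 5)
Your proposal is correct and follows essentially the same route as the paper: reduce to $i=0$ by $G$-equivariance, handle $A_{0,0}$ and the crosscuts $\Gamma'(y,0,0)$ by the computations already underlying Lemma~\ref{lem:gamma-stable}, and verify strong stability of $D_0$ via the three subcalculations on $L_{\bq_0}$, the lines $L_{\bt(a,k,0)}$, and the rectangles $B_{k,0}$, each giving $\hH^r(\Psi(\eta))_0=f^{r+1}(a)$ for $s$ in an $r$- and $k$-dependent interval, with maximality by the counting argument of Lemma~\ref{lem:ssc-irrational}. One clarification: Definition~\ref{def:strong-stable} is a pairwise condition, so no common iterate valid simultaneously for every point of $D_0$ is required (and none exists, since points with $s\to 0$ force $r\to\infty$); the bookkeeping you flag as delicate reduces to taking the larger of the two values of $r$ for any given pair.
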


\begin{figure}[htbp]
\begin{center}
\includegraphics[width=0.75\textwidth]{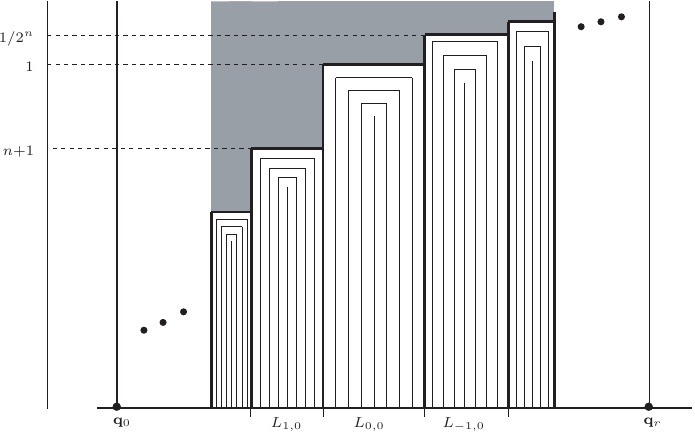}
\end{center}
\caption{Strongly stable components in the normal right hand endpoint case. The types of components are: (a) packets of crosscuts
  $\Gamma'(y,k,i)$ above each interval $L_{k,i}$, together with the ``central''
  arc $A_{k,i}$ which only intersects $\hS_\infty$ at a single point; and (b),
  for each~$i$, the set $D_i$ consisting of the outermost crosscut above
  each~$L_{k,i}$, the rectangles above these crosscuts extending up to (but not
  including) $s=0$, and the line $L_{\bq_i}$ above~$\bq_i$. In the figure,
  $D_0$ is the union of the shaded region and all of the bold arcs: it
  intersects $\hS_\infty$ at infinitely many points.}
\label{fig:rat-end-strong-stab}
\end{figure}

\begin{proof} We suppose that $q_{n-1}=\re_u$, so that $m/n>0$. The
modifications needed for the case $q_{n-1}=a$ are straightforward (including
for the special sub-case $m/n=0$, when $q_0=a$ is fixed by~$B$, and $\re_u=b$).
Notice that, since the orbit of~$a$ is disjoint from $\ingam$, we have
$\tau(B^r(a))=f^r(a)$ for all $r\ge 0$ by~(\ref{eq:commute}) (here the $a$ on
the left hand side is $a\in S$, while the $a$ on the right hand side is $a\in
I$). In particular, $f^n(a)=\tau(\re_u)=\re$, $f(a)$ is periodic of period~$n$,
and $\tau(q_i) = f^{i+1}(a)$ for $0\le i\le n-1$.

That the sets $A_{k,i}$ and $\Gamma'(y,k,i)$ are strongly stable is immediate
from Lemma~\ref{lem:gamma-stable} and Remark~\ref{rmk:dotted}.

To show that the sets~$D_i$ are strongly stable it suffices, since
$G(D_i)=D_{i+1\bmod n}$, to consider the case $i=0$.

\begin{enumerate}[(i)]
\item Let $\by=\bq_0 = \thr{\Inf{q_0, q_{n-1}, q_{n-2}, \ldots, q_1}}$. Since
    $y_1=q_{n-1}=\re_u$ and $y_{1+i}\not\in\ingam$ for all $i\ge 1$,
    Lemma~\ref{lem:constant-block} gives that $\Psi(\by,s)_0 =
    f(\tau(\re_u))=f(a)$ for all $s\in[1, \infty]$. On the other hand, if
    $s\in[1/2^r, 1/2^{r-1})$ for some $r\ge 1$, then $\Psi(\by, s)_0 = (q_0,
    s)$, and hence $\hH^r(\Psi(\by, s))_0 =
    \tau(B^r(q_0))=\tau(q_{r\bmod n}) = f^{r+1}(a)$. Therefore, for each $r\ge
    0$,
\[
\hH^r(\Psi(\bq_0, s))_0 = f^{r+1}(a) \qquad\text{ for all }s\in [1/2^r,
   \infty].
\]
\item By a similar argument applied to $\by = \bt(a,0,0) = \thr{q_0, a,
  B^{-1}(a), \ldots}$, we obtain that, for each~\mbox{$r\ge 0$},
$
\hH^r(\Psi(\bt(a,0,0), s))_0 = f^{r+1}(a)\text{ for all }s\in[1/2^r, \infty]$.

Now for each~$k\in\Z$ we have $G^{kn}(\bt(a,0,0), s) = (\bt(a,k,0),
\lambda^{kn}(s))$. By Corollary~\ref{cor:Psi-conjugates}, we obtain that for
all $k\in\Z$ and all $r\ge 0$,
\begin{eqnarray*}
\hH^r(\Psi(\bt(a,k,0),s))_0 &=& \hH^r(\Psi(G^{kn}(\bt(a,0,0),
  \lambda^{-kn}(s))))_0 \\ &=& \hH^{kn+r}(\Psi(\bt(a,0,0), \lambda^{-kn}(s)))_0
\\ &=& f^{kn}(f^{r+1}(a))=f^{r+1}(a) \qquad\text{ provided that
}\lambda^{-kn}(s)\in[1/2^r, \infty].
\end{eqnarray*} Therefore, for each $k\in\Z$ and each~$r\ge 0$, we have
\[
\hH^r(\Psi(\bt(a,k,0), s))_0 = f^{r+1}(a) \qquad\text{ for all }
s\in[\lambda^{kn}(1/2^r), \infty].
\]
\item Now let $\by = \bt(y,0,0)=\thr{q_0, y, B^{-1}(y), \ldots}\in L_{0,0}$,
  where $y\in [a,\re_u)$. Then $\Psi(\by,1)_0 = H(y,1/2) = f(a)$
  by~(\ref{eq:convenient-2}) and~(U1) of
  Definition~\ref{def:unwrapping-unimodal}. On the other hand, if $s\in(0,1)$
  we have $\Psi(\by,s) = \thr{(q_0,s), \ldots}$ and hence, as in~(i), if
  $s\in[1/2^r, 1/2^{r-1})$ for some $r\ge 1$ then $\hH^r(\Psi(\by,s))_0 =
  f^{r+1}(a)$. Therefore, for each~$r\ge 0$,
\[
\hH^r(\Psi(\by,s))_0 = f^{r+1}(a) \qquad\text{ for all }s\in[1/2^r, 1]\text{
  and all }\by\in L_{0,0}.
\] Since $G^{kn}(L_{0,0}) = L_{k,0}$ for each $k\in\Z$, a similar argument to
that of part~(ii) establishes that for all $k\in\Z$ and all $r\ge 0$,
\[
\hH^r(\Psi(\by,s))_0 = f^{r+1}(a) \qquad\text{ for all
}s\in[\lambda^{kn}(1/2^r), u_{k,0}] \text{ and all }\by\in L_{k,0},
\] where we have used that $\lambda^{kn}(1) = u_{k,0}$ for all $k\in\Z$.

\end{enumerate} Therefore, for all $\eta_1,\eta_2\in D_0$, there is some $r\ge
0$ such that $\hH^r(\Psi(\eta_1))_0 = \hH^r(\Psi(\eta_2))_0 = f^{r+1}(a)$,
establishing that $D_0$ is strongly stable as required.

\medskip

The proof that there are no connected strongly stable sets which strictly
contain one of these sets proceeds in the same way as in the proof of
Lemma~\ref{lem:ssc-irrational}.
\end{proof}

\medskip

The quadratic-like strict left endpoint case
(Section~\ref{sec:quadratic-like-endpoint}) is identical to the normal endpoint
case, except that there are additional half-open intervals
$I_i=\{\bt(y)\,:\,y\in(q_i, p_i]\}$ in $\hS$ (for $0\le i\le n-1$) whose points
satisfy $\hB^r(\bt(y))_0\not\in\gamma$ for all $r\in\Z$. The strongly stable
component containing $(\bt(y),\infty)$ is the line
$\{\bt(y)\}\times(0,\infty]$, exactly as in the irrational case; and other
strongly stable components are as in the normal endpoint case. We therefore
have the following description.

\begin{lem}
\label{lem:rat-quadratic-strong-stable}
  Let $f$ be of rational quadratic-like strict left endpoint type, with
  $q(\kappa(f))=m/n\in(0,1/2)\cap\Q$.
  Then the strongly stable components of $\sD$ are $\{\partial'\}$ and:
  \begin{enumerate}[(a)]
    \item for each $k\in\Z$ and $0\le i\le n-1$:
    \begin{enumerate}[(i)]
      \item the arc $A_{k,i}=\{\bt(c_u, k, i)\} \times [r_{k,i}, \infty]$, where
        $r_{k,i}=\lambda^{kn+i}(2)$, i.e.
        \[ r_{k,i} =
          \begin{cases} 
              kn + i + 2 & \text{ if }k\ge 0, \\ 
              1/2^{|k|n - i-1} & \text{ if }k<0.
          \end{cases}
        \]
      \item for each $y\in(c_u,a)$, the crosscut $\Gamma'(y,k,i)$.
    \end{enumerate}
      \item For each $0\le i\le n-1$, the set
      \[ 
        D_i=L_{\bq_{i-m^{-1}\bmod{n}}} \cup\bigcup_{k\in\Z}L_{\bt(\re_u,k,i)} \cup \bigcup_{k\in\Z}
          B_{k,i},
      \] 
      where $L_{\by}$ is the line $\{\by\}\times(0,\infty]$ and $B_{k,i} =
      R_{k,i}\times (0, u_{k,i}]$ with $u_{k,i}=\lambda^{kn+i}(1)$, i.e.
      \[ 
        u_{k,i} =
        \begin{cases} 
          kn + i + 1 & \text{ if }k\ge 0, \\ 
          1/2^{|k|n - i} & \text{ if }k<0.
        \end{cases}
      \]
    \item For each $0\le i\le n-1$ and each $y\in(q_i, p_i]$, the line $L_{\bt(y)}$.
  \end{enumerate} 
\end{lem}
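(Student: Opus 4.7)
The proof follows the template of Lemma~\ref{lem:rat-endpoint-strong-stable} in its left-hand endpoint form (since $q_{n-1}=a$ in the strict left endpoint case), with a single additional family of components reflecting the presence of the second period~$n$ orbit~$\bP$ and the half-open intervals $I_i = \{\bt(y) : y\in(q_i,p_i]\}$, as explained in the paragraph preceding the lemma statement.

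First, I would verify that the sets listed in~(a) and~(b) are strongly stable by repeating verbatim the computations in the proof of Lemma~\ref{lem:rat-endpoint-strong-stable}. These rely on the first-component behavior of~$\Psi$ on the arcs $A_{k,i}$, on the crosscuts $\Gamma'(y,k,i)$, and on the threads $\bt(\re_u,k,i)$ together with points of the intervals $R_{k,i}$; none of these arguments invokes the behavior of $\Psi$ on the new intervals $I_i$, so they carry over without modification. The formulas for $r_{k,i}$, $u_{k,i}$ are also unchanged, as the conjugacy $G^{kn+i}$ used to translate the base case $k=i=0$ to arbitrary $(k,i)$ is insensitive to the presence of~$\bP$.

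For part~(c), I would note that if $y\in(q_i,p_i]$ then, by Theorem~\ref{thm:outside-dynamics}~(b)(iii), the full $B$-orbit of $y$ is disjoint from~$\gamma$, so $B^{-r}(y)$ is defined for every $r\ge 0$ and the thread $\bt(y)=\thr{y,B^{-1}(y),B^{-2}(y),\ldots}$ is landing of level~0. Lemma~\ref{lem:formula-disjoint-gamma} then gives $\Psi(\bt(y),s)_0=\tau(y)$ for all $s\ge 1$, and the inductive argument in the proof of Lemma~\ref{lem:ssc-irrational}~(a) yields $\hH^m(\Psi(\bt(y),s))_0 = f^m(\tau(y))$ for every $m\ge 0$ and every $s\in[1/2^m,\infty]$, establishing that $L_{\bt(y)}=\{\bt(y)\}\times(0,\infty]$ is strongly stable.

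Finally, maximality of the components is established, as in the proofs of Lemmas~\ref{lem:ssc-irrational} and~\ref{lem:rat-endpoint-strong-stable}, by a cardinality argument: any connected set properly containing one of the listed components would meet uncountably many of the crosscuts $\Gamma'(y,k,i)$ or lines $L_{\bt(y)}$, and this would yield uncountably many distinct values of $\Psi(\eta)_0$, whereas on a strongly stable set these values must lie in a single $f^{-N}$-preimage of a common point and hence form a countable set (using that $f$ is not locally constant). The main obstacle, which is in practice just bookkeeping, is to confirm that the decomposition of $\hS$ into $\bQ$, $\bP$, the intervals $I_i$, the $L_{k,i}$, and the $R_{k,i}$ --- arranged around~$\hS$ as in Figure~\ref{fig:quadratic-order-endpoint} --- matches the claimed description of each $D_i$. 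This is precisely why $D_i$ involves $\bq_{i-m^{-1}\bmod{n}}$, the lines $L_{\bt(\re_u,k,i)}$, and the rectangles $B_{k,i}=R_{k,i}\times(0,u_{k,i}]$ rather than their $a$-valued and $L_{k,i}$-valued counterparts from the interior rational case.
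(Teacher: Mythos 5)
Your proposal is correct and matches the paper's treatment, which likewise disposes of this lemma by observing that parts (a) and (b) are identical to the normal (left) endpoint case of Lemma~\ref{lem:rat-endpoint-strong-stable}, that the extra lines $L_{\bt(y)}$ over the intervals $I_i$ are handled exactly as in the irrational case (Lemma~\ref{lem:ssc-irrational}~(a)), and that maximality follows from the same countability argument on the values $\Psi(\eta)_0$. Your closing remark about the bookkeeping for $D_i$ (using $\bq_{i-m^{-1}\bmod n}$, $\bt(\re_u,k,i)$, and $R_{k,i}$ because $q_{n-1}=a$ forces the $L_{k,i}$ to degenerate) is precisely the point of the left-endpoint modification stated at the end of Lemma~\ref{lem:rat-endpoint-strong-stable}.
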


The following straightforward consequence of the above proofs will be useful in
Section~\ref{sec:cont-var}.

\begin{lem}\mbox{}
  \label{lem:rational-diameter-bound}
  \begin{enumerate}[(a)]
    \item Let~$f$ be of irrational or rational early endpoint type. Then for
    each~$r\ge 1$ and each $y\in(c_u, a)$, the diameters of the strongly stable
    component images $\Psi(A_r)$ and $\Psi(C_{r,y})$ are bounded above by
    $|b-a|/2^{r-1}$.

    \item Let~$f$ be of rational interior or late endpoint type with
    $q(\kappa(f))=m/n\in(0,1/2)\cap\Q$. Then for each $k\ge 0$, each $0\le i\le
    n-1$, and each $y\in(c_u, a] \setminus\{\min(q_{n-1}, \hq_{n-1})\}$, the
    diameters of the strongly stable component images $\Psi(A_{k,i})$,
    $\Psi(\Gamma'(y,k,i))$ and $\Psi(D_{k, i})$ are bounded above by
    $|b-a|/2^{kn+i}$.

    \item Let~$f$ be of rational normal endpoint type or quadratic-like strict
    left endpoint type, with $q(\kappa(f))=m/n\in(0,1/2)\cap\Q$. Then for each
    $k\ge 0$, each $0\le i\le n-1$, and each $y\in(c_u, a)$, the diameters of
    the strongly stable component images $\Psi(A_{k,i})$ and
    $\Psi(\Gamma'(y,k,i))$ are bounded above by $|b-a|/2^{kn+i}$.

  \end{enumerate}
\end{lem}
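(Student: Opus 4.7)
The plan is to reduce the three diameter bounds to a single observation about threads in $\hT$: \emph{if a set $C \subseteq \sD$ and an integer $N \ge 0$ satisfy $\Psi(\eta)_N = x$ for some fixed $x \in I$ and all $\eta \in C$, then $\Psi(\eta)_r = f^{N-r}(x)$ for all $0 \le r \le N$ and all $\eta \in C$.} This is because each $\Psi(\eta) \in \invlim(T, H)$ satisfies $\Psi(\eta)_r = H(\Psi(\eta)_{r+1})$, and $H|_I = f$ with $f(I) \subseteq I$, so an induction backward from $N$ forces each earlier entry to be $f^{N-r}(x) \in I$ independently of $\eta$. Consequently, for any $\eta_1, \eta_2 \in C$,
\[
d(\Psi(\eta_1), \Psi(\eta_2)) \;=\; \sum_{r > N}\frac{d(\Psi(\eta_1)_r, \Psi(\eta_2)_r)}{2^r} \;\le\; \frac{\diam(T)}{2^N},
\]
and the metric on $T$ may be chosen so that $\diam(T) \le 2|b-a|$. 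It therefore suffices to identify, for each listed strongly stable component, an integer $N$ and a value $x\in I$ witnessing this hypothesis.

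For part~(a), the thread $\bt(c_u, r)$ lies in $\cL_r$ (since $c_u\in\gamma$ at coordinate $r$, and backward orbits of non-$\cO$ points avoid~$\gamma$ in the irrational and early endpoint cases), so Lemma~\ref{lem:formula-disjoint-gamma} applied with $s \ge s_r = r+1$ gives $\Psi(\eta)_r = \tau(c_u) = b$ for every $\eta \in A_r$. For $C_{r,y}$, the proof of Lemma~\ref{lem:ssc-irrational}\,(b)(ii) (which is essentially an application of Lemmas~\ref{lem:constant-block} and~\ref{lem:u-key}) establishes that $\Psi(\eta)_0 = f(\tau(y))$ on $C_{1,y}$; using $G^{r-1}(C_{1,y}) = C_{r,y}$ and Corollary~\ref{cor:Psi-conjugates}, the coordinate at which this constancy holds is shifted from~$0$ to~$r-1$, so $N = r-1$ works and the resulting tail bound is precisely $|b-a|/2^{r-1}$.

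For parts~(b) and~(c), the relevant constancy facts are already embedded in the proofs of Lemmas~\ref{lem:rat-general-strong-stable}, \ref{lem:rat-NBT-strong-stable}, \ref{lem:rat-endpoint-strong-stable}, and \ref{lem:rat-quadratic-strong-stable}, which collectively show that for every listed component:
\[
\begin{aligned}
A_{k,i}: \quad &\Psi(\eta)_{kn+i+1} = b\ \text{(applying Lemma~\ref{lem:formula-disjoint-gamma} to $\bt(c_u,k,i)\in\cL_{kn+i+1}$ with $s\ge r_{k,i}$);}\\
\Gamma'(y,k,i): \quad &\Psi(\eta)_{kn+i} = f(\tau(y))\ \text{directly from Lemma~\ref{lem:gamma-stable};}\\
D_{k,i}: \quad &\Psi(\eta)_{kn+i} = f(\tau(q_{n-1}))\ \text{from Lemma~\ref{lem:rat-general-strong-stable}(b)(iii) and Lemma~\ref{lem:rat-NBT-strong-stable}(b)(ii),}
\end{aligned}
\]
where the $D_{k,i}$ case combines Lemma~\ref{lem:constant-block} (for the rectangular portion) with Remark~\ref{rmk:dotted} (for the two bounding crosscuts). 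In every instance, the witnessing index is at least $kn+i$, so the general observation yields the bound $|b-a|/2^{kn+i}$.

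The argument is essentially bookkeeping: I expect no genuine obstacle, since each of the needed constancy statements has been verified during the classification of strongly stable components. The only mildly subtle point is the $C_{r,y}$ case in part~(a), where one must shift the index of constancy using the conjugacy $\Psi\circ G = \hH\circ\Psi$ to obtain the sharp exponent $r-1$ rather than merely the weaker bound $|b-a|/2^{r-r}$ that would come from considering $\Psi(\eta)_0$ alone.
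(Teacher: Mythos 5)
Your proposal is correct and follows essentially the same route as the paper: extract from the classification lemmas (or directly from Lemmas~\ref{lem:formula-disjoint-gamma}, \ref{lem:constant-block}, \ref{lem:gamma-stable} and Remark~\ref{rmk:dotted}) a coordinate at which all threads in a given component image agree, propagate that agreement backward through the inverse limit, and bound the metric tail. One small slip that does not affect the argument: $\tau(c_u)=c$, not $b$ (it is $f(\tau(c_u))=b$ that appears one index lower, which is the form the paper uses); since only the constancy of the entry matters, your bound still holds.
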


\begin{proof} In the irrational or early endpoint case, the proof of
    Lemma~\ref{lem:ssc-irrational} shows that every element~$\xi$ of
    $\Psi(A_1)$ (respectively $\Psi(C_{1,y})$) has $\xi_0=b$ (respectively
    $\xi_0=f(\tau(y))$). Therefore any two elements of $\Psi(A_r)$ or of
    $\Psi(C_{r,y})$ have equal $(r-1)^\text{th}$ entries, and so are within
    distance $|b-a|/2^{r-1}$ of each other. This establishes~(a). Parts~(b)
    and~(c) follow similarly from the proofs of
    Lemmas~\ref{lem:rat-general-strong-stable}
    and~\ref{lem:rat-endpoint-strong-stable}, which show that every
    element~$\xi$ of $\Psi(A_{0,0})$ (respectively $\Psi(\Gamma'(y, 0, 0))$,
    $\Psi(D_{0,0})$) has $\xi_0 = b$ (respectively $\xi_0=f(\tau(y))$, $\xi_0 =
    f(\tau(q_{n-1}))$).
\end{proof}

\subsection{Construction of the sphere homeomorphism}

\begin{defn}[The decomposition $\cG'$ of $\bD$]
\label{def:decomp-primed} 
  Let~$\cG'$ be the decomposition of $\bD$ whose elements are:
  \begin{itemize}
    \item $\{\eta\}$ for each $\eta\in\bD\setminus\sD$;
    \item Strongly stable components whose closures don't contain~$\partial'$; and
    \item The set~$X$ which is the union of the strongly stable components
      whose closures contain~$\partial'$ (including the strongly stable
      component $\{\partial'\}$ itself).
  \end{itemize}
\end{defn}

\begin{remark}
\label{rmk:decomp-primed-permuted} It follows from the explicit descriptions of
the strongly stable components that those whose closures don't contain
$\partial'$ are compact; and that the set~$X$ is also compact. Therefore $\cG'$
is the largest partition of~$\bD$ into compact sets with the property that
every strongly stable component is contained in a single partition element.

Moreover, the elements of $\cG'$ are connected, and are permuted by~$G$, since
\mbox{$G(X)=X$}.
\end{remark}

\begin{lem}
\label{lem:Psi-cont-decomp} 
  The restriction of~$\Psi$ to each element of~$\cG'$, apart from the single
  points of $\bD\setminus\sD$ (where it is not defined), is a
  homeomorphism onto its image.
\end{lem}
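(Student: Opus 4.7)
The plan is to exploit compactness of each element of $\cG'$ (Remark~\ref{rmk:decomp-primed-permuted}) together with the fact that a continuous injection from a compact space into a Hausdorff space is automatically a homeomorphism onto its image; thus it suffices to verify injectivity and continuity of $\Psi|_E$ for each relevant $E \in \cG'$. Injectivity on all of $\sD$ can be established by combining injectivity of $\Psi|_D$ (Corollary~\ref{cor:Psi-homeo}), injectivity of $\omega$ on $\bigcup_N \cL_N$ (Lemma~\ref{lem:landing-injective}), the disjointness of $\Psi(D)\subset U$ and $\omega(\cL) \subset \hI$, and---in the late endpoint case, where $\cL \supsetneq \bigcup_N \cL_N$---a direct check using Lemma~\ref{lem:late-endpoint-landing} that $\omega$ takes distinct values on $\bQ$ which do not coincide with any value of $\omega|_{\bigcup_N \cL_N}$.

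For continuity, I would first handle the cases $E \subset \tD$, i.e.~$E \cap \hS_\infty \subset \cR_\infty$: these are immediate from Corollary~\ref{cor:Psi-maximal-extension}. By inspection of Lemmas~\ref{lem:ssc-irrational}--\ref{lem:rat-quadratic-strong-stable}, this disposes of $A_{k,i}$, $A_r$, $\Gamma'(y,k,i)$, $C_{r,y}$, $D_{k,i}$, and every line $L_\by$ with $\by \in \cR$, together with the distinguished element $X$ in the rational general, NBT and late endpoint cases (where $X$ contains only $\{\partial'\}$ and lines $L_\bp$ with $\bp \in \bP \subset \cR$). Singleton strongly stable components $\{(\by,\infty)\}$ with $\by \in \cL\setminus\cR$ (which arise, for example, as $\{(\bq_i,\infty)\}$ in the late endpoint case) are trivially homeomorphisms onto their images. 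The remaining, genuine work concerns $X$ in the irrational, early endpoint, normal endpoint, and quadratic-like strict left endpoint cases, where $X$ contains lines $L_\by$ reaching up to $\cL_\infty \setminus \cR_\infty$ and, in the irrational and early endpoint cases, also the boxes $D_r$ whose $\hS_\infty$ endpoints at $\bt(a,r)$ and $\bt(\re_u,r)$ are gap endpoints not in $\cR$.

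Continuity of $\Psi|_X$ at $\partial'$ is inherited from continuity on $D$. At a point $(\by,\infty) \in X$ with $\by \in \cL \setminus \cR$, the key observation is that $\by \in \cL_0$ in every relevant case (either the backward $\hB$-orbit of $\by$ avoids $\ingam$ because $\by$ is outside the backward orbit of $\gamma$, or $\by \in \bQ$ and $q_{n-1}$ is a gap endpoint rather than an interior point of $\ingam$). Applying Lemma~\ref{lem:formula-disjoint-gamma} to sequences $(\by',s)$ approaching $(\by,\infty)$ along lines $L_{\by'}$ in $X$ gives $\Psi(\by',s)_k = \tau(y'_k)$ for each $k < \lfloor s \rfloor$, yielding coordinatewise convergence to $\omega(\by)$; for sequences approaching along the other constituents of $X$---boxes $D_r$ with $r \to \infty$ in the irrational or early endpoint cases, or bands $B_{k,i}$ of $D_i$ with $k \to \infty$ in the endpoint cases---the same lemma applied to $\bt(z,r) \in \cL_r$ or $\bt(y,k,i) \in \cL_{kn+i+1}$ yields analogous explicit expressions which likewise converge to $\omega(\by)_k$, because $B^r(a) \to y$ in $S$ in the irrational or early endpoint setting and by the periodic structure of $\bQ$ in the endpoint settings. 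The main obstacle is this last step: establishing the uniform coordinatewise control needed for mixed approach patterns which transition between different constituent strongly stable components of $X$; the uniformity is supplied by Lemma~\ref{lem:formula-disjoint-gamma} together with the explicit descriptions of these components, and no essentially new tool is required.
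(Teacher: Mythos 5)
Your overall architecture is sound and matches the paper's: the only elements of $\cG'$ requiring work are the exceptional element~$X$ in the irrational, early endpoint, normal endpoint, and quadratic-like strict left endpoint cases, and there the issue is continuity of $\Psi|_X$ at the points of $X\cap\hS_\infty$, all of which are landing of level~$0$. But the justification you give for the crucial step — convergence along the boxes $D_r$ (or the bands $B_{k,i}$) — does not work. Lemma~\ref{lem:formula-disjoint-gamma} applies to $(\bt(z,r),s)$ only when $s\ge r+1$, since $\bt(z,r)$ with $z\in\ingam$ is landing of level exactly~$r$ and of no smaller level; the points of the box $G_r\times(0,u_r]$ all have $s\le u_r=r$, so the lemma's hypothesis fails for every point you need it for. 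Worse, the formula it would produce has entries $\tau(z)$ depending on the coordinate $z\in\gamma$, whereas the correct values of $\Psi(\bt(z,r),s)_j$ for $s\le u_r$ are $f^{r-j}(a)$, independent of~$z$ — which is precisely why the entire box is a single strongly stable component rather than a union of crosscuts. So the "uniformity supplied by Lemma~\ref{lem:formula-disjoint-gamma}" is not there, and a new computation genuinely is required.

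The missing ingredient is the computation already carried out in the proofs of Lemmas~\ref{lem:ssc-irrational}~(b)(iii) and~\ref{lem:rat-endpoint-strong-stable}: for $(\by,s)\in G_r\times(0,u_r]$ one uses~\eqref{eq:convenient-2} together with the facts that the leading entries $y_0,\ldots,y_{r-1}$ of any thread in $G_r$ lie over the orbit of $B(a)$, which is disjoint from $\ingam$, and that $H(y,s)=(B(y),2s)$ for $s<1/2$, to conclude $\Psi(\by,s)_{N-2}=f^{r-(N-2)}(a)=\omega(\bt(a,r))_{N-2}$ whenever $s>N$. Since $\bt(a,r)\to\by'$ in $\hS$ when the box $G_r$ approaches $\by'\in\Lambda$, and $\omega$ is continuous on the uniformly landing set $\Lambda$, this yields the required convergence $\Psi(\by,s)\to\omega(\by')$; the analogous computation with $f^{r+1}(a)$ periodic handles the bands $B_{k,i}$ in the endpoint cases. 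Your injectivity argument and your disposal of all the other decomposition elements via Corollary~\ref{cor:Psi-maximal-extension} are fine.
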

\begin{proof} This is immediate from the descriptions of the strongly stable
components and Corollary~\ref{cor:Psi-maximal-extension} in all cases except
for the element~$X$ of~$\cG'$ in the irrational, early endpoint, normal
endpoint, and quadratic-like strict left endpoint cases.

Suppose that~$f$ is of irrational type, so that the strongly stable components
are given by Lemma~\ref{lem:ssc-irrational}. Then $X\cap \hS_\infty$ is the
Cantor set~$\Lambda_\infty$. $\Psi|_{X}$ is injective since~$\Psi$ is injective
on~$D$ and on~$\Lambda_\infty$ (Corollary~\ref{cor:Psi-homeo} and
Lemma~\ref{lem:landing-injective}), and $\Psi(\eta)\in\hI$ if and only if
$\eta\in\hS_\infty$. Since~$\Psi$ is continuous away from~$\hS_\infty$
(Corollary~\ref{cor:Psi-homeo}), it is only necessary to show that $\Psi|_X$ is
continuous at the points of~$\Lambda_\infty$ (of course, $\Psi$ itself is not
continuous at these points).

Now~$\Psi|_{\Lambda\times[0,\infty]}$ is continuous by
Lemma~\ref{lem:Psi-extension-continuous}, since $\Lambda$ is uniformly landing
of level~0. Thus it suffices to show that if $(\by,s)\in
\bigcup_{r\in\Z}(G_r\times(0,u_r])$ is sufficiently close to a point
    $(\by',\infty)$ of $\Lambda_\infty$, then $\Psi(\by,s)$ is close to
    $\Psi(\by',\infty)$, where $G_r\times(0,u_r]$ are the rectangles of
    Lemma~\ref{lem:ssc-irrational}~(b)(iii). In order to do this we will show
    that, for all $N\ge2$, if $(\by,s)\in G_r\times(0,u_r]$ for some~$r$, and
    $s>N$, then $\Psi(\by,s)_{N-2} =
  \Psi(\bt(a,r), \infty)_{N-2}$. This will establish the result, since if
  $(\by,s)$ is close to $(\by',\infty)$ then $(\bt(a,r),
  \infty)\in\Lambda_\infty$ is also close to $(\by', \infty)$.

Recall that $u_r<1$ for $r<1$, and $u_r=r$ for $r\ge 1$. So if $(\by,s)\in
G_r\times(0,u_r]$ and $s>N$, we have \mbox{$N<s\le r$}. Observe that $
G^{-(r-1)}(\by,s) = \left( \hB^{-(r-1)}(\by), \lambda^{-(r-1)}(s)
  \right) \in G_1\times (0,1] $.

Let $m\ge 0$ be such that $r-m\le s < r-m+1$, so that $\lambda^{-(r-1)}(s)\in
[1/2^m, 1/2^{m-1})$.  By the proof of Lemma~\ref{lem:ssc-irrational}~(b)(iii),
we have $\hH^m(\Psi(G^{-(r-1)}(\by, s)))_0 = f^{m+1}(a)$: therefore, by
Corollary~\ref{cor:Psi-conjugates}, $ \hH^{m-r+1}(\Psi(\by, s))_0 = f^{m+1}(a)
$, and so $\Psi(\by,s)_{r-m-1} = f^{m+1}(a)$. Since $r-m-1>s-2>N-2$, it follows
that
\[
\Psi(\by, s)_{N-2} = f^{r-(N-2)}(a) \qquad\text{ whenever }(\by,s)\in
    G_r\times(0,r] \text{ with } s>N.
\]

On the other hand, $\Psi(\bt(a,r), \infty) = \omega(\bt(a,r)) =
\thr{f^r(a), \ldots, f(a), a, \tau(B^{-1}(a)), \ldots}$ by~(\ref{eq:omega}),
since $\bt(a,r)\in\cL_r$. Therefore $\Psi(\bt(a,r), \infty)_{N-2} =
f^{r-(N-2)}(a)$, as required.

The proof when $f$ is of early endpoint type is identical, with the periodic
orbit~$\bQ$ in place of the Cantor set~$\Lambda$; and the proof when~$f$ is of
normal endpoint or quadratic-like strict left endpoint type involves only minor
modifications.
\end{proof}

\begin{defn}[The decomposition $\cG$ of $\hT$]
\label{def:decomp} Let~$\cG$ be the decomposition of $\hT$ whose
elements are:
\begin{itemize}
\item the images under $\Psi$ of the elements of $\cG'$, other than points of
  $\bD\setminus\sD$; and
\item single points which are not in the image of $\Psi$.
\end{itemize}
\end{defn}

By Corollary~\ref{cor:Psi-conjugates} and
Remark~\ref{rmk:decomp-primed-permuted}, the elements of~$\cG$ are permuted by
$\hH$.

\begin{lem}
\label{lem:G-musc}
  $\cG$ is a non-separating monotone upper semi-continuous decomposition
  of~$\hT$.
\end{lem}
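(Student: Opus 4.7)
The plan is to verify separately the four properties: that $\cG$ is a partition of $\hT$, that each element is connected (monotone), that each element has connected complement (non-separating), and that the decomposition is upper semi-continuous.

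That $\cG$ is a partition of $\hT$ is essentially immediate from the construction. $\cG'$ partitions $\bD$ by Definition~\ref{def:decomp-primed}, its non-singleton elements are contained in $\sD=D\cup\cL_\infty$, and $\Psi|_{\sD}$ is injective by Corollary~\ref{cor:Psi-maximal-extension}. Hence the $\Psi$-images of the non-singleton elements of $\cG'$ are pairwise disjoint and exhaust $\Psi(\sD)$, which by Theorem~\ref{thm:corr-homeo}~(d) equals $U$ together with all accessible points of $\hI$; the singleton elements of $\cG$ cover the remaining inaccessible points. Monotonicity is equally straightforward: singleton elements are trivially connected, and for a non-singleton element $\Psi(E')$, the set $E'\in\cG'$ is connected by Remark~\ref{rmk:decomp-primed-permuted} and $\Psi|_{E'}$ is continuous by Lemma~\ref{lem:Psi-cont-decomp}.

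For non-separation, singleton elements $\{p\}$ with $p\in\hI$ inaccessible do not separate the sphere~$\hT$. For a non-trivial element $E=\Psi(E')$, the plan is to argue case by case from the explicit descriptions of strongly stable components in Lemmas~\ref{lem:ssc-irrational}--\ref{lem:rat-quadratic-strong-stable}. The elements $E'\neq X$ are arcs ($A_r$, $A_{k,i}$), crosscuts ($C_{r,y}$, $\Gamma'(y,k,i)$), isolated lines ($L_{\bp}$), or finite umbrella-like configurations ($D_r$, $D_{k,i}$), whose removal does not disconnect the disk~$\bD$; combining this with the injectivity of $\Psi|_{\sD}$ yields connectedness of $\hT\setminus E$, where any inaccessible points excised along the way can be routed around through~$U$. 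For the large element $X$, removal from $\bD$ produces countably many (or a Cantor set's worth of) ``wedge'' regions bounded by consecutive lines $L_{\by}$ and umbrella pieces emanating from $\partial'$; these wedges correspond under $\Psi$ to open subsets of $U$ whose boundaries meet $\hI$, and the fact that $\hT$ is a sphere with $U$ dense together with a direct argument using the identifications on $\hS_\infty\setminus\cL_\infty$ shows that these regions are knitted together in $\hT\setminus\Psi(X)$ into a single connected set.

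For upper semi-continuity, since $\hT$ is compact and metric and every element of $\cG$ is compact, the task reduces to showing that any Hausdorff limit of a convergent sequence $(E_n)$ of elements of $\cG$ is contained in a single element of $\cG$. The diameter bounds of Lemma~\ref{lem:rational-diameter-bound}, together with the analogous bounds in the irrational and early endpoint cases (which follow by the same arguments used in the proof of Lemma~\ref{lem:ssc-irrational}), show that the $\Psi$-images of individual strongly stable components other than $X$ have diameters tending to zero as the relevant integer parameters grow. Consequently a sequence of distinct non-$X$ elements can only accumulate on a single point (lying in a singleton element if inaccessible, or in some unique element otherwise) or into $\Psi(X)$ itself. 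The main obstacle, and what requires the most detailed case analysis, is verifying this last claim: that sequences of strongly stable components whose closures approach $\partial'$ produce Hausdorff limits contained in $\Psi(X)$. This is handled by using that $X$ is defined precisely as the union of all strongly stable components whose closures contain $\partial'$, so that any accumulation near $\partial$ in $\hT$ must be absorbed into $\Psi(X)$, with the explicit descriptions in Lemmas~\ref{lem:ssc-irrational}--\ref{lem:rat-quadratic-strong-stable} giving the precise structure of $X$ in each type.
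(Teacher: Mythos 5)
Your treatment of the partition and monotonicity properties is fine, and for non-separation you could shortcut the case analysis the way the paper does: every element of $\cG'$ is compact, connected and contractible, and $\Psi$ restricted to it is a homeomorphism onto its image by Lemma~\ref{lem:Psi-cont-decomp}, so the images are non-separating continua without any wedge-by-wedge argument (your sketch for $X$ in particular is vaguer than it needs to be).

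The genuine gap is in the upper semi-continuity argument. Your claimed dichotomy --- that a sequence of distinct non-$X$ elements either has diameters tending to zero or is absorbed into $\Psi(X)$ --- is false. The diameter bound of Lemma~\ref{lem:rational-diameter-bound} is $|b-a|/2^{kn+i}$, which controls nothing when the integer parameters stay bounded: for fixed $k,i$ the crosscuts $\Gamma'(y_j,k,i)$ with $y_j\to y_0$ in $(c_u,a)$ are pairwise distinct, have diameters bounded away from zero, and accumulate on a set that is neither a point nor contained in $\Psi(X)$. One must show directly that each such Hausdorff limit is contained in a \emph{single} decomposition element, and this is where all the work lies: the limit element changes type as the defining data degenerates ($y_j\to c_u$ gives the arc $A_{k,i}$; $y_j\to a$ or $y_j\to\min(q_{n-1},\hq_{n-1})$ gives $D_{k,i}$ or $X$; sequences through the horizontal and vertical parts of the $D$-sets must be tracked separately), and the convergence has to be transferred from $\bD$ to $\hT$ through $\Psi$, which is \emph{not} continuous at points of $\hS_\infty\setminus\cR_\infty$ --- precisely the points where these limits occur. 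The paper handles this by proving convergence of $\Psi$ along the particular sequences that arise (Lemma~\ref{lem:convergence-Psi}) and then carrying out the full enumeration of cases in Lemma~\ref{lem:G-ast-usc}; your proposal asserts the conclusion of that enumeration without supplying it.
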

\begin{proof}
  The elements of~$\cG$ are compact, connected, and do not separate~$\hT$ since
  the elements of~$\cG'$ are compact, connected, and contractible, and the
  restriction of~$\Psi$ to each of them is a homeomorphism by
  Lemma~\ref{lem:Psi-cont-decomp}.

  That $\cG$ is upper semi-continuous is a special case of
  Lemma~\ref{lem:G-ast-usc} below (where~$\cG$ is a single slice of a sliced
  decomposition which is shown to be upper semi-continuous).
\end{proof}

\begin{thm}[Semi-conjugacy to sphere homeomorphisms]
\label{thm:semi-conj} 
  Let~$f$ be a unimodal map satisfying the conditions of
  Convention~\ref{conv:unimodal}. Then there is a sphere homeomorphism
  $F\colon\Sigma\to\Sigma$ and a continuous surjection $g\colon
  \hI\to\Sigma$ which semi-conjugates $\hf\colon\hI\to\hI$ to
  $F\colon\Sigma\to\Sigma$.

  Every fiber of~$g$ except for at most one contains three or fewer points, and
  only countably many fibers contain three points.
\end{thm}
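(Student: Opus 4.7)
The plan is to apply Moore's theorem to the decomposition $\cG$ of Definition~\ref{def:decomp}, whose properties have already been established in Lemma~\ref{lem:G-musc}: since $\cG$ is a non-separating monotone upper semi-continuous decomposition of the sphere $\hT$, the quotient $\Sigma := \hT/\cG$ is a sphere. Because $\cG$ is $\hH$-invariant (the element $X$ is $G$-invariant by construction of $\cG'$, and $G$ is conjugate to $\hH$ via $\Psi$ by Corollary~\ref{cor:Psi-conjugates}), the map $\hH$ descends to a homeomorphism $F\colon\Sigma\to\Sigma$. Let $\pi\colon\hT\to\Sigma$ denote the quotient projection.

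Next I would check that every element of $\cG$ meets $\hI$, which yields that $g := \pi|_{\hI}\colon\hI\to\Sigma$ is a continuous surjection. For singleton elements $\{\bx\}$ arising from $\bx\in\hT\setminus\Psi(\sD)$, Theorem~\ref{thm:corr-homeo}(d) identifies the accessible points of~$\hI$ with $\omega(\cL)=\Psi(\cL_\infty)$, so such~$\bx$ are inaccessible points of~$\hI$ and in particular lie in~$\hI$. For the remaining elements $\Psi(Y)$ where~$Y$ is either a strongly stable component with closure not containing~$\partial'$ or the exceptional set~$X$, inspection of the lists in Lemmas~\ref{lem:ssc-irrational},~\ref{lem:rat-general-strong-stable}, \ref{lem:rat-NBT-strong-stable},~\ref{lem:rat-endpoint-strong-stable}, and~\ref{lem:rat-quadratic-strong-stable} (together with the description of early endpoint strongly stable components in Remark~\ref{rmk:early-endpoint-strong-stable}) shows that $Y$ meets $\hS_\infty\cap\sD=\cL_\infty$, whose $\Psi$-image lies in~$\hI$. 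Since $\hH(\hI)=\hI$ and $\pi\circ\hH=F\circ\pi$, restricting the latter to~$\hI$ gives $g\circ\hf=F\circ g$.

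The fiber analysis is the content of the second statement of the theorem. Fibers of~$g$ are exactly the sets $G\cap\hI$ for $G\in\cG$; the exceptional fiber is $\pi(X)\cap\hI$, which we permit to be large. For every other element~$G$ of~$\cG$, either~$G$ is a single inaccessible point of~$\hI$ (cardinality~$1$), or $G=\Psi(Y)$ for a strongly stable component~$Y$ whose closure does not contain~$\partial'$, in which case $G\cap\hI=\Psi(Y\cap\hS_\infty)$ has the same cardinality as $Y\cap\hS_\infty$ (since $\Psi$ is injective by Corollary~\ref{cor:Psi-maximal-extension}). Going through the cases: lines $L_\bullet$ and arcs $A_\bullet$ contribute cardinality~$1$; crosscuts $C_{r,y}$ and $\Gamma'(y,k,i)$ contribute cardinality~$2$; and the sets $D_{k,i}$ of Lemma~\ref{lem:rat-general-strong-stable} contribute cardinality~$3$, with the three points on $\hS_\infty$ being $(\bt(\hq_{n-1},k,i),\infty)$, $(\bt(a,k+1,i),\infty)$, and $(\bt(\re_u,k+1,i),\infty)$. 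All other $D_\bullet$ sets either lie inside~$X$ (the normal endpoint, quadratic-like, and early endpoint cases, where the corresponding sets extend down to~$s=0$) or have only two boundary points on $\hS_\infty$ (the NBT case, where $\hq_{n-1}=c_u$ collapses one endpoint). Since the $D_{k,i}$ in the rational general and late endpoint cases are indexed by $(k,i)\in\Z\times\{0,\dots,n-1\}$, only countably many fibers contain three points.

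The main obstacle is the case-by-case bookkeeping in the fiber count: one must track, for each of the six types in Definition~\ref{def:type}, which strongly stable components are absorbed into the exceptional element~$X$ and which remain separate, and verify that in the five non-general/non-late types the component $D_{k,i}$ either degenerates (NBT) or becomes part of~$X$ (endpoint cases and irrational/early endpoint cases via the rectangles $B_{k,i}$ or $D_r$). The universal bound of~$3$ on non-exceptional fibers ultimately rests on the fact that every non-exceptional strongly stable component meets $\hS_\infty$ in at most three points, which is visible from the explicit descriptions.
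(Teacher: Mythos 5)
Your proposal is correct and follows essentially the same route as the paper: Moore's theorem applied to the decomposition $\cG$ via Lemma~\ref{lem:G-musc}, descent of $\hH$ to $F$ by $\hH$-invariance of $\cG$, and the fiber count read off from the explicit lists of strongly stable components. The only difference is that you spell out details the paper leaves implicit (why each element of $\cG$ meets $\hI$, and exactly which components --- the $D_{k,i}$ in the rational general and late endpoint cases --- produce the countably many three-point fibers), and these details are consistent with Remark~\ref{rmk:semiconj-fibres}.
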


\begin{proof}
By Lemma~\ref{lem:G-musc} and Moore's theorem, $\Sigma = \hT/\cG$ is a sphere;
and since $\hH$ permutes the elements of~$\cG$, the map $F=\hH/\cG\colon\Sigma
\to
\Sigma$ is a homeomorphism. Since every element of $\cG$ intersects $\hI$, and
$\hH|_{\hI} =
\hf$, it follows that $\Sigma$ is also the quotient of $\hI$ by the equivalence
relation $\sim_{\cG|_{\hI}}$ on~$\hI$ induced by~$\cG$; and the canonical
projection $g\colon \hI\to \hI/\!\!\sim_{\cG|_{\hI}} =
\Sigma$ is a semi-conjugacy between $\hf$ and~$F$.

The statement about the cardinalities of the fibers of~$g$ is immediate from
the descriptions of the elements of~$\cG'$ (Definition~\ref{def:decomp-primed}
and Lemmas~\ref{lem:ssc-irrational}, \ref{lem:rat-general-strong-stable},
\ref{lem:rat-NBT-strong-stable}, \ref{lem:rat-endpoint-strong-stable}, and
\ref{lem:rat-quadratic-strong-stable}), every one of which except for~$X$
intersects $\hS_\infty$ in three or fewer points, and only countably many of
which can intersect $\hS_\infty$ in three points.
\end{proof}

\begin{remark}
\label{rmk:same-entropies} Since $\Psi|_{X}$ is a homeomorphism onto its image
(Lemma~\ref{lem:Psi-cont-decomp}), the restriction of~$\hf$ to the exceptional
fiber~$\Psi(X\cap \hS_\infty)$ of~$g$ is topologically conjugate to the action
of the circle homeomorphism~$\hB$ on an invariant subset in the circle, and
therefore has topological entropy zero. It follows from a result of Bowen
(Theorem~17 of~\cite{bowen}), using the fact that all other fibers are finite,
that $\hf\colon\hI\to\hI$ and $F\colon\Sigma\to\Sigma$ have the same
topological entropy. Since~$\hf$ and~$f$ also have the same topological entropy
(this follows from the same result of Bowen), we conclude that the sphere
homeomorphism $F\colon\Sigma\to\Sigma$ has the same topological entropy as the
unimodal map $f\colon I\to I$.
\end{remark}

\begin{remark}
\label{rmk:semiconj-fibres} 
  By the proof of Theorem~\ref{thm:semi-conj}, the fibers of the semi-conjugacy
  $g\colon\hI\to\Sigma$ can be described explicitly. Every non-trivial fiber is
  contained in the set of accessible points and, conversely, all but countably
  many trivial fibers are contained in the set of inaccessible points.

  The accessible fibers of~$g$ are as follows:
  \begin{enumerate}[(a)]
  \item 
    In the irrational case, there is one fiber equal to $\omega(\Lambda)$,
    where $\Lambda$ is the Cantor set of Definition~\ref{def:lambda}; and there
    are countably many accessible trivial fibers $\{\omega(\bt(c_u,r))\}$ for
    $r\in\Z$. All other accessible fibers are of the form $\{\omega(\bt(y,r)),
    \omega(\bt(\hy,r))\}$, where $y\in\ingam\setminus\{c_u\}$ and $r\in\Z$.
  \item 
    In the early endpoint case there is one fiber
    \[\omega(\hS_\infty)\setminus \{\omega(\bt(y,r))\,:\,y\in\ingam, r\in\Z\}\]
     which is either a countable union of disjoint intervals, or such a union
     together with finitely many isolated points; and there are countably many
     accessible trivial fibers $\{\omega(\bt(c_u,r))\}$ for $r\in\Z$. All other
     accessible fibers are of the form $\{\omega(\bt(y,r)),
     \omega(\bt(\hy,r))\}$, where $y\in\ingam\setminus\{c_u\}$ and $r\in\Z$.
  \item 
    In the normal endpoint case with $q(\kappa(f))=m/n$ there is one countable
      fiber
    \[
      \omega(\bQ)\,\cup\, \{\omega(\bt(A, k, i))\,:\, k\in\Z,\, 0\le i\le n-1\},
    \] 
    where $A=a$ if $\kappa(f)=\rhe(m/n)$, and $A=\re_u$ if
    $\kappa(f)=\lhe(m/n)$; and there are countably many accessible trivial
    fibers $\{\omega(\bt(c_u,k,i))\}$ for $k\in\Z$ and $0\le i\le n-1$. All
    other accessible fibers are of the form $\{\omega(\bt(y,k,i)),
    \omega(\bt(\hy,k,i))\}$, where $y\in\ingam\setminus\{c_u\}$, $k\in\Z$ and
    $0\le i\le n-1$.
  \item
    In the quadratic-like strict left end point case with $q(\kappa(f))=m/n$,
    there is one fiber
    \[
      \omega(\bQ) 
            \cup \bigcup_{i=0}^{n-1} \omega(I_i)
            \cup \{\omega(\bt(\re_u, k, i))\,:\, k\in\Z, 0\le i\le n-1\}
    \]
    which is the union of~$n$ disjoint compact intervals and countably many
    points; and there are countably many accessible trivial fibers
    $\{\omega(\bt(c_u,k,i))\}$ for $k\in\Z$ and $0\le i\le n-1$. All other
    accessible fibers are of the form $\{\omega(\bt(y,k,i)),
    \omega(\bt(\hy,k,i))\}$, where $y\in\ingam\setminus\{c_u\}$, $k\in\Z$ and
    $0\le i\le n-1$.
  \item 
    In the rational general and late endpoint cases with $q(\kappa(f))=m/n$,
    there is one fiber $\omega(\bP)$ of cardinality~$n$; countably many
    $3$-element fibers of the form
    \[
      \{\omega(\bt(\hq_{n-1}, k, i)), \omega(\bt(a, k+1, i)), \omega(\bt(\re_u,
      k+1, i)) \}\quad\text{ for $k\in\Z$ and $0\le i\le n-1$};
    \] 
    and countably many accessible trivial fibers $\{\omega(\bt(c_u,k,i))\}$ for
    $k\in\Z$ and $0\le i\le n-1$. All other accessible fibers are of the form
    $\{\omega(\bt(y,k,i)),  \omega(\bt(\hy,k,i))\}$, where
    $y\in\ingam\setminus\{q_{n-1}, \hq_{n-1}, c_u\}$, $k\in\Z$, and $0\le i\le
    n-1$.
  \item 
    In the rational NBT case with $q(\kappa(f))=m/n$, there is one fiber
    $\omega(\bP)$ of cardinality~$n$. All other accessible fibers are of the
    form $\{\omega(\bt(y,k,i)), \omega(\bt(\hy,k,i))\}$, where
    $y\in\gamma\setminus\{c_u\}$, $k\in\Z$, and $0\le i\le n-1$.
  \end{enumerate}

  Note that the exceptional fiber of~$g$ can only be infinite when~$f$ is of
  irrational or endpoint type. In particular, for the tent family $\{f_t\}$,
  the semi-conjugacy has only finite fibers for an open dense subset of
  parameters.

\end{remark}

\medskip

An immediate consequence of Theorem~\ref{thm:semi-conj} is that {\em any}
  unimodal map with topological entropy greater than $\frac{1}{2}\log2$ has
  natural extension semi-conjugate to a sphere homeomorphism, although the
  fibers of the semi-conjugacy may not be so well behaved when the conditions
  of Convention~\ref{conv:unimodal} are not satisfied.

\begin{cor} Let $f$ be any unimodal map (not necessarily satisfying the
conditions of Convention~\ref{conv:unimodal}) with topological entropy
$h(f)>\frac{1}{2}\log2$. Then the natural extension $\hf$ is semi-conjugate to
a sphere homeomorphism with the same topological entropy as~$f$.
\end{cor}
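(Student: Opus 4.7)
The plan is to reduce to the case covered by Theorem~\ref{thm:semi-conj} via a semi-conjugacy between $f$ and a unimodal map satisfying Convention~\ref{conv:unimodal}. Since $h(f)>\tfrac{1}{2}\log 2$, the kneading sequence $\kappa(f)$ is maximal and satisfies $10\Infs{1}\prec\kappa(f)$ by Lemma~\ref{lem:char-ks} and Lemma~\ref{lem:height-intervals}~(b). By the Milnor--Thurston realisation theorem, there is a tent map $\tilde f\colon I\to I$ with $\kappa(\tilde f)=\kappa(f)$; such a tent map automatically satisfies Convention~\ref{conv:unimodal}, since distinct points of the domain of a tent map of slope greater than~$1$ have distinct itineraries.

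The next step is to produce a monotone semi-conjugacy $\pi\colon I\to I$ with $\pi\circ f=\tilde f\circ\pi$. Define an equivalence relation $\sim$ on $I$ by declaring $x\sim y$ when $\iota(f^r(x))$ and $\iota(f^r(y))$ admit a common itinerary for every $r\ge 0$; the equivalence classes are points or closed intervals (homtervals together with their preperiodic preimages), and the quotient $I/\!\sim$ is again a compact interval with a unimodal self-map conjugate to $\tilde f$. Then $\pi$ is continuous, monotone, surjective, and semi-conjugates $f$ to $\tilde f$. Passing to inverse limits, $\pi$ induces a continuous surjection $\hpi\colon\invlim(I,f)\to\invlim(I,\tilde f)$ with $\hpi\circ\hf=\widehat{\tilde f}\circ\hpi$.

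Now apply Theorem~\ref{thm:semi-conj} to $\tilde f$ to obtain a sphere homeomorphism $F\colon\Sigma\to\Sigma$ and a semi-conjugacy $g\colon\invlim(I,\tilde f)\to\Sigma$. The composition $g\circ\hpi\colon\invlim(I,f)\to\Sigma$ is the required continuous surjection, and it semi-conjugates $\hf$ to $F$. For the entropy statement, $h(f)=h(\tilde f)$ by Milnor--Thurston (topological entropy of a unimodal map is determined by its kneading sequence), $h(\tilde f)=h(\widehat{\tilde f})$ since the natural extension preserves topological entropy, and $h(\widehat{\tilde f})=h(F)$ by Remark~\ref{rmk:same-entropies}. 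Hence $h(F)=h(f)$ as claimed.

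The only step requiring genuine care is the construction and analysis of $\pi$: one must check that the equivalence classes of~$\sim$ really are intervals or points, that the collapse preserves the unimodal structure, and that the resulting quotient map is conjugate to $\tilde f$ rather than merely sharing its kneading sequence. This, however, is standard in the theory of unimodal maps with positive entropy (the collapsed intervals are precisely homtervals and their preimages, which carry no topological entropy), so I expect no new difficulty beyond what is already implicit in Milnor--Thurston theory.
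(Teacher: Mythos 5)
Your proposal is correct and follows essentially the same route as the paper: reduce to a tent map with the same kneading sequence via the Milnor--Thurston/Parry monotone semi-conjugacy, pass to natural extensions, and then invoke Theorem~\ref{thm:semi-conj} together with Remark~\ref{rmk:same-entropies} for the entropy equality. The only difference is that you sketch the construction of the semi-conjugacy $\pi$ by collapsing homtervals, whereas the paper simply cites the standard references for this fact.
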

\begin{proof} $h(f)>\frac{1}{2}\log2$ is equivalent to $\kappa(f)\succ
10\Infs{1}$. Now $f$ is semi-conjugate to a tent map~$F$ with
$\kappa(F)=\kappa(f)$ and $h(F)=h(f)$ (see~\cite{MT,Parry}), and hence
$\hf\colon\hI_f\to\hI_f$ is semi-conjugate to $\hF\colon\hI_F\to\hI_F$, which
is semi-conjugate to a sphere homeomorphism of topological entropy~$h(F)$ by
Theorem~\ref{thm:semi-conj} and Remark~\ref{rmk:same-entropies}.
\end{proof}

\subsection{Continuously varying families}
\label{sec:cont-var} 
Our aim in this section is the following result, which shows that the above
construction of sphere homeomorphisms can be carried out continuously.

\begin{thm}
\label{thm:cont-var} Let $J$ be a compact parameter interval, and
$\{f_t\}_{t\in J}$ be a continuously varying family of unimodal maps, all of
which are defined on the same core interval~$I$ and satisfy the conditions of
Convention~\ref{conv:unimodal}. For each~$t$, let
$F_t\colon\Sigma_t\to\Sigma_t$ be the sphere homeomorphism constructed from
$f_t$ in the proof of Theorem~\ref{thm:semi-conj}. Then there is a continuously
varying family $\{\chi_t\colon S^2\to S^2\}_{t\in J}$ of sphere homeomorphisms
such that $\chi_t$ is topologically conjugate to $F_t$ for each~$t$.

In particular, each natural extension $\hf_t\colon \hI_t\to\hI_t$ is
semi-conjugate to $\chi_t$, by a semi-conjugacy all but one of whose fibers
contains three or fewer points, and only countably many of whose fibers contain
three points.
\end{thm}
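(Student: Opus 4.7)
The plan is to work on a common sphere model provided by Theorem~\ref{thm:unwrap-family}, to organize the fiberwise decompositions $\cG_t$ into a single sliced upper semi-continuous decomposition of $S^2\times J$, and then to invoke a parameterized form of Moore's theorem to get the continuously varying family $\{\chi_t\}$.

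First, apply Theorem~\ref{thm:unwrap-family} to obtain homeomorphisms $h_t\colon\hT_t\to S^2$ such that $\chi'_t := h_t\circ\hH_t\circ h_t^{-1}$ varies continuously with $t\in J$, and such that the attractors $h_t(\hI_t)$ vary Hausdorff continuously. Push forward each decomposition to get $\cG_t^{S^2} := \{h_t(g)\,:\,g\in\cG_t\}$, a non-separating monotone upper semi-continuous decomposition of $S^2$ (by Lemma~\ref{lem:G-musc}), whose elements are permuted by $\chi'_t$. Assemble these into the sliced decomposition
\[
\cG_\ast \;=\; \{\, g\times\{t\}\,:\, t\in J,\ g\in\cG_t^{S^2}\,\}
\]
of $S^2\times J$, every element of which is compact and connected.

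The main technical step, Lemma~\ref{lem:G-ast-usc}, is to show that $\cG_\ast$ is upper semi-continuous; this is the principal obstacle, since the explicit descriptions of the elements of $\cG_t$ given in Lemmas~\ref{lem:ssc-irrational}, \ref{lem:rat-general-strong-stable}, \ref{lem:rat-NBT-strong-stable}, \ref{lem:rat-endpoint-strong-stable}, and~\ref{lem:rat-quadratic-strong-stable} depend discontinuously on the type of $f_t$ (irrational, rational interior, endpoint, NBT, etc.). The verification proceeds by fixing $t_0\in J$ and a sequence $t_k\to t_0$, and showing that if $g_k\in\cG_{t_k}^{S^2}$ accumulate on a compact set $g_\infty\subset S^2$ in the Hausdorff metric, then $g_\infty$ lies in a single element of $\cG_{t_0}^{S^2}$. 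The uniform diameter bounds from Lemma~\ref{lem:rational-diameter-bound}, together with the observation that the rotation number $\rho(B_t)=q(\kappa(f_t))$ depends continuously on $t$ (Theorem~\ref{thm:outside-dynamics}(a)), control the limit: the delicate case is when $q(\kappa(f_{t_k}))$ is rational and converges to an irrational $q(\kappa(f_{t_0}))$, in which case the periodic orbits $\bQ_k$ and $\bP_k$ and the countable packets of crosscuts around them collapse into (a subset of) the Cantor portion of the single ``big'' decomposition element $X_{t_0}$ at $t_0$. The analogous case in which one approaches an endpoint or NBT parameter is handled by the same diameter estimates, using that strongly stable sets at the limit parameter absorb nearby strongly stable sets.

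Once $\cG_\ast$ is known to be upper semi-continuous, monotone, and non-separating in each slice, the slicewise quotient $(S^2\times J)/\cG_\ast$ is, by Moore's theorem applied fiberwise, a topological space whose fiber over $t$ is the sphere $\Sigma_t$. Because $\cG_\ast$ refines the product structure $S^2\times J\to J$ and each slice quotient is a sphere, and because the base $J$ is a contractible interval, a standard argument (using the upper semi-continuity of $\cG_\ast$ and the uniform diameter bounds to produce local continuous sections, and then a Schoenflies-type trivialization over the interval $J$) yields a homeomorphism $\Phi\colon(S^2\times J)/\cG_\ast\to S^2\times J$ commuting with projection to $J$. Composing $\Phi$ with the quotient map and restricting to slices gives homeomorphisms $k_t\colon\Sigma_t\to S^2$ which vary continuously in $t$, and setting $\chi_t = k_t\circ F_t\circ k_t^{-1}$ produces the required continuously varying family of sphere homeomorphisms. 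The final statement on the semi-conjugacy $\hf_t\to\chi_t$ and the structure of its fibers is then immediate from Theorem~\ref{thm:semi-conj} applied at each $t$.
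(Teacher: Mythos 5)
Your overall architecture is the same as the paper's: assemble the fiberwise decompositions into a sliced decomposition of the product, prove it is upper semi-continuous, and then apply a parameterized Moore theorem. But there are two places where the substance is missing. First, your sketch of the upper semi-continuity proof does not engage with the real obstacle. You propose to take Hausdorff limits of decomposition elements $g_k\subset S^2$ directly and control them by diameter bounds; however, the diameter bounds (Lemma~\ref{lem:rational-diameter-bound}) only dispose of the case where the indices $n_jk_j+i_j$ (or $r_j$) are unbounded above. In the remaining cases one must identify \emph{which} element of $\cG_{t_0}$ contains the limit, and the only available description of the elements is via $\Psi_t$ applied to explicit subsets of $\bD_t$. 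The difficulty is that $\Psi_\ast$ is \emph{not} continuous at points of the circle at infinity outside the locally uniformly landing set, so knowing that the preimages $\eta_j=\Psi_{t_j}^{-1}(\xi_j)$ converge to some $\eta$ lying in a single element $g'\in\cG'_{t_0}$ does not by itself give $\xi_j\to\Psi_{t_0}(\eta)$. The paper has to isolate sufficient conditions under which $\Psi_\ast$ respects limits (Lemma~\ref{lem:convergence-Psi}, whose part (c) is tailored to sequences of crosscuts whose heights $n_j$ tend to infinity) and then verify one of these conditions in every branch of a long case analysis organized by the types of the $t_j$ and of $t_0$. Without this ingredient your argument does not close; ``the packets of crosscuts collapse into the Cantor portion of $X_{t_0}$'' is the correct picture, but asserting it is not proving it.

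Second, the final step is not ``a standard argument.'' The statement that a monotone, non-separating, slice-preserving upper semi-continuous decomposition of $S^2\times J$ has quotient homeomorphic to $S^2\times J$ by a slice-preserving homeomorphism is the Dyer--Hamstrom theorem, and the form used in the paper carries an extra hypothesis you have not supplied: the existence of an arc $L$ in $S^2\times J$ meeting each slice in a \emph{singleton} decomposition element. The paper produces $L$ from the fixed points $\bz_t\in\hI_t$ lying over the fixed point of $f_t$ in $(c,b)$, which are inaccessible and hence singleton elements of $\cG_t$. Your ``local continuous sections plus Schoenflies-type trivialization'' gestures at the content of Dyer--Hamstrom without proving it, and omits the normalizing arc that makes their proof work. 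A smaller point: you invoke continuity of $t\mapsto q(\kappa(f_t))$; this is true but is neither proved nor needed in the paper, which instead uses the openness in $J$ of the set of parameters of rational interior or late endpoint type with a prescribed height.
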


\begin{remark}
  If $f_t\colon I_t\to I_t$, where $\{I_t\}$ is a continuously varying family
  of compact intervals --- as occurs naturally when families of unimodal maps
  are restricted to their core intervals --- then the theorem applies after
  conjugating by a continuously varying affine coordinate change.
\end{remark}

\medskip

Throughout this section, $\{f_t\}_{t\in J}$ will denote a fixed family of
unimodal maps as in the statement of Theorem~\ref{thm:cont-var}. Because the
domain $I=[a,b]$ is fixed, the circle~$S$ and the sphere~$T$ are independent of
the parameter~$t$. However, almost every other object is parameter dependent.
This dependence will generally be indicated with a subscript~$t$, but will
sometimes be suppressed, particularly when it doesn't serve to illuminate
continuity or convergence arguments, in order to avoid excessive notation. For
example, we will not normally make explicit the parameter dependence of $c_u$
and $\re_u$.

Recall that $\{\barf_t\colon T\to T\}$ is a continuously varying family of
unwrappings of $\{f_t\}$, and that the homeomorphisms $\hH_t\colon
\hT_t\to\hT_t$ are the natural extensions of the near-homeomorphisms
\[H_t=\Upsilon\circ \barf_t \colon T\to T.\]

Let
\[
  \hT_\ast = \bigsqcup_{t\in J} \left(\hT_t \times\{t\}\right),
\]
topologized as a compact subset of $T^\N\times J$. The following result
from~\cite{param-fam} --- which is the key lemma used in the proof of
Theorem~\ref{thm:unwrap-family} --- tells us that $\hT_\ast$ is homeomorphic to
$S^2\times J$.

\begin{thm}
\label{thm:fat-homeo} There is a slice-preserving homeomorphism $\beta\colon
\hT_\ast \to T\times J$.
\end{thm}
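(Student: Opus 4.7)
The plan is to establish Theorem~\ref{thm:fat-homeo} as a parameterized version of Brown's theorem (Theorem~\ref{thm:Brown}). Since each slice $\hT_t$ is already known to be a topological sphere (as $H_t$ is a near-homeomorphism and $T$ is a sphere), the content of the theorem is that the homeomorphism to $T$ can be chosen to vary continuously in $t$. The strategy has two main phases: first, approximate the continuously varying family of near-homeomorphisms $\{H_t\}$ by a continuously varying family of genuine homeomorphisms; second, feed these approximations into (a parameterized reformulation of) Brown's inverse-limit argument.

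First I would exploit the explicit structure $H_t=\Upsilon\circ\barf_t$. The smash $\Upsilon$ is independent of $t$ and is the uniform limit of an explicit isotopy of homeomorphisms (shrink the plateau $[1/2,1]$ in the second coordinate onto a thin strip and scale back), while $\barf_t$ is itself a continuously varying family of orientation-preserving near-homeomorphisms whose non-degenerate preimages are the explicit arcs catalogued in Lemma~\ref{lem:barf-is-unwrapping}. Using Youngs' theorem together with a careful perturbation that opens up these arcs uniformly in $t$, I would produce, for each $n\in\N$, a slice-preserving homeomorphism $h_n\colon T\times J\to T\times J$, with slices $h_{n,t}\colon T\to T$, such that $\sup_{t\in J}d(h_{n,t}, H_t)\to 0$ as $n\to\infty$. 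Compactness of $J$ is what makes the convergence uniform.

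Next, following Brown's construction, I would build $\beta_t\colon \hT_t\to T$ as a uniform limit of compositions of approximating maps. Concretely, by passing to a subsequence we may assume $\sum_n\sup_{t\in J}d(h_{n,t},H_t)<\infty$; then the maps $\Phi_{n,t}=h_{1,t}\circ h_{2,t}\circ\cdots\circ h_{n,t}\colon T\to T$ form a Cauchy sequence (uniformly in $t$) converging to a continuous surjection $\Phi_{\infty,t}\colon T\to T$ that factors as a composition $\Phi_{\infty,t}=H_t\circ H_t\circ\cdots$ in a suitable sense, from which one builds a homeomorphism $\beta_t\colon\hT_t\to T$ whose definition only uses the sequence $(h_{n,t})$ and $H_t$. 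Because the approximations and the error bounds were all chosen uniformly in $t$, the resulting map $\beta\colon\hT_\ast\to T\times J$, $(\bx,t)\mapsto(\beta_t(\bx),t)$, is continuous; slice-by-slice bijectivity comes from Brown's theorem, and continuity of the inverse from compactness of $\hT_\ast$.

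The main obstacle is arranging that Brown's inductive construction is carried out uniformly in the parameter. Classical proofs of Theorem~\ref{thm:Brown} make inductive choices of approximating homeomorphisms with decay constants $\varepsilon_n$ tailored to a single map; here one must choose the $\varepsilon_n$ and the subsequence $(h_{n,t})$ so that the same estimates work simultaneously for every $t\in J$. This is exactly where compactness of $J$ and the fact that $\{H_t\}$ and $\{\barf_t\}$ vary continuously become essential, allowing the countable sequence of parameter-independent choices that assemble into the slice-preserving homeomorphism $\beta$.
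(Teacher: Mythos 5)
Your strategy is essentially the one used to prove this result in the reference the paper relies on: Theorem~\ref{thm:fat-homeo} is not proved here but quoted from~\cite{param-fam}, where it is obtained by applying Brown's theorem once to the single ``fat'' near-homeomorphism $(x,t)\mapsto (H_t(x),t)$ of $T\times J$, approximated uniformly by \emph{slice-preserving} homeomorphisms, and then observing that the resulting homeomorphism $\widehat{T\times J}\to T\times J$ is slice-preserving because it is a uniform limit of slice-preserving maps; the identification of $\hT_\ast$ with $\widehat{T\times J}$ is the explicit thread-reshuffling map recorded immediately after the theorem statement. Your two phases (uniform slice-preserving approximation of $\{H_t\}$, then Brown's inverse-limit construction carried out uniformly in $t$) are exactly this proof unpacked, and your use of the explicit fibre structure of $\barf_t$ together with Youngs' theorem to manufacture the approximating homeomorphisms is a legitimate, more hands-on substitute for the general fact (Edwards--Kirby, invoked in Remark~\ref{rmk:EK} for a related purpose) that continuous families of near-homeomorphisms can be uniformly approximated by continuous families of homeomorphisms.

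The one step that would fail as written is the claim that, after passing to a subsequence with $\sum_n \sup_{t\in J} d(h_{n,t},H_t)<\infty$, the compositions $\Phi_{n,t}=h_{1,t}\circ\cdots\circ h_{n,t}$ are automatically Cauchy. They are not: the distance from $\Phi_{n+1,t}$ to $\Phi_{n,t}\circ H_t$ is controlled by the modulus of continuity of $\Phi_{n,t}$ evaluated at $d(h_{n+1,t},H_t)$, not by $d(h_{n+1,t},H_t)$ itself, and the moduli of continuity of the partial compositions degenerate as $n$ grows, so summability of the raw errors gives nothing. One must instead choose $\varepsilon_{n+1}$ \emph{after} $h_{1,t},\dots,h_{n,t}$ have been fixed, small enough relative to a common modulus of continuity of the maps $\Phi_{n,t}$ (available uniformly in $t$ by compactness of $J$ and continuity of the family), so that both the convergence and the injectivity estimates in Brown's argument hold simultaneously for every $t$. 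You identify exactly this requirement in your closing paragraph, so the repair is only to delete the summability shortcut and run the standard inductive choice; with that change the argument is sound.
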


Here \emph{slice-preserving} means that $\beta(\hT_t\times\{t\}) =
T\times\{t\}$ for each~$t$. In~\cite{param-fam} this result is stated not
for~$\hT_\ast$, but for the inverse limit $\widehat{T\times J}$ of the fat map
$T\times J\to T\times J$ defined by $(x, t) \mapsto
(H_t(x), t)$. However $\hT_\ast$ and $\widehat{T\times J}$ are homeomorphic
by the slice-preserving homeomorphism $\left(\thr{x_0, x_1, \ldots}, t\right)
\mapsto \thr{(x_0, t), (x_1, t), \ldots}$.

Let $\hH_\ast\colon \hT_\ast\to \hT_\ast$ be the slice-preserving homeomorphism
defined by $\hH_\ast(\bx, t) = (\hH_t(\bx), t)$, and $\cG_\ast$ be the
$\hH_\ast$-invariant decomposition of $\hT_\ast$ induced in each slice by
$\cG_t$: that is,
\[
  \cG_\ast = \{g\times\{t\}\,:\,t\in J\text{ and }g\in\cG_t\}.
\]

The elements of~$\cG_\ast$ are each contained in a slice of $\hT_\ast$, and
moreover are compact, connected, and do not separate their slices, since these
properties are inherited from the $\cG_t$ (see Lemma~\ref{lem:G-musc}).
Lemma~\ref{lem:G-ast-usc} below states that $\cG_\ast$ is upper
semi-continuous. We now assume this lemma and show how to complete the proof of
the Theorem~\ref{thm:cont-var}. The key ingredient is the following theorem of
Dyer and Hamstrom~\cite{DH} (both statement and proof of this result are
contained in the proof of Theorem~8 of~\cite{DH}: note that a decomposition is
upper semi-continuous if and only if its quotient mapping is closed).

\begin{thm}[Dyer\,--\,Hamstrom]
\label{thm:DH} Let $\cG$ be a monotone upper semi-continuous decomposition of
  $S^2\times J$ into compact subsets, each of whose elements lies in, and does
  not separate, some slice $S^2\times\{t\}$. Suppose also that there is an
  arc~$L$ in $S^2\times J$ which intersects each slice $S^2\times\{t\}$ in a
  singleton decomposition element. Then there is a slice-preserving
  homeomorphism $K\colon (S^2\times J)/\cG \to S^2\times J$.
\end{thm}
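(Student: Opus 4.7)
The plan is to deduce the theorem from Moore's theorem applied slice-wise, combined with a parametric version of the near-homeomorphism property of Moore quotients. First, for each $t\in J$, the restriction $\cG_t = \{g\in\cG\,:\,g\subset S^2\times\{t\}\}$ is a monotone upper semi-continuous decomposition of $S^2$ whose elements are compact and non-separating. Moore's theorem then gives that the quotient map $q_t\colon S^2\to \Sigma_t := (S^2\times\{t\})/\cG_t$ presents $\Sigma_t$ as a topological sphere. Hence the decomposition quotient $Q := (S^2\times J)/\cG$ is, set-theoretically, a disjoint union $\bigsqcup_{t\in J}\Sigma_t$, and the composition $\pi\colon Q\to J$ of the quotient map with the slice projection is a continuous surjection all of whose fibers are spheres. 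The arc~$L$ descends to an arc $\widehat{L}\subset Q$ meeting each $\Sigma_t$ in exactly one point, giving a continuous section $\sigma\colon J\to Q$ of~$\pi$.

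The second step is to establish that $\pi$ is a locally trivial $S^2$-bundle. Fix $t_0\in J$. The parametric input is upper semi-continuity: for any $\epsilon>0$ there is a neighborhood~$N$ of~$t_0$ in $J$ such that each $g\in\cG_t$ with $t\in N$ is Hausdorff-$\epsilon$-close to some element of $\cG_{t_0}$, or to a single point of $S^2\times\{t_0\}$ outside the non-degenerate elements. Combined with the constructive form of Moore's theorem --- each $q_t$ is realized as a uniform limit of homeomorphisms of $S^2$ obtained by iteratively shrinking decomposition elements to small sets --- this yields a continuous family of near-homeomorphisms $\{\widetilde q_t\colon S^2\to\Sigma_t\}_{t\in N}$, fitting together into a continuous surjection $S^2\times N\to \pi^{-1}(N)$ that is a quotient map in each slice. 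After composing slice-wise with a parameter-continuous rearrangement of $S^2$ (obtained from path-connectedness of the group of orientation-preserving homeomorphisms of $S^2$ fixing a basepoint) that sends a fixed basepoint $\ast\in S^2$ to the preimage of $\sigma(t)$, one upgrades this quotient map to a fiber-preserving homeomorphism $S^2\times N\to \pi^{-1}(N)$ carrying $\{\ast\}\times N$ onto $\sigma(N)$.

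Finally, the local trivializations are patched together along~$J$. Cover~$J$ by finitely many closed subintervals $J_1,\ldots,J_k$ over each of which $\pi$ is trivialized compatibly with~$\sigma$. Build the global trivialization inductively: given a slice-preserving homeomorphism on $S^2\times(J_1\cup\cdots\cup J_i)$ agreeing with the chosen section, extend across $J_{i+1}$ by isotoping the transition homeomorphism at the shared endpoint to the identity through slice-preserving homeomorphisms of $S^2$ fixing $\ast$. This extension exists because the group of such homeomorphisms is path-connected, and it yields the desired slice-preserving homeomorphism $K\colon Q\to S^2\times J$.

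The main obstacle is the construction of the local trivializations: showing that the slice-wise Moore quotient maps can be organized into a continuous family of near-homeomorphisms, and then improved to genuine homeomorphisms. This is the technical core of~\cite{DH}. The subtle point is that at each parameter value the quotient collapses infinitely many decomposition elements, and while upper semi-continuity controls individual elements well, it does not immediately furnish a uniform shrinking procedure valid for a whole neighborhood of parameters. The role of the section~$\sigma$ supplied by $L$ is precisely to pin down one point in each fiber, breaking the ambiguity in the choice of trivialization and allowing the local pieces to be glued consistently; without it, one could only show that $\pi$ is an $S^2$-bundle, but not produce a canonical slice-preserving homeomorphism to the trivial bundle.
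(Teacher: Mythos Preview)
The paper does not prove Theorem~\ref{thm:DH}: it is quoted from the literature, with the parenthetical remark that ``both statement and proof of this result are contained in the proof of Theorem~8 of~\cite{DH}''. There is therefore no proof in the paper against which to compare your proposal.

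That said, your outline is broadly in the spirit of the Dyer--Hamstrom argument, but it is a sketch rather than a proof. You correctly identify the crux --- organizing the slice-wise Moore quotients into a continuous family --- and you yourself flag this as ``the technical core of~\cite{DH}'' without actually carrying it out. The step where you invoke ``the constructive form of Moore's theorem'' to produce a \emph{continuous family} of near-homeomorphisms $\{\widetilde q_t\}$ is precisely the hard part, and your proposal asserts rather than establishes it; upper semi-continuity alone does not obviously give you uniform control over the shrinking procedure across nearby parameters. Likewise, the passage from near-homeomorphisms to genuine homeomorphisms in a parameter-continuous way is not automatic. So while the architecture you describe is plausible, the proposal as written defers exactly the content that makes the theorem non-trivial to the cited reference, which is what the paper itself does.
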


It follows that, assuming the upper semi-continuity of~$\cG_\ast$, we have the
commutative diagram of Figure~\ref{fig:cont-fam-cd}. Here $\pi$ is the quotient
mapping of the decomposition~$\cG_\ast$;  $K$ is the homeomorphism of
Theorem~\ref{thm:DH} (which exists since, by Theorem~\ref{thm:fat-homeo},
$\hT_\ast$ is slice-preserving homeomorphic to $S^2\times J$: a suitable
arc~$L$ is the one which intersects each $\hT_t\times\{t\}$ at $(\bz_t, t)$,
where $\bz_t\in\hI_t$ is the fixed point of $\hH_t$ which lies above the fixed
point~$z_t$ of~$f_t$ in $(c,b)$); and $\hH_{\ast}/\cG_\ast$, and $\chi_\ast$
are the homeomorphisms which make the diagram commute. All of the maps in the
diagram are slice-preserving, and in particular $\chi_\ast\colon S^2\times J\to
S^2\times J$ defines a continuously varying family $\{\chi_t\}_{t\in J}$ of
sphere homeomorphisms. Restricting the diagram to a single slice, we see that
$\chi_t$ is topologically conjugate to the homeomorphism $F_t = \hH_t/\cG_t$ of
Theorem~\ref{thm:semi-conj}, which completes the proof of
Theorem~\ref{thm:cont-var}.

\begin{figure}[htbp]
\[
  \begin{CD} 
    \hT_\ast    @>\hH_\ast>>      \hT_\ast\\
    @V\pi VV   @VV\pi V\\ 
    \hT_\ast / \cG_\ast    @>\hH_{\ast}/\cG_\ast
    >>      \hT_\ast / \cG_\ast\\
    @V K VV   @VV K V\\ S^2\times J   @>\chi_\ast>>      S^2\times J.\\
  \end{CD}
\]
\caption{Construction of the continuous family of sphere homeomorphisms.}
\label{fig:cont-fam-cd}
\end{figure}

It therefore only remains to show that the decomposition $\cG_\ast$ of
$\hT_\ast$ is upper semi-continuous.  We do this by first considering the
decompositions $\cG'_t$ of the spaces $\bD_t$ --- which are described
explicitly by Lemmas~\ref{lem:ssc-irrational},
\ref{lem:rat-general-strong-stable}, \ref{lem:rat-NBT-strong-stable},
\ref{lem:rat-endpoint-strong-stable}, and~\ref{lem:rat-quadratic-strong-stable}
--- and then transferring the results using the maps $\Psi_t$.

Recall from Definition~\ref{def:decomp-primed} that, for each~$t\in J$, the
decomposition $\cG'_t$ of $\bD_t$ has as elements
\begin{itemize}
  \item Strongly stable components whose closures are disjoint from
  $\partial'_t$,
  \item The union~$X_t$ of strongly stable components whose closures contain
  $\partial'_t$, and
  \item Single points at which $\Psi_t$ is not defined.
\end{itemize} 
We write
\[
  \bD_\ast = \bigsqcup_{t\in J}\left(\bD_t \times \{t\}\right),
\] 
topologized as a compact subset of $\left(S^\N
  \times[0,\infty]\right)/\left(S^\N \times\{0\}\right) \,\times\, J$, and
  define 
  \[
    \cG'_\ast = \{g'\times\{t\}\,:\, t\in J\text{ and } g'\in\cG'_t\}
  \]
   to be
  the sliced decomposition of $\bD_\ast$ induced on each slice by the
  decompositions $\cG'_t$.

Recall also (Definitions~\ref{def:landing-etc}) that, for each~$t$, we
denote by $\sDt$ the subset of $\bD_t$ on which $\Psi_t$ is defined:
that is, $\sDt = \bD_t$ if $f_t$ is of irrational or rational endpoint
type, and otherwise $\sDt = \bD_t\setminus\bQ_t$. Writing $\sDast$
for the subset $\bigsqcup_{t\in J}(\sDt \times \{t\})$ of $\bD_\ast$, we
can then define the function $\Psi_\ast\colon \sDast \to \hT_\ast$ by
$\Psi_\ast(\eta, t) = (\Psi_t(\eta), t)$. With these definitions, the
non-trivial elements of the decomposition $\cG_\ast$ of $\hT_\ast$ are
precisely the images under $\Psi_\ast$ of the non-trivial elements of
$\cG'_\ast$.

The proof of the following is essentially the same as that of
Corollary~\ref{cor:Psi-homeo}.

\begin{lem}
\label{lem:psi-ast-cont} 
  $\Psi_\ast$ is continuous at $\left((\by, s), t\right)\in \sDast$ whenever
    $s<\infty$.
\end{lem}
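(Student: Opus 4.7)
The plan is to mirror the proof of Corollary~\ref{cor:Psi-homeo} parameterwise, lifting it to the bundle setting.

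First I would verify continuity of $\Psi_\ast$ on the base stratum $\{((\by,s),t) \in \bD_\ast : s \in [0,1)\}$. Here the explicit formula (\ref{eq:thread1}) gives
\[
  \Psi_t(\by, s) = \thr{(y_0, s), (y_1, s/2), (y_2, s/4), \ldots},
\]
with the convention that $\Psi_t(\partial'_t) = \partial$. The topology on $\bD_\ast$, inherited from $\hS^\N_\ast \times [0,\infty]/\sim$, is exactly designed so that if $((\by^{(i)}, s_i), t_i) \to ((\by, s), t)$ then for every $k \ge 0$ the first-coordinate entries $y^{(i)}_k \in S$ converge to $y_k \in S$. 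Hence each coordinate of $\Psi_{t_i}(\by^{(i)}, s_i)$ in $T$ converges to the corresponding coordinate of $\Psi_t(\by, s)$, and continuity on the region $s < 1$ follows (with some care at $\partial'_t$, which amounts to observing that small-$s$ arcs in $T$ collapse toward~$\partial$).

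Second, I would promote the identity (\ref{eq:HGPsi}) to the parametric version
\[
  \Psi_\ast|_{\{s<N+1\}} \;=\; \hH_\ast^N \circ \Psi_\ast|_{\{s<1\}} \circ G_\ast^{-N}|_{\{s<N+1\}},
\]
where $\hH_\ast\colon \hT_\ast \to \hT_\ast$ is as in Section~\ref{sec:cont-var} and $G_\ast\colon \bD_\ast \to \bD_\ast$ is the slice-preserving homeomorphism defined by $G_\ast((\by,s),t) = ((\hB_t(\by), \lambda(s)), t)$. Given any point $((\by, s), t) \in \sDast$ with $s < \infty$, choose $N$ with $s < N+1$; then continuity of $\Psi_\ast$ at $((\by, s), t)$ reduces to continuity of the three maps on the right, each on the open parameterized region $\{s < N+1\}$ or $\{s < 1\}$ where step~one applies.

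The main step is therefore verifying that $G_\ast$ and $\hH_\ast$ are continuous as maps on $\bD_\ast$ and $\hT_\ast$ respectively. For $\hH_\ast$ this is part of the continuous-family setup (and is a consequence of Theorem~\ref{thm:fat-homeo} together with the slice-preserving character of $\hH_\ast$). For $G_\ast$ it reduces to continuity of the family $\hB_\ast$ of natural extensions of the outside maps; and since $B_t(y)$ is jointly continuous in $(y, t) \in S \times J$ (as $f_t$, and hence $\barf_t$, varies continuously with~$t$), continuity of $\hB_\ast$ as a self-map of $\hS_\ast \subset S^\N \times J$ is immediate from the definition of the inverse-limit topology. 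The hard part is not any single estimate but rather keeping the fibered-topology identifications on $\hS_\ast$, $\bD_\ast$, and $\hT_\ast$ straight so that all three continuity statements genuinely combine via the displayed identity; the restriction $s < \infty$ in the statement of the lemma is exactly what allows a uniform choice of $N$ near the reference point, which is what makes the conjugacy argument work.
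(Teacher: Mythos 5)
Your proposal is correct and is essentially the paper's own argument: the paper's proof simply writes out the composite $\hH_\ast^N\circ\Psi_\ast|_{\{s<1\}}\circ G_\ast^{-N}$ explicitly as the formula $\Psi_\ast\yst = \thr{H_t^N(y_N,\lambda^{-N}(s)), H_t^N(y_{N+1},\lambda^{-(N+1)}(s)),\ldots}$ for $s<N+1$, citing \eqref{eq:HGPsi}, \eqref{eq:Psi}, and \eqref{eq:thread1}, and observes that this is jointly continuous. Your decomposition into the three factors, with continuity of $G_\ast^{\pm1}$ and $\hH_\ast$ coming from the joint continuity of $B_t(y)$ and $H_t(x)$ in the coordinatewise (product) topology, is the same reasoning made slightly more explicit.
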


\begin{proof} 
For each~$N\in\N$, we have that, whenever $s<N+1$,
  \[
    \Psi_\ast\yst = \thr{H_t^N(y_N, \lambda^{-N}(s)), H_t^N(y_{N+1},
    \lambda^{-(N+1)}(s)), \ldots}
  \] by \eqref{eq:HGPsi}, \eqref{eq:Psi}, and \eqref{eq:thread1}. This
  expression is clearly continuous in $\yst$.
\end{proof}

The main technique in the proof of upper semi-continuity of $\cG_\ast$ is to
take certain convergent sequences in~$\hT_\ast$, transfer them to $\sDast$
using $\Psi_\ast^{-1}$, draw conclusions about the limit in $\sDast$, and
transfer back to~$\hT_\ast$. In order to do this, we need to know that
$\Psi_\ast$ respects the limits of certain sequences, although it may not be
continuous at those limits. The following lemma enables us to do this:
parts~(a) and~(b) are natural, while part~(c), which is more esoteric, is
motivated by the specific requirements of the proof.

\begin{lem}
  \label{lem:convergence-Psi}
  Let $\ystj\to\yst$ be a convergent sequence in $\sDast$. Then
  $\Psi_\ast\ystj\to \Psi_\ast\yst$ if one of the following holds:
  \begin{enumerate}[(a)]
    \item $s<\infty$;
    \item $\by$ and all of the $\by\sj$ are landing of level~$1$; or
    \item $\by$ is landing of level~1, and there is a sequence
    $n_j\to \infty$ such that for each~$j$ we have
    $s_j\le n_j+1$ and $y\sj_{i}\not\in\ingam_{t_j}$ for $2\le i\le n_j$.
  \end{enumerate}
\end{lem}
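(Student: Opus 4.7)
The plan is to prove the three cases in turn, using as a common tool the fact that convergence in the inverse limit metric on $\hT_\ast$ is implied by $t_j \to t$ together with entry-by-entry convergence $\Psi_{t_j}(\by\sj, s_j)_r \to \Psi_t(\by, s)_r$ for each $r \ge 0$ (all entries lying in the compact space $T$). Part~(a) is immediate from Lemma~\ref{lem:psi-ast-cont}, so I focus on parts~(b) and~(c), in each of which I may assume $s = \infty$, and hence $s_j \to \infty$, by appealing to part~(a) along any subsequence where the $s_j$ stay bounded.

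For part~(b), since $\by$ and all $\by\sj$ are landing of level~$1$, the explicit formula of Lemma~\ref{lem:formula-disjoint-gamma} with $N = 1$ gives, writing $P(s_j) = (T_j, v_j)$,
\[
  \Psi_{t_j}(\by\sj, s_j) = \thr{f_{t_j}(\tau(y\sj_1)),\,\tau(y\sj_1),\,\tau(y\sj_2),\,\ldots,\,\tau(y\sj_{T_j-1}),\,(y\sj_{T_j}, v_j),\,\ldots},
\]
valid whenever $s_j \ge 2$; and Corollary~\ref{cor:ray-lands} gives $\omega_t(\by) = \thr{f_t(\tau(y_1)),\,\tau(y_1),\,\tau(y_2),\,\ldots}$. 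Since $T_j \to \infty$, for any fixed $r \ge 1$ we have $r \le T_j - 1$ for all large $j$, so $\Psi_{t_j}(\by\sj, s_j)_r = \tau(y\sj_r) \to \tau(y_r) = \omega_t(\by)_r$ by continuity of $\tau$ and of $y\sj_r$ in $(\by\sj, t_j)$; the $0$th entry converges similarly, using continuity of $f$ in the parameter.

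For part~(c) the threads $\by\sj$ need not be landing at any fixed level, but the hypothesis that $y\sj_i \not\in \ingam_{t_j}$ for $2 \le i \le n_j$, together with $s_j \le n_j + 1$ and $n_j \to \infty$, supplies enough consecutive ``safe'' entries that Lemma~\ref{lem:constant-block} applies. Concretely, for each fixed $r \ge 1$, I will apply that lemma with $r' = r+1$ and $k = n_j - r - 1$ to deduce $\Psi_{t_j}(\by\sj, s_j)_r = f_{t_j}(\tau(y\sj_{r+1}))$ for all sufficiently large $j$; a separate application with $r' = 1$ and $k = n_j - 1$ handles the $r = 0$ entry, giving $\Psi_{t_j}(\by\sj, s_j)_0 = f_{t_j}(\tau(y\sj_1))$. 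Passing to the limit and using that $y_{r+1} \not\in \ingam_t$ (because $\by$ is landing of level~$1$), the commute relation~\eqref{eq:commute} identifies $f_t(\tau(y_{r+1}))$ with $\tau(y_r) = \omega_t(\by)_r$, while the $0$th entry converges to $f_t(\tau(y_1)) = \omega_t(\by)_0$.

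The main obstacle I anticipate is the careful verification of the range hypothesis $s_j \in [r' + u_{t_j}(y\sj_{r'}),\, r' + k + 1]$ in Lemma~\ref{lem:constant-block}. The upper bound $s_j \le r' + k + 1$ is exactly $s_j \le n_j + 1$ in both applications, and thus holds by hypothesis. The lower bound is transparent for $r' \ge 2$ because $y\sj_{r'} \not\in \ingam_{t_j}$ forces $u_{t_j}(y\sj_{r'}) = 0$, reducing it to $s_j \ge r'$, which holds for large $j$ since $s_j \to \infty$. For $r' = 1$, $y\sj_1$ is unconstrained, but one has the uniform bound $u_{t_j}(y\sj_1) \le 1$, so the lower bound is dominated by $s_j \ge 2$ for large $j$. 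These verifications, while entirely elementary, are the only point at which the precise numerical form of the hypotheses in~(c) is used.
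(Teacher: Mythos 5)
Your proposal is correct and follows essentially the same route as the paper: part (a) via Lemma~\ref{lem:psi-ast-cont}, part (b) via Lemma~\ref{lem:formula-disjoint-gamma} and Corollary~\ref{cor:ray-lands} after reducing to $s=\infty$, and part (c) via Lemma~\ref{lem:constant-block} with the same choice of indices and the same verification of the range $[r+1+u(y\sj_{r+1}),\,n_j+1]$. The only (harmless) differences are presentational: you treat the $0$th entry explicitly and pass $f_{t_j}(\tau(y\sj_{r+1}))$ to the limit before invoking~\eqref{eq:commute}, where the paper rewrites it as $\tau(y\sj_r)$ first.
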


\begin{proof}\mbox{}
  \begin{enumerate}[(a)]
    \item By Lemma~\ref{lem:psi-ast-cont}.
    \item We can assume that $s=\infty$ since otherwise~(a) applies. Fix $r\ge
    1$. Since $\by\in\cL_r$, Corollary~\ref{cor:ray-lands} gives $\Psi_t(\by,
    \infty)_r = \tau(y_r)$; and since $\by\sj\in\cL_r$,
    Lemma~\ref{lem:formula-disjoint-gamma} gives $\Psi_{t_j}(\by\sj, s_j)_r =
    \tau(y\sj_r)$, provided that~$j$ is large enough that $s_j\ge r+1$.
    Therefore for each~$r\ge 1$ we have $\Psi_{t_j}(\by\sj, s_j)_r \to
    \Psi_t(\by, \infty)_r$ as $j\to\infty$, and the result follows.
    \item We can again assume that~$s=\infty$. Fix $r\ge 1$. As for~(b), we
    have $\Psi_t(\by, \infty)_r = \tau(y_r)$. For each~$j$ large enough that
    $n_j\ge r+2$, we have $y\sj_{(r+1)+i}\not\in\ingam_{t_j}$ for $1\le i\le
    n_j- (r+1)$, so that Lemma~\ref{lem:constant-block} gives
    \[
      \Psi_{t_j}(\by\sj, s')_r = f_{t_j}(\tau(y\sj_{r+1})) = \tau(y\sj_r)
      \quad\text{ for all }s'\in[r+2, n_j + 1].
    \]
    In particular, since $s_j\le n_j+1$ for all~$j$, we have
    $\Psi_{t_j}(\by\sj, s_j)_r = \tau(y\sj_r)$ whenever $j$ is large enough
    that $s_j\ge r+2$, so that $\Psi_{t_j}(\by\sj, s_j)_r \to \Psi_t(\by,
    \infty)_r$ as $j\to\infty$.
  \end{enumerate}
\end{proof}

The following abbreviated language will be convenient.

\begin{defn}[Type and height of a parameter] 
  We say that a parameter $t\in J$ is of \emph{irrational}, \emph{rational},
  and rational \emph{interior}, \emph{early endpoint}, \emph{normal endpoint},
  \emph{quadratic-like strict left endpoint}, or \emph{late endpoint} type
  according as $f_t$ is. We define the \emph{height} of~$t$ to be
  $q(\kappa(f_t))$.
\end{defn}

\begin{lem} 
\label{lem:G-ast-usc}
The decomposition $\cG_\ast$ of $\hT_\ast$ is upper semi-continuous.
\end{lem}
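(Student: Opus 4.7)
I verify upper semi-continuity of $\cG_\ast$ in its sequential form: whenever $(x_j, t_j), (y_j, t_j) \in \hT_\ast$ belong to a common $\cG_{t_j}$-element $g_j$, with $(x_j, t_j) \to (x, t)$ and $(y_j, t_j) \to (y, t)$, the points $(x, t)$ and $(y, t)$ must lie in a common $\cG_t$-element. By passing to a subsequence I may assume the $g_j$ are all of the same type in the catalogues of Lemmas~\ref{lem:ssc-irrational}--\ref{lem:rat-quadratic-strong-stable}: either each $g_j$ is a singleton, or each $g_j = \Psi_{t_j}(h_j)$ for a specifically parametrized strongly stable component (an arc $A_{\cdot}$, a crosscut $\Gamma'(\cdot,\cdot,\cdot)$ or $C_{\cdot,\cdot}$, a packet $D_{\cdot}$ or $D_{\cdot,\cdot}$, or a vertical line $L_{\cdot}$) of $\bD_{t_j}$, or each $g_j = \Psi_{t_j}(X_{t_j})$ is the exceptional element.

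The singleton case is immediate: $y_j = x_j$ forces $y = x$. For a typed non-trivial family, I lift $x_j = \Psi_{t_j}(\eta_j)$ and $y_j = \Psi_{t_j}(\zeta_j)$ with $\eta_j, \zeta_j \in h_j$. The parametrizing data of $h_j$ (integer indices, base points $y_j \in \gamma_{t_j}$, and the rational/irrational type of $t_j$ as controlled by $q(\kappa(f_{t_j}))$) converge along a further subsequence by continuity of the kneading and height functions together with Theorem~\ref{thm:outside-dynamics}. If the integer index tends to $+\infty$, the uniform diameter bound of Lemma~\ref{lem:rational-diameter-bound} gives $\diam g_j \to 0$, forcing $y = x$. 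Otherwise $h_j$ stays in a compact region of $\bD_\ast$, and the lifts $\eta_j, \zeta_j$ converge to points $\eta, \zeta$ lying on the corresponding strongly stable component $h_t$ of $\bD_t$ for the limit parameter — using the natural re-indexing $\bt(y, k, i) \leftrightarrow \bt(y, kn+i+1)$ that matches the rational and irrational parametrizations. Parts (a), (b), and (c) of Lemma~\ref{lem:convergence-Psi} are calibrated precisely to cover the three possible behaviors of the $s$-coordinate (namely: bounded; tending to $\infty$ along a uniformly landing family; or tending to $\infty$ through threads with an increasing number of entries outside $\ingam$, which arises exactly when rational $t_j$ of height $m_j/n_j$ with $n_j \to \infty$ approach a limit), and yield $\Psi_t(\eta) = x$ and $\Psi_t(\zeta) = y$; thus $(x, t), (y, t) \in \Psi_t(h_t) \in \cG_t$.

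The principal obstacle is the exceptional element $g_j = \Psi_{t_j}(X_{t_j})$: the diameter bounds of Lemma~\ref{lem:rational-diameter-bound} do not apply to $X_{t_j}$, and the structure of $X_{t_j} \cap \hS_\infty$ changes markedly with the type of $t_j$ --- for instance, when rational parameters $t_j$ of heights $m_j/n_j$ with $n_j \to \infty$ approach an irrational parameter $t$, the set $X_{t_j} \cap \hS_\infty$ is the finite periodic orbit $\bP_{t_j}$ of $\hB_{t_j}$, while $X_t \cap \hS_\infty = \Lambda_\infty$ is a Cantor set. The plan to handle this is to decompose $X_{t_j}$ into its ``spine'' (containing $\partial'_{t_j}$ together with the packets $D_{k,i,t_j}$ or $D_{r,t_j}$ whose closures meet $\partial'_{t_j}$) and its ``outer lines'' $L_\bp$ for $\bp \in \bP_{t_j}$, and then verify: (i) the spine portions of $X_{t_j}$ Hausdorff-accumulate onto the analogous spine of $X_t$, via a parameter-uniform version of the case-(b)(iii) analysis in the proof of Lemma~\ref{lem:Psi-cont-decomp}; (ii) the periodic orbits $\bP_{t_j}$ of $\hB_{t_j}$ accumulate only on the $\omega$-limit set of $\hB_t$, which, together with Theorem~\ref{thm:outside-dynamics}~(d)(iii) (the density of the $B_t$-orbit of $B_t(a)$ in $S \setminus \bigcup_{r\ge 0} B_t^{-r}(\gamma_t)$), places each such accumulation point $\by$ of $\bP_{t_j}$ in $\hS \setminus \bigcup_r G_r$, so $L_{\bp_j}$ limits into a line $L_\by \subset X_t$. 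Combining (i) and (ii) shows that every Hausdorff limit of $\Psi_{t_j}(X_{t_j})$ is contained in $\Psi_t(X_t)$, and the analogous cross-type limits (rational-to-rational with different heights, endpoint-to-interior, quadratic-like to tent-like, etc.)  are handled by parallel but simpler arguments, completing the verification.
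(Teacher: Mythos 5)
Your skeleton is the same as the paper's: pass to subsequences so that all $t_j$ have a fixed type, lift via $\Psi_{t_j}^{-1}$ to $\bD_\ast$, dispose of components whose indices tend to $+\infty$ using the diameter bounds of Lemma~\ref{lem:rational-diameter-bound}, identify the limit of the lifted components, and transfer back with the appropriate part of Lemma~\ref{lem:convergence-Psi} (you correctly isolate part~(c) as the tool for $n_j\to\infty$). But as written there are genuine gaps. First, the dichotomy ``index $\to+\infty$, else the lifts converge into the correspondingly re-indexed component of $\bD_t$'' fails when the index tends to $-\infty$: the crosscuts $\Gamma'(y,k,i)$ (resp.\ $C_{r,y}$) with $kn+i\to-\infty$ (resp.\ $r\to-\infty$) have no ``corresponding'' limit component; their horizontal parts sink to $\partial'$ and their vertical parts limit onto verticals over points $\by$ with $\hB_t^r(\by)_0\notin\ingam_t$ for all $r\in\Z$, so the limit points are absorbed into the exceptional element $X_t$. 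Second, and more fundamentally, identifying the limit element across a change of type is the actual content of the lemma and is not a matter of re-indexing: for instance, when rational interior parameters of height $m_j/n_j$ with $n_j\to\infty$ approach an irrational $t$, the packets $D_{0,0}$ limit onto $A_1$, onto $C_{1,\min(y,\hy)}$, or into $X_t$ according as $B_{t_j}^{n_j}(a)$ accumulates at $c_u$, at a point of $\ingam_t\setminus\{c_u,a,\re_u\}$, or at $\{a,\re_u\}$; and crosscuts $\Gamma'(y_j,0,0)$ with $y_j\to\min(q_{n-1},\hq_{n-1})$ limit into $D_{0,0}$ (or into $X_t$ in the endpoint case) rather than into another crosscut. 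Deferring all of these matchings to ``parallel but simpler arguments'' omits the bulk of the proof.

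Third, your mechanism for the outer lines of $X_{t_j}$ does not work. When the periods $n_j\to\infty$ there is no general principle forcing periodic orbits of $\hB_{t_j}$ to accumulate only on the $\omega$-limit set of the limiting Denjoy map $\hB_t$ (one can perturb a Denjoy homeomorphism inside a wandering gap and create periodic points there), and Theorem~\ref{thm:outside-dynamics}~(d)(iii) concerns the forward orbit of $B_t(a)$, not the location of periodic orbits of nearby maps. The correct and much simpler mechanism --- the one the paper uses --- is that membership of $(\by^{(j)},s_j)$ in the verticals of $X_{t_j}$ is characterized by the closed condition $\hB_{t_j}^r(\by^{(j)})_0\notin\ingam_{t_j}$ for all $r\in\Z$; this condition survives the limit because $\hB_t$ and $\gamma_t$ vary continuously with~$t$, and in every type of limit parameter it places $\by$ among the verticals contained in $X_t$. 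Replacing your items (i)--(ii) by this observation (and then still checking $\Psi_{t_j}(\eta_j)\to\Psi_t(\eta)$ via Lemma~\ref{lem:convergence-Psi}) would repair that part of the argument, but the case-by-case identification of limit elements in the non-exceptional cases still has to be carried out.
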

\begin{proof} Let $(t_j)$ be a sequence in~$J$ converging to $t\in J$, and for
  each $j$, let $g_j$ be a decomposition element of $\cG_{t_j}$. We need to
  show that there is a decomposition element $g\in\cG_t$ with the property
  that, whenever $(\xi_j, t_j)$ is a sequence in~$\hT_\ast$ with $(\xi_j,
  t_j)\to (\xi, t)$ and $\xi_j\in g_j$ for all~$j$, then we have $\xi\in g$.

  This is clearly the case if infinitely many of the $g_j$ are singletons, so
  we can assume that there are decomposition elements $g_j'\in\cG'_{t_j}$ with
  $\Psi_{t_j}(g_j') = g_j$ for each~$j$.

  Observe that if the required property holds for some subsequence of $(t_j,
  g_j)$, then it also holds for the full sequence. By taking such a
  subsequence, we can therefore further assume that all of the~$t_j$ are of
  rational interior or late endpoint type (type~A); or that they are all of
  rational normal or quadratic-like endpoint type (type~B); or that they are
  all of irrational or rational early endpoint type (type~C). We will consider
  each of these three cases in turn. The arguments will also depend on the type
  of the limiting parameter~$t$. We note that~$t$ can only be of type~A if the
  $t_j$ are also of type~A, and if the sequence $m_j/n_j$ of their heights is
  eventually constant, since the set~$J_q$ of parameters~$t$ of rational
  interior or late endpoint type with prescribed height~$q$ is open in~$J$.
  This is because $J_q=\{t\in J\,:\, (w_q1)^\infty \prec \kappa(f_t)\prec
  10(\hw_q1)^\infty \}$ (see Definitions~\ref{def:lhe,rhe,etc}
  and~\ref{def:type}); because if $\kappa(f_t)=(w_q0)^\infty$ then the turning
  point of~$f_t$ is not periodic by Definition~\ref{def:kneading-sequence}; and
  because $10(\hw_q1)^\infty$ is not a periodic sequence by
  Lemma~\ref{lem:height-intervals}~(d).

  The method of proof is the same in all three cases. We first use the explicit
  description of the decompositions~$\cG'_t$ provided by
  Lemmas~\ref{lem:ssc-irrational}, \ref{lem:rat-general-strong-stable},
  \ref{lem:rat-NBT-strong-stable}, \ref{lem:rat-endpoint-strong-stable},
  and~\ref{lem:rat-quadratic-strong-stable} to show that either (i) the
  diameters of (a subsequence of) the decomposition elements $g_j$ converge to
  zero, in which case the result is obvious; or (ii) there is a decomposition
  element $g'\in\cG'_t$ with the property that, whenever $(\eta_j, t_j)$ is a
  sequence in~$\sDast$ with $(\eta_j, t_j)\to (\eta, t)$ and $\eta_j\in g_j'$
  for all~$j$, we have $\eta\in g'$. Then if $(\xi_j, t_j)\to(\xi, t)$ with
  $\xi_j\in g_j$ for all~$j$, we define~$\eta_j = \Psi_{t_j}^{-1}(\xi_j)\in
  g_j'$, and take a subsequence if necessary to ensure that $(\eta_j, t_j)\to
  (\eta, t)$ with $\eta\in g'$. Writing $g=\Psi_t(g')$, it only remains to
  show, using Lemma~\ref{lem:convergence-Psi}, that $(\xi_j, t_j) =
  \Psi_\ast(\eta_j, t_j) \to  \Psi_\ast(\eta, t)$, so that $\xi\in g$ as
  required.

  We will therefore assume, in each case, that $(\eta_j, t_j) = \ystj\to(\eta,
  t)=\yst$ is a sequence in~$\sDast$, and show that the decomposition element
  $g'\in\cG'_t$ which contains $\eta$ only depends on the decomposition
  elements $g'_j\in  \cG'_{t_j}$ which contain the $\eta_j$. The proof in the
  given case will then be completed by showing (or observing) that one of the
  conditions of Lemma~\ref{lem:convergence-Psi} holds.

  The decomposition elements $X_t\in \cG'_t$ (see
  Definition~\ref{def:decomp-primed}) which contain $\partial'$ play a special
  role in the arguments. Observe that in all cases these contain the
  verticals $\{\by\}\times[0,\infty]$ above the points $\by\in
  \hS_t$ which have the property that $\hB^r_t(\by)_0\not\in\ingam_t$ for all
  $r\in\Z$; and that in the rational interior and late endpoint cases, $X_t$ is
  equal to the union of these verticals.

  If infinitely many of the $s_j$ are equal to~$0$ then $s=0$, and hence (using
  Lemma~\ref{lem:convergence-Psi}~(a)), $\xi\in \Psi_t(X_t)$. We will therefore
  always assume that $s>0$ and that $s_j>0$ for all~$j$.

  Sequences of type~A are the hardest to treat, mainly because there are
  decomposition elements (other than~$X_t)$ which are not uniformly landing.
  They also involve the most subcases, since limits of type~A are possible, and
  because limits of type~B can be approached in two quite different ways,
  either from within their height interval or from outside it. We will treat
  this case in detail --- although methods of arguments will be successively
  abbreviated as they recur --- and treat sequences of types~B and~C more
  briefly. Although the proof is quite long, it involves nothing more than the
  careful enumeration of cases and their analysis using the explicit
  description of the decomposition $\cG'_\ast$.

\medskip\medskip

  \noindent\textbf{Case A: }All $t_j$ are of rational interior or late endpoint
  type.

  In this case the decompositions~$\cG'_{t_j}$ are given by
  Lemma~\ref{lem:rat-general-strong-stable} or, in the NBT case, by
  Lemma~\ref{lem:rat-NBT-strong-stable}. Let the height of~$t_j$ be $m_j/n_j$.
  Suppose first that infinitely many (and so, without loss of generality, all)
  of the $\eta_j$ are contained in the $n_j$-stars $X_{t_j}$, so that for
  each~$j$ we have $\hB^r_{t_j}(\by\sj)_0\not\in\ingam_{t_j}$ for all~$r\in\Z$.
  Since $\hB_t$ and $\gamma_t$ vary continuously with~$t$, it follows that
  $\hB^r_t(\by)_0\not\in\ingam_t$ for all $r\in\Z$: therefore $\eta\in X_t$.
  Observing that $\by$ and $\by\sj$ are landing of level~0, and hence of
  level~1,  completes the proof using Lemma~\ref{lem:convergence-Psi}~(b).

  We can therefore assume that none of the $\eta_j$ lie in $X_{t_j}$, and so
  can define $k_j\in\Z$ and $0\le i_j\le n_j-1$ such that
  \[
    \eta_j\in A_{k_j, i_j} \cup D_{k_j, i_j} \cup \bigcup_{y\in (c_u,
    a]\setminus\{\min(q_{n_j-1}, \hq_{n_j-1})\}} \Gamma'(y, k_j, i_j)
  \] for each~$j$, where $A_{k_j, i_j}$, $D_{k_j, i_j}$, and $\Gamma'(y, k_j,
  i_j)$ are the elements of $\cG'_{t_j}$ defined in the statements of
  Lemmas~\ref{lem:rat-general-strong-stable}
  and~\ref{lem:rat-NBT-strong-stable} (we suppress the dependence of these
  decomposition elements, as well as that of~$c_u$ and $q_{n_j-1}$, on~$t_j$).

  There are three possibilities:
  \begin{enumerate}[(a)]
    \item The sequence $(n_jk_j + i_j)$ is not bounded above. Then, by
    Lemma~\ref{lem:rational-diameter-bound}~(b), there is a subsequence of the
    $\xi_j = \Psi_{t_j}(\eta_j)$ contained in decomposition elements whose
    diameters go to zero.

    \item The sequence $(n_jk_j +i_j)$ is not bounded below so that, taking a
    subsequence, we can assume that $k_j<0$ for all~$j$ and that
    $n_jk_j+i_j\to-\infty$. For each~$j$, we therefore have either that
    $\hB^r_{t_j}(\by\sj)_0\not\in\ingam_{t_j}$ for all $r\in\Z$ with $r\le
    -(n_jk_j+i_j+1)$, or that $s_j\le 2^{n_jk_j + i_j + 1}$ (depending on
    whether $\eta_j$ is in a vertical of its decomposition element, or is in
    one of the horizontals, or disks of $D_{k_j, i_j}$). Since $s\not=0$ it
    follows that $\hB^r_t(\by)_0\not\in\ingam_t$ for all $r\in\Z$. Thus
    $\eta\in X_t$ and the proof is completed using Lemma~\ref{lem:convergence-Psi}~(b).

    \item The sequence $(n_jk_j + i_j)$ is bounded so that, taking a
    subsequence, we can assume it to be a constant~$N$. Acting on $\hT_\ast$ by
    the decomposition-preserving homeomorphism $\hH_\ast^{-N}$, we can further
    assume that~$N=0$ (i.e. that $k_j=i_j=0$ for all~$j$), so that
    \[
      \eta_j \in A_{0, 0} \cup D_{0, 0} \cup
      \bigcup_{y\in(c_u, a]\setminus\{\min(q_{n_j-1}, \hq_{n_j-1})\}}\Gamma'(y,
      0, 0)\quad\text{ for all }j.
    \] 
    Taking another subsequence, we can assume that $\eta_j\in A_{0,0}$ for
    all~$j$; or $\eta_j\in D_{0,0}$ for all~$j$; or $\eta_j\in \bigcup_y
    \Gamma'(y, 0, 0)$ for all~$j$.

    \noindent\textbf{1. }If $\eta_j\in A_{0,0}$ for all~$j$, then $\by\sj =
    \bt(c_u, 0, 0) = \thr{B_{t_j}(a), c_u, B_{t_j}^{-1}(c_u),
    B_{t_j}^{-2}(c_u), \ldots}$
    (Definition~\ref{notn:threads-rat-interior}~(b)) and $s_j\in[2,\infty]$.
    Therefore $s\in[2,\infty]$, and since $B_t\colon S\to S$, $B_t^{-1}\colon
    S\setminus\{B_t(a)\}\to S$, and~$c_u$ all depend continuously on~$t$, we
    have $\by =
    \thr{B_t(a), c_u, B_t^{-1}(c_u), B_t^{-2}(c_u), \ldots}$ provided that
    $c_u$ is not in the $B_t$-orbit of $a$: that is, provided that~$t$ is not
    of NBT type.

    If~$t$ is of rational interior, late endpoint, or normal or quadratic-like
    endpoint type but not of NBT type, then $\by=\bt(c_u, 0, 0)$, and hence
    $\eta\in A_{0,0}$ (compare with
    Definition~\ref{notn:threads-rat-interior}~(b) and
    Lemmas~\ref{lem:rat-general-strong-stable},
    \ref{lem:rat-endpoint-strong-stable},
    and~\ref{lem:rat-quadratic-strong-stable}); while if~$t$ is of irrational
    or early endpoint type then $\by=\bt(c_u, 1)$ and hence $\eta\in A_{1,t}$
    (compare with Definition~\ref{notn:threads-irrat}~(a) and
    Lemma~\ref{lem:ssc-irrational}). If~$t$ is of NBT type, then~$t$ and $t_i$
    (for sufficiently large~$i$) all have the same height~$m/n$, and by taking
    a subsequence we can assume that either $B_{t_j}^n(a)\in(c_u, a]$ for
    all~$i$, or that $B_{t_j}^n(a)\in[\re_u, c_u)$ for all~$i$ (it is
    impossible to have $B_{t_j}^n(a)=c_u$, since there is no decomposition
    element $A_{0,0}$ in the NBT case). In the former case we have that
    $B_{t_j}^{-n}(c_u)\to \re_u$, so that $\by = \thr{B_t(a),c_u,
    B_t^{-1}(c_u), \ldots, B_t^{-(n-1)}(c_u), \re_u, B_t^{-1}(\re_u), \ldots} =
    \thr{q_0, q_{n-1}, q_{n-2}, \ldots, q_0, \re_u, B_t^{-1}(\re_u), \ldots} =
    \bt(\re_u, 1, 0)$; and in the latter case we have $B_{t_j}^{-n}(c_u)\to a$,
    so that $\by = \bt(a, 1, 0)$. Hence $\eta\in D_{0,0}$ (compare with
    Lemma~\ref{lem:rat-NBT-strong-stable}). 

    Since $\by$ and all of the $\by\sj$ are landing of level~1, the proof when
    $\eta_j\in A_{0,0}$ for all~$j$ is complete by
    Lemma~\ref{lem:convergence-Psi}~(b).

\medskip

    \noindent\textbf{2. }If $\eta_j\in D_{0,0}$ for all~$j$, then, referring to
    the descriptions of $D_{0,0}$ in the statements of
    Lemmas~\ref{lem:rat-general-strong-stable}
    and~\ref{lem:rat-NBT-strong-stable}, we have that for each~$j$ one of the
    following occurs (and so, taking a subsequence, one of them occurs for
    all~$j$):
    \begin{enumerate}[(i)]
      \item $\eta_j$ is in one of the verticals of the crosscuts in the
      description of~$D_{0,0}$: that is, $\by\sj$ is one of $\bt(\hq_{n_j-1},
      0, 0)$, $\bt(a, 1, 0)$, and $\bt(\re_u, 1, 0)$, and
      $s_j\in[1+u(q_{n_j-1}), \infty]$ (with the case $\bt(\hq_{n_j-1}, 0, 0)$
      omitted if~$t_j$ is of NBT type). By
      Definition~\ref{notn:threads-rat-interior}, we have
      \begin{eqnarray*}
        \bt(\hq_{n_j-1}, 0, 0) &=& \thr{B_{t_j}(a), \hq_{n_j-1},
        B_{t_j}^{-1}(\hq_{n_j-1}), B_{t_j}^{-2}(\hq_{n_j-1}), \ldots}  , \\
        \bt(a, 1, 0) &=& \thr{B_{t_j}(a), q_{n_j-1}, \ldots, q_1, q_0, a,
        B_{t_j}^{-1}(a), B_{t_j}^{-2}(a), \ldots}, \quad\text{ and}\\
        \bt(\re_u, 1, 0) &=& \thr{B_{t_j}(a), q_{n_j-1}, \ldots, q_1, q_0,
        \re_u, B_{t_j}^{-1}(\re_u), B_{t_j}^{-2}(\re_u), \ldots}.
      \end{eqnarray*} Note also that the function $u = u_t\colon S\to[0,1]$ of
      Definition~\ref{notn:u-y} varies continuously with~$t$.

      \item $\eta_j$ is in a horizontal of the crosscuts in the description
      of~$D_{0,0}$, but does not lie in the set \mbox{$[\bt(a, 1, 0),
      \bt(\re_u, 1, 0)] \,\times\,[u_{0,0}, v_{0,0}]$}: that is,
      $\by\sj = \bt(y, 0, 0) = \thr{B_{t_j}(a), y, B_{t_j}^{-1}(y), \ldots}$
      for some $y$ between $q_{n_j-1}$ and $\hq_{n_j-1}$, and
      $s_j=1+u(q_{n_j-1})$. (This case does not occur if~$t_j$ is of NBT type.)

      \item $\by\sj\in[\bt(a, 1, 0), \bt(\re_u, 1, 0)]$ and $s_j\in
      [1+u(q_{n_j-1}), n_j+1]$. (Here the interval is the one with the given
      endpoints which contains $\bq_0$: therefore $\by\sj$ lies in this
      interval if and only if its first $n_j+1$ entries are $\thr{B_{t_j}(a),
      B_{t_j}^{n_j}(a), \ldots B_{t_j}(a), \ldots}$.)
    \end{enumerate}

    \medskip

    Consider first the case where~$t$ is of rational interior or late endpoint
    type with height $m/n$, so that $m_j/n_j = m/n$ for all (sufficiently
    large)~$j$. Since~$n_j=n$, sequences $(\eta_j)$ of type~(i) converge to
    $(\by, s)$ with $\by\in\{\bt(\hq_{n-1}, 0, 0), \bt(a, 1, 0),
    \bt(\re_u, 1, 0)\}$ and $s\in [1+u(q_{n-1}), \infty]$ (notice that if~$t$
    is of NBT type and $t_j$ is not, then sequences of the form $\bt(\hq_{n-1},
    0, 0)$ --- which depend on~$j$ since both $q_{n-1}$ and $B_{t_j}$ do ---
    converge either to $\bt(a, 1, 0)$ or $\bt(\re_u, 1, 0)$). Sequences
    $(\eta_j)$ of type~(ii) converge either to $(\bt(y, 0, 0), s)$ with $y$
    between $q_{n-1}$ and $\hq_{n-1}$ and $s=1+u(q_{n-1})$, or to limits of
    type~(i). Finally, sequences $(\eta_j)$ of type~(iii) converge to $(\by,
    s)$ with $\by\in[\bt(a, 1, 0), \bt(\re_u, 1, 0)]$ and $s\in [1+u(q_{n-1}),
    n+1]$.

    Therefore $\eta\in D_{0,0}$. Since $\by$ and all of the $\by\sj$ are
    landing of level~1 in type~(i), and $s<\infty$ in types~(ii) and~(iii), the
    proof in this case is complete.

    \smallskip

    Now suppose that $t$ is of rational normal or quadratic-like endpoint type
    with height~$m/n$. We will assume that this is a right hand endpoint, so
    that $q_{n-1} = B_t^n(a) = \re_u$: the left hand endpoint cases are
    similar. By taking a subsequence, we can reduce to one of two
    possibilities: first, that $m_j/n_j=m/n$ for all~$j$ (we approach the
    endpoint from inside the height interval); or second, that $n_j\to\infty$
    as $j\to\infty$ (we approach the endpoint from outside the height
    interval).
    \begin{itemize}
      \item Suppose that we approach the endpoint from inside the height
      interval. Then sequences~($\eta_j$) of type~(i) converge to $(\by,s)$
      with $\by\in\{\bt(a, 0, 0), \bt(a, 1, 0), \bt(a, 2, 0)\}$ and
      $s\in[1,\infty]$. Sequences~$(\eta_j)$ of type~(ii) converge either to
      $(\bt(y, 0, 0), 1)$ for some $y\in\ingam_t$, or to limits of type~(i).
      Finally, sequences~$(\eta_j)$ of type~(iii) converge to $(\by, s)$ with
      $s\in[1, n+1]$ and the first~$n+1$ entries of $\by$ being $\thr{B_t(a),
      \re_u, \ldots, B_t(a), \ldots}$: that is, $\by\in \bigcup_{k\ge 1}L_{k,
      0}$.

      Therefore $\eta$ is in the set $D_0$ of
      Lemma~\ref{lem:rat-endpoint-strong-stable} (or of
      Lemma~\ref{lem:rat-quadratic-strong-stable} in the quadratic-like
      endpoint case), and hence $\eta\in X_t$, and the proof is completed since
      $\by$ and all of the $\by\sj$ are landing of level~1 in type~(i), and
      $s<\infty$ in types~(ii) and~(iii).

      \item Suppose that we approach the endpoint from outside of the height
      interval, so that $n_j\to\infty$ as $j\to\infty$. By taking a
      subsequence, we can assume that $q_{n_j-1}\to y\in\gamma_t$. Since the
      $q_{n_j-1}$ are determined by the~$t_j$, i.e.\ by the decomposition
      elements~$g_j'$ containing the~$\eta_j$, it is enough to show that the
      decomposition element containing~$\eta$ depends only on~$y$: in fact, we
      will show that this decomposition element is $A_{0,0}$ if $y=c_u$; is
      $X_t$ if $y=a$ or $y=\re_u$; and is $\Gamma'(y, 0, 0)$ otherwise.

      If $y\in\ingam_t$, then sequences $(\eta_j)$ of type~(i) converge to
      $\eta=(\by, s)$ with $\by = \bt(y, 0, 0)$ or $\by=\bt(\hy, 0, 0)$, and
      $s\in[1+u(y),
      \infty]$: therefore $\eta$ is contained in $\Gamma'(y,0,0)$ if
      $y\not=c_u$, and in $A_{0,0}$ if $y=c_u$ (see
      Lemma~\ref{lem:rat-endpoint-strong-stable}). Sequences~$(\eta_j)$ of
      type~(ii) converge to $\eta = (\bt(z, 0, 0), 1+ u(y))$ for some~$z$
      between $y$ and $\hy$: therefore $\eta$ is contained in $\Gamma'(y, 0,
      0)$ if $y\not=c_u$, and in $A_{0,0}$ if $y=c_u$ (in which case $z=y$).
      Sequences~$(\eta_j)$ of type~(iii) converge to $(\bt(y, 0, 0), s)$ with
      $s\in[1+u(y), \infty]$, since the first $n_j+1$ entries of $\by\sj$ are
      $\thr{B_{t_j}(a), q_{n_j-1}, B_{t_j}^{-1}(q_{n_j-1}), \ldots
      B_{t_j}^{-(n_j-1)}(q_{n_j-1}), \ldots}$, and again $\eta$ is in
      $\Gamma'(y, 0, 0)$ if $y\not=c_u$, and in $A_{0,0}$ if $y=c_u$.

      As before, $\by$ and all of the $\by\sj$ are landing of level~1 in
      type~(i), and $s<\infty$ in type~(ii). For sequences of type~(iii), we
      have that $\by = \bt(y,0,0)$ is landing of level~1, $s_j\le n_j+1$, and
      $y\sj_{i}\not\in\ingam_{t_j}$ for $2\le i\le n_j$, so that the proof is
      completed using Lemma~\ref{lem:convergence-Psi}~(c).

      \smallskip

      If $y = a$ or $y=\re_u$, then sequences of type~(i) and of type~(iii)
      converge to $\eta = (\by, s)$ with $\hB^r_t(\by)_0\not\in\ingam_t$ for
      all~$r\in\Z$ (and with $s\in[1,\infty]$): that is, to $\eta\in X_t$;
      while sequences of type~(ii) converge to $\eta=(\bt(z, 0, 0), 1)$ for
      some $z\in\gamma_t$, which is contained in the set~$B_{0,0}$ of
      Lemma~\ref{lem:rat-endpoint-strong-stable}, and hence in~$X_t$. That
      $\Psi_{t_j}(\by\sj, s_j)\to \Psi_t(\by, s)$ follows as when $y\in\ingam_t$.
    \end{itemize}

    \medskip

    The argument when~$t$ is of irrational or rational early endpoint type is
    similar. In this case we must have $n_j\to\infty$. Taking a subsequence so
    that $q_{n_j-1}\to y\in\gamma_t$, and referring to the notation of
    Lemma~\ref{lem:ssc-irrational}, we see that:
    \begin{itemize}
      \item If $y=c_u$, then sequences of types~(i), (ii), and (iii) all
      converge to elements of $A_1$;

      \item If $y\in\ingam_t\setminus\{a, \re_u\}$, then sequences of all three
      types converge to elements of $C_{1, \min(y, \hy)}$; and

      \item If $y\in\{a, \re_u\}$, then sequences of all three types converge
      to elements of $D_1\subset X_t$.
    \end{itemize} The argument that $\Psi_{t_j}(\by\sj, s_j)\to \Psi_t(\by, s)$
    for sequences of types~(i),~(ii), and~(iii) uses parts~(b),~(a), and~(c) of
    Lemma~\ref{lem:convergence-Psi} respectively.

\medskip

    \noindent\textbf{3. }If $\eta_j = (\by\sj, s_j)\in\bigcup_y \Gamma'(y, 0,
    0)$ for all~$j$, then let $y_j\in(c_u, a]\setminus\{\min(q_{n_j-1},
    \hq_{n_j-1})\}$ be such that $\eta_j\in\Gamma'(y_j, 0, 0)$, and take a
    subsequence so that $y_j\to y\in [c_u, a]$ (as usual, $c_u$ and $q_{n_j-1}$
    have a suppressed dependence on~$t_j$). Taking a further subsequence if
    necessary, we can assume that one of the following occurs for all~$j$:
    \begin{enumerate}[(i)]
      \item $\by\sj$ is either $\bt(y_j, 0, 0)$ or $\bt(\hy_j, 0, 0)$, and $s_j
      \in[1+u(y_j), \infty]$; or

      \item $\by\sj \in [\bt(y_j, 0, 0), \bt(\hy_j, 0, 0)]$, and $s_j =
      1+u(y_j)$. (The interval, as usual, is the one with the given endpoints
      which is disjoint from $\bP$.)
    \end{enumerate}

    If~$t$ is of irrational or early endpoint type; or if~$t$ is of rational
    interior or normal or quadratic-like endpoint type with height $m/n$ and
    $y\not=\min(q_{n-1},
    \hq_{n-1})$, it then follows straightforwardly that:
    \begin{itemize}
      \item If~$t$ is of rational interior or late endpoint type, then the
      limit~$\eta$ lies in $A_{0,0}$ if $y=c_u$; in $D_{0,0}$ if $y=a$; and in
      $\Gamma'(y, 0, 0)$ otherwise.

      \item If~$t$ is of rational normal or quadratic-like endpoint type,
      then~$\eta$ lies in $A_{0,0}$ if $y=c_u$; in $X_t$ if $y=a$; and in
      $\Gamma'(y, 0, 0)$ otherwise.

      \item If~$t$ is of irrational or rational early endpoint type,
      then~$\eta$ lies in $A_1$ if $y=c_u$; in $X_t$ if $y=a$; and in $C_{1,y}$
      otherwise.
    \end{itemize}

    \small

    Suppose, then, that~$t$ is of rational interior or normal or quadratic-like
    endpoint type with height~$m/n$, and that we have~$y=\min(q_{n-1},
    \hq_{n-1})$.
    \begin{itemize}
      \item If~$t$ is of interior type, then $m_j/n_j=m/n$ for all sufficiently
      large~$j$. Sequences $\by\sj$ of type~(i) converge to $\bt(a, 1, 0)$, to
      $\bt(\re_u, 1, 0)$, or to $\bt(\hq_{n-1}, 0, 0)$, while $s_j\to s\in
      [1+u(q_{n-1}), \infty]$, and hence $\eta_j\to\eta\in D_{0, 0}$.
      Similarly, sequences of type~(ii) converge to $\eta=(\by, 1+u(q_{n-1}))$,
      where $\by\in[\bt(\hq_{n-1}, 0, 0), \bt(a, 1, 0)] \cup [\bt(\hq_{n-1}, 0,
      0), \bt(\re_u, 1, 0)]$, so that $\eta\in D_{0,0}$.

      \item If~$t$ is of endpoint type and $m_j/n_j=m/n$ for all
      sufficiently large~$j$, then similarly $\eta\in D_0\subset X_t$.

      \item If~$t$ is of endpoint type and $n_j\to\infty$, then
      sequences of both types~(i) and~(ii) converge to $\eta=(\by, s)$ with
      $\hB_t^r(\by)_0\not\in\ingam_t$ for all $r\in\Z$: that is, to $\eta\in
      X_t$. 
    \end{itemize}

\medskip

    In all cases either $s<\infty$, or $\by$ and the $\by\sj$ are all landing
    of level~1, so that $\Psi_{t_j}(\by\sj, s_j)\to \Psi_t(\by, s)$ as
    $j\to\infty$ by Lemma~\ref{lem:convergence-Psi}~(a) and~(b).

  \end{enumerate}

\medskip\medskip

  \noindent\textbf{Case B: }All $t_j$ are of rational normal or quadratic-like
  endpoint type.

  In this case the decompositions~$\cG'_{t_j}$ are given by
  Lemmas~\ref{lem:rat-endpoint-strong-stable}
  and~\ref{lem:rat-quadratic-strong-stable}, and the limit parameter~$t$ cannot
  be of rational interior or late endpoint type. We will assume that all of the
  $t_j$ are of strict left hand endpoint type (either tent-like or
  quadratic-like): the right hand endpoint case is similar. Let the height
  of~$t_j$ be $m_j/n_j$.  Suppose first that infinitely many (and so, taking a
  subsequence, all) of the~$\eta_j$ are contained in the decomposition elements
  $X_{t_j}$: that is, in one of the sets~$D_i$ of
  Lemmas~\ref{lem:rat-endpoint-strong-stable}~(b)
  or~\ref{lem:rat-quadratic-strong-stable}~(b), or in one of the
  verticals~$L_{\bt(y)}$ of Lemma~\ref{lem:rat-quadratic-strong-stable}~(c). We
  will show that $\eta\in X_t$.

  If infinitely many of the~$\eta_j$ are contained in the lines $L_{\bq_i -
  m_j^{-1}\bmod n_j}$, $L_{\bt(\re_u, k, i)}$, or $L_{\bt(y)}$  then
  $\hB_t^r(\by)_0\not\in\ingam_t$ for all $r\in\Z$, so that $\eta\in X_t$ as
  required. We can therefore assume that there are $k_j\in\Z$ and $0\le i_j\le
  n_j-1$ such that $\eta_j = (\by\sj, s_j)$ satisfies $\by\sj\in\inR_{k_j,
  i_j}$ (that is, $\by\sj = \bt(y_j, k_j, i_j)$ for some $y_j\in\ingam_{t_j}$),
  and $s_j\in(0, u_{k_j, i_j}]$, where
  \[ u_{k_j,i_j} =
    \begin{cases} k_jn + i_j + 1 & \text{ if }k_j\ge 0, \\ 1/2^{|k_j|n - i_j} & \text{ if
      }k_j<0.
    \end{cases}
  \] 
  The sequence $(n_jk_j+i_j)$ must therefore be bounded below since $s>0$. If
  it is not bounded above then, since the first $n_jk_j+i_j+1$ entries of
  $\bt(y_j, k_j, i_j)$ are disjoint from $\ingam_{t_j}$, we have
  $\hB_t^r(\by)_0\not\in\ingam_t$ for all $r\in\Z$, and hence $\eta\in X_t$. We
  can therefore assume that $n_jk_j+i_j$ is constant and, acting on $\hT_\ast$
  by the decomposition-preserving homeomorphism $\hH_\ast^{-n_jk_j-i_j}$, that
  it is equal to~$0$, so that $k_j=i_j=0$, for all~$j$, and $s\in(0,1]$. Take a subsequence so
  that $y_j\to y\in\gamma_t$, and either $m_j/n_j$ is constant or
  $n_j\to\infty$.

  If $m_j/n_j$ is constant, then $t$ is of rational endpoint type and either
  $\eta_j\to (\bt(y, 0, 0), s) \in B_{0,0}$, or (if $y\not\in\ingam_t$)
  $\hB_t^r(\by)_0\not\in\ingam_t$ for all $r\in\Z$. Therefore $\eta\in X_t$.

  Suppose then that $n_j\to\infty$ as $j\to\infty$. If $y$ is not on the
  $B_t$-orbit of $B_t(a)$ then $\eta = (\by, s)$ with $s\in(0, 1]$ and $\by =
  \thr{B_t(a), y, B_t^{-1}(y), B_t^{-2}(y), \ldots}$. If~$t$ is of rational
  normal endpoint type then $\eta\in B_{0,0}\subset X_t$, while if~$t$ is of
  irrational or rational early endpoint type then $\by=\bt(y, 1)$ and (see
  Lemma~\ref{lem:ssc-irrational}) $\eta\in D_1\subset X_t$. On the other hand,
  if $y$ is on the $B_t$-orbit of $B_t(a)$, then $t$ is of rational normal
  endpoint type and $y\in\{a, \re_u\}$, so that $\hB_t^r(\by)_0\not\in\ingam_t$
  for all $r\in\Z$. Therefore $\eta\in X_t$.

  This completes the proof that if $\eta_j\in X_{t_j}$ for all~$j$, then
  $\eta\in X_t$. We can therefore assume that there are $k_j\in\Z$ and $0\le
  i_j\le n_j-1$ such that
  \[
    \eta_j\in A_{k_j, i_j} \cup \bigcup_{y\in(c_u, a)} \Gamma'(y, k_j, i_j)
  \] for each~$j$. The remainder of the proof is now similar to but simpler
  than that in case A. By the same argument as in that case (using part~(c)
  rather than part~(b) of Lemma~\ref{lem:rational-diameter-bound}), we can
  reduce to having $k_j=i_j=0$ for all~$j$.

  \begin{itemize}
    \item If $\eta_j\in A_{0,0}$ for all~$j$ then $\eta\in A_{0,0}$ if~$t$ is
    of rational normal or quadratic-like endpoint type, and $\eta\in A_1$ if
    $t$ is of irrational or rational early endpoint type.
    \item If $\eta_j\in \bigcup_y\Gamma'(y_j, 0, 0)$ for some sequence $y_j\in
    (c_u, a)$, then take a subsequence so that $y_j\to y\in[c_u, a]$. If~$y=a$
    then $\eta\in X_t$. If~$y=c_u$ then $\eta\in A_{0,0}$ if~$t$ is of rational
    normal or quadratic-like endpoint type, and $\eta\in A_1$ if $t$ is of
    irrational or rational early endpoint type. If $y\in(c_u, a)$, then
    $\eta\in\Gamma'(y, 0, 0)$ if~$t$ is of rational normal or quadratic-like
    endpoint type, and $\eta\in C_{1,y}$ if~$t$ is of irrational or rational
    early endpoint type.
  \end{itemize}

\medskip\medskip

  \noindent\textbf{Case C: }All $t_j$ are of irrational or rational early
  endpoint type.

  In this case the decompositions $\cG'_{t_j}$ are given by
  Lemma~\ref{lem:ssc-irrational}. If all of the $\eta_j$ are contained in the
  decomposition elements $X_{t_j}$, then $\eta\in X_t$ by an argument exactly
  analogous to that in case~B. We can therefore assume that there are
  integers~$r_j$ such that
  \[
    \eta_j\in A_{r_j} \cup \bigcup_{y\in(c_u, a)} C_{r_j, y}
  \] for each~$j$. If~$(r_j)$ is not bounded above, then by
  Lemma~\ref{lem:rational-diameter-bound}~(a) there is a subsequence of the
  $\xi_j=\Psi_{t_j}(\eta_j)$ contained in decomposition elements whose
  diameters go to zero; while if~$(r_j)$ is not bounded below then
  $\hB_t^r(\by)_0\not\in\ingam_t$ for all $r\in\Z$, so that $\eta\in X_t$. We
  can therefore assume that $r_j$ is constant and, acting on $\hT_\ast$ by the
  decomposition-preserving homeomorphism $\hH_\ast^{1-r_j}$, that $r_j=1$ for
  all~$j$. The analysis of the different cases then proceeds exactly as in
  case~B.
\end{proof}

This completes the proof of Theorem~\ref{thm:cont-var}.

\subsection{The post-critically finite tent map case}
\label{sec:gpa} In this section we consider the case in which~$f$ is a tent map
(of slope $t>\sqrt{2}$) for which the orbit of~$b$ is either periodic or
preperiodic. In particular (see Lemma~\ref{lem:height-intervals}~(a) and
Remark~\ref{rmk:terminology}), $q(\kappa(f))=m/n$ is rational, and $f$ is of
interior or normal endpoint type.

We will show that the sphere homeomorphism $F\colon\Sigma\to\Sigma$ constructed
in the proof of Theorem~\ref{thm:semi-conj} is pseudo-Anosov when
$\kappa(f)=\NBT(m/n)$; and otherwise is generalized pseudo-Anosov, in the sense
of the following definition from~\cite{gpa}.

\begin{defn}[Generalized pseudo-Anosov]
\label{def:gpA} A sphere homeomorphism $\Phi\colon S^2\to S^2$ is {\em
generalized pseudo-Anosov} if there exist
\begin{enumerate}[(a)]
\item a finite $\Phi$-invariant set~$Z$;
\item a pair $(\cF^s, \mu^s)$, $(\cF^u, \mu^u)$ of transverse measured
  foliations of $S^2\setminus Z$ (whose transverse measures are non-atomic and
  positive on open subsets on transversals) with countably many pronged
  singularities, which accumulate on each point of~$Z$ and have no other
  accumulation points; and
\item a real number $\lambda>1$ such that $\Phi(\cF^s, \mu^s) = (\cF^s,
\frac{1}{\lambda}\, \mu^s)$ and $\Phi(\cF^u, \mu^u) = (\cF^u, \lambda\,\mu^u)$.
\end{enumerate}
\end{defn}

We will do this by proving (Theorem~\ref{thm:gpA}) that~$F$ is topologically
conjugate to the explicit generalized pseudo-Anosov~$\Phi$ constructed
in~\cite{gpa} corresponding to the kneading sequence $\kappa(f)$. The existence
of the conjugacy will be a consequence of the following list of properties
of~$\Phi$ (see Figure~\ref{fig:ugpa}).

\begin{enumerate}[(P1)]
\item The homeomorphism~$\Phi$ is given by $\Phi = \tPhi/\!\!\sim\colon
R/\!\!\sim\,\to\, R/\!\!\sim$, where $\tPhi\colon R\to R$ is a continuous
self-map of a metric disk~$R$, and $\sim$ is a $\tPhi$-invariant equivalence
relation on~$R$ for which $R/\!\!\sim$ is a sphere.
\item There is a projection $\pi\colon R\to[a,b]$ which semi-conjugates $\tPhi$
  to the tent map~$f$.
\item For each~$x\in[a,b]$, the fiber $\fF_x:=\pi^{-1}(x)$ is a compact
  interval if~$x$ is not on the (finite) orbit of~$b$, and is a dendrite
  otherwise.
\item The map $x\mapsto \fF_x$ is upper semi-continuous with respect to the
  Hausdorff metric (that is, for every~$x_0\in[a,b]$ and every neighborhood~$U$
  of $\cF_{x_0}$, there is a neighborhood~$V$ of~$x_0$ with $\cF_x\subset U$
  for all $x\in V$).
\item $\tPhi$ is injective on each fiber $\fF_x$, and contracts it uniformly by
  a factor~$1/t$ (where~$t$ is the slope of the tent map~$f$).
\item The dynamics of $\tPhi$ on the boundary $\partial R$ is given by the
  outside map $B\colon S\to S$ corresponding to~$f$. More precisely (Lemma~16
  of~\cite{gpa}), there is a homeomorphism $\theta\colon S\to \partial R$ with
  the property that $\tPhi(\theta(y))\in\partial R$ if and only if
  $y\not\in\ingam$, and in this case $\tPhi(\theta(y)) = \theta(B(y))$.
  Moreover, $\tau(y)=\pi(\theta(y))$ for each~$y\in S$. We will suppress the
  homeomorphism~$\theta$, and label points and subsets of~$\partial R$ with the
  same symbols as the corresponding points and subsets of~$S$. With this
  convention, we have $\tPhi=B$ on $\partial R\setminus\ingam$.
\item If $x\in[a,f(a))$ or $x=b$ then $\tPhi(\fF_{z})=\fF_x$, where $z$ is the
  unique element of~$[a,b]$ with $f(z)=x$.
\item If $x\in[f(a),b)$, then $x$ has two $f$-preimages $z,\hz\in[a,\re]$. We
  have $\tPhi(\fF_{z}) \cup \tPhi(\fF_{\hz}) =
  \fF_x$; and $\tPhi(\fF_{z})$ and $\tPhi(\fF_{\hz})$ intersect at exactly one
  point, which is $\tPhi(z_u)=\tPhi(\hz_u)$. (Notice that $z_u,
  \hz_u\in\gamma$.)
\item The equivalence relation~$\sim$ is defined as follows: if $\xi,\xi'\in
  R$, then $\xi\sim \xi'$ if and only if there is some~$r\ge 0$ such that
  either $\tPhi^r(\xi)=\tPhi^r(\xi')$, or $\tPhi^r(\xi)$ and $\tPhi^r(\xi')$
  both belong to the periodic orbit~$P$ of $\tPhi$ on $\partial R$.  (This
  periodic orbit is given by Theorem~\ref{thm:outside-dynamics}~(b)(iii) in the
  interior case; and is the orbit of~$B(a)$ in the endpoint case.)
\end{enumerate} We will also use the following consequences of these
properties:
\begin{enumerate}[(P1)]
\setcounter{enumi}{9}
\item It follows from~(P7) and~(P8) that $\tPhi$ is injective away from
$\gamma\setminus\{c_u\}$, while if $y\in\gamma\setminus\{c_u\}$ then
$\tPhi^{-1}(\tPhi(y)) = \{y, \hy\}$. In particular the only point of $\partial
R$ which has more than one preimage is~$q_0$.
\item It follows from~(P7) and~(P8) that $\tPhi$ is surjective; and
\item It follows from~(P6), (P9), and (P10) that all of the non-trivial
  equivalence classes of~$\sim$ are contained in $\partial R$.
\end{enumerate}

\begin{figure}[htbp]
\begin{center}
\includegraphics[width=0.75\textwidth]{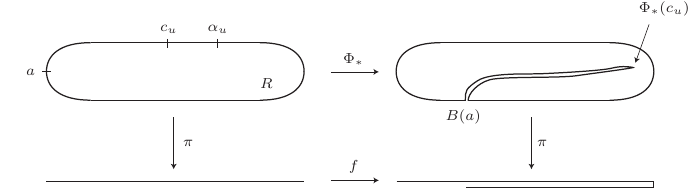}
\end{center}
\caption{Schematic representation of $\tPhi\colon R\to R$. The boundary of~$R$
  is identified with the circle~$S$. The map $\tPhi$ is injective except
  on~$\gamma\setminus\{c_u\}=[\re_u,a]\setminus\{c_u\}$, where it is
  two-to-one. We have $\tPhi(a)=\tPhi(\re_u)=B(a)$, where~$B$ is the outside
  map.}
\label{fig:ugpa}
\end{figure}

\begin{defns}[$\htPhi\colon \hR\to\hR$, $\hpi\colon\hR\to\hI$] Write
$\hR=\invlim(R, \tPhi)$, and let $\htPhi\colon\hR\to\hR$ be the natural
extension of~$\tPhi$. Let $\hpi\colon\hR\to\hI$ be the function induced by the
semi-conjugacy $\pi\colon R\to I$ of~(P2), that is, $\hpi(\bxi)_i =
\pi(\xi_i)$.
\end{defns}

We will show (Theorem~\ref{thm:gpA}) that $F\colon\Sigma\to\Sigma$ is
topologically conjugate to $\Phi\colon R/\!\!\sim\,\to R/\!\!\sim$. The proof
is structured as follows. We first show (Lemma~\ref{lem:hpi-conjugates}) that
$\hpi$ is a homeomorphism which conjugates~$\htPhi$ and~$\hf$. There are
therefore commutative diagrams
\begin{equation}
\label{eq:CD}
\begin{CD}
\hR    @>\htPhi>>   \hR\\
@V\hpi VV   @VV\hpi V\\
\hI    @>\hf>>      \hI\\
@VgVV       @VVgV\\
\Sigma @>F>>        \Sigma
\end{CD}
\qquad\qquad\text{ and }\qquad\qquad
\begin{CD}
\hR    @>\htPhi>>   \hR\\
@Vp_0VV   @VVp_0V\\ R    @>\tPhi>>      R\\
@Vp_\sim VV       @VVp_\sim V\\ R/\!\sim @>\Phi>>     R/\!\sim
\end{CD}
\end{equation} where $p_0(\bxi)=\xi_0$, and $p_\sim$ is the canonical
projection of~$\sim$. In order to show that $F$ and $\Phi$ are conjugate, it
therefore suffices to show that the fibers of $g\circ\hpi$ agree with those of
$p_\sim\circ p_0$: in other words, that $g(\hpi(\bxi))=g(\hpi(\bxi'))$ if and
only if $\xi_0\sim\xi_0'$. This will be done using the description of the
fibers of~$g$ given in Remark~\ref{rmk:semiconj-fibres}, together with the
technical Lemma~\ref{lem:xi-vary}.

\begin{lem}
\label{lem:hpi-conjugates} $\hpi$ is a homeomorphism which conjugates $\htPhi$
and $\hf$.
\end{lem}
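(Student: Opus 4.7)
The plan is to show that $\hpi$ is a continuous bijection between a compact space and a Hausdorff space, hence automatically a homeomorphism, and to read off the conjugacy relation directly from the semi-conjugacy $\pi\circ\tPhi = f\circ\pi$ supplied by~(P2). The conjugacy is a one-line coordinate computation: $\hpi(\htPhi(\bxi))_0 = \pi(\tPhi(\xi_0)) = f(\pi(\xi_0)) = \hf(\hpi(\bxi))_0$, and $\hpi(\htPhi(\bxi))_i = \pi(\xi_{i-1}) = \hf(\hpi(\bxi))_i$ for $i\ge1$. Continuity of $\hpi$ is immediate from continuity of $\pi$ and the inverse limit metric. Since $R$ is a compact disk by~(P1), so is $\hR$, and it only remains to prove that $\hpi$ is a bijection.

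For surjectivity, given $\bx\in\hI$, I would build a preimage by a compactness argument. For each $N\ge 0$, property~(P3) guarantees that the fiber $\fF_{x_N}$ is nonempty, so pick $\eta_N\in\fF_{x_N}$ and define $\xi_i^{(N)} = \tPhi^{N-i}(\eta_N)$ for $0\le i\le N$, extended arbitrarily for $i>N$. The semi-conjugacy gives $\pi(\xi_i^{(N)}) = f^{N-i}(x_N) = x_i$, and the construction gives $\tPhi(\xi_{i+1}^{(N)}) = \xi_i^{(N)}$ for $i<N$. Extracting a subsequential limit $\bxi$ of $(\bxi^{(N)})$ in the compact product $R^\N$ and using continuity of $\tPhi$ and $\pi$ yields a thread $\bxi\in\hR$ with $\hpi(\bxi)=\bx$.

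The main obstacle is injectivity, because two threads in $\hR$ mapping to the same element of~$\hI$ satisfy $\xi_i, \xi_i'\in\fF_{x_i}$ for every $i$, but need not a priori have equal initial entries. The key is to exploit the uniform fiber contraction in~(P5). Iterating $\tPhi(\xi_{j+1})=\xi_j$ gives $\xi_0=\tPhi^i(\xi_i)$, and likewise for $\bxi'$, so both $\xi_0$ and $\xi_0'$ lie in $\tPhi^i(\fF_{x_i})$ for every $i$. Since $\tPhi$ contracts each fiber by the factor $1/t$ and the fibers all lie in the compact disk $R$ and so have uniformly bounded diameter, we get $\diam(\tPhi^i(\fF_{x_i}))\le t^{-i}\diam(\fF_{x_i})\to 0$, which forces $\xi_0=\xi_0'$. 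The injectivity half of~(P5), namely that $\tPhi|_{\fF_x}$ is injective, then propagates this equality upward: given $\xi_j=\xi_j'$, the equation $\tPhi(\xi_{j+1})=\xi_j=\xi_j'=\tPhi(\xi_{j+1}')$ together with $\xi_{j+1}, \xi_{j+1}'\in\fF_{x_{j+1}}$ yields $\xi_{j+1}=\xi_{j+1}'$, and induction gives $\bxi=\bxi'$. The hardest point of the whole argument is forcing $\xi_0=\xi_0'$ from the projection data alone, which is exactly what the contracting half of~(P5) provides; with bijectivity in hand, the compact-to-Hausdorff principle upgrades $\hpi$ to a homeomorphism and combines with the conjugacy relation to finish the lemma.
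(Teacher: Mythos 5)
Your proposal is correct. It rests on the same two pillars as the paper's proof --- non-emptiness and compactness of the nested images $\tPhi^j(\fF_{x_j})$ from (P1), (P3), (P7), (P8), and the $1/t$ fiber contraction of (P5) to force these images down to single points --- but it packages them differently. The paper constructs an explicit inverse $v\colon\hI\to\hR$ via the map $h(\bx)=\bigcap_{j\ge0}\tPhi^j(\fF_{x_j})$, and therefore has to prove that $h$ is continuous, which is where the upper semi-continuity of the fibers (P4) enters; it then verifies directly that $v$ and $\hpi$ are mutually inverse. You instead prove surjectivity by a subsequential-limit argument in the compact product $R^\N$ and injectivity by combining the contraction (to get $\xi_0=\xi_0'$) with the fiber-injectivity half of (P5) (to propagate equality to all coordinates), and then invoke the compact-to-Hausdorff principle. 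The net effect is that your argument dispenses with (P4) entirely and never needs to exhibit or analyze the inverse map; the paper's version buys an explicit formula for $\hpi^{-1}$, though that formula is not used elsewhere. One small point worth making explicit in your injectivity step: to iterate the contraction you need $\tPhi(\xi_i)$ and $\tPhi(\xi_i')$ to lie in a common fiber again, which follows from $\pi\circ\tPhi=f\circ\pi$ (equivalently from (P7)/(P8)); with that observation recorded, the estimate $d(\xi_0,\xi_0')\le t^{-i}\diam(R)$ is fully justified.
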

\begin{proof} $\hpi$ is clearly continuous, and semi-conjugates $\htPhi$ and
$\hf$ since $\pi$ semi-conjugates $\tPhi$ and~$f$. We will exhibit an explicit
inverse $v\colon\hI\to\hR$ of~$\hpi$, which will establish the result since
$\hR$ and $\hI$ are compact metric spaces.

To do this, we first define a function $h\colon\hI\to R$. Let $\bx\in\hI$. Then
$\cF_{x_0}\supset \tPhi(\cF_{x_1}) \supset
\tPhi^2(\cF_{x_2}) \supset \cdots$ by~(P7) and~(P8). Since each
$\tPhi^j(\cF_{x_j})$ is compact and non-empty by~(P3) and~(P1), the
intersection $\bigcap_{j\ge 0}\tPhi^j(\cF_{x_j})$ is non-empty; moreover, it
contains a single point by~(P5). We define $h(\bx)\in\cF_{x_0}$ to be the
unique point of this intersection. Then $h\circ\hf = \tPhi\circ h$ by
construction. Moreover, $h$ is continuous: for if $U$ is a neighborhood
of~$h(\bx)$, then by~(P5) and the definition of~$h$ there is some~$N$ with
$\tPhi^N(\cF_{x_N})\subset U$. By~(P4), if $\bx'$ is sufficiently close
to~$\bx$ then we have also $\tPhi^N(\cF_{x'_N})\subset U$, and hence
$h(\bx')\in U$.

Define $v\colon\hI\to\hR$ by $v(\bx)_i = h(\hf^{-i}(\bx))$. That $v(\bx)\in\hR$
follows from $\tPhi\circ h = h\circ\hf$, which gives
$\tPhi(h(\hf^{-(i+1)}(\bx))) = h(\hf^{-i}(\bx))$ for each~$i$.

We now show that $v$ is inverse to~$\hpi$. First, let $\bx\in\hI$. Then for
each~$i\ge 0$,
\[
\hpi(v(\bx))_i = \pi(v(\bx)_i) = \pi(h(\hf^{-i}(\bx))) = x_i,
\] since $h(\hf^{-i}(\bx)) = h(\thr{x_i, x_{i+1}, \ldots}) \in
\cF_{x_i}$. On the other hand, if $\bxi\in\hR$, then for each $i\ge 0$,
\[ v(\hpi(\bxi))_i = h(\hf^{-i}(\hpi(\bxi))) = h(\thr{\pi(\xi_i),
  \pi(\xi_{i+1}), \ldots}) = \xi_i,
\] since for every $j\ge 0$ we have $\xi_i=\tPhi^{j}(\xi_{i+j})\in
\tPhi^j(\cF_{\pi(\xi_{i+j})})$, so that $\xi_i$ is the unique element of
$\bigcap_{j\ge 0}\tPhi^j(\cF_{\pi(\xi_{i+j})})$.
\end{proof}

The following lemma expresses the connection between the equivalence
relation~$\sim$ defined in~(P9) and the identifications on $\hI$ described in
Remark~\ref{rmk:semiconj-fibres}.

\begin{lem}
\label{lem:xi-vary} Let $\bxi, \bxi'\in\hR$.
\begin{enumerate}[(a)]
\item If~$f$ is of rational general type, then $\xi_0=\xi_0'$ but
  $\xi_1\not=\xi_1'$ if and only if either
\begin{eqnarray*}
\{\hpi(\bxi), \,\hpi(\bxi')\} &=& \{\omega(\bt(y,0,0)),\,
\omega(\bt(\hy,0,0))\} \text{ for some } y\in\gamma\setminus\{c_u, q_{n-1},
\hq_{n-1}\};\text{ or}\\
\{\hpi(\bxi), \,\hpi(\bxi')\} &=& \{\omega(\bt(\hq_{n-1},0,0)),\,
  \omega(\bt(a,1,0))\};\text{ or}\\
\{\hpi(\bxi), \,\hpi(\bxi')\} &=& \{\omega(\bt(\hq_{n-1},0,0)),\,
  \omega(\bt(\re_u,1,0))\}.
\end{eqnarray*}

\item If~$f$ is of rational NBT type, then $\xi_0=\xi_0'$ but
  $\xi_1\not=\xi_1'$ if and only if \[\{\hpi(\bxi),\, \hpi(\bxi')\} =
  \{\omega(\bt(y,0,0)),\, \omega(\bt(\hy,0,0))\}\text{ for some }
  y\in\gamma\setminus\{c_u\}.\]

\item If~$f$ is of rational (normal) endpoint type, then $\xi_0=\xi_0'$ but
  $\xi_1\not=\xi_1'$ if and only if either
\begin{eqnarray*}
\{\hpi(\bxi), \,\hpi(\bxi')\} &=& \{\omega(\bt(y,0,0)),\,
\omega(\bt(\hy,0,0))\} \text{ for some } y\in\ingam\setminus\{c_u\};\text{
or}\\
\{\hpi(\bxi), \,\hpi(\bxi')\} &=& \{\omega(\bt(A,0,0)),\,
  \omega(\bt(A,\ell,0))\} \text{ for some }\ell>0;\text{ or}\\
\{\hpi(\bxi), \,\hpi(\bxi')\} &=& \{\omega(\bt(A,0,0)),\,
  \omega(\bq_0)\},
\end{eqnarray*} where $A=a$ if $\kappa(f)=\rhe(m/n)$, and $A=\re_u$ if
$\kappa(f)=\lhe(m/n)$.

\item If~$f$ is of rational (normal) endpoint type, then there is some~$r\ge
  0$ with $\tPhi^r(\xi_0)\in P$ (the periodic orbit of~(P9)) if and only if
  there is some $i$ with $0\le i\le n-1$ such that either
  $\hpi(\bxi)=\omega(\bq_i)$, or $\hpi(\bxi)=\omega(\bt(A,k,i))$ for some
  $k\in\Z$, where $A=a$ if $\kappa(f)=\rhe(m/n)$, and $A=\re_u$ if
  $\kappa(f)=\lhe(m/n)$.
\end{enumerate}
\end{lem}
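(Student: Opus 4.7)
The plan is to compute, for each thread listed on the right-hand sides of parts~(a)--(d), the unique lift to $\hR$ under the bijection $\hpi$ of Lemma~\ref{lem:hpi-conjugates}, and then read off whether $\xi_0 = \xi_0'$, whether $\xi_1 \neq \xi_1'$, or whether $\tPhi^r(\xi_0) \in P$. The key technical ingredient is a \emph{canonical boundary lift}: since $\pi \circ \theta = \tau$ and $\tPhi = B$ on $\partial R \setminus \ingam$ by (P6), comparison with \eqref{eq:rat-thread-land} shows that, provided the backward $B$-orbit of $y$ avoids $q_0$, the thread
\[
\bxi = \left(\tPhi(y),\, y,\, B^{-1}(y),\, B^{-2}(y), \ldots\right) \in \hR
\]
satisfies $\hpi(\bxi) = \omega(\bt(y, 0, 0))$. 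Analogous canonical lifts exist for $\omega(\bt(y, k, i))$ (where a length-$(kn+i+1)$ preamble along the periodic orbit $P$ precedes the appearance of $y$) and for $\omega(\bq_i)$ (the fully periodic boundary thread). Since $\hpi$ is a bijection, any candidate lift displaying the correct $\hpi$-image is automatically the unique one.

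For parts~(a), (b), (c), the condition $\xi_0 = \xi_0'$, $\xi_1 \neq \xi_1'$ says that $\xi_1$ and $\xi_1'$ are distinct $\tPhi$-preimages of the common point $\xi_0$; by~(P10), this forces $\{\xi_1, \xi_1'\} = \{y, \hy\}$ for some $y \in \gamma \setminus \{c_u\}$, and moreover $q_0$ is the only boundary point whose $\tPhi$-preimage set has more than one element. I distinguish a \emph{generic subcase}, in which neither $y$ nor $\hy$ lies in the distinguished set of points whose backward $\tPhi$-orbit hits $q_0$ (namely $\{q_{n-1}, \hq_{n-1}\}$ in (a), nothing at all in (b) since $q_{n-1} = c_u$ degenerates the exceptional subcase, and the pair $\{A, q_{n-1}\}$ of endpoints of $\gamma$ in (c)); and an \emph{exceptional subcase}, in which $\{y, \hy\}$ equals this distinguished pair. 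The generic subcase yields canonical boundary lifts and the pair $\{\omega(\bt(y,0,0)), \omega(\bt(\hy,0,0))\}$ directly. In the exceptional subcase, the lift starting with $\hq_{n-1}$ (or with $A$ in (c)) is still canonical, but the lift starting with $q_{n-1}$ proceeds deterministically backward through $q_{n-2}, \ldots, q_0$ before branching at $\tPhi^{-1}(q_0) = \{a, \re_u\}$: the immediate branch to $A$ matches the canonical lift of $\omega(\bt(A, 1, 0))$ in (a), while in (c) a branch to $A$ may be postponed by any number $\ell - 1 \ge 0$ of further complete loops around $P$ (giving $\omega(\bt(A, \ell, 0))$ for each $\ell \ge 1$), and the branch-free periodic continuation gives the thread $\omega(\bq_0)$. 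Both directions of each equivalence are established simultaneously by this explicit matching.

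For part~(d), set $\bzeta = \htPhi^r(\bxi)$ so that $\zeta_0 = q_i$ for some $i$; by Lemma~\ref{lem:hpi-conjugates}, $\hpi(\bxi) = \hf^{-r}(\hpi(\bzeta))$, and it suffices to classify the possible $\hpi(\bzeta)$ for threads $\bzeta \in \hR$ with $\zeta_0 \in P$. Such a thread is determined by iterated backward choices: deterministically through $q_{i-1}, \ldots, q_0$, then branching at $\{a, \re_u\}$. Perpetuating the periodic branch $q_{n-1}$ at every visit to $q_0$ gives the canonical lift of $\omega(\bq_i)$, while a branch to $A$ after $k \ge 0$ complete loops gives the canonical lift of $\omega(\bt(A, k, i))$. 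Conversely, each such $\bzeta$ witnesses $\tPhi^0(\zeta_0) \in P$. To obtain all $k \in \Z$ on the right-hand side, observe that $\hf \circ \omega = \omega \circ \hB$ (from Lemma~\ref{lem:conjugation}) combined with the index dynamics $\hB(\bt(y, k, i)) \in \{\bt(y, k, i+1), \bt(y, k+1, 0)\}$ shows the listed set is $\hf$-invariant; the case $k < 0$ in the converse direction is then reduced to $k \ge 0$ by pushing forward with $\hf^s$ for sufficiently large $s$. The periodic case $\hpi(\bxi) = \omega(\bq_i)$ can alternatively be handled directly: $\omega(\bq_i)$ is $\hf$-periodic of period $n$, and by (P5) the contraction $\tPhi^n$ has a unique fixed point in each fiber over the period-$n$ orbit of $f$, forcing $\xi_0 \in P$.

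The principal obstacle is the careful enumeration of branching choices in each backward orbit at every visit to $q_0$, and keeping track of which lifts match which named threads $\omega(\bt(A, \ell, 0))$, $\omega(\bq_0)$, or $\omega(\bt(\hq_{n-1}, 0, 0))$; Property~(P10), which confines $\tPhi$-non-injectivity to the single boundary fiber over $q_0$, is the structural fact that makes this enumeration finite and tractable.
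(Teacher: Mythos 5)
Your proposal is correct and follows essentially the same route as the paper: reduce via (P10) to $\{\xi_1,\xi_1'\}=\{y,\hy\}$, compute the explicit backward lifts in $\partial R$ (branching only at $q_0$, with $\tPhi^{-1}(q_0)=\{a,\re_u\}$), and match them against $\omega(\bt(\cdot,k,i))$ and $\omega(\bq_i)$ using \eqref{eq:rat-thread-land} and \eqref{eq:omega}, with the bijectivity of $\hpi$ giving both directions at once. The only organizational difference is in part~(d), where you recover the $k<0$ threads by $\hf$-equivariance of the listed set rather than by the paper's direct enumeration via Remark~\ref{rmk:alternative-intervals}; this is an equivalent computation, and your minor notational looseness (using $A$ in case~(a), where both branches $a$ and $\re_u$ must be listed) does not affect the argument.
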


\begin{proof} Assume first that~$f$ is of general type. By~(P10), we have
$\xi_0=\xi_0'$ but $\xi_1\not=\xi_1'$ if and only
if~$\{\xi_1,\xi_1'\}=\{y,\hy\}$ for some $y\in\gamma\setminus\{c_u\}$.

Since~(i) $\tPhi^{-1}(\partial R)\subset \partial R$; (ii) the only point
of~$\partial R$ which has more than one preimage is
$q_0=\tPhi(a)=\tPhi(\re_u)$; and (iii) the only point of~$\gamma$ on the orbit
of $q_0$ is $q_{n-1}$, it follows that, for $y\in\gamma\setminus\{c_u, q_{n-1},
\hq_{n-1}\}$, we have
\begin{eqnarray*}
\{\xi_1,\, \xi_1'\} = \{y,\, \hy\}  &\iff& \{\bxi,\, \bxi'\} =
\{\thr{\tPhi(y), y, B^{-1}(y), \ldots} ,\, \thr{\tPhi(y), \hy, B^{-1}(\hy),
  \ldots}\} \\ &\iff& \{\hpi(\bxi),\, \hpi(\bxi')\} = \{\thr{f(\tau(y)),
  \tau(y),
  \tau(B^{-1}(y)),\ldots},\, \thr{ f(\tau(y)), \tau(\hy),
  \tau(B^{-1}(\hy)), \ldots } \}\\ &\iff& \{\hpi(\bxi),\, \hpi(\bxi')\} =
\{\omega(\bt(y,0,0)),\, \omega(\bt(\hy,0,0)) \}.
\end{eqnarray*} Here we have used (P6) (in particular that $\tPhi=B$ on
$S\setminus\ingam$ in the first line, and that $\pi=\tau$ on $S$ in the second
line); we have used that $\pi\circ\tPhi = f\circ\pi$ in the second line; and we
have used~(\ref{eq:rat-thread-land}) in the final line.

In the case $y\in\{q_{n-1}, \hq_{n-1}\}$, we have $\xi_1=q_{n-1}$ if and only
if
\[\bxi = \thr{\tPhi(q_{n-1}), q_{n-1}, q_{n-2}, \ldots, q_0, a, B^{-1}(a),
  \ldots}\text{ or }\bxi = \thr{\tPhi(q_{n-1}), q_{n-1}, q_{n-2}, \ldots, q_0,
\re_u, B^{-1}(\re_u),
  \ldots};\] while $\xi_1=\hq_{n-1}$ if and only if $\bxi =
\thr{\tPhi(\hq_{n-1}), \hq_{n-1}, B^{-1}(\hq_{n-1}), \ldots}$. These give the
other two possibilities in the statement of~(a),
using~(\ref{eq:rat-thread-land}).

\medskip

The argument in the NBT case is identical, except that the case $y\in\{q_{n-1},
\hq_{n-1}\}$ does not arise since $q_{n-1}=c_u$.

\medskip

Suppose then that~$f$ is of normal endpoint type. As in the general case, we
have $\xi_0=\xi_0'$ but $\xi_1\not=\xi_1'$ if and only if $\{\xi_1,\,\xi_1'\} =
\{y,\,\hy\}$ for some $y\in\gamma\setminus\{c_u\}$. The argument for
$y\not\in\{c_u, q_{n-1}, \hq_{n-1}\}$ (i.e.\ for $y\in\ingam\setminus\{c_u\}$)
is identical to the general case. Suppose then, without loss of generality,
that $\xi_1=a$ and $\xi_1'=\re_u$.

Consider first the case where $\kappa(f)=\rhe(m/n)$, so that we have
$B^n(a)=B^n(\re_u)=\re_u$. Then
\[\bxi=\thr{q_0, a, B^{-1}(a),  \ldots},\] while either
\[\bxi'=\thr{(q_0, \re_u, q_{n-2}, \ldots, q_1)^\infty}\quad\text{ or }\quad
  \bxi'=\thr{(q_0, \re_u, q_{n-2},
  \ldots, q_1)^\ell, q_0, a, B^{-1}(a), \ldots} \text{ for some }\ell>0.\]
Therefore $\hpi(\bxi)=\omega(\bt(a,0,0))$; while either
$\hpi(\bxi')=\omega(\bq_i)$ or
\mbox{$\hpi(\bxi')=\omega(\bt(a,\ell,0))$} for some~$\ell>0$. Here we have
used~(\ref{eq:rat-thread-land}); and we have used~(\ref{eq:omega}) to show that
$\hpi(\thr{(q_0, \re_u, q_{n-2},
  \ldots, q_1)^\infty}) = \omega(\bq_i)$.

The case where $\kappa(f)=\lhe(m/n)$ works analogously, and the result follows.

\medskip

For~(d), there is some~$r\ge0$ with $\tPhi^r(\xi_0)\in P$ if and only if either
$\xi_0\in P$ (i.e.\ $\xi_0=q_i$ for some~$i$), or $\xi_0 = B^{-s}(A)$ for some
$s\ge 0$. This is equivalent to
\begin{eqnarray*}
\bxi &=& \thr{(q_i, q_{i-1}, \ldots , q_0, q_{n-1}, \ldots, q_{i+1})^\infty}
  \quad\text{ for some~$i$, or}\\
\bxi &=& \thr{q_i, q_{i-1}, \ldots, q_0, (q_{n-1}, \ldots, q_0)^k, A,
  B^{-1}(A), \ldots} \quad\text{ for some~$i$ and some $k\ge 0$, or}\\
\bxi &=& \thr{B^{-s}(A), B^{-(s+1)}(A), \ldots}\quad \text{ for some $s\ge 0$}.
\end{eqnarray*} The first of these is equivalent to $\hpi(\bxi)=\omega(\bq_i)$
for some~$i$; the second to $\hpi(\bxi)=\omega(\bt(A,k,i))$ for some~$i$ and
some $k\ge 0$; and the third (noting Remark~\ref{rmk:alternative-intervals}) to
$\hpi(\bxi)=\omega(\bt(A,k,i))$ for some~$i$ and some~$k<0$.
\end{proof}

\begin{thm}
\label{thm:gpA} 
  Let~$f$ be a post-critically finite tent map of slope $t>\sqrt2$. Then the
  sphere homeomorphism~$F\colon\Sigma\to\Sigma$ constructed in the proof of
  Theorem~\ref{thm:semi-conj} is topologically conjugate to the generalized
  pseudo-Anosov $\Phi\colon R/\!\!\sim\,\to\, R/\!\!\sim$ constructed
  in~\cite{gpa}.
\end{thm}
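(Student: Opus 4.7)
The plan is to exploit the two commutative diagrams in~\eqref{eq:CD}. By Lemma~\ref{lem:hpi-conjugates}, $\hpi\colon\hR\to\hI$ is a homeomorphism conjugating $\htPhi$ to $\hf$, so $g\circ\hpi\colon\hR\to\Sigma$ is a continuous surjection semi-conjugating $\htPhi$ to~$F$, while $p_\sim\circ p_0\colon\hR\to R/\!\!\sim$ is a continuous surjection semi-conjugating $\htPhi$ to~$\Phi$. If these two surjections have identical fibers, they induce a continuous bijection $\Theta\colon R/\!\!\sim\,\to\,\Sigma$ with $\Theta\circ\Phi=F\circ\Theta$; since $R/\!\!\sim$ and $\Sigma$ are compact Hausdorff, $\Theta$ is automatically a homeomorphism, and since $\Phi$ is (generalized) pseudo-Anosov, so is~$F$.

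The core claim is therefore: for $\bxi,\bxi'\in\hR$, $g(\hpi(\bxi))=g(\hpi(\bxi'))$ if and only if $\xi_0\sim\xi_0'$. Both conditions are preserved by the dynamics, so replacing $\bxi,\bxi'$ by appropriate iterates I can reduce to one of two normal forms: either $\xi_0=\xi_0'$, or (only in the endpoint case) $\xi_0,\xi_0'\in P$. In the first normal form with $\bxi\ne\bxi'$, property~(P10) yields a least index $k\ge 1$ at which $\xi_k\ne\xi_k'$, where $\{\xi_k,\xi_k'\}=\{y,\hy\}$ for some $y\in\gamma\setminus\{c_u\}$; applying $\htPhi^{-(k-1)}$ shifts this difference to index~$1$, and Lemma~\ref{lem:xi-vary}~(a)\,--\,(c) then identifies the two resulting images under~$\hpi$ as one of the non-trivial fibers of~$g$ listed in Remark~\ref{rmk:semiconj-fibres}. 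Iterating forward by $\hf^{k-1}$, which preserves $g$-fibers because $F$ is a homeomorphism, shows $\hpi(\bxi)$ and $\hpi(\bxi')$ share a $g$-fiber. In the second normal form, $\omega(\bP)$ is an $n$-element $g$-fiber by Remark~\ref{rmk:semiconj-fibres} and $\hpi$ carries the periodic orbit $\bP\subset\hR$ bijectively onto it, so the conclusion is immediate in the interior case; in the endpoint case it is Lemma~\ref{lem:xi-vary}~(d).

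The converse direction is where the main obstacle lies: I must verify that every non-trivial $g$-fiber arises exactly in this way, so that any pair of points in a common fiber pulls back under~$\hpi^{-1}$ to threads with $\sim$-related $0$-th coordinates. I will proceed case by case through the types of tent map (rational general, NBT, and normal endpoint), enumerating the fibers described in Remark~\ref{rmk:semiconj-fibres} --- the two-element fibers $\{\omega(\bt(y,k,i)),\omega(\bt(\hy,k,i))\}$, the three-element fibers involving $\bt(\hq_{n-1},k,i),\bt(a,k+1,i),\bt(\re_u,k+1,i)$ in the general case, the orbit fiber $\omega(\bP)$, and the single countable exceptional fiber in the endpoint case --- and reading off from Lemma~\ref{lem:xi-vary} exactly which threads $\bxi,\bxi'\in\hR$ realize each one. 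In every case the definition of~$\sim$ in~(P9) is satisfied: either $\tPhi$ folds $\{\xi_k,\xi_k'\}$ into a single point $\xi_{k-1}=\xi_{k-1}'$, or the $0$-th coordinates eventually land in~$P$. This establishes the fiber equality, delivers the conjugating homeomorphism~$\Theta$, and transports the (generalized) pseudo-Anosov structure of~$\Phi$ to~$F$.
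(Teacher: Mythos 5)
Your proposal is correct and follows essentially the same route as the paper: reduce via the commutative diagrams in~\eqref{eq:CD} and Lemma~\ref{lem:hpi-conjugates} to matching the fibers of $g\circ\hpi$ with those of $p_\sim\circ p_0$, translate condition~(i) of the definition of $\sim$ into the existence of a shift index at which the threads first differ, and then match the resulting pairs from Lemma~\ref{lem:xi-vary} against the fiber list of Remark~\ref{rmk:semiconj-fibres}, case by case over the general, NBT, and normal endpoint types. The only cosmetic differences are that you separate the two implications where the paper runs a single chain of equivalences, and your parenthetical that the second normal form ($\xi_0,\xi_0'\in P$) occurs ``only in the endpoint case'' is inaccurate (it occurs in the interior case too, where it is trivial because $P$-points have unique preimages), but you handle that case correctly anyway.
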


\begin{proof} By~(\ref{eq:CD}) and the accompanying discussion, it is only
necessary to show that for all pairs $\bxi,\bxi'$ of distinct elements
of~$\hR$, we have $g(\hpi(\bxi))=g(\hpi(\bxi'))$ if and only if
$\xi_0\sim\xi_0'$.

\medskip

We start with the case where~$f$ is of general type. In this case the points
of~$P$ (the periodic orbit of~(P9)) have unique \mbox{$\tPhi$-preimages} ---
which are, of course, in ~$P$ --- and hence, by~(P9), $\xi_0\sim\xi_0'$ if and
only if either
\begin{enumerate}[(i)]
\item $\tPhi^r(\xi_0)=\tPhi^r(\xi'_0)$ for some $r\ge 0$; or
\item $\xi_0,\xi'_0\in P$.
\end{enumerate} Since $\bxi\not=\bxi'$, condition~(i) is equivalent to the
condition
\begin{equation}
\label{eq:condition} \text{There is some }s\in\Z\text{ with
}\quad\htPhi^s(\bxi)_0 =
\htPhi^s(\bxi')_0,\quad\text{ but }\quad\htPhi^s(\bxi)_1 \not=
\htPhi^s(\bxi')_1.
\end{equation} For if~(\ref{eq:condition}) holds, then either $s\le 0$, in
which case $\xi_0=\xi_0'$; or $s>0$, in which case
$\tPhi^r(\xi_0)=\tPhi^r(\xi'_0)$ for $r=s$. Conversely, suppose that there is
some $r\ge 0$ for which $\tPhi^r(\xi_0)=\tPhi^r(\xi_0')$, and pick this~$r$ to
be as small as possible. If $r>0$ then we have $\htPhi^r(\bxi)_0 =
\htPhi^r(\bxi')_0$, but $\tPhi^{r-1}(\xi_0)\not=\tPhi^{r-1}(\xi_0')$, so that
$\htPhi^{r-1}(\bxi)_0 \not=\htPhi^{r-1}(\bxi')_0$; that is,
$\htPhi^r(\bxi)_1\not=\htPhi^r(\bxi')_1$, and hence~(\ref{eq:condition}) holds
with $s=r$. On the other hand, if $r=0$ then $\xi_0=\xi'_0$. Since
$\bxi\not=\bxi'$, there is some greatest~$i\ge0$ with $\xi_i=\xi'_i$. Then we
have $\htPhi^{-i}(\bxi)_0=\htPhi^{-i}(\bxi')_0$ but
$\htPhi^{-i}(\bxi)_1\not=\htPhi^{-i}(\bxi')_1$, so that~(\ref{eq:condition})
holds with $s=-i$.

Therefore condition~(i) holds if and only if there is some $s\in\Z$ such that
$\{\hpi(\htPhi^s(\bxi)),\,\hpi(\htPhi^s(\bxi'))\}$ is one of the pairs from the
statement of Lemma~\ref{lem:xi-vary}~(a). By Lemma~\ref{lem:hpi-conjugates},
this is equivalent to the existence of $s\in\Z$ such that
$\{\hf^s(\hpi(\bxi)),\,\hf^s(\hpi(\bxi'))\}$ is one of these pairs.
Setting~$r=-s$, this in turn is equivalent to the existence of $r\in\Z$ such
that $\{\hpi(\bxi),\,\hpi(\bxi')\}$ is the image under $\hf^r$ of one of these
pairs. That is, condition~(i) holds if and only if there is some $k\in\Z$ and
$0\le i\le n-1$ such that
\begin{eqnarray*}
\{\hpi(\bxi),\,\hpi(\bxi')\} &=&
\{\omega(\bt(y,k,i)),\,\omega(\bt(\hy,k,i))\} \text{ for some
}y\in\gamma\setminus\{c_u, q_{n-1}, \hq_{n-1}\}; \text{ or}\\
\{\hpi(\bxi),\,\hpi(\bxi')\} &=&
\{\omega(\bt(\hq_{n-1},k,i),\,\omega(\bt(a,k+1,i))\}; \text{ or}\\
\{\hpi(\bxi),\,\hpi(\bxi')\} &=&
\{\omega(\bt(\hq_{n-1},k,i),\,\omega(\bt(\re_u, k+1, i))\}.
\end{eqnarray*} Observing that condition~(ii) holds if and only if $\hpi(\bxi),
\hpi(\bxi')\in\omega(\bP)$ and comparing with
Remark~\ref{rmk:semiconj-fibres}~(e), we see that $\xi_0\sim\xi_0'$ if and only
if $g(\hpi(\bxi))=g(\hpi(\bxi'))$.

\medskip

The proof in the NBT case is similar, using Lemma~\ref{lem:xi-vary}~(b) and
Remark~\ref{rmk:semiconj-fibres}~(f).

\medskip

Suppose, then, than~$f$ is of (normal) endpoint type, so that $q_{n-1}$ is
either $a$ or $\re_u$: as before, we will write $A=\hq_{n-1}$, so that
$q_{n-1}=\re_u$ and $A=a$ when $\kappa(f)=\rhe(m/n)$; while $q_{n-1}=a$ and
$A=\re_u$ when $\kappa(f)=\lhe(m/n)$. The periodic orbit~$P$ of $\tPhi$
on~$\partial R$ is therefore $P=\{q_0, q_1, \ldots, q_{n-1}\}$ (that is, it is
equal to~$Q$), and the two $\tPhi$-preimages of $q_0$ are $a$ and $\re_u$.
By~(P9), $\xi_0\sim\xi_0'$ if and only if either
\begin{enumerate}[(i)]
\item $\tPhi^r(\xi_0) = \tPhi^r(\xi_0')$ for some $r\ge 0$; or
\item $\tPhi^r(\xi_0), \tPhi^r(\xi_0')\in P$ for some $r\ge 0$.
\end{enumerate}

As in the general case, condition~(i) is equivalent to~(\ref{eq:condition}),
which in turn is equivalent to the existence of $r\in\Z$ such that
$\{\hpi(\bxi), \hpi(\bxi')\}$ is the image under $\hf^r$ of one of the pairs
from the statement of Lemma~\ref{lem:xi-vary}~(c). That is, condition~(i) holds
if and only if there is some $k\in\Z$ and $0\le i\le n-1$ such that
\begin{eqnarray*}
\{\hpi(\bxi),\,\hpi(\bxi')\} &=&
\{\omega(\bt(y,k,i)),\,\omega(\bt(\hy,k,i))\} \text{ for some
}y\in\ingam\setminus\{c_u\}; \text{ or}\\
\{\hpi(\bxi),\,\hpi(\bxi')\} &=&
\{\omega(\bt(A,k,i),\,\omega(\bt(A,k+\ell,i))\} \text{ for some }\ell>0; \text{
or}\\
\{\hpi(\bxi),\,\hpi(\bxi')\} &=&
\{\omega(\bt(A,k,i),\,\omega(\bq_i)\}.
\end{eqnarray*}

On the other hand, by Lemma~\ref{lem:xi-vary}~(d), condition~(ii) holds if and
only if there are integers $k$ and $k'$, and $0\le i,i'\le n-1$ such that
$\hpi(\bxi)=\omega(\bq_i)$ or $\hpi(\bxi)=\omega(\bt(A,k,i))$; and $\hpi(\bxi')
= \omega(\bq_{i'})$ or $\hpi(\bxi')=\omega(\bt(A, k', i'))$.

Combining conditions~(i) and~(ii), we obtain that $\xi_0\sim\xi_0'$ if and only
if either there exist $y\in\ingam\setminus\{c_u\}$, $k\in\Z$, and $0\le i\le
n-1$ such that
$\{\hpi(\bxi),\,\hpi(\bxi')\}=\{\omega(\bt(y,k,i)),\,\omega(\bt(\hy,k,i))\}$;
or
\[
\{\hpi(\bxi),\,\hpi(\bxi')\} \subset \omega(\bQ) \cup
\{\omega(\bt(A,k,i))\,:\,k\in\Z,\, 0\le i\le n-1\}.
\] Comparing with Remark~\ref{rmk:semiconj-fibres}~(c), we see that
$\xi_0\sim\xi_0'$ if and only if $g(\hpi(\bxi))=g(\hpi(\bxi'))$, as required.
\end{proof}

A detailed description of the dynamics of the sphere homeomorphism $F$ in the
case where $f$ is a tent map but is not post-critically finite is the subject
of ongoing research. Here we only give a straightforward statement about the
way in which dynamical properties of a general~$f$ carry over to~$F$. Recall
that a Borel probability measure on a topological manifold~$M$ is called an
{\em Oxtoby-Ulam} measure, or {\em OU-measure}, if it is non-atomic, positive
on open sets, and assigns zero measure to the boundary of~$M$ (if it has one).

\begin{thm}[Sphere homeomorphism dynamics]
\label{thm:sphere-dynamics} Let~$f$ be a unimodal map satisfying the conditions
of Convention~\ref{conv:unimodal}, and $F\colon\Sigma\to\Sigma$ be the
corresponding sphere homeomorphism given by Theorem~\ref{thm:semi-conj}. Then
\begin{enumerate}[(a)]
\item if~$f$ is topologically transitive then so is~$F$;
\item if~$f$ has dense periodic points, then so does~$F$;
\item $f$ and~$F$ have the same number of periodic orbits of each period, with
the exception that, provided $\kappa(f)\not=1\Infs{0}$,
\begin{itemize}
  \item $F$ has one more fixed point than~$f$, and
  \item if~$f$ is of rational type with
$q(\kappa(f))=m/n\in(0,1/2)$, then~$F$ has either one or two fewer period~$n$
orbits than~$f$.
\end{itemize}
\item $f$ and $F$ have the same topological entropy; and
\item if~$f$ preserves an ergodic OU-measure, then~$F$ preserves an ergodic
  OU-measure with the same metric entropy.
\end{enumerate} In particular, if $f$ is a tent map of slope $t\in(\sqrt2,2]$
restricted to its dynamical interval, then~$F$ is topologically transitive, has
dense periodic points, has topological entropy~$\log(t)$, and has an invariant
ergodic OU-measure with metric entropy~$\log(t)$.
\end{thm}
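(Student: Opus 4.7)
The plan is to prove each of (a)--(e) by factoring the passage from $f$ to $F$ through the natural extension $\hf\colon\hI\to\hI$, that is, through the two semi-conjugacies $f\overset{\pi_0}{\longleftarrow}\hf\overset{g}{\longrightarrow}F$, where $\pi_0(\bx)=x_0$ and $g$ is the semi-conjugacy of Theorem~\ref{thm:semi-conj}. For the first factor, standard properties of natural extensions of continuous surjections will give that $\pi_0$ restricts to a bijection between period-$n$ points of $\hf$ and of $f$ for each $n\ge1$ (with inverse sending a period-$n$ point $x$ of $f$ to $\thr{x,f^{n-1}(x),\ldots,f(x),x,f^{n-1}(x),\ldots}$); that $f$ and $\hf$ are simultaneously topologically transitive, simultaneously have dense periodic points, and satisfy $h(\hf)=h(f)$ (the last by Bowen's theorem, as noted in Remark~\ref{rmk:same-entropies}); and that pushforward by $\pi_0$ gives a bijection between $\hf$-invariant and $f$-invariant Borel probability measures which preserves ergodicity, non-atomicity, positivity on open sets, and metric entropy.

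For the second factor, parts~(a), (b), and (d) are direct: $g$ is continuous and surjective, so it transports a transitive orbit of $\hf$ to a transitive orbit of $F$, and a dense set of $\hf$-periodic points to a dense set of $F$-periodic points, while by Remark~\ref{rmk:same-entropies} the fact that all fibers of $g$ except the exceptional fiber $E$ are finite, together with $h(\hf|_E)=0$, yields $h(F)=h(\hf)$ via Bowen's entropy formula. For part~(c), I would use the enumeration of fibers in Remark~\ref{rmk:semiconj-fibres} to check in each type that every non-exceptional fiber consists of landings $\omega(\by^{(j)})$ whose underlying threads in $\hS$ are non-$\hB$-periodic (because they contain entries in $\ingam$ that are incompatible with the structure of the $\hB$-periodic orbits $\bP$ and $\bQ$); combined with the identity $\hf(\omega(\by))=\omega(\hB(\by))$ for $\by\in\cL$, this shows that no non-exceptional fiber of $g$ contains an $\hf$-periodic point, so distinct $\hf$-periodic orbits project injectively under $g$ to distinct $F$-periodic orbits of the same period, while inaccessible $\hf$-periodic orbits (lying in singleton fibers) also map bijectively to $F$-periodic orbits. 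It then remains to tabulate which $\hf$-periodic orbits lie in $E$: the exceptional point of $\Sigma$, being a fixed point of $F$ (as $E$ is $\hf$-invariant), contributes one additional fixed point to $F$ unless $E$ already contains a fixed point of $\hf$. Since $q(\kappa(f))\in(0,1/2)$ in every rational case (the only exclusion being $\kappa(f)=1\Infs{0}$, which makes the parenthetical hypothesis of~(c) fail), the orbits $\bP$ and $\bQ$ all have period $n\ge2$ and contribute no fixed points to $E$; meanwhile, in rational cases of height $m/n$, $E$ contains exactly one period-$n$ orbit of $\hf$ (namely $\omega(\bP)$ in the general/NBT/late-endpoint cases, or $\omega(\bQ)$ in the normal-endpoint cases) except in the quadratic-like strict left endpoint and early-endpoint quadratic-like cases, where it contains both $\omega(\bQ)$ and $\omega(\bP)$, yielding the ``one or two fewer'' count.

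The main obstacle is part~(e). Given an ergodic OU-measure $\mu$ for $f$, I would lift via the first factor to an ergodic, non-atomic measure $\hmu$ on $\hI$ with $h_{\hmu}(\hf)=h_\mu(f)$, and set $\nu=g_*\hmu$. Then $\nu$ is $F$-invariant, ergodic, and positive on open sets, with metric entropy $h_\mu(f)$ (using Bowen's entropy formula applied to $g$). The delicate point is non-atomicity: the only possible atom of $\nu$ lies at the exceptional point $p_\ast\in\Sigma$, with $\nu\{p_\ast\}=\hmu(E)$, and by ergodicity of $\hmu$ together with $\hf$-invariance of $E$, this value belongs to $\{0,1\}$. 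The case $\hmu(E)=1$ can be ruled out when $h_\mu(f)>0$, because $\hf|_E$ has topological entropy zero and so cannot support a positive-entropy invariant measure; this covers the tent-map case at the end of the statement, where $\mu$ is the absolutely continuous Parry measure of metric entropy $\log t$. In the zero-entropy setting a separate argument is required, for instance by replacing $\mu$ with the measure of maximal entropy (which has positive entropy by the standing hypothesis $h(f)>\tfrac12\log2$), running the above argument to obtain one ergodic OU-measure for $F$, and then treating an arbitrary OU-measure by combining the entropy-positive case with ergodic decomposition. The final assertion for tent maps of slope $t\in(\sqrt{2},2]$ then follows by combining (a)--(e) with the classical facts that such tent maps are topologically mixing on their dynamical intervals, have dense periodic orbits, topological entropy $\log t$, and preserve the absolutely continuous Parry measure, which is ergodic, OU, and has metric entropy $\log t$.
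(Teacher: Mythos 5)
Your overall route is the same as the paper's: factor the passage from $f$ to $F$ through $\hf$ via $\pi_0$ and $g$, use the standard lifting properties of natural extensions for transitivity, dense periodic points, measures and entropy, invoke Bowen's theorem (Remark~\ref{rmk:same-entropies}) for (d), and count periodic orbits from the fiber enumeration of Remark~\ref{rmk:semiconj-fibres}, noting that the only fiber containing $\hf$-periodic points that fails to be a singleton is the exceptional fiber, which collapses to a fixed point of $F$. Parts (a)--(d) are fine as proposed.

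The one genuine problem is your treatment of non-atomicity in (e). Your reduction is correct: the only candidate atom of $\nu=g_*\hmu$ is $p_\ast=g(E)$, and $\nu\{p_\ast\}=\hmu(E)\in\{0,1\}$ by ergodicity. But your entropy argument ruling out $\hmu(E)=1$ applies only when $h_\mu(f)>0$, and the patch you propose for the zero-entropy case does not work: replacing $\mu$ by the measure of maximal entropy changes the measure whose pushforward you must show is OU, and ``combining with ergodic decomposition'' is vacuous for a measure that is already ergodic, so the statement for a zero-entropy ergodic OU-measure $\mu$ is left unproved. The gap closes immediately from a fact you have already established: $\nu$ is positive on open sets, so $\nu\{p_\ast\}=1$ would force the open non-empty set $\Sigma\setminus\{p_\ast\}$ to be null, a contradiction. (This is exactly the paper's argument, phrased there as: an atom of an ergodic invariant measure lies on a periodic orbit of full measure, whose complement is open, non-empty and null.) With that substitution, no entropy hypothesis and no case split are needed. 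A minor further caveat: the equality of metric entropies $h_{\hmu}(\hf)=h_{\nu}(F)$ is not ``Bowen's entropy formula'' (which concerns topological entropy); what you need is that a measure-preserving factor map with finite fibers $\nu$-almost everywhere preserves metric entropy, which is what the paper invokes once $\nu\{p_\ast\}=0$ is known.
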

\begin{proof} It is well known that if $f$ is topologically transitive or has
dense periodic points, then the same is true of its natural extension~$\hf$.
Since these properties are preserved by semi-conjugacy, (a) and~(b) follow. 

(c) follows from the explicit descriptions of the fibers of the semi-conjugacy
$g\colon \hI\to\Sigma$ given in Remark~\ref{rmk:semiconj-fibres}. The only
fiber which contains periodic points is the exceptional fiber in the rational
case: in the normal endpoint case this fiber contains the period~$n$
orbit~$\omega(\bQ)$; in the interior and late endpoint cases it is equal to the
period~$n$ orbit $\omega(\bP)$; and in the early or quadratic-like strict
endpoint cases, it contains both period~$n$ orbits $\omega(\bP)$ and
$\omega(\bQ)$ (or the single semi-stable orbit $\omega(\bP) =\omega(\bQ)$). In all cases, the exceptional fiber is collapsed to a fixed point of~$F$.

(d) is established in Remark~\ref{rmk:same-entropies}.

For~(e), it is also well known that if~$\mu$ is an $f$-invariant Borel
probability measure, then there is a unique $\hf$-invariant Borel probability
measure $\hmu$ on $\hI$ characterized by $(\pi_n)_*(\hmu)=\mu$ for all~$n$
(here $\pi_n\colon\hI\to I$ is defined by $\pi_n(\bx)=x_n$); moreover, $\hmu$
is ergodic if and only if $\mu$ is; and $\mu$ and $\hmu$ have the same metric
entropy. If $\hmu$ were atomic then $\mu$ would be also. Moreover, since a base
for the topology on~$\hI$ is given by the set of all $\pi_n^{-1}(U)$, where~$U$
is non-empty and open in~$I$, and since $\hmu(\pi_n^{-1}(U))=\mu(U)$, we have
that $\hmu$ is positive on open sets if $\mu$ is.

Write $\tmu=g_*(\hmu)$, so
that $\tmu$ is $F$-invariant and ergodic. Since $g$ is continuous, $\tmu$ is
also positive on open sets. To show that $\tmu$ is non-atomic, suppose for a
contradiction that there were some $z\in\Sigma$ with $\tmu(z)>0$. Then, since
$\tmu$ is $F$-invariant and ergodic, $z$ would belong to a periodic orbit of
full measure, a contradiction since the complement of this periodic orbit would
be open and non-empty.

Except perhaps for a single point (which we now know has $\tmu$-measure zero),
the point preimages of the semi-conjugacy $g$ are finite. Thus $g$ has finite
preimages $\tmu$-almost everywhere and so $\hmu$ and $\tmu$ have the same
metric entropy.

Therefore $\tmu$ is an $F$-invariant ergodic OU-measure as required. Since tent
maps of slope $t>\sqrt{2}$ restricted to their dynamical intervals are
transitive, have dense periodic points, have topological entropy $\log(t)$, and
have invariant ergodic OU-measures with metric entropy $\log(t)$, the final
statement follows.
\end{proof}

\begin{remark} A theorem of Oxtoby and Ulam (see~\cite{OU} and Appendix~2
of~\cite{AP}) states that if $m_1$ and $m_2$ are OU-measures on a manifold~$M$,
then there is a homeomorphism $h\colon M\to M$ with $h_*(m_1)=m_2$. Therefore
by conjugating the sphere homeomorphism~$F$, we can make it preserve any
OU-measure; in particular, one coming from an area form on~$\Sigma$.
\end{remark}

\appendix

\section{The embedding is independent of the unwrapping}
\label{app:independence} 
In this appendix we will prove the following result, which establishes that the
prime ends of $(\hT, \hI)$ are independent of the choice of unwrapping of the
unimodal map~$f$.

\medskip

\noindent\textbf{Theorem~\ref{thm:independence-unwrapping}.}\emph{ Any two
unwrappings of a unimodal map~$f$ are equivalent. }

\medskip

Recall that unwrappings $\barf_0$ and $\barf_1$ of~$f$, which have associated
Barge-Martin homeomorphisms \mbox{$\hH_0\colon(\hT_0,\hI)\to (\hT_0, \hI)$} and
$\hH_1\colon(\hT_1,\hI)\to(\hT_1, \hI)$, are equivalent if there is a
homeomorphism \mbox{$\lambda\colon\hT_0\to\hT_1$} which restricts to the
identity on $\hI$. Theorem~\ref{thm:independence-unwrapping} is a consequence
of the following recent deep result (Theorem~7.3 of~\cite{OT}).

\begin{thm}[Oversteegen--Tymchatyn]
\label{thm:OT} Let~$K$ be a continuum and $\{\alpha_t\}_{t\in[0,1]}\colon
K\to S^2$ be an isotopy of embeddings of~$K$ into the sphere. Then there is an
ambient isotopy $\{A_t\}_{t\in[0,1]}\colon S^2\to S^2$ with $A_0=\id$ and
$\alpha_t=A_t\circ \alpha_0$ for all~$t$.
\end{thm}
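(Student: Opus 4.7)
The plan is to apply the Oversteegen--Tymchatyn theorem (Theorem~\ref{thm:OT}) to an isotopy of embeddings of $\hI$ into the sphere obtained by interpolating between the two unwrappings. The starting point is that condition~(b) of Definition~\ref{def:unwrapping} forces $H_i|_I = f$ for $i = 0, 1$, so $\hI = \invlim(I, f)$ is a fixed inverse limit embedding canonically into both $\hT_0$ and $\hT_1$; denote the inclusions by $\iota_0$ and $\iota_1$. To invoke Theorem~\ref{thm:OT}, I will build a continuous family of embeddings $\alpha_t\colon \hI \to S^2$ with $\alpha_0$ and $\alpha_1$ conjugate (via homeomorphisms $h_0$ and $h_1$ between $\hT_i$ and $S^2$) to $\iota_0$ and $\iota_1$, and then obtain the equivalence as $\lambda = h_1^{-1} \circ A_1 \circ h_0$, where $\{A_t\}$ is the ambient isotopy produced by the theorem.

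The first and principal step is to produce a continuous path $\{\barf_t\}_{t \in [0,1]}$ of unwrappings of $f$ joining $\barf_0$ to $\barf_1$ in the uniform topology on $C^0(T, T)$. Conditions~(a)--(c) of Definition~\ref{def:unwrapping} leave considerable flexibility: (c) constrains only the second coordinate on $S \times [0, 1/2]$ and the image of $\partial$; (b) fixes $\Upsilon \circ \barf|_I$; and (a) requires injectivity on $I$ with image in the upper hemisphere. I would first show that every unwrapping can be deformed through unwrappings to the canonical one of Definition~\ref{def:unwrapping-unimodal}, then concatenate the deformation of $\barf_0$ to the canonical one with the reverse of the deformation of $\barf_1$. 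The deformations themselves can be built region by region, interpolating along geodesics in $T$ in suitable local coordinates, taking care that intermediate maps remain orientation-preserving monotone surjections whose point preimages are points or arcs, so that Youngs' theorem continues to guarantee the near-homeomorphism property as in the proof of Lemma~\ref{lem:barf-is-unwrapping}. Maintaining the near-homeomorphism condition along the entire path, particularly in a neighborhood of $I \cup \partial$ where conditions (a), (b), (c) interact most delicately, is the main technical obstacle.

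Once the path $\{\barf_t\}$ is in hand, Theorem~\ref{thm:unwrap-family} applied with the constant family $f_t = f$ provides homeomorphisms $h_t\colon \hT_t \to S^2$ such that $h_t \circ \hH_t \circ h_t^{-1}$ varies continuously and $h_t(\hI)$ varies Hausdorff continuously. A modest strengthening of the proof in~\cite{param-fam} shows that the map $(t, \bx) \mapsto h_t(\iota_t(\bx))$ is jointly continuous on $[0,1] \times \hI$: this follows because $\hI$ is a common fixed subset of all $\hT_t$ and convergence of threads in $\hT_t$ is coordinatewise, combined with the continuity of $\barf_t$ in $t$. Therefore $\alpha_t := h_t \circ \iota_t\colon \hI \to S^2$ is an isotopy of embeddings of the continuum $\hI$. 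Applying Theorem~\ref{thm:OT} yields an ambient isotopy $\{A_t\}$ with $A_0 = \id$ and $\alpha_t = A_t \circ \alpha_0$ for all $t$, and the homeomorphism $\lambda := h_1^{-1} \circ A_1 \circ h_0\colon \hT_0 \to \hT_1$ then satisfies $\lambda \circ \iota_0 = h_1^{-1} \circ A_1 \circ \alpha_0 = h_1^{-1} \circ \alpha_1 = \iota_1$, so $\lambda$ restricts to the identity on $\hI$, establishing the equivalence of $\barf_0$ and $\barf_1$.
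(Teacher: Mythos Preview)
Your proposal is not a proof of the stated theorem. Theorem~\ref{thm:OT} is the Oversteegen--Tymchatyn theorem, which the paper quotes from~\cite{OT} without proof; it is a deep external result used as a tool. What you have written is instead a proof sketch for Theorem~\ref{thm:independence-unwrapping} (any two unwrappings of~$f$ are equivalent), which \emph{applies} Theorem~\ref{thm:OT} rather than proving it.

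Treating your proposal as an attempt at Theorem~\ref{thm:independence-unwrapping}, it shares the paper's overall architecture --- connect the unwrappings by a one-parameter family, invoke Theorem~\ref{thm:unwrap-family}, apply Theorem~\ref{thm:OT}, and set $\lambda = h_1^{-1}\circ A_1\circ h_0$ --- but the first step contains a genuine gap. You assert that every unwrapping can be deformed through unwrappings to the canonical one of Definition~\ref{def:unwrapping-unimodal}. This is not true in general: unwrappings come in two discrete types, ``up'' and ``down'', according to which side of $I$ the fold $\barf([a,c))$ lies on relative to $\barf((c,b])$ (see the discussion preceding Lemma~\ref{lem:up-down}). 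The type is constant along any continuous family of weak unwrappings, since it is determined by the injective map $\barf|_I$ subject to $\Upsilon\circ\barf|_I=f$, and cannot jump. The canonical unwrapping is of type ``down'', so an unwrapping of type ``up'' cannot be joined to it by the path you propose. The paper handles this obstruction explicitly: Lemma~\ref{lem:up-down} shows (using weak unwrappings, an Edwards--Kirby pseudo-isotopy, and the Alexander trick) that same-type unwrappings are u-near-isotopic, whence equivalent by Lemma~\ref{lem:u-n-i-equiv}; the opposite-type case is then dispatched separately by conjugating with the involution $\Gamma(x_u,s)=(x_\ell,s)$, $\Gamma(x_\ell,s)=(x_u,s)$, which swaps the two types and manifestly induces a homeomorphism $\hT_0\to\hT_1$ fixing~$\hI$.
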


\begin{defn}[Weak unwrapping] A {\em weak unwrapping} of a unimodal map~$f$ is
an orientation-preserving near-homeomorphism $\barf\colon T\to T$ which is
injective on~$I$ with $\barf(I)\subset\{(y,s)\,:\,s\ge 1/2\}$, and satisfies
$\Upsilon\circ\barf|_I=f$.
\end{defn}

\begin{remark}
\label{rmk:weak-unwrap} A weak unwrapping differs from an unwrapping in that it
is not required that the second component of $\barf(y,s)$ is equal to~$s$
whenever $s\in[0,1/2]$. A Barge-Martin homeomorphism can be associated to a
weak unwrapping in exactly the same way as to an unwrapping, although it may
not have $\hI$ as a global attractor (if $\barf$ pushes points with
$s\in[0,1/2]$ outwards more strongly than the smash~$\Upsilon$ pulls them
inwards).
\end{remark}

\begin{defn}[u-near-isotopy] A homotopy $\{\barf_t\}\colon T\to T$ of weak
unwrappings of a fixed unimodal map~$f$ is called a {\em u-near-isotopy} (for
``unwrapping near-isotopy'').
\end{defn}

\begin{remark}
\label{rmk:EK} It is not obvious that a homotopy of near-homeomorphisms of~$T$
can be uniformly approximated by isotopies, and therefore merits the name {\em
near-isotopy}. That this is the case follows from a theorem of Edwards and
Kirby (Corollary~1.1 of~\cite{edwardskirby}), which states that the
homeomorphism group of any manifold is locally contractible, and hence locally
path connected.
\end{remark}

\begin{lem}
\label{lem:u-n-i-equiv} Any two u-near-isotopic unwrappings $\barf_0\colon T\to
T$ and $\barf_1\colon T\to T$ of a unimodal map~$f$ are equivalent.
\end{lem}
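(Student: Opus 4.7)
The plan is to combine the parameterized Barge--Martin construction with the Oversteegen--Tymchatyn ambient-extension theorem~\ref{thm:OT}. From the u-near-isotopy $\{\barf_t\}_{t\in[0,1]}$, the near-homeomorphisms $H_t = \Upsilon\circ\barf_t\colon T\to T$ vary continuously in $t$, so the (proof of the) parameterized Barge--Martin theorem~\ref{thm:unwrap-family}, applied to the constant family $f_t\equiv f$ with $\{\barf_t\}$ as the chosen family of (weak) unwrappings, will yield a continuously varying family of sphere homeomorphisms $h_t\colon \hT_t\to S^2$. Since $H_t|_I = f$ for every~$t$, each $\hT_t$ contains the fixed continuum $\hI=\invlim(I,f)$ as a subset via inclusion, and the restrictions $\alpha_t := h_t|_{\hI}\colon \hI\to S^2$ assemble into a continuous isotopy of embeddings of $\hI$ into $S^2$.

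The second step will be to apply Theorem~\ref{thm:OT} to this isotopy of embeddings, obtaining an ambient isotopy $\{A_t\}\colon S^2\to S^2$ with $A_0=\id$ and $\alpha_t = A_t\circ\alpha_0$ for every $t\in[0,1]$. Specializing to $t=1$ gives the identity $h_1|_{\hI} = A_1\circ h_0|_{\hI}$ on $\hI$.

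Finally, one sets
\[
  \lambda \;=\; h_1^{-1}\circ A_1\circ h_0\colon \hT_0 \longrightarrow \hT_1,
\]
which is a composition of homeomorphisms, hence a homeomorphism of spheres. For any $\bx\in\hI$,
\[
  \lambda(\bx) \;=\; h_1^{-1}\bigl(A_1(h_0(\bx))\bigr) \;=\; h_1^{-1}\bigl(h_1(\bx)\bigr) \;=\; \bx,
\]
so $\lambda$ restricts to the identity on~$\hI$. By definition this exhibits $\barf_0$ and $\barf_1$ as equivalent unwrappings.

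The expected main obstacle is the first step: namely, extracting from the parameterized Barge--Martin construction the precise statement that $t\mapsto \alpha_t$ is continuous as a map into $C(\hI,S^2)$, rather than merely the Hausdorff continuity of the images $\alpha_t(\hI)$ which Theorem~\ref{thm:unwrap-family}~(b) asserts verbatim. This stronger continuity should follow from the fact that the $h_t$ arise from a single slice-preserving homeomorphism of the parameterized inverse limit (in the spirit of Theorem~\ref{thm:fat-homeo}), whose construction depends continuously on~$t$; this continuity, together with $\hI$ being a fixed abstract continuum independent of~$t$, is what is needed in order to feed $\{\alpha_t\}$ into Theorem~\ref{thm:OT}. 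Once that is in hand, the remainder of the argument is formal.
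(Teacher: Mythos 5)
Your proposal is correct and follows essentially the same route as the paper's proof: apply the parameterized Barge--Martin construction to the u-near-isotopy to get homeomorphisms $h_t\colon\hT_t\to S^2$, feed the isotopy of embeddings $h_t|_{\hI}$ into Theorem~\ref{thm:OT}, and set $\lambda=h_1^{-1}\circ A_1\circ h_0$. The obstacle you flag --- that one needs continuity of $t\mapsto h_t|_{\hI}$ rather than mere Hausdorff continuity of the images --- is exactly the point the paper addresses, by observing that the $h_t$ arise as compositions $p_1\circ\beta\circ\iota_t$ in the construction of \cite{param-fam} and hence vary continuously with~$t$.
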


\begin{proof} Let $\{\barf_t\}$ be a u-near isotopy from~$\barf_0$
to~$\barf_1$, and for each~$t$ let $\hH_t \colon (\hT_t, \hI)\to (\hT_t,\hI)$
be the Barge-Martin homeomorphism associated with $\barf_t$. By
Theorem~\ref{thm:unwrap-family}, there are homeomorphisms $h_t\colon
\hT_t\to S^2$ with the property that $\{\Phi_t=h_t\circ \hH_t\circ
h_t^{-1}\}\colon S^2\to S^2$ is an isotopy. It follows from the construction of
the homeomorphisms $h_t$ (as compositions $h_t = p_1\circ\beta\circ\iota_t$) in
the proof of Corollary~2.3 of~\cite{param-fam} --- which applies equally well
to weak unwrappings as to unwrappings --- that $h_t|_{\hI}$ varies continuously
with~$t$, so that $\{h_t|_\hI\}$ is an isotopy of embeddings of the
continuum~$\hI$ into $S^2$.

By Theorem~\ref{thm:OT}, there is an isotopy $\{A_t\}\colon S^2\to S^2$ with
$A_0=\id$ and $h_t|_{\hI} = A_t\circ h_0|_{\hI}$ for all~$t$. Let~$\lambda =
h_1^{-1}\circ A_1\circ h_0\colon
\hT_0\to\hT_1$. Then $\lambda$ is the required homeomorphism which restricts to
the identity on~$\hI$.
\end{proof}

\begin{remark} Lemma~\ref{lem:u-n-i-equiv} extends to apply to any continuous
surjection $f\colon K\to K$, where~$K$ is a Peano non-separating planar
continuum. A theorem of Brechner and Brown~\cite{BB} states that such a
continuum~$K$ has a {\em Disk Mapping Cylinder Neighborhood}: that is, it can
be embedded in a disk~$D$ in such a way that there is a continuous map
$\phi\colon S^1\times[0,1]\to D$ with~$K=\phi(S^1\times\{1\})$, and
$\phi(y_1,s_1)=\phi(y_2,s_2) \implies s_1=s_2=1$. (In fact, Brechner and Brown
show that a planar continuum has a Disk Mapping Cylinder Neighborhood if and
only if it is Peano and non-separating.) Using the associated ``coordinates''
$(y,s)$, the Barge-Martin construction can be carried out exactly as in the
case~$K=I$, and the proof of Lemma~\ref{lem:u-n-i-equiv} goes through without
change.
\end{remark}

\medskip

We next show that there are only two u-near-isotopy classes of unwrappings of a
given unimodal map~$f$. Since a weak unwrapping~$\barf$ is injective on~$I$,
one of the following two cases must occur
\begin{enumerate}[(a)]
\item For every $x\in[a,c)$ either $\barf(x) = (f(x)_u,s_1)$ and
  $\barf(\hx)=(f(x)_\ell, s_2)$; or $\barf(x)=(f(x)_u, s_1)$ and
  $\barf(\hx)=(f(x)_u, s_2)$ with $s_1<s_2$; or $\barf(x)=(f(x)_\ell, s_1)$ and
  $\barf(\hx)=(f(x)_\ell, s_2)$ with $s_1>s_2$.
\item For every $x\in[a,c)$ either $\barf(x) = (f(x)_\ell,s_1)$ and
  $\barf(\hx)=(f(x)_u, s_2)$; or $\barf(x)=(f(x)_u, s_1)$ and
  $\barf(\hx)=(f(x)_u, s_2)$ with $s_1>s_2$; or $\barf(x)=(f(x)_\ell, s_1)$ and
  $\barf(\hx)=(f(x)_\ell, s_2)$ with $s_1<s_2$.
\end{enumerate} We say that $\barf$ is of type ``up'' in case~(a), and of type
``down'' in case~(b) (our preferred unwrapping is therefore of type ``down'').

\begin{lem}
\label{lem:up-down} If two unwrappings $\barf_0$ and $\barf_1$ of a unimodal
map~$f$ are of the same type, then they are u-near-isotopic.
\end{lem}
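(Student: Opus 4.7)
The plan is to construct an explicit u-near-isotopy $\{\barf_t\}$ from $\barf_0$ to $\barf_1$. The starting observation is that, by condition~(c) of Definition~\ref{def:unwrapping} (specifically (U1)), both unwrappings coincide on the lower hemisphere $T^- := \{(y,s) : s \le 1/2\}$ and at $\partial$, where each acts by $(y,s) \mapsto (B(y),s)$. Thus all of the freedom is concentrated in the closed upper hemisphere $T^+$, which is topologically a disk bounded by the circle $S \times \{1/2\}$ and with the arc $I$ embedded in its interior; I would take the isotopy to be constant on $T^-$ and on the boundary circle of $T^+$, so that the entire construction reduces to a family of maps on $T^+$ agreeing on its boundary circle with the common value $(y,1/2)\mapsto (B(y),1/2)$.

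Next I would deform $\barf_0|_I$ to $\barf_1|_I$ through a family $\{\barf_t|_I\}_{t \in [0,1]}$ of continuous injective arcs in $T^+$, all satisfying $\Upsilon \circ \barf_t|_I = f$ and all of the shared type. Since the type hypothesis forces the endpoint arcs $\barf_0(I)$ and $\barf_1(I)$ to have combinatorially identical structure on each fiber over $f(x) \in I$---such a fiber being an arc consisting of the two strands $\{(f(x)_u, s) : s \in [1/2, 1]\}$ and $\{(f(x)_\ell, s) : s \in [1/2, 1]\}$ joined at $f(x)$---such a deformation can be carried out concretely: one interpolates the second-component heights of $\barf_t(x)$ and $\barf_t(\hx)$ along this fiber, pushing through the branch point $f(x)$ (where $(f(x)_u, 1) = (f(x)_\ell, 1)$) whenever a first-component choice must change. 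The same-type hypothesis prevents any collisions that would violate injectivity.

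The final step is to extend the arc isotopy $\{\barf_t|_I\}$, together with the fixed data on $T^-$ and on the boundary of $T^+$, to a homotopy $\barf_t \colon T \to T$ through weak unwrappings. For each fixed $t$, the extension is specified on $T^+ \setminus I$ by a monotone cellular interpolation whose non-trivial point-preimages are arcs, mirroring the preimage structure enumerated in the proof of Lemma~\ref{lem:barf-is-unwrapping}. By Youngs' theorem (continuous monotone surjections $T \to T$ are near-homeomorphisms), each $\barf_t$ constructed in this way is a weak unwrapping, and orientation-preservation, injectivity on $I$, and the constraint $\barf_t(I) \subseteq \{s \ge 1/2\}$ all hold by construction.

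The main obstacle is the extension step: one must verify that the interpolated maps are monotone and depend continuously on~$t$, so that the resulting path lies entirely within weak unwrappings. Here the same-type hypothesis is indispensable, because it ensures that the combinatorial pattern of non-trivial point-preimages can be preserved by a continuous deformation without ever forcing a point-preimage to become disconnected---which would break monotonicity, and hence the near-homeomorphism property via Youngs' theorem. Two unwrappings of \emph{different} types would instead require a homotopy that passes through a non-monotone (and therefore non-near-homeomorphic) map, which is precisely why the ``same type'' hypothesis is needed.
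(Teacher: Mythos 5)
Your first step already contains an error: condition~(c) of Definition~\ref{def:unwrapping} pins down only the \emph{second} component of $\barf(y,s)$ for $s\in(0,1/2]$; it does not say that $\barf(y,s)=(B(y),s)$ there. That formula is (U1) of the \emph{explicit} unwrapping of Definition~\ref{def:unwrapping-unimodal}, not a property of general unwrappings --- the outside map $B$ does not appear in Definition~\ref{def:unwrapping} at all. So two unwrappings of the same type need not agree on $\{(y,s):s\le 1/2\}$ or on the circle $S\times\{1/2\}$, and your reduction to a deformation of the upper hemisphere rel its boundary collapses. (This is repairable in principle, since weak unwrappings impose no condition away from $I$, but your construction as written depends on the agreement.)

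The more serious problem is the extension step, which you correctly identify as the main obstacle but do not resolve. Producing, for each $t$, a monotone surjection $\barf_t\colon T\to T$ extending the arc embedding $\barf_t|_I$, \emph{continuously in~$t$}, is exactly the hard part, and ``monotone cellular interpolation'' is an assertion rather than a construction. The paper sidesteps it entirely: it first uses the Edwards--Kirby local contractibility theorem to replace each near-homeomorphism unwrapping by a genuine homeomorphism weak unwrapping with the same restriction to~$I$ (a pseudo-isotopy corrected by an ambient isotopy supplied by the Oversteegen--Tymchatyn theorem, Theorem~\ref{thm:OT}); it then realizes the fiberwise slide of $\barf_0|_I$ to $\barf_1|_I$ --- the one part of your argument that does match the paper --- by an \emph{ambient} isotopy $\{A_t\}$ via Theorem~\ref{thm:OT} again, sets $B_t=A_t\circ\barf_0$ so that every $B_t$ is automatically a weak unwrapping, and finishes with the Alexander trick to join $B_1$ to $\barf_1$, these being homeomorphisms agreeing on the embedded arc~$I$. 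Without some substitute for these two ingredients --- an isotopy extension theorem for arcs in the sphere, and a device for passing from near-homeomorphisms to homeomorphisms --- your direct interpolation cannot be completed; the same-type hypothesis governs only the combinatorics of the arc isotopy, not the extension problem.
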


\begin{proof} We show first that any unwrapping $\barf$ of~$f$ is
u-near-isotopic to a weak unwrapping which is a homeomorphism, and which has
the same type as $\barf$. Since $\barf$ is a near-homeomorphism, it follows
from the theorem of Edwards and Kirby stated in Remark~\ref{rmk:EK} that it is
the endpoint of a pseudo-isotopy: that is, that there is a homotopy
$\{g_t\}\colon T\to T$ with $g_0=\barf$ and $g_t$ a homeomorphism for $t>0$.
Define $\alpha_t = g_t|_I\colon I\to T$. Since $\barf|_I$ is a homeomorphism by
definition, $\{\alpha_t\}$ is an isotopy of embeddings. By
Theorem~\ref{thm:OT}, there is an isotopy $\{A_t\}\colon T\to T$ with $A_0=\id$
and $\alpha_t=A_t\circ \alpha_0$ for all~$t$. Define $\barg_t = A_t^{-1}\circ
g_t$, so that $\barg_0 = \barf$. If $t>0$ then $\barg_t$ is a homeomorphism
with $\barg_t|_I = A_t^{-1}\circ\alpha_t = \alpha_0 =
\barf|_I $, so that $\barg_t$ is a weak unwrapping. Therefore $\barf$ is
u-near-isotopic to the homeomorphism weak unwrapping~$\barg_1$, which is
clearly of the same type as $\barf$.

We can therefore complete the proof by showing that if $\barf_0$ and $\barf_1$
are homeomorphism weak unwrappings of the same type, then they are
u-near-isotopic. Write~$\zeta\colon T\to I$ for the map $(y,s) \mapsto (y,1)$.
A {\em fiber} in~$T$ is an arc of the form $\zeta^{-1}(x)$ for some $x\in I$.
Since $\barf_0$ and $\barf_1$ have the same type, we can slide from one to the
other along fibers to produce an isotopy of embeddings $\{\alpha_t\}\colon I\to
T$ with $\alpha_0=\barf_0|_I$, $\alpha_1 =
\barf_1|_I$, and $\zeta\circ\alpha_t=f$ for all $t$. By Theorem~\ref{thm:OT}
there is an isotopy $\{A_t\}\colon T\to T$ with $A_0=\id$ and
$\alpha_t=A_t\circ\alpha_0$ for all~$t$. Let $\{B_t\}\colon T\to T$ be the
isotopy defined by $B_t = A_t\circ \barf_0$. Now
\[
\tau\circ B_t|_I \,=\,\tau\circ A_t\circ\barf_0|_I \,=\, \tau\circ
A_t\circ\alpha_0 \,=\, \tau\circ\alpha_t \,=\, f.
\] Therefore $\{B_t\}$ is a u-near-isotopy (consisting of homeomorphisms) from
$\barf_0$ to $B_1$.

Now $B_1|_I = A_1\circ \barf_0|_I = A_1\circ\alpha_0 = \alpha_1 =
\barf_1|_I$. Therefore $B_1$ and $\barf_1$ are orientation-preserving
homeomorphisms $T\to T$ which agree on the embedded arc~$I$: by the Alexander
trick, they are isotopic by an isotopy which is constant on~$I$. This isotopy
is a u-near-isotopy from $B_1$ to~$\barf_1$, so that $\barf_0$ and $\barf_1$
are u-near-isotopic as required.
\end{proof}

\begin{proof}[Proof of Theorem~\ref{thm:independence-unwrapping}] Let~$f$ be a
unimodal map. By Lemmas~\ref{lem:u-n-i-equiv} and~\ref{lem:up-down}, any two
unwrappings of~$f$ of the same type are equivalent. It therefore only remains
to show that there exist unwrappings $\barf_0$ and $\barf_1$ of different types
which are equivalent.

Let $\Gamma\colon T\to T$ be the involution defined by
$\Gamma(x_u,s)=(x_\ell,s)$ and $\Gamma(x_\ell,s)=(x_u,s)$. Let~$\barf_0$ be any
unwrapping of type ``down'', and let $\barf_1 = \Gamma\circ \barf_0\circ
\Gamma$, which is of type ``up''. Since $\Gamma$ commutes with the smash, if
$H_i=\Upsilon\circ\barf_i\colon T\to T$ for each~$i$, then $H_1 =
\Gamma\circ H_0\circ \Gamma$.

It follows that the map $\lambda\colon\hT_0\to \hT_1$ defined by
$\lambda(\thr{x_0,x_1,\ldots}) = \thr{\Gamma(x_0), \Gamma(x_1), \ldots}$ is a
homeomorphism which restricts to the identity on $\hI$.
\end{proof}

\section{Technical lemmas}
\label{app:technical} 
In this appendix we prove two lemmas about the dynamics of unimodal maps. As is
the case throughout the paper, we assume that unimodal maps satisfy the
conditions of Definition~\ref{def:unimodal} and Convention~\ref{conv:unimodal}.

\begin{lem}
\label{lem:eventually-onto} Let~$f\colon[a,b]\to[a,b]$ be unimodal with turning
point~$c$. Then there is some~$N$ such that $f^N([a,c])=[a,b]$.
\end{lem}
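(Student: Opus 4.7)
The plan is to reduce to a simpler statement and then perform an inductive analysis on the iterates of $[a,c]$ driven by the critical orbit.

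\medskip

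First I reduce as follows. Since $f$ is strictly decreasing on $[c,b]$ with $f(c)=b$ and $f(b)=a$, one has $f([c,b])=[a,b]$; and $f([a,b])=[a,b]$ because $a=f(b)$ and $b=f(c)$ both belong to the connected set $f([a,b])$. Hence $f^k([c,b])=[a,b]$ for every $k\ge 1$. It therefore suffices to exhibit $M$ such that $J_M:=f^M([a,c])\supseteq [c,b]$, since then $f^{M+1}([a,c])\supseteq f([c,b])=[a,b]$, giving $N=M+1$.

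\medskip

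Because $f$ is increasing on $[a,c]$ with $f(c)=b$, one has $J_1=[f(a),b]$. Define $m=\min\{k\ge 1:f^k(a)\le c\}$; if $m=1$ then $J_1\supseteq [c,b]$ and the proof is complete. Otherwise $f(a)>c$, and I claim $m$ is finite and odd. Finiteness: if $f^k(a)>c$ for all $k\ge 1$ then $\kappa(f)_k=1$ for all $k\ge 2$, so $\kappa(f)=10\Infs{1}$, contradicting Convention~\ref{conv:unimodal}(a). Oddness: since the critical orbit stays above $c$ until step $m$, one has $\kappa(f)=1\,0\,1^{m-1}\,0\,\ldots$; comparing with $10\Infs{1}$ at the first disagreement (position $m+1$), the partial sum of $\kappa(f)$ up to that position equals $m$, so $\kappa(f)\succ 10\Infs{1}$ iff $m$ is odd.

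\medskip

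I then prove by induction for $1\le n\le m$ that
\begin{equation*}
J_n=\begin{cases}\bigl[\min\{f^k(a):k\text{ odd},\,1\le k\le n\},\,b\bigr]\subseteq[c,b]&\text{if $n$ is odd and $n<m$,}\\ \bigl[a,\,\max\{f^k(a):k\text{ even},\,2\le k\le n\}\bigr]\supseteq[a,c]&\text{if $n$ is even.}\end{cases}
\end{equation*}
The inductive step uses that, as long as $f^k(a)>c$ for $1\le k<m$, each $J_n$ is contained in $[c,b]$ (if $n$ is odd, where $f$ acts monotonically decreasing) or in $[a,c]$ (if $n$ is even, where $f$ acts monotonically increasing); applying $f$ swaps the roles of min and max and adjoins the appropriate endpoint $a=f(b)$ or $b=f(c)$. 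For $n=m$ odd, the new term $f^m(a)\le c$ enters the minimum, so $J_m\supseteq [c,b]$, and $N=m+1$ works.

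\medskip

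The main obstacle is the parity step: if $m$ were even then $(J_n)_{n\ge 2}$ would become eventually $2$-periodic, alternating between an interval in $[a,c]$ and one in $[c,b]$, and no iterate would contain all of $[c,b]$. The hypothesis $\kappa(f)\succ 10\Infs{1}$ is exactly what excludes this, explaining the need for the entropy bound $h(f)>\tfrac12\log 2$.
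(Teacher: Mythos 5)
Your argument is essentially the paper's: both proofs follow the images $f^n([a,c])$ along the critical orbit and use Convention~\ref{conv:unimodal}~(a) to force the first index $m$ with $f^m(a)\le c$ to be odd (your $m$ is the paper's $2\ell+1$, and both arrive at $N=m+1$). The reduction to covering $[c,b]$, the finiteness and parity of $m$, and the displayed formula for $J_n$ are all correct.

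There is, however, a slip in your justification of the inductive step. You assert that for even $n$ the interval $J_n$ is contained in $[a,c]$ and that $f$ acts on it monotonically increasing; but your own displayed formula gives $J_n=[a,\max\{f^k(a):k\text{ even},\,2\le k\le n\}]$ with that maximum strictly greater than $c$ (since even $n\le m$ forces $n\le m-1$), so $J_n$ strictly \emph{contains} $[a,c]$ --- as the same display also asserts --- rather than being contained in it. If $f$ really acted monotonically on $J_n$, the image would be $[f(a),f(\max)]$, which cannot produce the right-hand endpoint $b$ appearing in your formula for $J_{n+1}$. The correct mechanism, which your formula implicitly relies on, is that $c$ lies in the interior of $J_n$ for even $n$, so that $f(J_n)=f([a,c])\cup f([c,\max])=[f(a),b]\cup[\min\{f^{k+1}(a)\},b]$: it is the fold at $c$ that adjoins $b$. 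With the inductive step rewritten this way (monotone decreasing image for odd $n<m$, folded image for even $n$), the proof is complete and matches the paper's.
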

\begin{proof} $\kappa(f)\succ 101^\infty$ by
Convention~\ref{conv:unimodal}~(a), so that $\kappa(f) = 10(11)^\ell 0\ldots$
for some~$\ell\ge 0$. We shall show that $f^{2\ell+2}([a,c])=[a,b]$.

We first show by finite induction on $i$ that $[a,c] \subset f^{2i}([a,c])$ for
$0\le i\le \ell$. The base case is trivial. If $1\le i\le\ell$ then in
particular $\ell\ge 1$, so that $f^2(a)\ge c$. We have by the inductive
hypothesis that $f^{2i-2}([a,c])\supset [a,c]$. Therefore
$f^{2i-1}([a,c])\supset [f(a),b]$, and $f^{2i}([a,c])\supset [a,f^2(a)] \supset
[a,c]$ as required.

Therefore $f^{2\ell}([a,c])\supset [a,c]$. Since $f^{2\ell}(a)\in
f^{2\ell}([a,c])$, it follows that $f^{2\ell}([a,c])\supset [c, f^{2\ell}(a)]$
and so $f^{2\ell+1}([a,c]) \supset [f^{2l+1}(a),b]
\supset [c,b]$, the latter inclusion coming from the fact that
$\kappa(f)_{2\ell+2}=0$. Therefore $f^{2\ell+2}([a,c])=[a,b]$ as required.
\end{proof}

If $\{f_t\}$ is a monotonic family of unimodal maps then, for each rational
$m/n\in(0,1/2)$, the height~$m/n$ parameter interval starts when the kneading
sequence is $\Inf{w_q1}$ (with the saddle-node creation of a periodic orbit
whose rightmost point has this itinerary). In a full family, this is followed
by an interval of parameters in which~$f_t$ is renormalizable (starting with a
period-doubling cascade). This interval ends when the kneading sequence exceeds
$w_q0\Inf{w_q1}$. In the tent family, by contrast, kneading sequences~$\kappa$
with $\Inf{w_q1} \prec \kappa \preceq  w_q0\Inf{w_q1}$ do not occur.

\begin{lem}
\label{lem:images-of-tight} Let $f\colon[a,b]\to[a,b]$ be unimodal with
$q(\kappa(f))=q=m/n\in(0,1/2)$, and write $\theta=\min(f^n(a),
\widehat{f^n(a)})$.
\begin{enumerate}[(a)]
\item If $\Inf{w_q0}\prec \kappa(f)\preceq w_q0\Inf{w_q1}$ then $\theta=f^n(a)$
  and $f^n([a,f^n(a)))=[a, f^n(a)]$. \\Moreover, $f^{n-1}([f(a), f^{n+1}(a)]) =
  [a,f^n(a)]$.
\item If $ w_q0\Inf{w_q1}\prec \kappa(f) \prec \rhe(q)$ then there is
  some~$N\in\N$ with $f^N([a,\theta)) = [a,b]$.
\end{enumerate}
\end{lem}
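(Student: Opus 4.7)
The plan is to translate both hypotheses on $\kappa(f)$ into concrete information about how the orbit of $a$ interacts with the interval $[a,\theta]$, and then use elementary interval-arithmetic arguments. Throughout, I would use that both $\Inf{w_q0}=w_q0\Inf{w_q0}$ and $w_q0\Inf{w_q1}$ begin with $w_q0$; by a parity argument via Definition~\ref{def:unimodal-order}, any $\syma$ which lies strictly between them in the unimodal order must also begin with $w_q0$. Iterating this on the next few symbols, one gets $\kappa(f)_n=1$ and $\kappa(f)_{n+1}=0$ for all $\kappa(f)$ in either hypothesis interval.

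For part~(a), the equation $\iota(f^n(a))_0=\kappa(f)_{n+1}=0$ immediately gives $f^n(a)\in[a,c]$ and hence $\theta=f^n(a)$. Next I would show by induction on $k$ that, for $0\le k\le n-1$, $f^k([a,f^n(a)])$ is the closed interval with endpoints $f^k(a)$ and $f^{n+k}(a)$, and that its interior avoids $c$; this follows because the symbols $\kappa(f)_{k+1}$ and $\kappa(f)_{n+k+1}$ agree (the first $n$ symbols of $\kappa(f)$ repeat in the tail by the hypothesis $\kappa(f)\preceq w_q0\Inf{w_q1}$, which forces $\sigma^n(\kappa(f))\preceq\Inf{w_q1}$ agreeing with $w_q0$ up to the first $n-1$ places). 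Combining with $f$ increasing on $[a,c]$ yields $f^{n-1}([f(a),f^{n+1}(a)])=[a,f^n(a)]$, which is the second claim of~(a). Finally, for $f^n([a,f^n(a)))=[a,f^n(a)]$: the strict inequality $\kappa(f)\succ\Inf{w_q0}$ forces the orbit of $a$ to \emph{not} be period-$n$, which forces some interior point of $f([a,f^n(a)])=[f(a),f^{n+1}(a)]$ to be sent to $b$ by $f^{n-1}$. This yields a point $x^{\ast}\in[a,f^n(a))$ with $f^n(x^{\ast})=a$, and surjectivity of $f^n$ onto $[a,f^n(a)]$ then follows from continuity together with $f^n(a)=f^n(a)$ and the intermediate value theorem.

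For part~(b), I would argue by contradiction: suppose no such $N$ exists. The hypothesis $\kappa(f)\succ w_q0\Inf{w_q1}$ together with a repeat of the parity argument still shows $\kappa(f)$ starts with $w_q0$, so $\theta$ is again controlled; but now the tail $\sigma^n(\kappa(f))$ exceeds $\Inf{w_q1}$ in the unimodal order. If all iterates $f^{kn}([a,\theta))$ were contained in some proper subinterval of $[a,b]$, taking the increasing Hausdorff limit would produce a closed, $f^n$-invariant interval $J$ containing $[a,\theta]$ on which $f^n$ restricts to a unimodal-like map; but the existence of such a renormalization interval forces $\kappa(f)\preceq w_q0\Inf{w_q1}$ by Lemma~\ref{lem:height-intervals}~(c) and the classification of heights, contradicting the hypothesis. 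Hence some iterate $f^{kn}([a,\theta))$ properly contains $[a,\theta]$, and by continuity and iteration this forces the critical point $c$ into $f^j([a,\theta))$ for some $j$; then $f^{j+1}([a,\theta))$ contains a neighborhood of $b$, and after at most one more iterate we obtain an image containing $[a,c]$, so Lemma~\ref{lem:eventually-onto} finishes the argument.

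The main obstacle is the non-invariance step in part~(b): rigorously extracting a closed invariant interval from the nested iterates and using the height characterization to derive the contradiction with $\kappa(f)\succ w_q0\Inf{w_q1}$ is the one place where the combinatorics of Section~\ref{sec:height} must be invoked in a nontrivial way. Everything else reduces to careful itinerary bookkeeping using the fact that $\kappa(f)$ is maximal and lies in $\KS(m/n)$, together with monotonicity of $f$ on $[a,c]$ and $[c,b]$.
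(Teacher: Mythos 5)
Your part~(a) breaks down at the crucial step. Since $\Inf{w_q0}$ and $\Inf{w_q0w_q1}$ are consecutive kneading sequences, the hypothesis forces $\kappa(f)=w_q0\,w_q1\ldots$, so the symbols of $\kappa(f)$ in positions $n-1$ and $2n-1$ are $0$ and $1$ respectively: they \emph{disagree}. Consequently $f^{n-2}(a)\le c\le f^{2n-2}(a)$, and the interval $f^{n-2}([a,f^n(a)))$ straddles $c$. Your induction claim that $f^k([a,f^n(a)])$ has endpoints $f^k(a)$, $f^{n+k}(a)$ with interior avoiding $c$ for all $0\le k\le n-1$ is therefore false at $k=n-2$ (your own parenthetical concedes agreement only "up to the first $n-1$ places", which covers $k\le n-3$). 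This is not a boundary nuisance: the straddling is the entire mechanism. It puts $b$ into $f^{n-1}([a,f^n(a)))$ and hence $a=f(b)$ into $f^n([a,f^n(a)))$; if your induction were correct, $f^n([a,f^n(a)])$ would have endpoints $f^n(a)$ and $f^{2n}(a)$ and would not contain $a$, contradicting the "Moreover" claim you then assert. Your fallback ("not period $n$ forces some interior point to be sent to $b$") is unjustified, and you never establish $f^{n-1}(a)\le f^{2n-1}(a)$, which is needed to pin the image down as \emph{exactly} $[a,f^n(a)]$ rather than something larger (this inequality is where $\kappa(f)\preceq w_q0\Inf{w_q1}$ does real work, via comparing $(w_q1)^\ell\nu$ with $(w_q1)^{\ell-1}\nu$; the exact containment is what Lemma~\ref{lem:principal-rat-int-renorm} later relies on).

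Part~(b) has two independent gaps. First, the negation of the conclusion does not yield a nested increasing family $f^{kn}([a,\theta))$ with a Hausdorff limit: you have not shown these images are intervals containing $a$, nor that they are monotone under inclusion, and "no iterate equals $[a,b]$" does not mean all iterates lie in one fixed proper subinterval. Second, the step "an $f^n$-invariant interval containing $[a,\theta]$ forces $\kappa(f)\preceq w_q0\Inf{w_q1}$" is not what Lemma~\ref{lem:height-intervals}~(c) says (that lemma only locates $\kappa(f)$ within $\KS(m/n)$); the equivalence between this renormalizability and the combinatorial condition $\kappa(f)\preceq w_q0\Inf{w_q1}$ is essentially the content of the lemma you are trying to prove, so the argument is circular as stated. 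Finally, $c\in f^j([a,\theta))$ gives $b\in f^{j+1}([a,\theta))$ and hence $a\in f^{j+2}([a,\theta))$, but not $[a,c]\subset f^{j+2}([a,\theta))$ — you also need the right endpoint of that image to exceed $c$, which requires control of its itinerary. The paper avoids all of this by a direct computation: writing $\kappa(f)=w_q0(w_q1)^\ell\nu$ (or the symmetric form $10(\hw_q1)^\ell\nu$) with $\nu$ not beginning with $w_q1$, explicitly tracking $f^{kn}([a,\theta))$ down to $(\nu,b]$, and then case-analyzing where $\nu$ first deviates from $\Inf{w_q1}$ to exhibit an iterate covering $[a,c]$, after which Lemma~\ref{lem:eventually-onto} applies.
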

\begin{proof} Recall that $w_q = 10^{\kappa_1(q)}110^{\kappa_2(q)}11\ldots 11
0^{\kappa_{m-1}(q)} 11 0^{\kappa_m(q)-1}$ is a word of length~$n-1$ containing
an odd number of $1$s. Moreover $\kappa_1(q)=\floor{1/q}-1>0$ so that
$w_q=10\ldots$.

\begin{enumerate}[(a)]
\item Since $\Inf{w_q0}$ and $\Inf{w_q0w_q1}$ are consecutive kneading
  sequences (the latter is obtained from the former by period doubling), we
  have $\Inf{w_q0w_q1}\preceq\kappa(f)\preceq w_q0\Inf{w_q1}$, and hence
  \mbox{$\kappa(f)=w_q0w_q1\ldots$}. In particular, $\iota(f^n(a))=0\ldots$, so
  that $f^n(a)\le c$ and $\theta = f^n(a)$.

We will first show that $f^{n-1}(a) \le f^{2n-1}(a)$. If
$\kappa(f)=w_q0\Inf{w_q1}$ then both $f^{n-1}(a)$ and $f^{2n-1}(a)$ have
itinerary $\Inf{w_q1}$, and since $\kappa(f)$ is not periodic it follows from
Convention~\ref{conv:unimodal}~(b) that $f^{n-1}(a)=f^{2n-1}(a)$. On the other
hand, if $\kappa(f)\not= w_q0\Inf{w_q1}$, then let~$\ell\ge 1$ and
$\symb\in\{0,1\}^\N$ be such that $\kappa(f)=w_q0(w_q1)^\ell \symb$,
where~$\symb$ does not start with the symbols $w_q1$. Since $\kappa(f)\prec
w_q0\Inf{w_q1}$ we have $\symb\succ\Inf{w_q1}$. Then $\iota(f^{n-1}(a)) =
(w_q1)^\ell \symb \prec (w_q1)^{\ell-1} \symb =
\iota(f^{2n-1}(a))$, so that $f^{n-1}(a)\le f^{2n-1}(a)$ as required.

Now $\iota(a)=\sigma(w_q0w_q1\ldots)$ and $\iota(f^n(a))=\sigma(w_q1\ldots)$.
Therefore $f^i(a)$ and $f^i(f^n(a))$ lie in the same monotone piece of~$f$ for
$0\le i\le n-3$, and they lie in the decreasing piece of~$f$ for an even number
of values of~$i$. Therefore $f^{n-2}([a, f^n(a))) = [f^{n-2}(a), f^{2n-2}(a))$.
Since $f^{n-2}(a)\le c\le f^{2n-2}(a)$ (as $\kappa(f)_{n-1}=0$ and
$\kappa(f)_{2n-1}=1$), and $c\le f^{n-1}(a) \le f^{2n-1}(a)$ (as shown in the
previous paragraph), it follows that $f^{n-1}([a, f^n(a))) = [f^{n-1}(a), b]$
and hence $f^n([a,f^n(a))) = [a,f^n(a)]$ as required. Since $f([a,f^n(a)) =
[f(a), f^{n+1}(a))$, the final statement is immediate.

\item Recall that $\rhe(q)=10\Inf{\hw_q1}$. Using $10\hw_q=w_q01$ (which is
  true since $c_q=w_q01$ is palindromic), we have
\[ w_q0\Inf{w_q1} \prec \kappa(f) \preceq w_q01\Inf{1\hw_q}.
\] In particular, $\kappa(f)=w_q01\ldots$. We consider separately the case
where $\kappa(f)=w_q010\ldots$ (so that $\theta=f^n(a)$), and the case where
$\kappa(f)=w_q011\ldots$ (so that $\theta =
\widehat{f^n(a)}$).

\medskip\medskip

\noindent\textbf{Case 1:} $\kappa(f) = w_q010\ldots$ and $\theta=f^n(a)$.

\medskip

Let $\ell\ge 0$ and $\symb\in\{0,1\}^\N$ be such that
\[\kappa(f)=w_q0(w_q1)^\ell \symb,\] where $\symb$ does not start with the
symbols $w_q1$. Since $\kappa(f)\succ w_q0\Inf{w_q1}$ we have
$\symb\prec\Inf{w_q1}$.

\medskip

\noindent{\em Step 1:} We will show that
\begin{equation}
\label{eq:step1} f^{\ell n}([a, \theta)) =
\begin{cases} [a,b] & \text{ if }\symb = 0\ldots, \\ [a, \sigma(\symb)) &
\text{ if }\symb = 1\ldots.
\end{cases}
\end{equation} (Here, $[a,\sigma(\symb))$ means an interval whose left hand
endpoint is~$a$, and whose right hand endpoint has itinerary $\sigma(\symb)$:
the fact that there may be more than one point with this itinerary will not be
important. We will use this notation throughout the remainder of the proof.)

If~$\ell=0$ then this is immediate: $\theta=f^n(a)$ has itinerary
$\sigma(\symb)$, and $\symb=1\ldots$ since $\kappa(f)=w_q010\ldots$. We
therefore suppose that $\ell\ge 1$, so that
\[
\iota(a) = \sigma(w_q)0(w_q1)^\ell \symb \qquad\text{ and }\qquad \iota(\theta)
= \sigma(w_q) 1 (w_q1)^{\ell-1} \symb.
\] We show by finite induction on~$i$ that
\[ f^{(i+1)n - 2}([a,\theta)) = [0(w_q1)^\ell \symb,
\,\,1(w_q1)^{\ell-i-1}\symb)
    \quad\text{ for }0\le i\le \ell-1.
\] The case~$i=0$ is straightforward, since the first~$n-2$ symbols of
$\iota(a)$ and $\iota(\theta)$ agree, and contain an even number of~$1$s.
Suppose then that $0<i\le\ell-1$, and assume, by the inductive hypothesis, that
$f^{in-2}([a,\theta)) = [0(w_q1)^\ell \symb, \,\, 1(w_q 1)^{\ell-i}\symb)$.
Since $\symb\prec\Inf{w_q1}$ we have $(w_q1)^{\ell-i}\symb \prec (w_q1)^\ell
\symb$, and hence $f^{in-1}([a,\theta)) = ((w_q1)^{\ell-i}\symb, b]$, and
    \[ f^{in}([a,\theta)) = [a,\,\, \sigma(w_q) 1 (w_q1)^{\ell-i-1}\symb) =
[\sigma(w_q)0(w_q1)^\ell \symb,\,\, \sigma(w_q) 1 (w_q1)^{\ell-i-1}\symb).
\] Applying $f^{n-2}$ gives the required result.

Setting $i = \ell-1$ gives $f^{\ell n-2}([a,\theta)) = [0(w_q1)^\ell
    \symb,\,\, 1\symb)$ and hence $f^{\ell n-1}([a,\theta)) = (\symb,b]$ (since
    $\symb\prec (w_q1)^\ell \symb$). Applying~$f$ once more
    gives~(\ref{eq:step1}).

\medskip

\noindent{\em Step 2: }We therefore assume that~$\symb=1\ldots$, and show that
$f^N([a,\sigma(\symb)))=[a,b]$ for some~$N$, which will complete the proof in
case~1.

Recall that
\[ w_q1 = 10^{\kappa_1}110^{\kappa_2}11 \ldots 11 0^{\kappa_{m-1}}11
0^{\kappa_m-1}1,
\] where $\kappa_i=\kappa_i(q)$ is given by~(\ref{eq:kappa-i}). Since~$\symb$
does not begin with the symbols $w_q1$, and satisfies $\symb\prec\Inf{w_q1}$,
one of the following possibilities must occur:
\begin{enumerate}[(i)]
\item There is some $i$ with $1\le i\le m$, and an integer~$k$ with $0\le
  k<\kappa_i$ (or $0\le k<\kappa_m-1$ in the case $i=m$), such that
\[
\symb = 10^{\kappa_1}110^{\kappa_2}11 \ldots 11 0^{\kappa_{i-1}} 11 0^k
1\ldots.
\]
\item There is some $i$ with $1\le i < m$ such that
\[
\symb = 10^{\kappa_1}110^{\kappa_2}11 \ldots 11 0^{\kappa_i} 1 0 \ldots.
\]
\end{enumerate} For~(i), write~$M=k + \sum_{j=1}^{i-1}(\kappa_j+2)$. If $i<m$
we have
\[ f^M([a,\sigma(\symb))) = [0^{\kappa_i-k}11\ldots,\,\, 1\ldots).
\] Therefore $f^{M+1}([a,\sigma(\symb))) \supset [0^{\kappa_i-k-1}11\ldots,\,\,
b]$. If $\kappa_i>k+1$ then we have $f^{M+2}([a,\sigma(\symb)))=[a,b]$, while
if $\kappa_i=k+1$ then $f^{M+2}([a,\sigma(\symb))) \supset [a, 1\ldots]\supset
[a,c]$, and the result follows by Lemma~\ref{lem:eventually-onto}. The
case~$i=m$ works similarly, remembering that $\iota(a)$ starts with the word
$\sigma(w_q)0$: we always have $f^{M+2}([a.\sigma(\symb)))=[a,b]$ in this case.

\medskip

\noindent For (ii), write~$M=\left(\sum_{j=1}^i (\kappa_j+2)\right) - 1$. Then
$f^M([a,\sigma(\symb))) = (0\ldots, 1^{2r+1}0\ldots]$ for some $r\ge 0$  (which
will be greater than~$0$ if and only if $\kappa_{i+1}=0$). Therefore
$f^{M+1}([a,\sigma(\symb)))\supset [1^{2r}0\ldots,\,\,b]$. If~$r=0$ then
$f^{M+2}([a,\sigma(\symb))) = [a,b]$; while if $r>0$ then
$f^{M+2}([a,\sigma(\symb)))=[a, 1\ldots]\supset [a,c]$, and the result follows
by Lemma~\ref{lem:eventually-onto}.

\medskip\medskip

\noindent\textbf{Case 2:} $\kappa(f) = w_q011\ldots$ and
$\theta=\widehat{f^n(a)}$.

\medskip

We have $\kappa(f)=w_q011\ldots\prec\rhe(q) = w_q01\Inf{1\hw_q}$: equivalently
$\kappa(f) = 10\hw_q1\ldots \prec 10\Inf{\hw_q1}$, since $w_q01=10\hw_q$.  Let
$\ell\ge1$ and $\symb\in\{0,1\}^\N$ be such that
\[
\kappa(f) = 10(\hw_q1)^\ell \symb,
\] where~$\symb$ does not start with the symbols $\hw_q1$. Since
$\kappa(f)\prec 10\Inf{\hw_q1}$ we have $\symb\succ\Inf{\hw_q1}$.

Now $\iota(a)=0(\hw_q1)^\ell \symb$, and
$\iota(f^n(a))=1(\hw_q1)^{\ell-1}\symb$, so that
$\iota(\theta)=0(\hw_q1)^{\ell-1}\symb$. Therefore
\[ f^{(\ell-1)n+1}([a,\theta)) = [\hw_q1\symb,\,\,\symb).
\] We have
\[
\hw_q1 = 0^{\kappa_m-1}110^{\kappa_{m-1}}11 \ldots
110^{\kappa_2}110^{\kappa_1}11.
\] Since~$\symb$ does not begin with the symbols $\hw_q1$ and
$\symb\succ\Inf{\hw_q1}$, one of the following possibilities must occur:
\begin{enumerate}[(i)]
\item There is some~$i$ with $1\le i\le m$ and an integer~$k$ with $0\le
  k<\kappa_i$ (or $0\le k < \kappa_m-1$ in the case~$i=m$), such that
\[
\symb = 0^{\kappa_m-1}110^{\kappa_{m-1}}11 \ldots 110^{\kappa_{i+1}}110^k
1\ldots.
\]
\item There is some~$i$ with $1\le i\le m$ such that
\[
\symb = 0^{\kappa_m-1}110^{\kappa_{m-1}}11 \ldots 11 0^{\kappa_i} 10\ldots.
\]
\end{enumerate} For~(i), we suppose that $i<m$ to avoid complicating the
notation: the case $i=m$ is similar. Write~$M=\bigl((\ell-1)n+1\bigr) +
\bigl(k-1+\sum_{j=i+1}^m (\kappa_j+2)\bigr)$. Then
\[ f^M([a,\theta)) = [0^{\kappa_i-k}11\ldots,\,1\ldots),
\] so that $f^{M+1}([a,\theta))\supset [0^{\kappa_i-k-1}11\ldots,\,b]$. If
$\kappa_i>k+1$ then we have $f^{M+2}([a,\theta)) = [a,b]$, while if
$\kappa_i=k+1$ then $f^{M+2}([a,\theta)) \supset [a, 1\ldots]\supset [a,c]$,
and the result follows by Lemma~\ref{lem:eventually-onto}.

For~(ii), write~$M=\bigl((\ell-1)n+1\bigr) +
\bigl(\sum_{j=i}^m(\kappa_j+2))-2\bigr)$. Then $f^M([a,\theta)) = (0\ldots,\,
1^{2r+1}0\ldots]$ for some $r\ge 0$. Therefore $f^{M+1}([a,\theta))\supset
[1^{2r}0\ldots,\,b]$. If~$r=0$ then $f^{M+2}([a,\theta))=[a,b]$; while if $r>0$
then $f^{M+2}([a,\theta))=[a,\,1\ldots]\supset[a,c]$, and the result follows by
Lemma~\ref{lem:eventually-onto}.

\end{enumerate}
\end{proof}

\bibliographystyle{amsplain}
\bibliography{neumrefs}

\providecommand{\bysame}{\leavevmode\hbox to3em{\hrulefill}\thinspace}
\providecommand{\MR}{\relax\ifhmode\unskip\space\fi MR }
\providecommand{\MRhref}[2]{%
  \href{http://www.ams.org/mathscinet-getitem?mr=#1}{#2}
}
\providecommand{\href}[2]{#2}
\begin{thebibliography}{10}

\bibitem{AKM}
R.~Adler, A.~Konheim, and M.~McAndrew, \emph{Topological entropy}, Trans. Amer.
  Math. Soc. \textbf{114} (1965), 309--319.

\bibitem{AY}
K.~Alligood and J.~Yorke, \emph{Accessible saddles on fractal basin
  boundaries}, Ergodic Theory Dynam. Systems \textbf{12} (1992), no.~3,
  377--400.

\bibitem{AP}
S.~Alpern and V.~Prasad, \emph{Typical dynamics of volume preserving
  homeomorphisms}, Cambridge Tracts in Mathematics, vol. 139, Cambridge
  University Press, Cambridge, 2000.

\bibitem{ABC}
A.~Anu\v{s}i\'{c}, H.~Bruin, and J.~\v{C}in\v{c}, \emph{The core {I}ngram
  conjecture for non-recurrent critical points}, Fund. Math. \textbf{241}
  (2018), no.~3, 209--235.

\bibitem{AC}
A.~Anu\v{s}i\'{c} and J.~\v{C}in\v{c}, \emph{Accessible points of planar
  embeddings of tent inverse limit spaces}, Dissertationes Math. \textbf{541}
  (2019), 57.

\bibitem{barge}
M.~Barge, \emph{Prime end rotation numbers associated with the {H}\'enon maps},
  Continuum theory and dynamical systems, Lecture Notes in Pure and Appl.
  Math., vol. 149, Dekker, New York, 1993, pp.~15--33.

\bibitem{BaBrDi}
M.~Barge, K.~Brucks, and B.~Diamond, \emph{Self-similarity in inverse limit
  spaces of the tent family}, Proc. Amer. Math. Soc. \textbf{124} (1996),
  no.~11, 3563--3570.

\bibitem{Ingram}
M.~Barge, H.~Bruin, and S.~\v{S}timac, \emph{The {I}ngram conjecture}, Geom.
  Topol. \textbf{16} (2012), no.~4, 2481--2516.

\bibitem{BaDi3}
M.~Barge and B.~Diamond, \emph{Stable and unstable manifold structures in the
  {H}\'enon family}, Ergodic Theory Dynam. Systems \textbf{19} (1999), no.~2,
  309--338.

\bibitem{bargemartin}
M.~Barge and J.~Martin, \emph{The construction of global attractors}, Proc.
  Amer. Math. Soc. \textbf{110} (1990), no.~2, 523--525.

\bibitem{birkhoff}
G.~Birkhoff, \emph{Sur quelques courbes ferm\'ees remarquables}, Bull. Soc.
  Math. France \textbf{60} (1932), 1--26.

\bibitem{BO}
A.~Blokh and L.~Oversteegen, \emph{Wandering gaps for weakly hyperbolic
  polynomials}, Complex dynamics, A K Peters, Wellesley, MA, 2009,
  pp.~139--168.

\bibitem{bowen}
R.~Bowen, \emph{Entropy for group endomorphisms and homogeneous spaces}, Trans.
  Amer. Math. Soc. \textbf{153} (1971), 401--414.

\bibitem{param-fam}
P.~Boyland, A.~de~Carvalho, and T.~Hall, \emph{Inverse limits as attractors in
  parameterized families}, Bull. London Math. Soc. \textbf{45} (2013), no.~5,
  1075--1085.

\bibitem{BB}
B.~Brechner and M.~Brown, \emph{Mapping cylinder neighborhoods in the plane},
  Proc. Amer. Math. Soc. \textbf{84} (1982), no.~3, 433--436.

\bibitem{brown}
M.~Brown, \emph{Some applications of an approximation theorem for inverse
  limits}, Proc. Amer. Math. Soc. \textbf{11} (1960), 478--483.

\bibitem{bruin2}
H.~Bruin, \emph{Invariant measures of interval maps}, Ph.D. thesis, Technische
  Universiteit Delft, 1994.

\bibitem{bruin}
\bysame, \emph{Planar embeddings of inverse limit spaces of unimodal maps},
  Topology Appl. \textbf{96} (1999), no.~3, 191--208.

\bibitem{CL}
M.~Cartwright and J.~Littlewood, \emph{Some fixed point theorems}, Ann. of
  Math. (2) \textbf{54} (1951), 1--37, With appendix by H. D. Ursell.

\bibitem{CMTT}
D.~Childers, J.~Mayer, H.~Tuncali, and E.~Tymchatyn, \emph{Indecomposable
  continua and the {J}ulia sets of rational maps}, Complex dynamics, Contemp.
  Math., vol. 396, Amer. Math. Soc., Providence, RI, 2006, pp.~1--20.

\bibitem{gpa}
A.~de~Carvalho and T.~Hall, \emph{Unimodal generalized pseudo-{A}nosov maps},
  Geom. Topol. \textbf{8} (2004), 1127--1188 (electronic).

\bibitem{Devaney}
R.~Devaney, \emph{An introduction to chaotic dynamical systems}, Studies in
  Nonlinearity, Westview Press, Boulder, CO, 2003, Reprint of the second (1989)
  edition.

\bibitem{DH}
E.~Dyer and M.-E. Hamstrom, \emph{Completely regular mappings}, Fund. Math.
  \textbf{45} (1958), 103--118.

\bibitem{edwardskirby}
R.~Edwards and R.~Kirby, \emph{Deformations of spaces of imbeddings}, Ann.
  Math. (2) \textbf{93} (1971), 63--88.

\bibitem{GR}
C.~Good and B.~Raines, \emph{Continuum many tent map inverse limits with
  homeomorphic postcritical {$\omega$}-limit sets}, Fund. Math. \textbf{191}
  (2006), no.~1, 1--21.

\bibitem{HS}
T.~Hall, \emph{The creation of horseshoes}, Nonlinearity \textbf{7} (1994),
  no.~3, 861--924.

\bibitem{HoWi}
P.~Holmes and R.~Williams, \emph{Knotted periodic orbits in suspensions of
  {S}male's horseshoe: torus knots and bifurcation sequences}, Arch. Rational
  Mech. Anal. \textbf{90} (1985), no.~2, 115--194.

\bibitem{kiwi}
J.~Kiwi, \emph{{$\mathbb{R}$}eal laminations and the topological dynamics of
  complex polynomials}, Adv. Math. \textbf{184} (2004), no.~2, 207--267.

\bibitem{LeC}
P.~Le~Calvez, \emph{Propri\'et\'es des attracteurs de {B}irkhoff}, Ergodic
  Theory Dynam. Systems \textbf{8} (1988), no.~2, 241--310.

\bibitem{Mather}
J.~Mather, \emph{Topological proofs of some purely topological consequences of
  {C}arath\'eodory's theory of prime ends}, Selected studies:
  physics-astrophysics, mathematics, history of science, North-Holland,
  Amsterdam-New York, 1982, pp.~225--255.

\bibitem{MT}
J.~Milnor and W.~Thurston, \emph{On iterated maps of the interval}, Dynamical
  systems ({C}ollege {P}ark, {MD}, 1986--87), Lecture Notes in Math., vol.
  1342, Springer, Berlin, 1988, pp.~465--563.

\bibitem{moore}
R.~Moore, \emph{Concerning upper semi-continuous collections of continua},
  Trans. Amer. Math. Soc. \textbf{27} (1925), no.~4, 416--428.

\bibitem{nusseyorke}
H.~Nusse and J.~Yorke, \emph{Bifurcations of basins of attraction from the view
  point of prime ends}, Topology Appl. \textbf{154} (2007), no.~13, 2567--2579.

\bibitem{OT}
L.~Oversteegen and E.~Tymchatyn, \emph{Extending isotopies of planar continua},
  Ann. of Math. (2) \textbf{172} (2010), no.~3, 2105--2133.

\bibitem{OU}
J.~Oxtoby and S.~Ulam, \emph{Measure-preserving homeomorphisms and metrical
  transitivity}, Ann. of Math. (2) \textbf{42} (1941), 874--920.

\bibitem{Parry}
W.~Parry, \emph{Symbolic dynamics and transformations of the unit interval},
  Trans. Amer. Math. Soc. \textbf{122} (1966), 368--378.

\bibitem{raines}
B.~Raines, \emph{Inhomogeneities in non-hyperbolic one-dimensional invariant
  sets}, Fund. Math. \textbf{182} (2004), no.~3, 241--268.

\bibitem{rogers}
J.~Rogers, Jr., \emph{Indecomposable continua, prime ends, and {J}ulia sets},
  Continuum theory and dynamical systems, Lecture Notes in Pure and Appl.
  Math., vol. 149, Dekker, New York, 1993, pp.~263--276.

\bibitem{Thurston}
W.~Thurston, \emph{On the geometry and dynamics of diffeomorphisms of
  surfaces}, Bull. Amer. Math. Soc. (N.S.) \textbf{19} (1988), no.~2, 417--431.

\bibitem{williams}
R.~Williams, \emph{One-dimensional non-wandering sets}, Topology \textbf{6}
  (1967), 473--487.

\bibitem{williams2}
\bysame, \emph{Expanding attractors}, Inst. Hautes \'Etudes Sci. Publ. Math.
  \textbf{43} (1974), 169--203.

\bibitem{youngs}
J.~Youngs, \emph{The topological theory of {F}r\'echet surfaces}, Ann. of Math.
  (2) \textbf{45} (1944), 753--785.

\end{thebibliography}

\end{document}